\documentclass[12pt]{amsart}
\usepackage{amscd}     
\usepackage{amssymb}
\usepackage{amsmath, amsthm, graphics, dsfont}

\allowdisplaybreaks[3]
\usepackage{amsfonts}
\usepackage[normalem]{ulem}
\usepackage{hyperref}
\usepackage{tikz}
\usepackage{mathtools}
\usepackage{verbatim} 
\usepackage{MnSymbol}
\usepackage{tikz-cd}
\usepackage{subcaption}

\usepackage[margin=4.2cm]{geometry}
\usepackage{latexsym}
\usepackage{epsfig}
\usepackage{enumerate}
\usepackage{times}
\usepackage{xcolor}

\newtheorem{thm}{Theorem}[section]
\newtheorem{prop}[thm]{Proposition}
\newtheorem{lem}[thm]{Lemma}
\newtheorem{cor}[thm]{Corollary}

\newtheorem*{ThmA}{Theorem A}
\newtheorem*{ThmB}{Theorem B}
\newtheorem*{ThmC}{Theorem C}
\newtheorem*{ThmD}{Theorem D}
\newtheorem*{ThmE}{Theorem E}
\newtheorem*{ThmF}{Theorem F}
\newtheorem*{ThmG}{Theorem G}
\newtheorem*{ThmH}{Theorem H}
\newtheorem*{ThmK}{Theorem K}

\theoremstyle{remark}
\newtheorem{rem}[thm]{Remark}
\newtheorem{ex}[thm]{Example}
\newtheorem{defn}[thm]{Definition}

\newcommand{\V}{\mathds{V}}
\newcommand{\W}{\mathds{W}}
\newcommand{\II}{\mathcal{I}}

\newcommand{\E}{\mathsf{E}}

\newcommand{\x}{\mathbf{x}}

\DeclareMathOperator{\Z}{\mathbb{Z}}
\DeclareMathOperator{\R}{\mathbb{R}}
\DeclareMathOperator{\Q}{\mathbb{Q}}

\DeclareMathOperator{\Pic}{\mathrm{Pic}}

\DeclareMathOperator{\Cl}{\mathrm{Cl}}

\newcommand{\M}{\overline{M}_{0,n}}

\usetikzlibrary{calc}
\usetikzlibrary{graphs,graphs.standard,quotes}

\title[Polytopes of Effective Boundary Expressions of Divisors on $\M$]{Polytopes of Effective Boundary Expressions of Divisors on $\M$}

\author[Cavey]{Ian Cavey}
\address{Department of Mathematics\\University of Illinois Urbana-Champaign\\1409 W. Green Street (MC-382)\\Urbana, IL 61801\\ USA}
\email{cavey@illinois.edu}

\author[Genlik]{Deniz Genlik}
\address{Department of Mathematics\\University of Illinois Urbana-Champaign\\1409 W. Green Street (MC-382)\\Urbana, IL 61801\\ USA}
\email{genlik@illinois.edu}

\date{\today}

\begin{document}

\maketitle
\begin{abstract}
For a divisor on $\M$, we introduce the polytope of its effective boundary expressions. We establish structural properties of these polytopes under the forgetful maps of $\M$ forgetting marked points, and give equivalent graph-theoretic descriptions. We compute these polytopes for several families of divisors. For psi-classes and their pullbacks by forgetful maps, we show that the polytopes are unimodular simplices. For the log-canonical class and its modifications by psi-classes, we prove that the nonnegative parts of the corresponding polytopes recover spanning forest polytopes and the subtour elimination (Held--Karp relaxation) polytope of the symmetric traveling salesman problem. As an application, we obtain a Minkowski-like decomposition of the subtour elimination polytope into simplices. Finally, for symmetric level-one $\mathfrak{sl}_p$ conformal block divisors, we show that the defining inequalities are local Tur\'an bounds and the $0/1$-points are balanced Tur\'an graphs. Moreover, for $p=2$ and $p=n/2$, these polytopes recover the perfect matching and fractional perfect matching polytopes.
\end{abstract}

\section{Introduction}

The moduli space $\M$ is a smooth projective variety parametrizing stable rational complex curves with $n$ marked points \cite{Knudsen83II,Knudsen83III,Knudsen83I}. The interior
\begin{equation*}
M_{0,n}\subseteq \M
\end{equation*}
parametrizes irreducible such curves. The boundary $\partial \M = \M \setminus M_{0,n}$ is a normal crossings divisor with $2^{n-1}-n-1$ irreducible components,
\begin{equation*} \partial \M = \bigcup D_I, \end{equation*}
indexed by partitions $I \sqcup I^c = [n]$ of the set $[n]\coloneqq \{1,2,\ldots,n\}$ such that $2\leq \vert I  \vert , \vert I^c  \vert \leq n-2$, and where we consider $D_I = D_{I^c}$ \cite{Kapranov93,Keel}. We often index these boundary divisors by
\begin{equation*}
\II_n = \{ I\subseteq [n] \,  \vert n\notin I, \, 2 \leq \vert I \vert \leq n-2 \}.
\end{equation*}

The classes of the boundary divisors generate the divisor class group of $\M$ with certain linear relations among them, known as Keel's relations \cite{Keel}. Thus, divisor classes on $\M$ have many expressions as linear combinations of boundary divisors. Given a divisor $D$ on $\M$, we are interested in the set of all \emph{effective} linear combinations of boundary divisors such that the sum is linearly equivalent to $D$,
\begin{equation*}
  Q(D) = \left\{\, (a_I)_{I\in\II_n}\in\R^{\II_n} \, \bigg \vert \,
  [D] = \sum_{I\in\II_n} a_I\,[D_I],\ \ a_I\geq 0 \,\right\}.
\end{equation*}
We show that $Q(D)\subseteq\R^{\II_n}\cong\R^{\,2^{n-1}-n-1}$ is a (possibly empty) convex polytope which depends only on the class $[D]$, and ask the guiding question of this
paper:

\begin{quote}
\textit{How does the geometry of a divisor class $[D]$ on $\M$ appear in the combinatorics of its polytope of effective boundary expressions $Q(D)$?}   
\end{quote}

As we will show, the combinatorics arising from natural families of divisor classes on $\M$ are closely related to the polytopes of spanning trees and forests, Hamiltonian paths and cycles, perfect matchings, and Tur\'an subgraphs of complete graphs.

\subsection{Main structural results}

First, we establish the following structural results. 

\begin{ThmA}
For any divisors $D,D'$ on $\M$, we have the following:
    \begin{enumerate}
        \item (Lemma \ref{lem:subadditivity and scaling}) For any $c> 0$, we have
        \begin{equation*} Q(cD) = c \,Q(D), \text{ and } Q(D+D') \supseteq Q(D)+Q(D'). \end{equation*}
        \item (Proposition \ref{prop:pullback on polytopes}) The forgetful map $\pi \colon \overline{M}_{0,n+1}\to \M$ induces a bijection
        \begin{equation*} Q(D) \longrightarrow Q(\pi^*(D)) \end{equation*}
        given on linear combinations of boundary divisors by
        \begin{equation*} \sum_{I\in\II_n} a_I D_I \, \longmapsto \, 
        \sum_{I\in\II_n} a_I\big(D_I+D_{[n]\setminus I}\big).\end{equation*}
        \item (Proposition \ref{prop:inverse to projection}) The coordinate projection $(a_{I})\mapsto (a_{\{i,j\}})_{1\leq i < j \leq n-1}$ restricts to a bijection from $Q(D)$ onto its image.
    \end{enumerate}
\end{ThmA}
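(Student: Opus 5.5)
\textbf{Part (1).} This should be immediate from the definition of $Q(D)$. Linear equivalence is compatible with scaling and addition, and the conditions $a_I\geq 0$ are preserved by positive scaling and by addition. So $a\in Q(D)$ if and only if $ca\in Q(cD)$, which gives $Q(cD)=cQ(D)$. Likewise, if $a\in Q(D)$ and $a'\in Q(D')$, then $a+a'$ is an effective boundary expression for $D+D'$, giving $Q(D)+Q(D')\subseteq Q(D+D')$. I would also record here that $Q(D)$ depends only on $[D]$, since it is defined through $[D]$.

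\textbf{Part (2).} First I would use the standard pullback formula $\pi^*D_I = D_I + D_{I\cup\{n+1\}}$ in $\overline{M}_{0,n+1}$; the paper writes the second term as $D_{[n]\setminus I}$ via $D_J=D_{J^c}$. Applying $\pi^*$ to a relation $[D]=\sum a_I[D_I]$ shows that the stated map sends $Q(D)$ into $Q(\pi^*D)$. The map is linear and visibly injective, because the images of distinct $D_I$ involve disjoint pairs of boundary divisors.

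For surjectivity, let $\sum_J b_J D_J$ be an effective expression for $\pi^*D$ on $\overline{M}_{0,n+1}$. I would intersect with curves contracted by $\pi$, on which $\pi^*D$ has degree $0$.
\begin{itemize}
\item A general fiber $F$ of $\pi$ meets exactly the divisors $D_{\{i,n+1\}}$, each once, and no other boundary divisor. This gives $\sum_i b_{\{i,n+1\}}=0$, so each $b_{\{i,n+1\}}=0$ by effectivity.
\item For $I\in\II_n$, take a general point of $D_I\subseteq\M$. Its fiber under $\pi$ is a union of two rational curves $F_1$ and $F_2$, the components carrying the points of $I$ and of $I^c$. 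On each $F_k$ the point $n+1$ moves along that component.
\end{itemize}
Using the standard intersection numbers of such curves with boundary divisors, the condition $F_1\cdot\pi^*D=0$ should read $\sum_{i\in I}b_{\{i,n+1\}} + b_{I}' - b_{I}''=0$. Here $b_I', b_I''$ are the coefficients of $D_I$ and $D_{I\cup\{n+1\}}$ (in some order, with signs coming from the node, where the self-intersection contributes $-1$). With the first step this yields $b_I=b_{I\cup\{n+1\}}$. Hence $\sum b_J D_J = \pi^*\big(\sum_I b_I D_I\big)$. Since $\pi^*$ is injective on $\Pic$, we get $\sum b_I D_I\sim D$ with $b_I\geq 0$, so the expression is the image of a point of $Q(D)$.

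The main obstacle is getting these node intersection numbers and their signs right, including the edge cases $|I|=2$ or $|I^c|=2$.

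\textbf{Part (3).} Suppose $a,a'\in Q(D)$ agree on all coordinates $a_{\{i,j\}}$ with $1\le i<j\le n-1$. Then $c=a-a'$ gives a linear relation $\sum c_I[D_I]=0$ supported on those $I\in\II_n$ with $|I|\geq 3$. I would then use Kapranov's model of $\M$ as an iterated blow-up of $\mathbb{P}^{n-3}$ at the $n-1$ points indexed by $[n-1]$ and the linear spans of subsets of them. In this model each $D_I$ with $n\notin I$ and $|I|\geq 3$ is the exceptional divisor $E_{[n-1]\setminus I}$ over the span of the points in $[n-1]\setminus I$. The classes $H$ and $E_J$ form a basis of $\Pic$, so these exceptional classes are linearly independent. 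Hence $c=0$, and the projection is injective on $Q(D)$.

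As a sanity check, the count fits. There are $|\II_n|-\binom{n-1}{2}$ divisors $D_I$ with $|I|\ge 3$, which is exactly one less than the Picard rank $2^{n-1}-\binom n2-1$; the missing class is $H$.
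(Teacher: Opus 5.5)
Your proposal is correct. Part (1) is the same direct check as in the paper, but for parts (2) and (3) you take a different route.

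\textbf{Part (2).} The paper argues with Keel's relations and linear algebra. It shows that $\W_{n+1}/\pi^*(\W_n)$ is spanned by the Keel vectors $v_{i,j,k,n+1}$. Projecting onto the coordinates of $D_{\{i,n+1\}}=D_{[n]\setminus\{i\}}$ sends $v_{i,j,k,n+1}$ to $1_k-1_j$. A dimension count using the Picard rank then identifies the quotient with the sum-zero hyperplane in $\R^n$. A nonnegative vector in that hyperplane is zero, so $w\in\pi^*(\W_n)$.

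Your general-fiber computation is the geometric form of that ``sum-zero'' step. Your fiber components $F_1$ over $D_I$ replace the dimension count with a direct proof that $b_I=b_{I\cup\{n+1\}}$.

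The sign issue you flag is harmless. From $\pi^*D_I\cdot F_1=0$ you get $F_1\cdot D_{I\cup\{n+1\}}=-F_1\cdot D_I$. Also $F_1\cdot D_I>0$, because $F_1$ is not contained in $D_I$ but meets it where $n+1$ reaches the node. So your relation is $(b_I-b_{I\cup\{n+1\}})\,(F_1\cdot D_I)=0$; the two values are in fact $1$ and $-1$. No special treatment is needed when $|I|=2$ or $|I^c|=2$.

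\textbf{Part (3).} The paper proves injectivity of $\rho_n$ on $\W_n$ by computing the images of Keel vectors, showing they span the hyperplane $\mathbf{a}(\E_{n-1})=0$, and comparing dimensions. Your argument is shorter: a kernel element is a relation among the classes $[D_I]$ with $|I|\geq 3$. These are part of the Kapranov basis $\{\psi_n\}\cup\{[D_I]\}$, which the paper also takes as given, so the relation is trivial. This fully suffices for ``bijection onto its image.''

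\textbf{What each route buys.} The paper's heavier route gives more than injectivity. It identifies the image $\rho_n(D+\W_n)$ as the hyperplane $\mathbf{a}(\E_{n-1})=\alpha(D)$. It also yields the explicit inverse $\mathbf{a}\mapsto\sum_I(\beta_I+\mathbf{a}(\E_I))D_I$, which is what drives Theorem B. Your route to (2), in turn, trades the Keel-vector bookkeeping for standard intersection numbers on $\overline{M}_{0,n+1}$.
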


The last point in the above Theorem is a useful technical tool as it allows us to describe isomorphic polytopes in $n-1 \choose 2$ coordinates rather than recording all $2^{n-1}-n-1$ boundary divisor coefficients. This result also provides the conceptual bridge from divisor expressions to graph theory, as we may therefore identify each boundary divisor expression for $D$ with a weighting $(a_{\{i,j\}})$ of the edges $e= \{i,j\}$ of the complete graph $K_{n-1}$. We therefore write $\E_{n-1} = \{ \{i,j\} \,  \vert \, 1 \leq i < j \leq n-1 \}$ for the edge set of $K_{n-1}$, and index these coordinates as $a_e$ rather than $a_{\{i,j\}}$ to emphasize this connection.

Throughout this paper, we make use of several coordinatizations of $Q(D)$. We write $P(D)$ and $P^+(D)$ for the projections of $Q(D)$ onto coordinates $(a_e)_{e \in \E_{n-1}}$ and $(a_e)_{e \in \E_n}$ respectively. We also frequently use the change of coordinates $x_e = 1-a_e$ on both $P(D)$ and $P^+(D)$, and denote the resulting polytopes in $x_e$-coordinates by $\widetilde{P}(D)$ and $\widetilde{P}^+(D)$ respectively. It follows from Theorem A that all these polytopes are affinely isomorphic, and we summarize these projections and changes of coordinates in the following diagram:
\begin{equation}
\begin{tikzcd}[column sep=small]
    & Q(D)\arrow[d] \\
    \R^{\E_n} &\arrow[l,phantom,"\supseteq"] P^+(D)\arrow[d] \arrow[rrr,<->,"x_e=1-a_e"] &&& \widetilde{P}^+(D)\arrow[d]\\
    \R^{\E_{n-1}}&\arrow[l,phantom,"\supseteq"] P(D) \arrow[rrr,<->,"x_e=1-a_e"] &&& \widetilde{P}(D).\\
\end{tikzcd}
\end{equation}
Each divisor class on $\M$ has a unique expression in the Kapranov basis centered at the $n^\text{th}$ marking:

\begin{equation*}[D] = \alpha(D) \psi_n + \sum_{I\in \II_n, \vert I  \vert \geq 3} \beta_I(D) [D_I]. \end{equation*}

For notational convenience, we also write $\beta_I(D) = 0$ for any divisor $D$ whenever $I\in \II_n$ has $ \vert I  \vert =2$. The following theorem gives the defining inequalities of these polytopes in terms of these coefficients.

\begin{ThmB}
For any divisor $D$ on $\overline{M}_{0,n}$, we have
    \begin{equation*}
    P(D) = \left\{ (a_e)_{e \in \E_{n-1}}\in \R^{\E_{n-1}} \, \bigg \vert \, \sum_{e \in \E_{n-1}} a_{e} = \alpha(D), \beta_I(D) + \sum_{e \in \E_I}a_{e}\geq 0\, \text{ for all }I\in \II_n \right\}\quad \text{(Corollary \ref{cor:P(D) inequalities})}
    \end{equation*}
or equivalently under the change of variable $a_e \mapsto x_e=1-a_e$,
    \begin{equation*}\widetilde{P}(D) = \left\{ (x_e)_{e \in \E_{n-1}}\in \R^{\E_{n-1}} \bigg \vert \, \sum_{e \in \E_{n-1}}x_e = r_{[n-1]}(D),\, \sum_{e \in \E_I} x_e \leq r_I(D)\, \text{ for all }I\in \II_n \right\}\quad \text{(Proposition \ref{prop:P tilde inequalities})}
    \end{equation*}
where
\begin{equation*}
r_I(D)={ \vert I \vert \choose 2} + \beta_I(D) \quad \text{and} \quad r_{[n-1]}(D)={n-1 \choose 2}- \alpha(D).
\end{equation*}
\end{ThmB}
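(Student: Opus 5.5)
The plan is to describe $Q(D)$ explicitly and then project. The key tool is the Kapranov-basis expansion of each boundary divisor $D_I$ for $I\in\II_n$. The convention $D_I=D_{I^c}$ with $n\notin I$ fixes the representative. In the Kapranov model centered at the $n$th marking, $\psi_n$ is the pullback of the hyperplane class on $\mathbb{P}^{n-3}$. The divisors $D_I$ with $|I|\ge 3$ are the exceptional divisors. The divisors $D_{\{i,j\}}$, for $i<j\le n-1$, are strict transforms of hyperplanes spanned by the other points.

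\textbf{Step 1: Kapranov expansion of $D_{\{i,j\}}$.} A hyperplane through the $n-3$ points indexed by $[n-1]\setminus\{i,j\}$ contains the linear span of every subset of those points. Its strict transform therefore satisfies
\[
[D_{\{i,j\}}]=\psi_n-\sum_{J\in\II_n,\ |J|\ge 3,\ \{i,j\}\subseteq J}[D_J].
\]
I would either take this as the standard formula or verify it via Keel's relations. The condition $\{i,j\}\subseteq J$ is equivalent to $J^c\cup\{i,j\}$ missing nothing, i.e.\ the complementary linear span lies in the hyperplane. Getting exactly this indexing right, including the case $|J|=n-2$, is the main technical obstacle. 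I would check it with a test-curve or intersection computation for small $n$ (e.g.\ $n=5$: $D_{12}=H-E_{123}-E_{124}$ in the blowup of $\mathbb P^2$ at four points).

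\textbf{Step 2: compare coefficients.} Substitute Step 1 into $\sum_{I\in\II_n} a_I[D_I]$ and use uniqueness of the Kapranov basis expansion. This shows $(a_I)\in Q(D)$ if and only if $a_I\ge 0$ for all $I$ and
\[
\sum_{e\in\E_{n-1}} a_e=\alpha(D),\qquad a_J-\sum_{e\in\E_J}a_e=\beta_J(D)\quad(|J|\ge 3).
\]
Hence the coordinates $a_J$ with $|J|\ge 3$ are determined by the edge coordinates, which reproves Theorem A(3). This gives $P(D)$ as the set of $(a_e)$ satisfying three conditions:
\begin{itemize}
\item the sum equals $\alpha(D)$;
\item $a_e\ge0$ for each edge;
\item $\beta_J(D)+\sum_{e\in\E_J}a_e\ge 0$ for each $|J|\ge3$.
\end{itemize}
With the convention $\beta_I=0$ for $|I|=2$, the condition $a_e\ge 0$ is exactly the case $I=e$ of the last inequality. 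This yields the first description. Boundedness and the polytope claim are not needed here, since the statement is only a set equality.

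\textbf{Step 3: change of variables $x_e=1-a_e$.} Since $|\E_I|=\binom{|I|}{2}$, we get $\sum_{e\in\E_I}a_e=\binom{|I|}{2}-\sum_{e\in\E_I}x_e$. The inequality therefore becomes $\sum_{e\in\E_I}x_e\le \binom{|I|}{2}+\beta_I(D)=r_I(D)$. The equation becomes $\sum_{e\in\E_{n-1}} x_e=\binom{n-1}{2}-\alpha(D)=r_{[n-1]}(D)$. These two computations are routine and give the second description.
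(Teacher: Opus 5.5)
Your proof is correct. Its core computation is the same one the paper uses to prove Proposition \ref{prop:inverse to projection}: substitute the Kapranov expansion of $[D_e]$ into $\sum_I a_I[D_I]$ and collect coefficients.

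The difference is how you get the necessity and uniqueness direction.
\begin{itemize}
\item \textbf{The paper's route.} It first proves Lemma \ref{lem:injectivity of projection} by projecting the Keel vectors $v_{i,j,k,\ell}$ to edge coordinates and counting dimensions. Lemma \ref{lem:image of translation of W} then shows that $\rho_n(D+\W_n)$ is the hyperplane $\mathbf a(\E_{n-1})=\alpha(D)$. The coefficient computation is used only in one direction, to check that the candidate $\iota(\mathbf a)$ lies in $D+\W_n$.
\item \textbf{Your route.} You run the coefficient comparison as an equivalence, using linear independence of the Kapranov basis. This gives, all at once, the sum condition $\mathbf a(\E_{n-1})=\alpha(D)$, the formula $a_J=\beta_J(D)+\mathbf a(\E_J)$, and hence Theorem A(3). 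No Keel-vector analysis is needed.
\end{itemize}

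Both routes rest on the same input: that the Kapranov elements form a basis of $\Cl(\M)_{\R}$, or equivalently the rank of $\Pic(\M)$. The paper needs this to define $\alpha,\beta$ and to get $\dim\W_n$. Your version is shorter. The paper's version isolates a $D$-independent statement about $\W_n$ itself, proved directly from Keel's relations, which it quotes again later (e.g.\ Corollary \ref{cor:projection_Q_to_P_is_affine_iso}).

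The indexing in Step 1 that you flag as the main obstacle does not need the blow-up geometry or a test computation. Take the standard boundary formula \eqref{eqn:psi_wrt_divisors} with $k=n$ and excluded pair $\{i,j\}$, and pass to complements. It reads $\psi_n=\sum_{J\in\II_n,\ \{i,j\}\subseteq J}[D_J]$. Isolating $J=\{i,j\}$ gives your formula, including the $|J|=n-2$ terms. This is exactly the paper's Lemma \ref{lem:Djk_basis_decomp}.
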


This description of $\widetilde{P}(D)$ resembles that of the base polytope of an extended polymatroid \cite[Chapter 44]{Schrijver03}. We note, however, that the polytopes $P(D)$ and $\widetilde{P}(D)$ can be non-integral even when $D$ is an integral divisor class. See Figure \ref{fig:nonintegral vertices} and Remark \ref{rem:matroid_vs_ours}.

For the applications that follow, we frequently use the following structural result relating the polytopes associated to a divisor $D$ on $\M$ and its pullback to $\overline{M}_{0,n+1}$ by the forgetful map.

\begin{ThmC}
(Corollary \ref{cor: P^+ and P relationship}) For a divisor $D$ on $\M$, we have
\begin{equation*}
P^+(D)=P(\pi^*(D)) 
\end{equation*}
where $\pi  \colon \overline{M}_{0,n+1}\to \M$ is the forgetful map forgetting the last marked point.
\end{ThmC}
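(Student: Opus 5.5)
We combine the bijection of Theorem A(2) with the coordinate projection of Theorem A(3), applied on $\overline{M}_{0,n+1}$. Recall that $P^+(D)$ is the image of $Q(D)$ under $(a_I)\mapsto(a_e)_{e\in\E_n}$. Here, for $e=\{i,n\}$, the coordinate $a_e$ means $a_{[n]\setminus\{i,n\}}$, i.e. the coefficient of $D_{\{i,n\}}=D_{[n]\setminus\{i,n\}}$. On the other side, $P(\pi^*D)$ is the image of $Q(\pi^*D)\subseteq\R^{\II_{n+1}}$ under projection to the coordinates $a_{\{i,j\}}$, $1\le i<j\le n$, which is exactly the edge set $\E_n$ of $K_n$. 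So both polytopes live in $\R^{\E_n}$, and it suffices to show that the composite
\[
Q(D)\xrightarrow{\ \sim\ } Q(\pi^*D)\longrightarrow P(\pi^*D)
\]
agrees with the projection $Q(D)\to P^+(D)$. Since the first map is a bijection by Proposition~\ref{prop:pullback on polytopes}, the images then coincide.

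\textbf{Step 1: compute the image of an expression under the pullback bijection.} Take $(a_I)_{I\in\II_n}\in Q(D)$. By Theorem A(2), its image in $Q(\pi^*D)$ is
\[
\sum_{I\in\II_n}a_I\big(D_I+D_{[n]\setminus I}\big),
\]
written with indices in $\II_{n+1}$ (subsets of $[n+1]$ not containing $n+1$). The summand $D_I$ on $\overline{M}_{0,n+1}$ is the divisor whose partition places $n+1$ together with $I^c$. The summand $D_{[n]\setminus I}$ is the divisor placing $n+1$ together with $I$. Each index $J\in\II_{n+1}$ arises from at most one pair $(I,\text{term})$: given $J\subseteq[n]$ with $2\le|J|\le n-1$, either $n\notin J$, so $J=I$ with $I\in\II_n$ (requiring $|J|\le n-2$), or $n\in J$, so $J=[n]\setminus I$. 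Hence the coefficient of $D_J$ is $a_J$ if $n\notin J$, and $a_{[n]\setminus J}$ if $n\in J$, interpreted as $0$ when out of range.

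\textbf{Step 2: read off the pairs.} For $J=\{i,j\}$ with $1\le i<j\le n-1$, the coefficient is $a_{\{i,j\}}$. For $J=\{i,n\}$, the coefficient is $a_{[n]\setminus\{i,n\}}$. Note that $[n]\setminus\{i,n\}$ has size $n-2$ and lies in $\II_n$, so it is a genuine coordinate. This is precisely the coefficient of $D_{\{i,n\}}=D_{[n]\setminus\{i,n\}}$ on $\M$, i.e. the coordinate $a_{\{i,n\}}$ used in defining $P^+(D)$. Thus the composite map is exactly the $P^+$ projection, giving $P^+(D)=P(\pi^*D)$.

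The only real obstacle is the bookkeeping of indexing conventions in Steps 1 and 2. One must confirm how $D_I$ and $D_{[n]\setminus I}$ are identified as elements of $\II_{n+1}$, and check that the $P^+$ convention for edges $\{i,n\}$ really uses the complement representative. I would verify this directly from the statement of Proposition~\ref{prop:pullback on polytopes}, namely that $\pi^*D_I=D_I+D_{I\cup\{n+1\}}$ with $D_{I\cup\{n+1\}}=D_{[n]\setminus I}$. Once the conventions are fixed, no inequalities need to be checked, since Theorem A(2) already supplies the bijection.
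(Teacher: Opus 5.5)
Your proposal is correct and follows the same route as the paper. The paper uses Proposition~\ref{prop:pullback on polytopes} to write $Q(\pi^*D)=\pi^*(Q(D))$, then observes that $\pi^*\colon D_I\mapsto D_I+D_{[n]\setminus I}$ preserves the coefficient of $D_e$ for every $e\in\E_n$; your check that the $\{i,n\}$-coordinate is the coefficient of $D_{[n-1]\setminus\{i\}}$ on both sides simply spells out that one-line observation in more detail.
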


\begin{rem}[Index conventions]\label{rem:index conventions}
Theorems A--C concern a divisor $D$ on $\M$, so that $P(D),\widetilde{P}(D)\subseteq
\R^{\E_{n-1}}$ and $P^{+}(D),\widetilde{P}^{+}(D)\subseteq\R^{\E_{n}}$. In applications, it is generally more convenient to place the divisor on
$\overline{M}_{0,n+1}$, so that $P(D)$ is indexed by the edges of $K_n$.  In that
case,
\begin{equation*}
\II_{n+1}=\{\,I\subseteq[n]\, \vert \,2\leq  \vert I \vert \leq n-1\,\},
\quad
r_I(D)=\binom{ \vert I \vert }{2}+\beta_I(D),
\quad
r_{[n]}(D)=\binom{n}{2}-\alpha(D).
\end{equation*}
We do not impose a global shift of indices, and each result below specifies its ambient
moduli space.
\end{rem}

\subsection{Main applications: Natural divisors on $\M$ and graph theory polytopes}

The structural results above convert the Kapranov basis coefficients of a divisor into linear constraints on edge weights of a complete graph.  For the natural divisor classes considered below, these constraints coincide with familiar polyhedral descriptions from graph theory and combinatorial optimization. Conversely, the integral and $0/1$-points of the resulting polytopes produce explicit effective boundary expressions of the corresponding divisor classes.

\subsubsection{Psi-classes and their pullbacks}

Our first application is to the psi-classes and their pullbacks by forgetful maps. The class $\psi_i$ is defined as the first Chern class of the $i^{\text{th}}$ tautological line bundle (see Section \ref{sec:background}). More generally, we consider Kapranov classes $X_{S,i}$ indexed by subsets $S\subseteq [n]$ with $i\in S$ and $ \vert S \vert \geq 3$, defined as the pullback of $\psi_i$ to $\M$ via the map $\M\to \overline{M}_{0,S}$ forgetting the marked points not in $S$.

Intersection numbers of classes are known as Kapranov degrees \cite{BELL}:
\begin{equation*}
\int_{\overline{M}_{0,n}} X_{S_1, i_1} \cdots X_{S_{n-3}, i_{n-3}}
\end{equation*}
which have an increasing number of recent combinatorial, algebraic and geometric applications \cite{CGM,GGL, GGL2,Reinke26, Silversmith22, silversmith24}.

\begin{ThmD} (Theorem \ref{thm:Kapranov polytopes} and Corollary \ref{cor:Kapranov P plus})
    Let $S \subseteq [n]$ with $ \vert S \vert \geq 3$ and $i \in S$. For the  Kapranov class $X_{S,i}$ on $\M$, the polytope $Q(X_{S,i})$ is a unimodular simplex of dimension
    \begin{equation*}
    { \vert S  \vert -1\choose 2}-1.
    \end{equation*}
    The vertices of $Q(X_{S,i})$ are indexed by pairs $\{j,k\}\subseteq S\setminus\{i\}$, and correspond to boundary divisor expressions for $X_{S,i}$ are given by
    \begin{equation*}
    X_{S,i}= \sum_{\substack{I\subseteq[n]\,: i\notin I,\ j,k\in I \\ 2\leq \vert I\cap S \vert \leq \vert S  \vert -2}}[D_I].
    \end{equation*}
\end{ThmD}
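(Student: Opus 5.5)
Our plan is to reduce to the case $S=[n]$ by Theorem A(2), and then solve the case $X_{[n],i}=\psi_i$ directly using Theorem B.

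\emph{Reduction.} The forgetful map $\overline{M}_{0,n}\to\overline{M}_{0,S}$ is a composition of single-point forgetful maps, and $X_{S,i}$ is the pullback of $\psi_i$ on $\overline{M}_{0,S}$. Iterating Proposition \ref{prop:pullback on polytopes} gives a bijection $Q(\psi_i)\to Q(X_{S,i})$. This bijection is the restriction of a linear map, so it is an affine isomorphism onto its image.

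\emph{Effect on boundary divisors.} Iterating the formula $D_I\mapsto D_I+D_{I\cup\{n+1\}}$, the divisor $D_J$ on $\overline{M}_{0,S}$ pulls back to $\sum D_I$. The sum runs over subsets $I\subseteq[n]$ with $I\cap S=J$, where we use the representative not containing $i$. So once the case $S=[n]$ is settled, the vertices and the displayed expressions transfer directly. Unimodularity needs one more check. The image lattice of $\Z^{\II_S}$ is saturated in $\Z^{\II_n}$, because each $D_I$ appears in the pullback of exactly one $D_J$, namely $J=I\cap S$ taken up to complement. Hence a unimodular simplex maps to a unimodular simplex.

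\emph{The case $S=[n]$.} By relabeling (the symmetric group acts on $\M$ and permutes boundary divisors), we may take $i=n$. Then $\psi_n$ has Kapranov coefficients $\alpha=1$ and $\beta_I=0$ for all $I$. By Theorem B,
\[
P(\psi_n)=\Big\{a\in\R^{\E_{n-1}} \;\Big\vert\; \textstyle\sum_e a_e=1,\ \sum_{e\in\E_I}a_e\ge 0 \text{ for all } I\in\II_n\Big\}.
\]

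The main obstacle is to check that the inequalities over all $I$ force each individual $a_e\ge0$. This is immediate: taking $|I|=2$ gives $\sum_{e\in\E_I}a_e=a_{\{j,k\}}\ge 0$. Conversely, these nonnegativity conditions imply all the other inequalities. So $P(\psi_n)$ is the standard simplex in $\R^{\E_{n-1}}$. It has dimension $\binom{n-1}{2}-1$, and its vertices are the unit vectors $e_{\{j,k\}}$ for $\{j,k\}\subseteq[n-1]$.

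By Proposition \ref{prop:inverse to projection}, $Q(\psi_n)\cong P(\psi_n)$. We still need to identify the full vector $(a_I)$ over each vertex. For this we use the standard formula $\psi_n=\sum_{I\ni j,k,\ n\notin I}D_I$ for distinct $j,k\ne n$. Its pair coordinates are exactly $e_{\{j,k\}}$, because the only two-element $I$ containing $j,k$ is $\{j,k\}$ itself. Injectivity of the projection then says this is the unique lift of the vertex.

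\emph{Unimodularity.} The inverse of the projection is an affine map. Each coordinate $a_I$ is a linear combination of the $a_{\{j,k\}}$ with integer coefficients: on vertices $a_I=1$ if and only if $\{j,k\}\subseteq I$, so $a_I=\sum_{e\in\E_I}a_e$. Therefore $Q(\psi_n)$ is lattice-equivalent to the standard simplex. The inverse map sends the lattice in the affine span of $P(\psi_n)$ onto the lattice points of the affine span of $Q(\psi_n)$, because the projection is itself integral. Hence $Q(\psi_n)$ is a unimodular simplex, and the reduction above gives the same conclusion for every $X_{S,i}$.
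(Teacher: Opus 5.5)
Your proposal is correct and follows the paper's proof. In both, the case $\psi_n$ is settled via the Kapranov-basis inequality description: the $|I|=2$ constraints make the rest redundant and give the standard simplex. The vertices are then lifted to $Q$, and the result is transported to $X_{S,i}$ by iterating Proposition \ref{prop:pullback on polytopes}.

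Your extra lattice checks supply the unimodularity that the paper asserts without spelling out. These are the integrality of $\iota$ and $\rho_n$, and the saturation of the pullback image, which holds because the $\pi^*(D_J)$ have disjoint $0/1$ supports. One small correction: each $D_I$ occurs in at most one of the $\pi^*(D_J)$, not exactly one. For example, $I\subseteq S^c$ occurs in none, but disjointness is all your saturation argument needs.
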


\subsubsection{The log-canonical class $\kappa$, spanning trees and Hamiltonian cycles}

Next, we study the log-canonical class 
\begin{equation*}\kappa = K_{\M}+\partial \M,\end{equation*}
a natural ample divisor on $\M$. For clarity, we also write $\kappa^{(n)}$ for the log-canonical class on $\M$. The following theorem relates $\kappa$ to the spanning forest polytopes of $K_n$, defined as convex hulls of the $0/1$-points in $\R^{\E_n}$ corresponding to spanning forests of $K_n$ with a fixed number of connected components. We also refer to the \emph{subtour elimination polytope} (or \emph{Held--Karp polytope}) of $K_n$. This polytope is a well-known relaxation of the Hamiltonian cycle polytope of $K_n$ in combinatorial optimization, and is often used to approximate solutions to the traveling salesman problem \cite{Schrijver03}. This polytope is non-integral in general, and its integer points are $0/1$-points encoding Hamiltonian cycles of $K_n$.

\begin{ThmE}
(Corollary \ref{cor:spanning tree polytope and kappa})
    The following hold:
    \begin{enumerate}
        \item For any integer $t\geq 0$, \begin{equation*}\widetilde{P}(\kappa^{(n+1)}+t\psi_{n+1})\cap \R^{\E_n}_{\geq 0}\end{equation*} is the $(t+1)$-connected component spanning forest polytope of $K_n$. 
        \item The polytope 
        \begin{equation*} \widetilde{P}^+(\kappa^{(n)})\cap \R^{\E_n}_{\geq 0} = \widetilde{P}(\kappa^{(n+1)}-\psi_{n+1})\cap\R^{\E_n}_{\geq 0} \end{equation*}
        is the subtour elimination polytope of $K_n$.
    \end{enumerate}
\end{ThmE}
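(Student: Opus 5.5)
Throughout, $\kappa^{(n+1)}+t\psi_{n+1}$ is a divisor on $\overline{M}_{0,n+1}$, so by Remark \ref{rem:index conventions} the polytope $\widetilde{P}$ lives in $\R^{\E_n}$ and is cut out by Theorem B. The plan is therefore to compute the Kapranov basis coefficients $\alpha$ and $\beta_I$ of $\kappa^{(n+1)}$ and of $\psi_{n+1}$ with respect to the $(n+1)$-st marking. Then we substitute them into Theorem B and compare the resulting inequality systems with the classical descriptions of the spanning forest polytopes and the subtour elimination polytope.

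First, we may use $\psi_{n+1}=X_{[n+1],n+1}$. In its own Kapranov basis this gives $\alpha(\psi_{n+1})=1$ and $\beta_I(\psi_{n+1})=0$. For $\kappa$, we use the standard formula $\kappa=\sum_{I}\bigl(\tfrac{|I||I^c|}{n}-1\bigr)D_I$, written on $\overline{M}_{0,n+1}$ with $n+1$ in place of $n$, or equivalently $\kappa=\sum_i\psi_i-\partial$. We then rewrite it in the Kapranov basis centered at $n+1$. This uses Keel's relations, or the known expansions $\psi_i=\psi_{n+1}+\sum(\dots)D_I$ in that basis, where $\psi_i$ for $i\le n$ is expressed via boundary divisors separating $i$ from $n+1$. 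The expected outcome is
\[
\alpha(\kappa^{(n+1)})=\binom{n}{2}-n+1
\quad\text{and}\quad
\beta_I(\kappa^{(n+1)})=|I|-1-\binom{|I|}{2}.
\]
Indeed, this makes $r_I=|I|-1$ and $r_{[n]}=n-1$ for $\kappa$ alone, which is the spanning tree polytope in Edmonds' description. We check this guess first on small $n$, for example $n+1=4,5$, and then prove it in general by verifying the coefficient identity on each boundary divisor.

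With these values, $\widetilde{P}(\kappa^{(n+1)}+t\psi_{n+1})$ is the set of $x$ satisfying
\[
\sum_{e\in\E_n}x_e=n-1-t,
\qquad
\sum_{e\in\E_I}x_e\le |I|-1 \ \text{ for } 2\le|I|\le n-1.
\]
After intersecting with $x\ge 0$, this is the standard facet description of the convex hull of spanning forests with $n-1-t$ edges, that is, with $t+1$ components. This follows from Edmonds' theorem: the forest polytope $\{x\ge0,\ x(\E_I)\le|I|-1\}$ is integral with vertices the forests. Intersecting it with the hyperplane $x(\E_n)=k$ keeps it integral, because the independence polytope of a matroid truncated by a rank-level hyperplane is the base polytope of the truncated matroid. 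This proves (1). For (2), take $t=-1$. This gives $x(\E_n)=n$ and $x(\E_I)\le|I|-1$ for all proper $I$ with $|I|\ge2$. The first equality in (2) is Theorem C together with Theorem A(2), using $\pi^*\kappa^{(n)}=\kappa^{(n+1)}-\psi_{n+1}$, which is a standard pullback formula. We then show that this system with $x\ge0$ equals the subtour polytope $\{x\ge0,\ x(\delta(v))=2,\ x(\delta(S))\ge2\}$. Summing the bounds gives $x(\E_I)\le|I|-1$ equivalent to $x(\delta(I))\ge 2$ once the degree equalities hold.

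The main obstacle is deriving the degree equalities $x(\delta(v))=2$, since they are not explicitly present. We would take $I=[n]\setminus\{v\}$, which gives $x(\delta(v))\ge n-(n-2)=2$. Summing over $v$ gives $2n=\sum_v x(\delta(v))\ge 2n$, so equality is forced at every $v$. Conversely, the subtour constraints together with $x(\delta(v))=2$ give $2x(\E_I)=2|I|-x(\delta(I))\le 2|I|-2$. The remaining genuinely computational point is the Kapranov expansion of $\kappa$, which we expect to handle by the pullback/inductive structure of Theorem A.
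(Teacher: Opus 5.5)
Your overall route is the paper's. You compute the Kapranov coefficients of $\kappa^{(n+1)}+t\psi_{n+1}$ centered at $n+1$ and feed them into Theorem B to get $r_{[n]}=n-1-t$ and $r_I=|I|-1$. You then compare with Edmonds' description of the $(t+1)$-component forest polytope and with the clique description of $P_{\mathrm{SEP},n}$, and use Theorem C with $\pi^*\kappa^{(n)}=\kappa^{(n+1)}-\psi_{n+1}$ for the first equality in (2). The combinatorial parts are fine. Your truncated-matroid argument correctly justifies the forest-polytope description that the paper simply cites. Your derivation of $x(\delta(v))=2$ from the inequalities for $I=[n]\setminus\{v\}$ is the argument of the paper's appendix lemma.

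The gap is the one step with geometric content, namely the expansion
\[
\kappa^{(n+1)}=\binom{n-1}{2}\psi_{n+1}-\sum_{I\in\II_{n+1},\,|I|\geq 3}\binom{|I|-1}{2}[D_I].
\]
This is what your values $\binom{n}{2}-n+1$ and $|I|-1-\binom{|I|}{2}$ amount to, and they are correct. But you state it as an ``expected outcome'' to be checked on small $n$ and never prove it, and everything downstream rests on it. The route you indicate is also wrong as written. In the basis centered at $n+1$ one has $\psi_i=(n-2)\psi_{n+1}-\sum_{i\in I,\,|I|\geq 3}(|I|-2)[D_I]$ for $i\leq n$, not $\psi_{n+1}+\cdots$. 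Before rewriting the divisors $D_{\{i,j\}}$, the relation is $\psi_i+\psi_{n+1}=\sum_{i\in I\subseteq[n]}[D_I]$, with the opposite sign.

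The missing computation is short. Take the standard formula $\kappa=\sum_{I\not\ni a,b}(|I|-1)[D_I]$ with $a=n$, $b=n+1$, so that $\kappa^{(n+1)}=\sum_{I\subseteq[n-1],\,|I|\geq 2}(|I|-1)[D_I]$. Substitute $[D_e]=\psi_{n+1}-\sum_{I\in\II_{n+1},\,e\subsetneq I}[D_I]$ for each $e\in\E_{n-1}$; this follows from $\psi_{n+1}=\sum_{I\in\II_{n+1},\,e\subseteq I}[D_I]$. The coefficient of $\psi_{n+1}$ is $\binom{n-1}{2}$. For $|I|\geq 3$, the coefficient of $[D_I]$ is $(|I|-1)-\binom{|I|}{2}$ if $n\notin I$ and $-\binom{|I|-1}{2}$ if $n\in I$, and both equal $-\binom{|I|-1}{2}$. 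This is the paper's Lemma \ref{lem:kappa in kapranov basis}, which does the same bookkeeping by averaging the $\binom{n-1}{2}$ boundary expressions of $\psi_{n+1}$.

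Your alternative $\sum_i\psi_i-\partial$ route also works once you use the correct $\psi_i$ expansion above. The $\psi_{n+1}$-coefficient is then $n(n-2)+1-\binom{n}{2}=\binom{n-1}{2}$. The $[D_I]$-coefficient is $-|I|(|I|-2)+\binom{|I|}{2}-1=-\binom{|I|-1}{2}$.
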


The truncation $x_e\geq 0$ in this theorem corresponds to the bound $a_e\leq 1$ on the coefficients of the divisors $D_e = D_{\{i,j\}}$ in the boundary divisor expressions.

For $I\subseteq[n]$, let $K_I$ denote the complete graph on vertex set $I$. Let $k(G)$ denote the number of connected components of a graph $G$. As a consequence of Theorem E, we obtain effective boundary expressions for the log-canonical class indexed by spanning trees and Hamiltonian cycles.  

\begin{ThmF}
    Let $T$ be a spanning tree and $C$ be a Hamiltonian cycle on $K_n$. Then, on $\overline{M}_{0,n+1}$, we have
    \begin{equation*}
        \kappa^{(n+1)}= \sum_{I\in \II_{n+1}} \left( k(T\cap K_I) -1\right)[D_I], \quad \text{(Theorem \ref{thm:tree expression for kappa})}
    \end{equation*}
and on $\M$, we have
    \begin{equation*}
    \kappa^{(n)} = \sum_{I\in \II_n} (k(C\cap K_I)-1)[D_I].  \quad \text{(Theorem \ref{thm:Hamiltonian cycle formula for kappa})}.
    \end{equation*}
\end{ThmF}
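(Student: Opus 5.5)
Both formulas should follow from Theorem E combined with the inverse of the coordinate projection (Theorem A(3)) and Theorem B. The strategy is to take the $0/1$-point $x = \mathbf{1}_T$ (respectively $\mathbf{1}_C$), which by Theorem E lies in $\widetilde{P}(\kappa^{(n+1)})$ (the case $t=0$, spanning tree polytope), respectively in $\widetilde{P}^+(\kappa^{(n)})$ (the subtour elimination polytope). We then reconstruct the full boundary expression, i.e. the unique point of $Q$ lying over it, and show that its coefficients are $k(T\cap K_I)-1$, respectively $k(C\cap K_I)-1$.

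\emph{Tree formula.} Work on $\overline{M}_{0,n+1}$, with edges of $K_n$ as coordinates (Remark \ref{rem:index conventions}).

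1. Since $T$ is a spanning tree, $\mathbf{1}_T\in\widetilde{P}(\kappa^{(n+1)})$ by Theorem E(1) with $t=0$. Hence $a_e = 1-x_e$, i.e. $a_e=0$ for $e\in T$ and $a_e=1$ otherwise, lies in $P(\kappa^{(n+1)})$.

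2. By Theorem A(3) there is a unique $(a_I)_{I\in\II_{n+1}}\in Q(\kappa^{(n+1)})$ over this point. To identify it, I will recall from the proof of Theorem A(3) how $a_I$ for $|I|\ge3$ is recovered from the pair coordinates. Intersecting with test curves, or equivalently the Kapranov-basis comparison behind Theorem B, should give
\[ a_I = \beta_I + \sum_{e\in\E_I} a_e - \sum_{J\subsetneq I,\,|J|\ge 3}(\text{correction}). \]
I expect, however, that the cleanest route is to verify directly that the proposed coefficients give a valid expression.

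3. Concretely, set $c_I = k(T\cap K_I)-1\ge 0$. For $|I|=2$ this equals $1-x_e=a_e$, so the pair coordinates match. It therefore suffices to show that $\sum_I c_I[D_I]=\kappa^{(n+1)}$. By injectivity (Theorem A(3)) and the fact that $Q$ is defined via classes, this reduces to checking that the class is right. For this I would use the Kapranov-basis coefficients: the identity $\sum_{e\in\E_I}x_e\le r_I$ in Theorem B is tight exactly when the induced structure is extremal. The clean formulation is that the unique lift has $a_I=\beta_I+\binom{|I|}{2}-\sum_{J}\dots$. The key computation is that $k(T\cap K_I)-1 = |I|-1-|T\cap \E_I|$ (since $T\cap K_I$ is a forest on $I$), which is linear in $x=\mathbf 1_T$, namely $|I|-1-\sum_{e\in\E_I}x_e$. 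Hence the candidate lift $I\mapsto |I|-1-\sum_{e\in \E_I}x_e$ is an affine function of $x$. I will verify that it is exactly the affine inverse map of Theorem A(3) for $\kappa^{(n+1)}$. To do this, I check that it gives the correct class for every $x$ in the affine span, and for that it suffices to check it at $x$ with the correct sum and then use linearity: the map $x\mapsto\sum_I(|I|-1-\sum_{\E_I}x_e)[D_I]$ must be constant on the hyperplane $\sum x_e = n-1$. I would prove this constancy by showing that moving weight from edge $e$ to edge $f$ changes the class by a Keel relation, namely $\sum_{I\supseteq e}[D_I]-\sum_{I\supseteq f}[D_I]=0$ with $I$ ranging over $\II_{n+1}$, which is the standard equality $\psi_{n+1}=\sum_{I\ni i,j,\;n+1\notin I}D_I$ independent of the pair $\{i,j\}$. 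Finally I evaluate at one convenient tree, e.g. a star, and compare with a known boundary formula for $\kappa$ (such as $\kappa=\sum_I(\tfrac{|I|(n+1-|I|)}{n}-1)D_I$ symmetrized, or via the Kapranov coefficients $\alpha,\beta_I$ that produced Theorem E).

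\emph{Cycle formula.} Repeat the argument on $\M$ using $P^+$ and Theorem C: $\mathbf 1_C\in \widetilde P(\kappa^{(n+1)}-\psi_{n+1})$. For a Hamiltonian cycle, $k(C\cap K_I)=|I|-|C\cap\E_I|$ for proper $I$, which is again affine in $x$. The same Keel-relation argument applies, with the extra $-\psi_{n+1}$ accounting for the shift from $|I|-1$ to $|I|$. I then push down along $\pi$ via Theorem A(2), noting that $k(C\cap K_I)=k(C\cap K_{I^c})$ so that $D_I$ and $D_{[n]\setminus I}$ receive equal coefficients.

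The main obstacle is the base-point check that the affine formula yields exactly $\kappa$ (rather than $\kappa$ plus a multiple of some class); this will require a careful comparison with $\alpha(\kappa)$ and $\beta_I(\kappa)$ from Theorem B.
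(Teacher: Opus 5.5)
Your proof is correct. Its outline matches the paper's: lift $\mathds{1}_T$, resp.\ $\mathds{1}_C$, to $Q$, read off the coefficients, then push the cycle formula down along $\pi$. What differs is how you obtain the lift.

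The paper substitutes $\x=\mathds{1}_T$ into Proposition \ref{prop:P tilde inequalities}(2). By Theorem \ref{thm:kappa + t psi polytope}, its coefficients $r_I(D)-\x(\E_I)$ are $|I|-1-|\E_I(T)|$ for $\kappa^{(n+1)}$.

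You re-derive this special case by hand. Since $\sum_{I\in\II_{n+1},\,e\subseteq I}[D_I]=\psi_{n+1}$ for every edge $e$ (equation \eqref{eqn:psi_wrt_divisors}), the map $F(\x)=\sum_{I\in\II_{n+1}}(|I|-1-\x(\E_I))[D_I]$ satisfies $F(\x)=F(0)-\x(\E_n)\psi_{n+1}$. Evaluating at a star centered at $i$ reproduces equation \eqref{eqn:kappa_1_wrt_divisors} with $j=n+1$ verbatim. Hence $F(\x)=\kappa^{(n+1)}+(n-1-\x(\E_n))\psi_{n+1}$.

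This is the same computation that proves Proposition \ref{prop:inverse to projection}. Your route buys self-containedness: Theorem E is not needed for the identity at all, and effectivity is obvious since $k\geq 1$. It also gives one formula covering trees, forests and cycles at once. The paper's route is a one-line application of its general machinery.

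Three corrections to your write-up:
\begin{enumerate}
\item The ``correction'' terms you contemplate in step 2 do not exist. Proposition \ref{prop:inverse to projection} gives the lift exactly as $a_I=\beta_I+\mathbf{a}(\E_I)$, which is precisely the affine map you then verify.
\item The base-point check you flag as the main obstacle is immediate via the star, as above.
\item For the cycle, the coefficient has the same form $|I|-1-|\E_I(C)|$ as for trees, because $C\cap K_I$ is a linear forest for $I\neq[n]$. It is not a shift from $|I|-1$ to $|I|$. The extra $-\psi_{n+1}$ comes from $|\E(C)|=n$ rather than $n-1$.
\end{enumerate}
The push-down then uses $\kappa^{(n+1)}-\psi_{n+1}=\pi^*\kappa^{(n)}$ (equation \eqref{eqn:kappa_pull_back_formula}) together with Proposition \ref{prop:pullback on polytopes}.
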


\subsubsection{A decomposition of the Held--Karp relaxation polytope of traveling salesman problem}

The log-canonical class on $\M$ has a decomposition into a sum of certain Kapranov classes
\begin{equation*} \kappa = \omega_4+\omega_5+\cdots+\omega_n, \end{equation*}
where $\omega_i = X_{[i],i}$. By Theorem A, there is therefore an inclusion 
\begin{equation*} Q(\kappa) \supseteq Q(\omega_4)+Q(\omega_5)+\cdots+Q(\omega_n). \end{equation*}
This inclusion is strict, in general, meaning that there are effective boundary expressions for $\kappa$ that cannot be decomposed into a sum of such expressions for the $\omega_i$'s. The following result states such decompositions do exist for expressions corresponding to points in the subtour elimination polytope under Theorem E.

\begin{ThmG}
Let $\mathbf{1}\in\R^{\E_n}$ denote the all-ones vector.  Then, we have:
\begin{enumerate}
\item (Theorem \ref{thm:kappa omega decomp}) The cube-truncated polytope of effective boundary expressions of $\kappa$ has
      the Minkowski-like decomposition
      \begin{equation*}
      P^+(\kappa)\cap[0,1]^{\E_n}
      =\left(P^+(\omega_4)+\cdots+P^+(\omega_n)\right)
       \cap[0,1]^{\E_n}.
      \end{equation*}
\item (Corollary
\ref{cor:Held-Karp_Decomposition}) The graph complement $\mathbf{1}-P_{\mathrm{SEP},n}$ of the subtour elimination polytope $P_{\mathrm{SEP},n}$ satisfies the following Minkowski-like decomposition:
\begin{equation*}
\mathbf{1}-P_{\mathrm{SEP},n}= \left( P^+(\omega_4)+P^+(\omega_5)+\cdots+P^+(\omega_n) \right)\cap [0,1]^{\E_n}.
\end{equation*}
Equivalently,
\begin{equation*}
P_{\mathrm{SEP},n}
= \left( \mathbf{1}- \left( P^+(\omega_4)+P^+(\omega_5)+\cdots+P^+(\omega_n) \right) \right) \cap\R_{\geq0}^{\E_n}.
\end{equation*}
\end{enumerate}
\end{ThmG}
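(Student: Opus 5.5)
The inclusion ``$\supseteq$'' in (1) is formal, the reverse inclusion is the real content, and (2) follows from (1) by a change of coordinates. For ``$\supseteq$'', Theorem A(1) together with $\kappa=\omega_4+\cdots+\omega_n$ gives $Q(\kappa)\supseteq Q(\omega_4)+\cdots+Q(\omega_n)$. Projecting linearly onto the $\E_n$-coordinates commutes with Minkowski sums, so $P^+(\kappa)\supseteq\sum_i P^+(\omega_i)$. Intersecting both sides with $[0,1]^{\E_n}$ gives this inclusion.

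For (2), I apply $x_e=1-a_e$ to (1) and use Theorem E(2), which identifies $\widetilde P^+(\kappa)\cap\R^{\E_n}_{\geq0}$ with $P_{\mathrm{SEP},n}$. The cube truncation $0\le a_e\le 1$ becomes $0\le x_e\le 1$, and the SEP constraints already force $x_e\le1$. So it remains to prove ``$\subseteq$'' in (1), which I would do by induction on $n$. The base case $n=4$ reduces to $\kappa=\omega_4$.

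The first step is to compute $P^+(\omega_n)$ explicitly from Theorem D and Corollary \ref{cor:Kapranov P plus}. For the vertex indexed by $\{j,k\}\subseteq[n-1]$, the expression $\sum D_I$ (over $n\notin I\ni j,k$) has coefficient $1$ on $D_{\{j,k\}}$ and $0$ on all other $D_{\{p,q\}}$ with $p,q<n$. Since $D_{\{l,n\}}=D_{[n]\setminus\{l,n\}}$, it has coefficient $1$ on $D_{\{l,n\}}$ exactly when $l\ne j,k$. Hence in $x$-coordinates a point $\sum\lambda_{jk}v_{jk}$ of $P^+(\omega_n)$, with $\lambda\ge0$ and $\sum\lambda_{jk}=1$, has $x_{ln}=\sum_{j}\lambda_{lj}$ and $x_{pq}=1-\lambda_{pq}$ for $p,q<n$.

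Next I need $P^+(\omega_i)$ for $i<n$ in $\E_n$-coordinates. Since $\omega_i$ on $\M$ is the pullback of $\omega_i$ from $\overline M_{0,n-1}$, the bijection in Theorem A(2) sends $D_{\{p,q\}}\mapsto D_{\{p,q\}}+D_{\{p,q,n\}}$. So these contributions lie in $\{a_{ln}=0\}$ and restrict to $P^+$ of $\omega_i$ on $\overline M_{0,n-1}$, indexed by $\E_{n-1}$.

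The inductive step is then a splitting-off statement. Take $x\in P_{\mathrm{SEP},n}$, so that $\sum_l x_{ln}=2$. I must find $\lambda\ge0$ on pairs of $[n-1]$ with marginals $\sum_j\lambda_{lj}=x_{ln}$ such that $x'=x|_{\E_{n-1}}+\lambda$ lies in $P_{\mathrm{SEP},n-1}$. Given such $\lambda$, I write $a=(1-x)$ as the $P^+(\omega_n)$-point determined by $\lambda$ plus the pullback of $1-x'$. By induction and Theorem E(2) on $n-1$ points, $1-x'\in\big(P^+(\omega_4)+\cdots+P^+(\omega_{n-1})\big)\cap[0,1]^{\E_{n-1}}$, which finishes the step.

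The condition $x'\in P_{\mathrm{SEP},n-1}$ means the degrees stay $2$, which is automatic from the marginals, and every cut of $K_{n-1}$ keeps $x'$-weight at least $2$. The bound $x'_e\le1$ then follows from the cut condition applied to $\{p,q\}$. This is a fractional, complete splitting-off at a vertex of degree $2$ that preserves global $2$-edge-connectivity. This is the main obstacle. I would first try to deduce it from Lovász's splitting theorem (even degree, no cut edge at $n$) after scaling $x$ to a rational, even-integral capacity, taking limits if necessary. Failing that, I would argue directly: choose $\lambda$ greedily pair by pair, maximal subject to keeping all cuts $\ge2$. A tight-set uncrossing argument would then show that if the remaining mass at $n$ is positive, the sets blocking further splitting cannot cover all remaining neighbours of $n$.
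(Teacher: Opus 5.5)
Your proposal is correct and follows essentially the same route as the paper. The paper also proves the reverse inclusion by scaling a rational point $x\in P_{\mathrm{SEP},n}$ to an integral multigraph with all degrees $2\mu$ and applying Lov\'asz's splitting-off theorem $\mu$ times at vertex $n$ with $\lambda=2\mu$. It reads off the split pairs as a convex combination of vertices of $P^+(\psi_n)=P^+(\omega_n)$ and the residual graph as a point of $P_{\mathrm{SEP},n-1}$, then inducts via $\kappa^{(n)}-\psi_n=\pi^*\kappa^{(n-1)}$ and extension by zero.

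Your first option therefore works directly, and no uncrossing fallback is needed. The only extra check is that no split pairs two parallel copies of $\{l,n\}$; Lov\'asz's conclusion rules this out, since such a split would drop $\vert\delta(\{l\})\vert$ from $2\mu$ to $2\mu-2$.
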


The proof of part (2) uses Lov\'asz's Splitting-off Theorem \cite{Lovasz1976}. Under the identification of Theorem E, the $0/1$-points of $P^+(\kappa)\cap[0,1]^{\E_n}$ are graph complements of Hamiltonian cycles in $K_n$. On these points, the decomposition in Theorem G has a combinatorial interpretation that successively splitting-off vertices of the Hamiltonian cycle records the corresponding simplex summands $P^+(\omega_i)$. This splitting-off operation gives a description of the Held--Karp relaxation polytope as the nonnegative orthant truncation of a Minkowski sum of simplices. To our knowledge, such a description of this polytope was not known previously.

\subsubsection{Level-one $\mathfrak{sl}_p$ conformal block divisors and Tur\'an graphs }

Conformal blocks are vector bundles on moduli spaces $\overline{M}_{g,n}$ of stable $n$-pointed curves of genus $g$, that are attached to a simple Lie algebra, a level, and a tuple of dominant weights. Their first Chern classes are called \emph{conformal block divisors} \cite{Fakhruddin12}. We focus on the level-one conformal block divisors $D^{\mathfrak{sl}_p}_{1,(1^n,0)}$ on $\overline{M}_{0,n+1}$ \cite{F11,GiansiracusaGibney12}.

On the graph-theoretic side, the Tur\'an number $\operatorname{ex}(m,K_{p+1})$ is the maximum number of edges in a graph on $m$ vertices that does not contain a subgraph isomorphic to $K_{p+1}$. Tur\'an's theorem states that equality is attained precisely by complete $p$-partite graphs whose part sizes differ by at most one \cite{Turan41,Turan54}. Our next theorem shows that the inequalities for the polytope of  effective boundary representations for $D^{\mathfrak{sl}_p}_{1,(1^n,0)}$ are exactly these local Tur\'an bounds.

\begin{ThmH}
Let $n\geq4$ and  $2\leq p \leq n$ be integers with $p\mid n$, and let $D^{\mathfrak{sl}_p}_{1,(1^n,0)}$ denote the level-one $\mathfrak{sl}_p$ conformal block divisor on $\overline{M}_{0,n+1}$ attached to the weight vector $(1^n,0)$.
Then:
\begin{enumerate}
\item (Proposition \ref{prop:P_tilde_sl_p_weights_all_1s}) The polytope of effective boundary expressions of
$D^{\mathfrak{sl}_p}_{1,(1^n,0)}$ is, in the complemented coordinates,
\begin{equation*}
  \widetilde{P}\left(D_{1,(1^n,0)}^{\mathfrak{sl}_p}\right)
  =\left\{ \x\in \R^{\E_{n}} \, \middle \vert \, 
  \sum_{e\in\E_n}x_e = \operatorname{ex}(n,K_{p+1}),\ \
  \sum_{e\in\E_I}x_e\leq \operatorname{ex}( \vert I \vert ,K_{p+1})
  \ \text{ for all } I\in\II_{n+1} \right\},
\end{equation*}
where $\mathrm{ex}(m,K_{p+1})$ is the Tur\'an number.
\item (Corollary \ref{cor:01points_of_slp_conformal}) The $0/1$-points of $\widetilde{P}\left(D_{1,(1^n,0)}^{\mathfrak{sl}_p}\right)$
are exactly the incidence vectors of the balanced Tur\'an graphs
\begin{equation*}
T_{A_1,\ldots,A_p}=K_{A_1,\ldots,A_p}\cong K_{\frac{n}{p},\ldots,\frac{n}{p}},
   \qquad [n]=A_1\sqcup\cdots\sqcup A_p.
\end{equation*}
\item (Corollary \ref{cor:slp_expressions_from_Turan_graphs}) Fix such a balanced Tur\'an graph $T_{A_1,\ldots,A_p}$.  For $I\in\II_{n+1}$ put
\begin{equation*}
\sigma_i(I)\coloneqq   \vert A_i\cap I  \vert \quad \text{for }1\leq i\leq p,
\end{equation*}
and let $\tau_1(I),\ldots,\tau_p(I)$ be the part sizes of the Tur\'an graph $T( \vert I  \vert ,p)$, that
is, equal to either $\lceil \vert I  \vert /p\rceil$ or $\lfloor \vert I  \vert /p\rfloor$.  Then, the corresponding
effective boundary expression is
\begin{equation*}
  D_{1,(1^n,0)}^{\mathfrak{sl}_p}=\sum_{I \in \II_{n+1}}c_I\left(T_{A_1,\ldots,A_p}\right)\left[D_I \right],
  \qquad
  c_I\left(T_{A_1,\ldots,A_p}\right)=\sum_{i=1}^p\left(\binom{\sigma_i(I)}{2}-\binom{\tau_i(I)}{2}\right).
\end{equation*}
\end{enumerate}
\end{ThmH}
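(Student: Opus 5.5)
The plan is to derive all three parts from Theorem B (in the indexing of Remark \ref{rem:index conventions}, with the divisor on $\overline{M}_{0,n+1}$). For part (1) we need the Kapranov basis coefficients of $D=D^{\mathfrak{sl}_p}_{1,(1^n,0)}$, centered at the $(n+1)$-st marking. We then show that
\begin{equation*}
r_{[n]}(D)=\binom{n}{2}-\alpha(D)=\operatorname{ex}(n,K_{p+1}),\qquad r_I(D)=\binom{|I|}{2}+\beta_I(D)=\operatorname{ex}(|I|,K_{p+1}).
\end{equation*}
Equivalently, the claims are that $\alpha(D)=\sum_{i}\binom{\tau_i([n])}{2}$ and $\beta_I(D)=-\sum_i\binom{\tau_i(I)}{2}$. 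The second form uses $\binom{m}{2}-\operatorname{ex}(m,K_{p+1})=\sum_i\binom{\tau_i}{2}$, which holds because the Tur\'an graph misses exactly the edges inside its parts.

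To compute $\alpha$ and $\beta_I$, I would use the known formula for level-one $\mathfrak{sl}_p$ conformal block divisors of Fakhruddin and Giansiracusa--Gibney. Its degree on an F-curve, or equivalently its boundary expression, is given in terms of $c(m)=\frac{m(p-m)}{2p}$-type functions of the weight sums $m=|J|\bmod p$. Equivalently, it is a combination of $\psi$-classes and boundary divisors with coefficients depending on $|J|\bmod p$. I would then convert this into the Kapranov basis by expanding $\psi_j$ and $D_J$ (with $n+1\in J$) via Keel's relations. An alternative route is to pair both sides against the dual basis of curves and test on F-curves. Here the intersection numbers of $\psi_{n+1}$ and $D_I$ with F-curves are explicit, so the coefficients are determined by a triangular system indexed by $I$ ordered by size. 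The key identity to verify is that the coefficient depends only on $|I|$ and equals $-\sum_i\binom{\tau_i(|I|)}{2}$. I expect a telescoping check of the form $f(m)=\sum\binom{\tau_i(m)}2$, satisfying $f(m+1)-f(m)=\lfloor m/p\rfloor$, to match the conformal block F-curve degrees. This coefficient computation is the main obstacle, and I would first try it by induction on $|I|$ using the triangular F-curve system. The hypothesis $p\mid n$ enters through $\alpha$: at the full set $[n]$ the weight $n+1$ is zero and the total weight is divisible by $p$, so the corresponding term vanishes.

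For part (2), let $x$ be a $0/1$-point, corresponding to a graph $G$ on $[n]$. The conditions say that $G$ has exactly $\operatorname{ex}(n,K_{p+1})$ edges, and that every induced subgraph on $I$ with $2\le|I|\le n-1$ has at most $\operatorname{ex}(|I|,K_{p+1})$ edges. Taking $|I|=p+1\le n-1$ (when $p<n$) shows that $G$ is $K_{p+1}$-free. The uniqueness part of Tur\'an's theorem then forces $G\cong T(n,p)$, which is balanced since $p\mid n$. The case $p=n$ is degenerate: there $G=K_n$, which is the Tur\'an graph with singleton parts, and one checks it directly. Conversely, every Tur\'an graph $T_{A_1,\ldots,A_p}$ satisfies all the constraints. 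Indeed, its restriction to $I$ is complete $p$-partite on $I$, hence $K_{p+1}$-free, and so has at most $\operatorname{ex}(|I|,K_{p+1})$ edges.

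For part (3), we use that $a_e=1-x_e$ equals $1$ exactly on the edges inside the parts $A_i$. By Theorem A(3) this determines the whole expression, and the full coefficient vector is recovered via the inequalities of Theorem B. The slack $c_I=\beta_I(D)+\sum_{e\in\E_I}a_e$ equals $-\sum_i\binom{\tau_i(I)}2+\sum_i\binom{\sigma_i(I)}2$. I would confirm that these slacks are precisely the boundary coefficients $a_I$, using the relationship between the $P$ coordinates and $Q$ established in Corollary \ref{cor:P(D) inequalities}.
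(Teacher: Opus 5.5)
Your plan is the paper's proof. The paper substitutes the Kapranov expansions of $\psi_i$ and $[D_e]$ (Lemmas~\ref{lem:psi_i_in_kapranov} and~\ref{lem:Djk_basis_decomp}) into the $\psi$/boundary formula of Lemma~\ref{lem:slp_conformal_divisor_in_kapranov}, then uses Tur\'an's theorem for (2) and the slack $r_I-\x(\E_I)$ for (3). The step you call the main obstacle needs no F-curve induction: with $\theta(\{i\})=p-1$ and $\theta(e)=2(p-2)$, that substitution gives directly $\beta_I=-\frac{1}{2p}\bigl(\langle m\rangle_p(p-\langle m\rangle_p)+m(m-p)\bigr)=-\sum_i\binom{\tau_i(I)}{2}$ for $m=|I|$, and $\alpha=\frac{n(n-p)}{2p}=p\binom{n/p}{2}$.

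Your separate handling of $p=n$ in (2) is a small improvement on the paper. In that case no $I\in\II_{n+1}$ has $|I|=p+1$, and the paper's appeal to Tur\'an's theorem, which it states only for $p<m$, does not literally apply.
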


Part (3) of Theorem H gives an explicit family of effective boundary expressions indexed by balanced $p$-partitions of the markings. To the best of our knowledge, these boundary expressions are new, even though these conformal block divisor classes themselves are well-known.

Finally, we recall two matching polytopes.  Assume that $n$ is even.  A \emph{perfect matching} of $K_n$ is a collection of $n/2$ pairwise disjoint edges covering every vertex, and the perfect matching polytope $P_{\mathrm{PM},n}$ is the convex hull of their incidence vectors. The \emph{fractional perfect matching polytope} $P_{\mathrm{FPM},n}$ is a relaxation of the perfect matching polytope $P_{\mathrm{PM},n}$. One has $P_{\mathrm{PM},n}\subseteq P_{\mathrm{FPM},n}$, and their $0/1$ points are precisely the perfect matchings. Unlike the perfect mathcing polytope $P_{\mathrm{PM},n}$, the fractional polytope $P_{\mathrm{FPM},n}$ can have nonintegral vertices \cite{Balinski65,Ed65,Schrijver03}. The following result shows that these polytopes naturally emerge in the geometry of $\mathfrak{sl}_2$ level-one conformal block divisors.

\begin{ThmK}
Let $n\geq4$ be an even integer. Then, for the divisor $D_{1,(1^n,0)}^{\mathfrak{sl}_2}$ on $\overline{M}_{0,n+1}$ we have:
\begin{enumerate}
\item (Theorem \ref{thm:perfect_matching_and_sl2}) The nonnegative truncation of $\widetilde{P}\left(2D_{1,(1^n,0)}^{\mathfrak{sl}_2}\right)$
is the perfect matching polytope of $K_n$:
\begin{equation*}
  \widetilde{P}\left(2D_{1,(1^n,0)}^{\mathfrak{sl}_2}\right)\cap\R^{\E_n}_{\geq0}
  =P_{\mathrm{PM},n}.
\end{equation*}
\item (Theorem \ref{thm:fractional_perfect_and_sl2}) The polytope of effective boundary expressions of
$D_{1,(1^n,0)}^{\mathfrak{sl}_{n/2}}$ is the fractional perfect matching polytope of
$K_n$:
\begin{equation*}
  P\left(D_{1,(1^n,0)}^{\mathfrak{sl}_{\frac{n}{2}}}\right)=P_{\mathrm{FPM},n}.
\end{equation*}
\end{enumerate}
\end{ThmK}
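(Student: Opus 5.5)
Both parts follow from Theorem H(1), which describes $\widetilde{P}(D^{\mathfrak{sl}_p}_{1,(1^n,0)})$ by local Tur\'an bounds. The plan is to evaluate $\operatorname{ex}(m,K_{p+1})$ explicitly in the two cases, translate the bounds into the right coordinates, and compare the resulting inequalities with the standard descriptions of $P_{\mathrm{FPM},n}$ and $P_{\mathrm{PM},n}$. The one non-formal idea is to use the constraints for the sets $I=[n]\setminus\{v\}$, which lie in $\II_{n+1}$, to force the vertex-degree equalities.

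\emph{Part (2), $p=n/2$.} Compute $\operatorname{ex}(m,K_{p+1})$ for $2\le m\le n$.
\begin{itemize}
\item If $m\le p$, the Tur\'an graph is $K_m$, so $\operatorname{ex}=\binom m2$.
\item If $m=p+s$ with $0\le s\le p$, the Tur\'an graph has $s$ parts of size $2$ and $p-s$ parts of size $1$. Hence $\operatorname{ex}=\binom m2-(m-p)$.
\end{itemize}
Passing to $a_e=1-x_e$, Theorem H(1) says that $P(D)$ is cut out by
\[
\sum_{e\in\E_n}a_e=\tfrac n2 \qquad\text{and}\qquad \sum_{e\in\E_I}a_e\ge\max\bigl(0,\ \vert I\vert -\tfrac n2\bigr)\ \text{ for all } I\in\II_{n+1}.
\]
Write $d_v=\sum_{e\ni v}a_e$ for the weighted degree and $\delta(I)$ for the total weight of edges leaving $I$.

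For the inclusion $P(D)\subseteq P_{\mathrm{FPM},n}$:
\begin{itemize}
\item The sets with $\vert I\vert=2$ give $a_e\ge0$, using $n\ge4$.
\item The sets $I=[n]\setminus\{v\}$ give $\sum_{\E_I}a_e\ge \tfrac n2-1$. Since $d_v=\tfrac n2-\sum_{\E_I}a_e$, this means $d_v\le1$.
\item Since $\sum_v d_v=2\cdot\tfrac n2=n$, every $d_v$ equals $1$.
\end{itemize}
This is exactly the description of $P_{\mathrm{FPM},n}$.

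For the reverse inclusion, take $a\in P_{\mathrm{FPM},n}$.
\begin{itemize}
\item We have $\sum_{\E_I}a_e=(\vert I\vert-\delta(I))/2$.
\item Also $\delta(I)\le\sum_{v\notin I}d_v=n-\vert I\vert$, which gives $\sum_{\E_I}a_e\ge \vert I\vert-\tfrac n2$.
\item Nonnegativity of $a$ gives $\sum_{\E_I}a_e\ge0$, and the total sum is $n/2$.
\end{itemize}

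\emph{Part (1), $p=2$.} Here $\operatorname{ex}(m,K_3)=\lfloor m^2/4\rfloor$. By Theorem A(1), $Q(2D)=2Q(D)$. In complemented coordinates this reads $\widetilde{P}(2D)=\mathbf 1-2(\mathbf 1-\widetilde{P}(D))$, so the right-hand sides become $2\lfloor m^2/4\rfloor-\binom m2$. A short parity check shows this equals $\lfloor m/2\rfloor$ (it is $k$ both for $m=2k$ and for $m=2k+1$). Hence
\[
\widetilde{P}(2D)\cap\R^{\E_n}_{\ge0}=\Bigl\{x\ge0 \;\Big\vert\; x(\E_n)=\tfrac n2,\ \ x(\E_I)\le\lfloor \vert I\vert/2\rfloor \text{ for all } I\in\II_{n+1}\Bigr\}.
\]
We compare this with Edmonds' description of $P_{\mathrm{PM},n}$: $x\ge0$, $x(\delta(v))=1$ for every $v$, and $x(\E_I)\le(\vert I\vert-1)/2$ for odd $I$.

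For the inclusion of our set into $P_{\mathrm{PM},n}$:
\begin{itemize}
\item As before, $I=[n]\setminus\{v\}$ gives $x(\E_I)\le \tfrac n2-1$, so $x(\delta(v))\ge1$.
\item Since the degrees sum to $n$, all degrees equal $1$.
\item The odd-set inequalities are among our constraints, for $3\le\vert I\vert\le n-1$.
\end{itemize}

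For the reverse inclusion:
\begin{itemize}
\item Edmonds' inequalities give $x(\E_I)\le\lfloor \vert I\vert/2\rfloor$ for odd $I$.
\item For even $I$, the bound follows from $2x(\E_I)\le\sum_{v\in I}x(\delta(v))=\vert I\vert$.
\item The total is $x(\E_n)=n/2$.
\end{itemize}

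\emph{Main obstacle.} The only delicate points are bookkeeping. First, the scaling of the complemented coordinates under $D\mapsto2D$ must be done correctly, and the resulting right-hand sides $\lfloor m/2\rfloor$ checked. Second, one must confirm that the index range $\II_{n+1}$ (sets with $2\le\vert I\vert\le n-1$) contains both the pairs and the complements of single vertices. The rest is the standard degree-counting argument above.
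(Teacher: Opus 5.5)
Your proposal is correct and follows essentially the same route as the paper. You evaluate the local Tur\'an bounds from Theorem H(1): for $p=n/2$ they give $\max\{0,\vert I\vert-\tfrac{n}{2}\}$ in the $a$-coordinates, and for $p=2$, after the rescaling $x\mapsto 2x-1$, they give $\lfloor \vert I\vert/2\rfloor$. You then match these clique descriptions to the standard ones by the same degree-counting argument, using $I=[n]\setminus\{v\}$ and the handshake identity, that the paper places in its appendix lemmas; the only difference is that you inline those arguments instead of citing them.
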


\subsection{Acknowledgments} We thank David Anderson, Karthekeyan Chandrasekaran, Chandra Chekuri, June Huh, David Jensen, Siddarth Kannan, Matt Larson and Rob Silversmith for useful discussions. D.\,G. is supported by an AMS--Simons Travel Grant.

\subsection{AI disclosure}

The authors used generative AI tools, including Claude and ChatGPT, for literature discovery, exploratory computations in small cases, and occasional language and grammar editing. The authors take full responsibility for all the content in this work.

\section{Background}\label{sec:background}

This section collects some of the geometric and combinatorial background used throughout the paper. We first recall the divisor theory of $\M$, including boundary divisors,
tautological classes, forgetful morphisms, and the Kapranov basis.  We then fix
graph theoretic notation and review the linear programming polytopes that arise
from our descriptions of effective boundary expressions. The remaining background material will be provided as needed throughout the paper.

\subsection{Background on $\M$}

We recall only the facts about $\M$ that are needed for the construction and analysis of effective boundary expression polytopes.

\subsubsection{Divisors on $\M$}
Throughout the paper, $n\geq 4$ and we work over $\mathbb{C}$, and all divisor classes are taken with real
coefficients. We write $\Cl(\M)_{\R}=\Cl(\M)\otimes_{\Z}\R$. Since $\M$ is smooth \cite{Knudsen83II}, we have $\Cl(\M)=\Pic(\M)$. Moreover, $\Pic(\M)$ is free of rank
\begin{equation*}
2^{n-1}-\binom{n}{2}-1.
\end{equation*}
The cycle class map induces an isomorphism $\Pic(\M)_{\Q} \cong H^2(\M,\Q)$ \cite{Keel}. Equivalently, after tensoring with $\R$, we identify
\begin{equation*}
\Cl(\M)_{\R}=\Pic(\M)_{\R}\cong H^2(\M,\R)\cong N^1(\M)_{\R}.
\end{equation*}
Thus, throughout the paper, we work with divisor classes in
$\Cl(\M)_{\R}$, write $[D]$ for the class of a divisor expression $D$,
and pass between divisor classes, cohomology classes, and numerical divisor
classes without further warning.

Boundary divisors on $\M$ are indexed by partitions $I\sqcup I^c = [n]$ with $ \vert I  \vert , \vert I^c  \vert \geq 2$. The corresponding boundary divisor is denoted by either $D_I$ or $D_{I^c}$. We define two indexing sets for boundary divisors on $\M$,
\begin{equation*}
\II_n \coloneqq   \{ I\,  \vert \, I\subseteq [n-1], 2\leq \vert I  \vert \leq n-2 \}\quad \text{and}\quad \II_n^+ \coloneqq   \{ I\,  \vert \, I\subseteq [n], 2\leq \vert I  \vert \leq n-2 \}.
\end{equation*}

The set $\II_n$ indexes boundary divisors on $\M$ irredundantly by always using the subset not containing $n$. The set $\II_n^+$ indexes the boundary divisors on $\M$ redundantly, but is more convenient for writing certain formulas.

The moduli space $\overline{M}_{0,n}$ has a universal curve
\begin{equation*}
\epsilon:\overline{U}_{0,n}\longrightarrow \overline{M}_{0,n}.
\end{equation*}
Let $\omega_{\epsilon}$ denote the relative dualizing sheaf, and let 
$\sigma_i:\overline{M}_{0,n}\longrightarrow \overline{U}_{0,n}$ denote the $i^{\text{th}}$ 
tautological section arising from the $i^{\text{th}}$ marked point. For $i\in [n]$, the 
\emph{$i^{\text{th}}$ tautological line bundle} $\mathcal{L}_i$ is then defined by the 
pullback
\begin{equation*}
\mathcal{L}_i\coloneqq   \sigma_i^*\omega_{\epsilon},
\end{equation*}
and the \emph{$i^{\text{th}}$ psi-class} (also called the \emph{$i^{\text{th}}$ 
tautological class}) is its first Chern class
\begin{equation*}
\psi_i\coloneqq   c_1(\mathcal{L}_i).
\end{equation*}

Let $S\subseteq [n]$ with $ \vert S \vert \geq 3$. Let $\overline{M}_{0,S}$ denote $\overline{M}_{0, \vert S \vert }$ where the points are labeled by elements of  $S$, and let
\begin{equation*}
\pi_{S^{c}}: \overline{M}_{0,n}\longrightarrow \overline{M}_{0,S}
\end{equation*}
be the forgetful map forgetting the marked points in $S^{c}\coloneqq   [n]
\setminus S$. For $i\in S$, define\footnote{The intersection numbers
    $\displaystyle \int_{\overline{M}_{0,n}} X_{S_1, i_1} \cdots X_{S_{n-3}, i_{n-3}}$
are called \emph{Kapranov degrees} in \cite{BELL}. We choose the terminology \emph{Kapranov class} to match theirs.} the \emph{Kapranov class} $X_{S,i}$ associated to the pair $(S,i)$ to be the pullback of $\psi_i$ on $\overline{M}_{0,n}$ by $\pi_{S^{c}}$:
\begin{equation*}
X_{S,i}\coloneqq   \pi_{S^{c}}^{*}(\psi_i).
\end{equation*}
A special family of these classes is the \emph{omega classes}. The $i^{\text{th}}$ \emph{omega class} is defined as
\begin{equation*}
\omega_i \coloneqq   X_{[i],i}.
\end{equation*}
These classes arise naturally in connection with the Keel-Tevelev embedding \cite{KT} of $\overline{M}_{0,n}$ into $\mathbb{P}^{1}\times \cdots \times\mathbb{P}^{n-3}$, and their top intersection numbers compute the multidegrees of this embedding \cite{CGM}. These intersection numbers admit combinatorial descriptions in terms of column-restricted parking functions \cite{CGM} and lazy tournaments \cite{GGL2}.  More generally, products of psi and omega classes admit positive, multiplicity-free expressions in boundary strata via slide rules \cite{GGL}.  The omega-class intersection numbers are also special cases of the more general Kapranov degrees studied in \cite{BELL}.
\begin{rem}
Throughout, for $m\in[n]$ we write $\pi_m$ for the forgetful map dropping the marked
point labelled $m$. Thus, $\pi_{n+1} \colon \overline{M}_{0,n+1}\to\M$ and
$\pi_{n} \colon \M\to\overline{M}_{0,n-1}$, and $\pi_m=\pi_{\{m\}}$ in the notation
$\pi_{S^c}$ introduced above.
\end{rem}

Let $\pi_{n+1}:\overline{M}_{0,n+1}\longrightarrow \overline{M}_{0,n}$ be the forgetful map forgetting the last marked point. The \emph{first kappa class} on $\overline{M}_{0,n}$ is defined by
\begin{equation*}
\kappa_1={\pi_{n+1}}_{\star}(\psi_{n+1}^{2})\in H^{2}(\overline{M}_{0,n},\mathbb{Q}).
\end{equation*}

\begin{rem}\label{rem:kappa_1_and_kappa_same}
The class $\kappa_1$ on $\M$ matches with the log-canonical class $\kappa=K_{\M}+\partial \M$. For this reason, in the rest of the paper we will refer to it as $\kappa$ or $\kappa^{(n)}$ if we would like to emphasize that it is a divisor class on $\overline{M}_{0,n}$.  
\end{rem}

\subsubsection{Relations among the divisors of $\M$}
The linear relations among the boundary divisors are given by the following relations, known as Keel's relations \cite{Keel}.

\begin{lem}[\cite{Keel}]\label{lem:Keels_quadruple_relations}
The relations among the linear equivalence classes of boundary divisors on $\M$ are generated by
\begin{equation*}
\sum_{\substack{i,j \in I \\ k,l \notin I}} \left[ D_I\right ]
=
\sum_{\substack{i,k \in I \\ j,l \notin I}} \left[ D_I\right]
=
\sum_{\substack{i,l \in I \\ j,k \notin I}} \left [ D_I \right]
\end{equation*}
for any four distinct elements $ i, j, k, l \in \{1, 2, \ldots, n\}$. All sums here are over $I\in \II_n^+$ satisfying the stated containment conditions.
\end{lem}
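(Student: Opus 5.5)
Since this statement is Keel's theorem, which the paper cites rather than proves, I will outline the standard argument in three parts. First, I will show that the displayed relations hold. Second, I will show that the boundary classes span $\Pic(\M)$. Third, I will show that the displayed relations generate all relations, by comparing ranks.

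\textbf{Validity.} Fix distinct $i,j,k,l$. The forgetful map $\pi\colon \M\to \overline{M}_{0,\{i,j,k,l\}}\cong\mathbb{P}^1$ pulls back points to linearly equivalent divisors. The three boundary points of $\overline{M}_{0,4}$ are $D_{ij|kl}$, $D_{ik|jl}$ and $D_{il|jk}$, and any two points of $\mathbb{P}^1$ are linearly equivalent. Pulling back, $\pi^*[D_{ij|kl}]$ is the reduced sum of those $D_I$ with $i,j\in I$ and $k,l\notin I$. Reducedness holds because the map is flat with reduced fibres over the boundary, or alternatively by checking multiplicity one at a general point of each $D_I$ using local coordinates. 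Equating the three pullbacks gives the displayed identities.

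\textbf{Spanning.} I will use the Kapranov model, in which $\M$ is the iterated blowup of $\mathbb{P}^{n-3}$ along the linear spans of $n-1$ general points, in increasing dimension. Then $\Pic(\M)$ is freely generated by the pullback hyperplane class $H=\psi_n$ and the exceptional divisors $E_I$, one for each $I\subseteq[n-1]$ with $1\le|I|\le n-4$. These exceptional divisors are the boundary divisors $D_{I\cup\{n\}}$. A hyperplane through $n-3$ of the points is the strict transform of some $D_I$ with $|I|=2$, plus a combination of exceptional divisors. Hence $H$ lies in the span of the boundary classes, and the boundary classes span $\Pic(\M)$, which has rank $2^{n-1}-\binom n2-1$.

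\textbf{Completeness.} The free $\R$-space on the boundary divisors has dimension $2^{n-1}-n-1$. So the kernel of the map to $\Pic(\M)_\R$ has dimension $\binom n2-n=\binom{n-1}2-1$, and it suffices to show that the displayed relations span a subspace of at least this dimension. I would prove the equivalent statement that the relations let one rewrite any boundary expression into a normal form with $\binom{n-1}{2}-1$ fewer free coefficients. Concretely, I would use the relations with $l=n$ fixed to express each $D_{\{a,b\}}$ in terms of a fixed $D_{\{a_0,b_0\}}$ plus divisors $D_I$ with $|I|\ge3$. This would show that the quotient is spanned by $2^{n-1}-\binom n2-1$ classes, one for each Kapranov generator. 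Equality of dimensions with the rank computed above then forces the relations to generate the full kernel.

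I expect this last step, independence of the normal-form classes, to be the main obstacle. The plan is to get it for free from the Kapranov basis, since that basis is already known to be free of the right rank. The difficulty then shifts to the bookkeeping that the chosen relations actually reach the normal form: one must check that the elimination terminates, that is, that replacing a size-two index never reintroduces one already eliminated. A cleaner alternative would be induction on $n$ via $\pi_{n+1}$, using that $\Pic(\overline{M}_{0,n+1})$ is generated by $\pi^*\Pic(\M)$, the sections $D_{\{i,n+1\}}$, and $\psi_{n+1}$.
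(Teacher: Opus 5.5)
Your proof is correct. The paper gives no proof of this lemma and simply cites \cite{Keel}, and your route differs from Keel's. Keel realizes $\overline{M}_{0,n+1}$ as an iterated blowup of $\overline{M}_{0,n}\times\overline{M}_{0,4}$ along smooth codimension-two centers and computes the whole Chow ring by induction. The four-point relations then appear, integrally, as the degree-one part of a full presentation that also contains the quadratic relations $D_ID_J=0$ for crossing $I,J$. You instead use Kapranov's model only to get the rank $2^{n-1}-\binom n2-1$ and the spanning property, and finish with a dimension count. This is much shorter for the divisor statement. The cost is that it says nothing about products and relies on Kapranov's theorem as an external input.

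The step you flag as the main obstacle is not really one.
\begin{itemize}
\item \textbf{Independence is automatic.} Let $K$ be the span of the Keel vectors. Validity gives $K\subseteq\W_n$ and spanning gives $\V_n\to\Cl(\M)_{\R}$ surjective, so $\V_n/K$ surjects onto $\Cl(\M)_{\R}$. Any spanning set of $\V_n/K$ whose size equals the rank is therefore a basis.
\item \textbf{Elimination takes one step per edge.} In $\II_n$-indexing, $v_{a,b,c,n}$ has exactly two pair coordinates: $+1$ on $D_{\{a,b\}}$ and $-1$ on $D_{\{a,c\}}$. The term $D_{\{c,n\}}$ equals $D_{[n-1]\setminus\{c\}}$, which is a Kapranov generator.
\end{itemize}
This is exactly the computation $\rho_n(v_{a,b,c,n})=D_{\{a,b\}}-D_{\{a,c\}}$ in the proof of Lemma \ref{lem:injectivity of projection}, and that computation does not use the statement being proved. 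Walking along a path in the connected line graph of $K_{n-1}$ therefore never reintroduces a pair. Equivalently, $\dim K\geq\dim\rho_n(K)=\binom{n-1}{2}-1=\dim\W_n$, which together with $K\subseteq\W_n$ gives $K=\W_n$.

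These relations have integer coefficients, and a surjection $\Z^r\to\Z^r$ is an isomorphism. So the same argument recovers Keel's statement over $\Z$, not just over $\R$.

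One minor point in your validity step: saying the map is ``flat with reduced fibres over the boundary'' restates what you want to show. What actually gives multiplicity one is your local-coordinate check at a general point of $D_I$, or alternatively iterating \eqref{eqn:boundary_pullback}.
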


\begin{lem}\label{lem:some_relations_for_kappa_psi_and_D_A}
The following relations of divisor classes hold.
\begin{enumerate}
\item[$\bullet$] For any distinct $i,j\in[n]$, on $\M$:
\begin{align}
\kappa &=\sum_{I\in\II_n^{+}:\,i,j\notin I}( \vert I  \vert -1)\left[ D_{I}\right],\label{eqn:kappa_1_wrt_divisors}
\end{align}
\item[$\bullet$] For any distinct $i,j,k\in[n]$, on $\M$:
\begin{align}
\psi_k &=\sum_{\substack{I\in\II_n^{+}:\,k\in I\\ i,j\notin I}}\left[ D_{ I}\right],\label{eqn:psi_wrt_divisors}
\end{align}
\item[$\bullet$] For any $k\in[n]$
\begin{align}
\pi_{n+1}^{\star}(\psi_k) &= \psi_k-[D_{ \{k,n+1\}}],\label{eqn:psi_pullback}
\end{align}
\item[$\bullet$] Relating $\overline{M}_{0,n-1}$ and $\M$ via $\pi_{n} \colon \M\to\overline{M}_{0,n-1}$:
\begin{align}
\kappa^{(n)} &= \pi_{n}^{\star}(\kappa^{(n-1)})+\psi_n.\label{eqn:kappa_pull_back_formula}
\end{align}
\end{enumerate}
Finally, for
$I\in\II_n^{+}$ one has the relation of Cartier divisors on $\overline{M}_{0,n+1}$:
\begin{align}
\pi_{n+1}^{\star}(D_{I}) &=D_{I}+D_{I\cup \{n+1\}}.\label{eqn:boundary_pullback}
\end{align}
\end{lem}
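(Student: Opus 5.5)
The plan is to take the last two bullets as standard inputs from the literature and to derive the two explicit boundary formulas, \eqref{eqn:psi_wrt_divisors} and \eqref{eqn:kappa_1_wrt_divisors}, from them by induction on $n$.

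\textbf{Pullback formulas.} For \eqref{eqn:boundary_pullback}, I would note that the set-theoretic preimage of $D_I$ under $\pi_{n+1}$ is $D_I\cup D_{I\cup\{n+1\}}$, since the forgotten point $n+1$ lies on one of the two sides of the node. Both components occur with multiplicity one because $\pi_{n+1}$ is flat with reduced fibres, being the universal curve. The relation \eqref{eqn:psi_pullback} is the classical comparison of cotangent lines under the forgetful map; it can be cited from Witten or Arbarello--Cornalba. In that comparison the discrepancy is supported exactly on the divisor where $k$ and $n+1$ bubble off together. The relation \eqref{eqn:kappa_pull_back_formula} is Arbarello--Cornalba's formula $\kappa_1=\pi^*\kappa_1+\psi_n$, combined with Remark~\ref{rem:kappa_1_and_kappa_same}.

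\textbf{Psi-class formula.} I would prove \eqref{eqn:psi_wrt_divisors} by induction on $n$. For $n=4$, $\overline{M}_{0,4}\cong\mathbb P^1$ and $\psi_k$ has degree one, so it is the point class $[D_{\{k,l\}}]$, where $l$ is the fourth index. This is exactly the single term of the right-hand side. For the inductive step, fix $i,j,k\in[n]$ and pass to $\overline{M}_{0,n+1}$. By \eqref{eqn:psi_pullback}, $\psi_k=\pi_{n+1}^*\psi_k+[D_{\{k,n+1\}}]$. Expanding $\pi_{n+1}^*\psi_k$ with the inductive hypothesis and \eqref{eqn:boundary_pullback}, each $D_I$ with $k\in I$ and $i,j\notin I$ contributes $D_I+D_{I\cup\{n+1\}}$. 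Together with $D_{\{k,n+1\}}$, these are precisely the $J\subseteq[n+1]$ with $k\in J$, $i,j\notin J$ and $|J|\ge 2$, each appearing exactly once.

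\textbf{Kappa formula.} I would prove \eqref{eqn:kappa_1_wrt_divisors} the same way. For $n=4$, $\kappa=K+\partial$ has degree $-2+3=1$, while the right-hand side is the single term $[D_{\{k,l\}}]$ with coefficient $|I|-1=1$. For the inductive step, write $\kappa^{(n+1)}=\pi_{n+1}^*\kappa^{(n)}+\psi_{n+1}$. I choose $i,j\in[n]$ and apply the inductive formula for $\kappa^{(n)}$ and the psi formula for $\psi_{n+1}$ with the same $i,j$. Then I compare coefficients on $D_J$ for $J\subseteq[n+1]$ with $i,j\notin J$:
\begin{itemize}
\item if $n+1\notin J$, only the pullback term contributes, giving $|J|-1$;
\item if $J=I\cup\{n+1\}$ with $|I|\ge2$, the two terms give $(|I|-1)+1=|J|-1$;
\item if $J=\{m,n+1\}$, only $\psi_{n+1}$ contributes, giving $1=|J|-1$.
\end{itemize}
This closes the induction.

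The only step I expect to need care is the bookkeeping of the redundant index set $\II^+$ (identifying $D_J$ with $D_{J^c}$). Choosing $i,j$ distinct from the new point $n+1$ sidesteps this, since every $J$ avoiding $i,j$ is then an irredundant representative on the side away from $i,j$.
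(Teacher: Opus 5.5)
Your argument is correct, but it takes a different route from the paper. The paper proves nothing here. It cites all five identities from Arbarello--Cornalba:
\begin{itemize}
\item the boundary expressions \eqref{eqn:kappa_1_wrt_divisors} and \eqref{eqn:psi_wrt_divisors} come from \cite[Theorem 2.2]{Arbarello98};
\item \eqref{eqn:kappa_pull_back_formula} comes from \cite{Arbarello96};
\item \eqref{eqn:psi_pullback} and \eqref{eqn:boundary_pullback} come from \cite{Arbarello87}.
\end{itemize}

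You cite only the three forgetful-map identities, plus the degree-one facts on $\overline{M}_{0,4}\cong\mathbb{P}^1$. From these you derive the two explicit boundary expressions by induction on $n$. The coefficient bookkeeping in both inductive steps checks out. In particular, choosing $i,j\in[n]$ and representing every $D_J$ by the side avoiding $i,j$ makes the expansions irredundant. Your multiplicity-one justification for \eqref{eqn:boundary_pullback} is also sound: a flat family with reduced fibres over the reduced divisor $D_I$ has reduced total space.

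The two routes trade off as follows. Your route shows that the explicit formulas are formal consequences of the pullback identities. It uses the same index bookkeeping as the paper's lift $\pi^*\colon \V_n\to\V_{n+1}$, so it makes the combinatorics transparent. The paper's route is shorter but delegates everything to the literature.

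One point you should state explicitly. Your $\psi$ inductive step establishes \eqref{eqn:psi_wrt_divisors} on $\overline{M}_{0,n+1}$ only for triples $i,j,k\in[n]$. However, your $\kappa$ step applies it to $\psi_{n+1}$, that is, with $k=n+1$. The next round of the induction also needs it for arbitrary triples in $[n+1]$, and likewise for arbitrary pairs $i,j$ in the $\kappa$ formula. All of this is closed by the $S_{n+1}$ relabeling action on $\overline{M}_{0,n+1}$, which permutes psi-classes and boundary divisors compatibly. A relation proven for one triple (or pair) therefore transfers to all of them. The paper uses the same symmetry elsewhere, but your induction silently depends on it, so it should be said.
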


\begin{proof}
These relations are standard, see \cite{Arbarello11} for a textbook account.  Taking Remark \ref{rem:kappa_1_and_kappa_same} into account equations
\eqref{eqn:kappa_1_wrt_divisors} and \eqref{eqn:psi_wrt_divisors} follow from \cite[Theorem 2.2]{Arbarello98}. The identity \eqref{eqn:kappa_pull_back_formula} is the case
$a=1$ of \cite[Equation (1.10)]{Arbarello96}. Lastly, for equations \eqref{eqn:psi_pullback},
\eqref{eqn:boundary_pullback}, see \cite[Equation (6)]{Arbarello87}.
\end{proof}

Omega classes provide the following natural decomposition of the class $\kappa$.

\begin{cor}\label{cor:kappa_divisor_decomoposition_into_omega_i_s}
For $n\geq 4$, the following relation holds on $\overline{M}_{0,n}$:
\begin{equation}
\kappa=\omega_4+\omega_5+\cdots+\omega_n.
\end{equation}
\end{cor}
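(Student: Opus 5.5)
We argue by induction on $n$, using the pullback formula \eqref{eqn:kappa_pull_back_formula}, $\kappa^{(n)}=\pi_n^{\star}(\kappa^{(n-1)})+\psi_n$. The two ingredients are that each omega class on $\M$ is a pullback of the corresponding omega class on $\overline{M}_{0,n-1}$, and that the last omega class $\omega_n$ coincides with $\psi_n$.

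For the base case $n=4$, the space $\overline{M}_{0,4}\cong\mathbb{P}^1$. Here $\omega_4=X_{[4],4}$ is $\psi_4$ itself, since no points are forgotten. Formula \eqref{eqn:kappa_1_wrt_divisors} with $\{i,j\}=\{1,2\}$ gives $\kappa=\sum_{I:\,1,2\notin I}(\vert I\vert-1)[D_I]$. The only admissible $I$ is $\{3,4\}$, so $\kappa=[D_{\{3,4\}}]$. Formula \eqref{eqn:psi_wrt_divisors} with $k=4$ and $\{i,j\}=\{1,2\}$ gives $\psi_4=[D_{\{3,4\}}]$ by the same count. Hence $\kappa=\omega_4$.

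For the inductive step, let $n\geq 5$ and assume $\kappa^{(n-1)}=\omega_4^{(n-1)}+\cdots+\omega_{n-1}^{(n-1)}$ on $\overline{M}_{0,n-1}$. For $i\leq n-1$, the map $\M\to\overline{M}_{0,[i]}$ forgetting the points outside $[i]$ factors as
\[
\M\xrightarrow{\;\pi_n\;}\overline{M}_{0,n-1}\longrightarrow\overline{M}_{0,[i]} .
\]
Pullback is functorial, so $\omega_i^{(n)}=\pi_n^{\star}\big(\omega_i^{(n-1)}\big)$. Since pullback is additive on divisor classes, the inductive hypothesis gives $\pi_n^{\star}\kappa^{(n-1)}=\omega_4+\cdots+\omega_{n-1}$ on $\M$. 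Finally, $\omega_n=X_{[n],n}=\psi_n$ because no marked points are forgotten. Substituting these into \eqref{eqn:kappa_pull_back_formula} gives $\kappa^{(n)}=\omega_4+\cdots+\omega_{n-1}+\omega_n$.

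The main point needing care is compatibility of conventions. The identity $\omega_i^{(n)}=\pi_n^{\star}\omega_i^{(n-1)}$ depends on $X_{S,i}$ being defined as an honest pullback of $\psi_i$ under the forgetful map. It also depends on the composition of forgetful maps being the forgetful map, which holds. One must also check that \eqref{eqn:kappa_pull_back_formula} is stated for $\pi_n$ forgetting the last point, as the lemma indicates. With those checks, the argument is purely formal.
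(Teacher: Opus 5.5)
Your proposal is correct and is essentially the paper's proof. Both argue by induction on $n$ via $\kappa^{(n)}=\pi_n^{\star}(\kappa^{(n-1)})+\psi_n$, using $\omega_n=\psi_n$, $\omega_i^{(n)}=\pi_n^{\star}\omega_i^{(n-1)}$ by functoriality of pullback, and the base case $\kappa=\psi_4=\omega_4$ on $\overline{M}_{0,4}$ from \eqref{eqn:kappa_1_wrt_divisors} and \eqref{eqn:psi_wrt_divisors}; your check that both sides equal $[D_{\{3,4\}}]$ just spells out that base case.
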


\begin{proof}
We will prove the statement by induction. Firstly, notice that we have $\omega_n=\psi_n$ on $\overline{M}_{0,n}$. By equations \eqref{eqn:kappa_1_wrt_divisors} and \eqref{eqn:psi_wrt_divisors} in Lemma \ref{lem:some_relations_for_kappa_psi_and_D_A}, we have $\kappa=\psi_4$ on $\overline{M}_{0,4}$. Hence, we get the base case $n=4$ of the induction. Now, assume that the statement holds for $n-1$. Then, by the functoriality of pullback and the fact $\omega_n=\psi_n$, we have
\begin{equation*}
\kappa^{(n)} = \pi_{n}^{\star}(\kappa^{(n-1)})+\psi_n
= \pi_{n}^{\star}(\omega_4+\cdots+\omega_{n-1})+\omega_n
= \omega_4+\cdots+\omega_{n-1}+\omega_n
\end{equation*}
which completes the inductive step of the proof.
\end{proof}

\begin{rem}
Throughout this paper, we will use the Kapranov basis for $\Cl(\M)_{\R}$, which is
\begin{equation*}
\{ \psi_n \}\cup \{ [D_I] : I\in \II_n, \vert I  \vert \geq 3 \}.
\end{equation*}
\end{rem}

\begin{lem}\label{lem:psi_i_in_kapranov}
For $i\in [n]$, on $\overline{M}_{0,n+1}$, we have the following Kapranov basis expression for $\psi_i$:
\begin{equation*}
\psi_i = (n-2)\psi_{n+1} - \sum_{\substack{I \in \II_{n+1}, \vert I \vert \geq 3\\ i\in I}} ( \vert I  \vert -2)D_{I}.
\end{equation*}
\end{lem}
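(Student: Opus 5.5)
We derive the identity from the two boundary expressions in Lemma \ref{lem:some_relations_for_kappa_psi_and_D_A} by cancellation against Keel's relations. On $\overline{M}_{0,n+1}$ we will express both $\psi_i$ and $\psi_{n+1}$ in the redundant boundary indexing, and then convert to the irredundant indexing $\II_{n+1}$ (subsets of $[n]$ not containing $n+1$).

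\textbf{Step 1: expand both psi-classes.} By \eqref{eqn:psi_wrt_divisors} with $k=n+1$, averaging over all choices of the auxiliary pair $\{a,b\}\subseteq[n]$, we get
\begin{equation*}
\binom{n}{2}\psi_{n+1}=\sum_{I\in\II_{n+1}}\binom{n-|I|}{2}D_I .
\end{equation*}
Here a divisor $D_I$ with $I\subseteq[n]$ is counted once for each pair $\{a,b\}$ lying in the side $I$ not containing $n+1$. Similarly, for $k=i$, averaging over pairs $\{a,b\}\subseteq[n]\setminus\{i\}$ gives
\begin{equation*}
\binom{n-1}{2}\psi_i=\sum_{I\in\II_{n+1},\, i\notin I}\binom{|I|}{2}D_I+\sum_{I\in\II_{n+1},\, i\in I}\binom{n-|I|}{2}D_I .
\end{equation*}
In the first sum $i$ lies on the side containing $n+1$, so the pair must lie in $I$. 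In the second sum the pair must lie in $I^c\setminus\{n+1\}$.

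\textbf{Step 2: the cleaner route.} Averaging produces denominators, so instead I would fix the auxiliary pair cleverly and take a difference. Fix $j,k\in[n]\setminus\{i\}$. Applying \eqref{eqn:psi_wrt_divisors} to $\psi_{n+1}$ with pair $\{i,j\}$ and to $\psi_i$ with pair $\{j,n+1\}$ expresses both classes as sums of $D_I$. The goal then becomes verifying that
\begin{equation*}
\psi_i-(n-2)\psi_{n+1}+\sum_{i\in I,|I|\ge3}(|I|-2)D_I
\end{equation*}
is zero. Since the Kapranov basis is a basis, the identity holds if and only if it holds after pairing with a spanning set of curves, or equivalently, if and only if the coefficient vectors agree modulo Keel relations. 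The most robust check is intersection with F-curves. Alternatively, one can argue by induction via \eqref{eqn:psi_pullback} and \eqref{eqn:boundary_pullback}: pulling back the identity from $\overline{M}_{0,n}$, where $\psi_i$ is expressed via $\psi_n$, and then comparing.

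\textbf{Step 3: the approach I would commit to.} Compute $\psi_i-\psi_{n+1}$ directly. Using \eqref{eqn:psi_wrt_divisors} with the common pair $\{j,k\}$ for both classes, the $D_I$ containing both $i$ and $n+1$ cancel, and we obtain
\begin{equation*}
\psi_i-\psi_{n+1}=\sum_{i\in I,\,j,k\notin I}D_I-\sum_{i\notin I,\,j,k\in I}D_I
\end{equation*}
with $I\subseteq[n]$. Then sum over all pairs $\{j,k\}\subseteq[n]\setminus\{i\}$, which gives $\binom{n-1}{2}(\psi_i-\psi_{n+1})$ on the left. A term $D_I$ with $i\in I$ appears $\binom{n+1-|I|-1}{2}-\dots$ times. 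More precisely, the coefficient of $D_I$ is $\binom{n-|I|}{2}$ when $i\in I$ and $-\binom{|I|}{2}$ when $i\notin I$. This leaves $|I|=2$ terms and the terms with $i\notin I$, which must be eliminated. For that I would use the Step 1 expansion of $\psi_{n+1}$ to trade $\sum_{|I|=2}D_I$ for $\psi_{n+1}$ and larger $D_I$. Matching the final coefficients against $(n-2)-1$ and $-(|I|-2)$ is a binomial identity check.

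\textbf{Main obstacle.} The main obstacle is eliminating the $i\notin I$ terms. These are not visible in the target formula, so their total must vanish modulo Keel relations. I expect this to require a second averaged relation: the Step 1 expansion of $\psi_i$ compared with that of $\psi_{n+1}$. If the bookkeeping becomes unwieldy, I would fall back on verifying the identity by intersecting both sides with F-curves. That reduces to the standard degrees of $\psi$ and $D_I$ on F-curves, and the check can be organized by the four-part partition of each curve.
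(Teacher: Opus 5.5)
Your overall strategy (start from \eqref{eqn:psi_wrt_divisors} and rewrite the $|I|=2$ terms in the Kapranov basis) is sound, and it differs from the paper, which imports equation (1) from the proof of Bruno--Mella's Proposition 1.2 with $j=n+1$ and reindexes by $I=[n]\setminus J$. As written, however, the proposal contains no proof, and the computation you commit to in Step 3 rests on a false identity.

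Use the common pair $\{j,k\}$ and let $I\subseteq[n]$ be the side not containing $n+1$. The terms shared by both expansions are exactly those with $i\notin I$ and $j,k\in I$, i.e.\ the partitions putting $i,n+1$ together against $j,k$. These are the ones that cancel. Meanwhile $\psi_{n+1}$ contributes extra terms with $I\supseteq\{i,j,k\}$. The correct relation is
\begin{equation*}
\psi_i-\psi_{n+1}=\sum_{\substack{I\in\II_{n+1}\\ i\in I,\ j,k\notin I}}D_I-\sum_{\substack{I\in\II_{n+1}\\ i,j,k\in I}}D_I .
\end{equation*}
Your displayed version already fails on $\overline{M}_{0,4}$ ($n=3$, $i=1$, $\{j,k\}=\{2,3\}$). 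There it reads $\psi_1-\psi_4=-D_{23}$, which has degree $0$ on the left and $-1$ on the right. Likewise, your Step 1 formula for $\binom{n}{2}\psi_{n+1}$ should have coefficient $\binom{|I|}{2}$, as your own sentence describes, not $\binom{n-|I|}{2}$. So the ``main obstacle'' of eliminating the $i\notin I$ terms modulo Keel relations is an artifact of this bookkeeping error and cannot be resolved as posed. The F-curve and induction fallbacks are only named, not carried out.

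With the corrected relation the argument closes without any Keel relations. Averaging over $\{j,k\}\subseteq[n]\setminus\{i\}$ gives $D_I$ the coefficient $\binom{n-|I|}{2}-\binom{|I|-1}{2}$ if $i\in I$, and $0$ otherwise. The only $|I|=2$ terms are then the $n-1$ divisors $D_{\{i,c\}}$, each with coefficient $\binom{n-2}{2}$. Substitute Lemma \ref{lem:Djk_basis_decomp} for them; its proof uses only \eqref{eqn:psi_wrt_divisors}, so there is no circularity. In $\binom{n-1}{2}\psi_i$ this gives:
\begin{itemize}
\item $\psi_{n+1}$-coefficient $\binom{n-1}{2}+(n-1)\binom{n-2}{2}=(n-2)\binom{n-1}{2}$;
\item coefficient $\binom{n-|K|}{2}-\binom{|K|-1}{2}-(|K|-1)\binom{n-2}{2}=-(|K|-2)\binom{n-1}{2}$ for each $K\ni i$ with $|K|\geq 3$;
\item coefficient $0$ for each $K\not\ni i$.
\end{itemize}
Dividing by $\binom{n-1}{2}$ gives the lemma.

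Simpler still, you need no averaging, because Kapranov coefficients are unique. Take a single expansion $\psi_i=\sum_{i\in J,\ a,b\notin J}D_J+\sum_{i\notin J,\ a,b\in J}D_J$. Its $|J|=2$ terms are $D_{\{a,b\}}$ and the $n-3$ divisors $D_{\{i,c\}}$ with $c\neq a,b$, which immediately gives the coefficient $n-2$ on $\psi_{n+1}$. After substituting Lemma \ref{lem:Djk_basis_decomp}:
\begin{itemize}
\item each $K\ni i$ with $|K|\geq 3$ gets coefficient $2-|K|$, checked in the three cases $|K\cap\{a,b\}|=0,1,2$;
\item each $K\not\ni i$ gets $0$, since its direct coefficient is cancelled exactly by the correction coming from $D_{\{a,b\}}$.
\end{itemize}
This is a self-contained derivation that the paper does not give.
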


\begin{proof}
From equation (1) in the proof of \cite[Proposition 1.2]{bruno11}, after replacing the number of markings there by $n+1$ and setting $j=n+1$, we obtain, for $i\in[n]$,
\begin{equation*}
\psi_i
=(n-2)\psi_{n+1}
-\sum_{\substack{J\subseteq[n]\,:\, i\notin J\\ 1\leq  \vert J \vert \leq n-2}}
 \left(n-1- \vert J\cup\{n+1\} \vert  \right)D_{J\cup\{n+1\}}.
\end{equation*}
Since $ \vert J\cup\{n+1\} \vert = \vert J \vert +1$, the coefficient in the sum is $n-2- \vert J \vert $.
Set $I=[n]\setminus J$.  Then, $i\notin J$ is equivalent to $i\in I$ and
\begin{equation*}
D_{J\cup\{n+1\}}
=D_{[n+1]\setminus(J\cup\{n+1\})}
=D_I.
\end{equation*}
Moreover,
\begin{equation*}
n-2- \vert J \vert =n-2-(n- \vert I \vert )= \vert I \vert -2.
\end{equation*}
Consequently,
\begin{equation*}
\psi_i
=(n-2)\psi_{n+1}
-\sum_{\substack{I\in\II_{n+1}\\ i\in I}}
( \vert I \vert -2)D_I.
\end{equation*}
The terms with $ \vert I \vert =2$ have coefficient zero, so the sum may be restricted to $ \vert I \vert \geq3$, as claimed.
\end{proof}

\begin{lem}\label{lem:kappa in kapranov basis}
The expansion of the log-canonical class $\kappa$ on $\overline{M}_{0,n+1}$ in the Kapranov basis is given by
\begin{equation*}
 \kappa = {n-1\choose 2}\psi_{n+1}- \sum_{I\in \II_{n+1}, \vert I \vert \geq 3} { \vert I \vert -1\choose 2}[D_I].
\end{equation*}
\end{lem}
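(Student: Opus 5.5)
The plan is to start from the omega-class decomposition of Corollary \ref{cor:kappa_divisor_decomoposition_into_omega_i_s}, $\kappa=\omega_4+\cdots+\omega_{n+1}$ on $\overline{M}_{0,n+1}$, and use the Kapranov expansion of each $\psi$-class provided by Lemma \ref{lem:psi_i_in_kapranov}. Since the target formula is linear, one could alternatively proceed by induction on $n$ using \eqref{eqn:kappa_pull_back_formula}, namely $\kappa^{(n+1)}=\pi_{n+1}^{\star}(\kappa^{(n)})+\psi_{n+1}$. I would take the inductive route, since $\psi_{n+1}$ is already a basis element and the pullback of boundary divisors is explicit by \eqref{eqn:boundary_pullback}.

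For the inductive step, assume that on $\overline{M}_{0,n}$
\begin{equation*}
\kappa^{(n)}=\binom{n-2}{2}\psi_n-\sum_{J\subseteq[n-1],\,|J|\geq 3,\,|J|\leq n-2}\binom{|J|-1}{2}[D_J].
\end{equation*}
Pull this back along $\pi_{n+1}$. Each $D_J$ pulls back to $D_J+D_{J\cup\{n+1\}}=D_J+D_{[n]\setminus J}$, and in $\II_{n+1}$ the term $D_{[n]\setminus J}$ is indexed by a set containing $n$. By \eqref{eqn:psi_pullback}, $\pi_{n+1}^\star\psi_n=\psi_n-D_{\{n,n+1\}}=\psi_n-D_{[n-1]}$. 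Now replace $\psi_n$ using Lemma \ref{lem:psi_i_in_kapranov} with $i=n$:
\begin{equation*}
\psi_n=(n-2)\psi_{n+1}-\sum_{I\ni n,\,|I|\ge3}(|I|-2)D_I .
\end{equation*}
Adding $\psi_{n+1}$ gives the coefficient of $\psi_{n+1}$ as $\binom{n-2}{2}(n-2)+1$. This does not match $\binom{n-1}{2}$, so the naive induction miscounts. I would therefore switch to the direct approach: write $\kappa=\sum_{m=4}^{n+1}\omega_m$ and express each $\omega_m=X_{[m],m}$ in the Kapranov basis centered at $n+1$.

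For the direct approach, the key point is that $\omega_m$ for $m\le n$ is a pullback of $\psi_m$ from $\overline{M}_{0,[m]}$. To avoid computing it directly, I would instead use a cleaner identity: since $\kappa$ is symmetric and \eqref{eqn:kappa_1_wrt_divisors} gives $\kappa=\sum_{I\not\ni i,j}(|I|-1)D_I$, a boundary expression is already available. It then remains to rewrite it in the Kapranov basis. I would convert the $D_I$ with $|I|=2$ and $I\subseteq[n]$ using Keel relations, or equivalently pair $\kappa$ against a dual basis of curves, i.e. F-curves. The coefficients in a basis are determined by intersection numbers. Concretely, I would check the proposed identity by verifying that both sides have equal intersection with every F-curve, using $\kappa\cdot F=1$ for every F-curve (the log-canonical class has degree one on each F-curve), the known values $\psi_{n+1}\cdot F$, and the values $D_I\cdot F\in\{-1,0,1\}$. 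Since F-curves span $N_1$, this suffices.

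The main obstacle is the F-curve bookkeeping. An F-curve corresponds to a partition $[n+1]=A_1\sqcup A_2\sqcup A_3\sqcup A_4$. Its intersection with $\psi_{n+1}$ is $1$ if $A_\ell=\{n+1\}$ for some $\ell$ and $0$ otherwise. Its intersection with $D_I$ is $1$ if $I$ or $I^c$ is a union of two parts, $-1$ if $I$ or $I^c$ equals a single part, and $0$ otherwise. I would split into two cases, according to whether $n+1$ is alone in its part. In each case I would sum $\binom{|I|-1}{2}$ over the relevant unions of parts not containing $n+1$, and then verify the resulting binomial identity. For example, when $A_4=\{n+1\}$ with $|A_\ell|=a_\ell$, the identity to verify is $\binom{n-1}{2}-\bigl[\binom{a_1+a_2-1}{2}+\binom{a_1+a_3-1}{2}+\binom{a_2+a_3-1}{2}-\sum_\ell\binom{a_\ell-1}{2}\bigr]=1$ with $a_1+a_2+a_3=n$. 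Here the convention $\binom{|I|-1}{2}=0$ for $|I|=2$ handles terms that are absent, and the identity follows from the quadratic expansion. The other case is analogous.
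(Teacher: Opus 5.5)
Your final argument, checking that both sides have degree $1$ on every F-curve, is correct. It is a genuinely different route from the paper's.

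The paper never leaves linear equivalence and uses only Lemma \ref{lem:some_relations_for_kappa_psi_and_D_A}. Taking $i=n$, $j=n+1$ in \eqref{eqn:kappa_1_wrt_divisors} gives $\kappa=\sum_{I\in\II_{n+1},\,n\notin I}(|I|-1)[D_I]$. Summing \eqref{eqn:psi_wrt_divisors} for $\psi_{n+1}$ over all $\binom{n-1}{2}$ pairs in $[n-1]$ gives $\binom{n-1}{2}\psi_{n+1}=\sum_{I\in\II_{n+1}}\binom{|I\cap[n-1]|}{2}[D_I]$. Subtracting, the coefficient of $[D_I]$ is $\binom{|I|}{2}-(|I|-1)=\binom{|I|-1}{2}$ when $n\notin I$, and $\binom{|I|-1}{2}$ when $n\in I$.

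Your route instead verifies a guessed formula and imports outside inputs:
\begin{itemize}
\item the F-curve intersection table;
\item the fact $\kappa\cdot F=1$, e.g.\ from $\kappa=\sum_i\psi_i-[\partial\overline{M}_{0,n+1}]$;
\item Keel's theorem that F-curves span $N_1$ (they span it but are not a dual basis, as you wrote).
\end{itemize}
In exchange it is mechanical and explains the shape of the answer. Put $f(x)=(x-1)(x-2)/2$, so $f(|I|)=\binom{|I|-1}{2}$ for all $|I|\geq 1$. Then both cases reduce to the vanishing third difference of a quadratic: $f(a_1+a_2+a_3)-\sum_{k<l}f(a_k+a_l)+\sum_k f(a_k)=f(0)=1$.

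In the case you call analogous ($n+1\in A_4$, $|A_4|\geq 2$), the $\psi_{n+1}$ term drops out. Its role is taken by $D_{A_1\cup A_2\cup A_3}=D_{A_4}$, which meets $F$ in degree $-1$. That is the only new point, and it should be written out.

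Separately, your reason for abandoning the induction is wrong: the induction does go through. In $\pi_{n+1}^{\star}D_J=D_J+D_{[n]\setminus J}$ with $|J|=n-2$, the term $D_{[n]\setminus J}=D_{\{k,n\}}$ is not a Kapranov basis element on $\overline{M}_{0,n+1}$. By Lemma \ref{lem:Djk_basis_decomp}, each of these $n-1$ terms contributes $-\binom{n-3}{2}\psi_{n+1}$. With them included, the coefficient of $\psi_{n+1}$ is $\binom{n-2}{2}(n-2)+1-(n-1)\binom{n-3}{2}=\binom{n-1}{2}$, as required.
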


\begin{proof}
    Taking $i=n$ and $j=n+1$ in equation \eqref{eqn:kappa_1_wrt_divisors}, we have
    \begin{equation*}\kappa = \sum_{\substack{I\in \II_{n+1}^+\\ I\not\ni n,n+1}} ( \vert I  \vert -1)[D_I] =  \sum_{\substack{I\in \II_{n+1}\\ I\not\ni n}} ( \vert I  \vert -1)[D_I]. \end{equation*}
    Summing the boundary expressions for $\psi_{n+1}$ in equation \eqref{eqn:psi_wrt_divisors} over all choices of indices $1\leq i < j \leq n-1$, we also have
    \begin{align*}
        {n-1 \choose 2}\psi_{n+1} & = \sum_{1\leq i < j \leq n-1} \sum_{\substack{I\in \II_{n+1}\\ i,j\in I}}[D_I]\\
        & = \sum_{I\in \II_{n+1}} { \vert I\cap [n-1] \vert \choose 2}[D_I]\\
        & = \sum_{\substack{I\in \II_{n+1}\\ n\notin I}} { \vert I \vert \choose 2} [D_I]+\sum_{\substack{I\in \II_{n+1}\\ n\in I}} { \vert I  \vert -1 \choose 2} [D_I].
    \end{align*}
    Combining these, we obtain the desired relation
    \begin{equation*}{n-1 \choose 2}\psi_{n+1} - \kappa = \sum_{\substack{I\in \II_{n+1}\\ n\notin I}} \left( { \vert I \vert \choose 2}-( \vert I  \vert -1)\right) [D_I]+\sum_{\substack{I\in \II_{n+1}\\ n\in I}} { \vert I  \vert -1 \choose 2} [D_I] = \sum_{I\in \II_{n+1}} { \vert I  \vert -1 \choose 2} [D_I]. \end{equation*}
\end{proof}

\subsection{Background on some graph theoretic and linear programming notions}

The pair-coordinate models developed later identify the coefficients of the divisors $D_{\{i,j\}}$ with weights on the edges of a complete graph.  We therefore recall the graph-theoretic notation needed to express the defining inequalities of our polytopes, followed by the linear programming relaxation of the symmetric traveling salesman problem that appears in the study of the log-canonical class.

\subsubsection{Graph theory notations}
Unless explicitly stated otherwise, all graphs in this subsection are finite, undirected, and simple, with a specified spanning vertex set. Multigraphs will appear only later, in connection with splitting-off operations, when we consider graphs with parallel edges.

We write
\begin{equation*}
\E_n \coloneqq \{ \{i,j\} \,  \vert \, 1 \leq i < j \leq n \}
\end{equation*}
for the edge set of the complete graph $K_n$ on vertices labeled by $[n] = \{1,2,\dots,n\}$. For a subgraph $G\subseteq K_n$, we write $\E(G)$ for its edge set, $\mathds{1}_G\in\{0,1\}^{\E_n}$ for its \emph{incidence vector}, defined by 
\begin{equation*}
(\mathds{1}_G)_e=
\begin{cases}
1 & \text{if }e \in \E(G),\\
0 & \text{if }e \notin \E(G),
\end{cases}
\end{equation*}
and $\overline{G}$ for the complement of $G$ in $K_n$, so that $\mathds{1}_{\overline{G}}=\mathbf{1}-\mathds{1}_G$ where $\mathbf{1}=(1,\dots,1)\in\R^{\E_n}$.

For $I \subseteq [n]$, we also write $K_I\subseteq K_n$ for the complete subgraph on the vertices labeled by $I$, and $\E_I$ for the edge set of $K_I$. For a graph $G\subseteq K_n$ and a subset $I\subseteq[n]$, let $G_I$ denote
the induced subgraph on vertex set $I$, with isolated vertices retained, and write
\begin{equation*}
\E_I(G) \coloneqq \E(G)\cap\E_I.
\end{equation*}
Thus, $ \vert \E_I(G) \vert $ is the number of edges of $G_I$. We write $k(G)$ for the
number of connected components of $G$. Also, define the \emph{cut} determined by $I$ to be
\begin{equation*}
\delta(I) \coloneqq \{\{u,v\}\in \E_n\, \vert \, u\in I,\ v \notin I \}.   
\end{equation*}
By definition it is clear that 
\begin{equation*}
\delta(I)=\delta([n]\setminus I).
\end{equation*}

\begin{rem}
When we restrict a graph to a subset of vertices, isolated vertices are retained. This
convention is important when counting the connected components of $G\cap K_I$.
\end{rem}

We define an edge weight of a complete graph $K_n$ as a function
\begin{equation*}
\x  \colon \E_n\longrightarrow \mathbb{R} \quad \text{with the notation } x_e \coloneqq \x (e)\quad \text{for any }e \in \E_n.
\end{equation*}
Similar notation is used for other letters when they represent an edge weight function corresponding to an edge weighted complete graph. Let $i$ be a vertex of $K_n$ with edge weight function $\x   \colon \E_n\longrightarrow \mathbb{R}$, then the degree of $i$ with respect to this edge weighted complete graph is defined as the sum of the weights of all edges attached to $i$:
\begin{equation*}
\deg_{\x } (i) \coloneqq   \sum_{j\in [n]\setminus \{i\}}x_{\{i,j\}}=\sum_{e \in \delta(\{i\})}x_e.
\end{equation*}
For any subset $S\subseteq \E_n$ of edges of the complete graph $K_n$, we set
\begin{equation*}
\x (S) \coloneqq \sum_{e \in S}x_e.
\end{equation*}
\begin{rem}
We use both notations $\x (S)$ and $\sum_{e \in S}x_e$ throughout the paper. Also, note that we have
\begin{equation*}
\deg_{\x } (i) =\x \left(\delta(\{i\})\right).
\end{equation*}
\end{rem}

\begin{rem}
  We note that the index set $\II_{n+1}=\{\,I\subseteq[n] : 2\leq \vert I \vert \leq n-1\,\}$ introduced above for boundary divisors on $\overline{M}_{0,n+1}$ is exactly the set of vertex subsets of $K_n$ having at least two elements and not equal to $[n]$. It is therefore also the natural index set for the \emph{clique inequalities} $\x(\E_I)\leq c_{I}$ appearing in the linear programming relaxations below.  
\end{rem}

\subsubsection{Hamiltonian cycle and Held--Karp relaxation polytopes}

We next recall the Hamiltonian cycle polytope and its subtour elimination relaxation.

\begin{defn}
The \emph{symmetric traveling salesman polytope (Hamiltonian cycle polytope)} $P_{\mathrm{STSP},n}$ of the complete graph $K_n$ is the convex hull of the incidence vectors of Hamiltonian cycles of $K_n$.
\end{defn}

The complete set of inequalities defining $P_{\mathrm{STSP},n}$ is not known, in general \cite{Karp1972,Padberg1980OnTS}. The following simpler polytope was introduced by Dantzig--Fulkerson--Johnson \cite{DFJ}.

\begin{defn}[\cite{DFJ}]\label{def:HK}
The \emph{subtour (Held--Karp) relaxation polytope} of the symmetric traveling salesman polytope is defined by
\begin{equation*}
P_{\mathrm{SEP},n}=\left \{(x_{e})_{e \in \E_n} \in \R^{ \E_n}_{\geq 0} \, \bigg \vert \, x_e\leq 1\,\, \forall e \in \E_n,\, \deg_{\x }(i)=2\,\, \forall i\in [n],\,  \x(\delta(I))\geq 2\,\, \forall I \, \text{with } \varnothing \neq I \subsetneq [n] \right \}.
\end{equation*}
\end{defn}

The subtour elimination polytope is one of the central linear programming relaxations of the symmetric traveling salesman problem. Dantzig--Fulkerson--Johnson observed that the $0/1$-points of $P_{\mathrm{SEP},n}$ are the Hamiltonian cycles of $K_n$. However, the inclusion 
\begin{equation*} P_{\mathrm{SEP},n} \supseteq P_{\mathrm{STSP},n} \end{equation*}
is strict in general. The polytope $P_{\mathrm{SEP},n}$ is also known as the \emph{Held--Karp relaxation polytope} due to works \cite{HK70,HK71} of Held--Karp. See \cite[Chapter 58]{Schrijver03}, \cite[Chapter 2]{Traub2024}, and the references therein for more details on the traveling salesman problem and the subtour elimination polytope $P_{\mathrm{SEP},n}$.

In Lemma \ref{lem:Held-Karp rank description}, we show that $P_{\mathrm{SEP},n}$ can also be described as
\begin{equation}\label{eqn:P_SEP_alternative_form}
P_{\mathrm{SEP},n}= \left \{ (x_{e})_{e \in \E_n} \in \R^{ \E_n}_{\geq 0}\, \big \vert \, \x(\E_n) =n,\, \x(\E_I) \leq \vert I \vert -1 \,  \text{ for all }I\in \II_{n+1} \right \}.
\end{equation}
We do not claim originality for this description, however we were unable to locate a reference for this rewriting.

\section{ Effective Boundary Expression Polytopes}

In this section, we introduce the polytope of effective boundary expressions of a divisor on $\M$ and establish its basic properties. We then show that projection to the pair boundary coefficients gives an equivalent graph-theoretic description. After changing coordinates $x_e=1-a_e$, the defining inequalities become clique inequalities on a complete graph. These descriptions will be used throughout the applications in the remaining of the paper.

\subsection{The polytope of effective boundary expressions}

Let $\V_n$ denote the free real vector space on the set of boundary divisors of $\M$, with basis $\{D_I\}_{I\in\II_n}$, and let
\begin{equation*}
  \mathrm{Cone}_n \coloneqq \left\{\, \sum_{I\in\II_n} a_I D_I \ \bigg \vert \ a_I\geq 0 \,\right\} =\bigoplus_{I \in \II_n} \R_{\geq 0} \cdot D_I \subseteq \V_n
\end{equation*}
be the nonnegative orthant in this basis.  Let $\W_n\subseteq\V_n$ be the subspace of those linear combinations of boundary divisors that are linearly equivalent to $0$. Since the boundary classes span $\Cl(\M)_{\R}$ \cite{Keel}, we have a short exact sequence of real vector spaces
\begin{equation}\label{eq:projection onto classgroup}
    0 \to \W_n \to \V_n \to \Cl(\M)_{\R} \to 0,
\end{equation}
in which the right-hand map sends a formal linear combination of boundary divisors to its linear equivalence class. Comparing the sizes of the bases for $\V_n$ and $\Cl(\M)_{\R}$, one sees that
\begin{equation*}
\dim \V_n = 2^{n-1}-n-1 \quad \text{and}\quad \dim \W_n = {n-1 \choose 2}-1.
\end{equation*}
For any distinct $a,b,c,d \in [n]$, define vectors
\begin{equation}\label{eqn:T_vectors_abcd}
T_{ab \mid cd}
 \coloneqq \sum_{\substack{I\in\II_n^+\,, a,b\in I \\ c,d\notin I}}D_I \in \V_n.
\end{equation}
By Lemma \ref{lem:Keels_quadruple_relations}, $\W_n$ is spanned by vectors of the form 
\begin{equation}\label{eq:keel vector}
v_{i,j,k,\ell} \coloneqq T_{ij\mid k \ell}-T_{ik\mid j \ell}=
\sum_{\substack{I\in \II_n^+, i,j\in I\\ k,\ell\notin I}} D_I
-
\sum_{\substack{I\in \II_n^+, i,k\in I\\ j,\ell\notin I}} D_I,
\end{equation}
where $i,j,k,\ell$ are distinct elements of $[n]$. We call the vectors $v_{i,j,k,\ell}$ the \emph{Keel vectors}. 

For any $D\in \V_n$, the affine subspace
\begin{equation*}
D+\W_n = \{ D+E \, \vert \, E \in \W_n \}\subseteq \V_n 
\end{equation*}
consists of all linear combinations of boundary divisors $E \in \V_n$ linearly equivalent to $D$. Since any divisor $D$ on $\M$ is linearly equivalent to a sum of boundary divisors, the affine subspace $D+\W_n\subseteq \V_n$ is well defined for any divisor $D$ and depends only on the linear equivalence class $[D]$. Our main object of study is the following:

\begin{defn}
For any divisor $D$ on $\M$, the \emph{polytope of effective boundary expressions of $D$}, $Q(D)\subseteq \V_n$, is the set of real linear combinations of boundary divisors with nonnegative coefficients that are linearly equivalent to $D$, i.e.
\begin{equation*}
Q(D)\coloneqq   (D+\W_n)\cap \mathrm{Cone}_n = \left\{\, (a_I)_{I\in\II_n}\in\R^{\II_n} \, \bigg \vert \,
  [D] = \sum_{I\in\II_n} a_I\,[D_I],\ \ a_I\geq 0 \,\right\} \subseteq \V_n.
\end{equation*}
\end{defn}

\begin{lem}
    The subset $Q(D)\subseteq \V_n$ depends only on the linear equivalence class of $D$, and is a polytope. 
\end{lem}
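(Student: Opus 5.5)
The first claim is immediate from the definitions. If $D$ and $D'$ are linearly equivalent, choose boundary expressions $E,E'\in\V_n$ representing them. Then $E-E'$ maps to $0$ in $\Cl(\M)_{\R}$ under the right-hand map of \eqref{eq:projection onto classgroup}, so $E-E'\in\W_n$. Hence $E+\W_n=E'+\W_n$, and intersecting with $\mathrm{Cone}_n$ gives the same set. This also shows that $D+\W_n$ is the full fiber over $[D]$ of the surjection $\V_n\to\Cl(\M)_{\R}$, which makes the notation $Q(D)$ unambiguous.

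For polytopality, $Q(D)$ is the intersection of an affine subspace with finitely many closed half-spaces $a_I\geq 0$. It is therefore a polyhedron, and it remains only to show that it is bounded. The standard way to do this is to find a linear functional $\ell$ on $\V_n$ such that
\begin{itemize}
\item $\ell$ vanishes on $\W_n$, so that $\ell$ factors through $\Cl(\M)_{\R}$, and
\item $\ell(D_I)>0$ for every $I\in\II_n$.
\end{itemize}
Given such an $\ell$, every point of $Q(D)$ satisfies $\sum_I a_I\ell(D_I)=\ell(D)$, which is a constant. Together with $a_I\geq 0$, this gives $0\leq a_I\leq \ell(D)/\min_J \ell(D_J)$, so $Q(D)$ is bounded.

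The natural choice is intersection with a curve class. Take an ample divisor $H$ on $\M$ and set $\ell(E)=\int_{\M}[E]\cdot H^{n-4}$. This depends only on $[E]$, so it kills $\W_n$. It is positive on each $D_I$ because $D_I$ is a nonzero effective divisor, namely an irreducible subvariety of dimension $n-4$, and $H$ is ample. Projectivity of $\M$ guarantees that such an $H$ exists. For $n=4$ the functional degenerates to the degree on $\overline{M}_{0,4}\cong\mathbb{P}^1$, where each $D_I$ is a point of degree $1$.

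If one prefers to stay within the paper's explicit classes, one can instead use $\kappa$, which the introduction identifies as ample, in the role of $H$. A purely combinatorial alternative is to pair with F-curves, but a single F-curve can meet some $D_I$ negatively, so the ample intersection is cleaner.

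The only nonformal step is this positivity, $\ell(D_I)>0$ for every boundary divisor. It rests on the standard fact that the top self-power of an ample class restricted to a nonzero effective subvariety has positive degree. I expect no real obstacle beyond citing projectivity of $\M$. Finally, note that $Q(D)$ may be empty, which is consistent with the statement's "(possibly empty) polytope."
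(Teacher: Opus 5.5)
Your proof is correct, and it follows the same template as the paper's proof. Both find a linear functional on $\V_n$ that vanishes on $\W_n$ and is positive on every $D_I$. The difference is how the functional is produced.

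The paper takes the most naive candidate, the coefficient sum $\sum_{I\in\II_n} a_I$. It checks directly that this kills each Keel vector $v_{i,j,k,\ell}$: the sums $T_{ij\mid k\ell}$ and $T_{ik\mid j\ell}$ each contain one term for every way of distributing the remaining $n-4$ markings, so both have $2^{n-4}$ terms. Since the Keel vectors span $\W_n$, the coefficient sum vanishes on $\W_n$. Hence $Q(D)$ lies in the intersection of $\mathrm{Cone}_n$ with an affine hyperplane of constant coefficient sum, which is a (possibly empty) dilate of the standard simplex.

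Your functional $E\mapsto\int_{\M}[E]\cdot H^{n-4}$ gets its vanishing on $\W_n$ for free, because intersection numbers depend only on classes. Its positivity comes from ampleness, at the cost of importing projectivity and the positivity of degrees of subvarieties.

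What each buys: the paper's route uses nothing beyond Keel's presentation. It also gives the sharper fact that every effective boundary expression of $[D]$ has the same total coefficient sum, since every $D_I$ gets weight $1$. In your version the weights $\deg_H D_I$ in general depend on $I$. On the other hand, your argument is more robust: it applies verbatim to any finite collection of nonzero effective divisors on a projective variety, without knowing the relations among them.
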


\begin{proof}
    It is clear from the definition that $Q(D)$ depends only on the linear equivalence class of $D$. By definition, $Q(D)$ is the intersection of a translated linear subspace and a polyhedral cone, so it follows that $Q(D)$ is a polyhedron. It remains to show that $Q(D)$ is bounded. Observe that the sum of the coefficients on each Keel vector $v_{i,j,k,\ell}$ is $0$, since there are an equal number of terms in each of the two sums in \eqref{eq:keel vector}. Since the Keel vectors span $\W_n$, the sum of the coefficients of the boundary divisors $D_I$ on any element of $\W_n$ is $0$. If we let $H\subseteq \V_n$ denote the hyperplane of elements $\sum_{I\in \II_n}a_I D_I\in \V_n$ with $\sum_{I\in \II_n}a_I = 0$, this observation implies that $\W_n\subseteq H$. But then, for any $D\in \V_n$, we have
    \begin{equation*}Q(D) = (D+\W_n)\cap \mathrm{Cone}_n \subseteq (D+H) \cap \mathrm{Cone}_n. \end{equation*}
    The latter is easily seen to be bounded, and so $Q(D)$ is a polytope as claimed. 
\end{proof}

\begin{lem}\label{lem:subadditivity and scaling}
    For divisors $D,D'$ on $\M$ and $t>0$, we have
    \begin{equation*}
    Q(tD)= t\cdot Q(D) \quad \text{and} \quad Q(D)+Q(D')\subseteq Q(D+D').
    \end{equation*}
\end{lem}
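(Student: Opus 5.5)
The plan is to work directly with the description $Q(D) = (D+\W_n)\cap \mathrm{Cone}_n$ and use two facts: the class map $\V_n \to \Cl(\M)_{\R}$ in \eqref{eq:projection onto classgroup} is linear, and $\mathrm{Cone}_n$ is a convex cone, so it is closed under positive scaling and under addition. Since $Q(D)$ depends only on $[D]$, we may fix representatives $D, D' \in \V_n$, which exist because boundary classes span $\Cl(\M)_{\R}$.

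For the scaling identity I will prove the two inclusions separately. First take $E \in Q(D)$. Then $E$ has nonnegative coefficients and $[E]=[D]$. So $tE$ has nonnegative coefficients because $t>0$, and $[tE] = t[E] = t[D] = [tD]$ by linearity of the class map. Hence $tE \in Q(tD)$, which gives $t\cdot Q(D)\subseteq Q(tD)$. For the reverse inclusion, apply the same argument to $tD$ with scalar $t^{-1}>0$. This gives $t^{-1}Q(tD)\subseteq Q(D)$, and multiplying by $t$ gives $Q(tD)\subseteq t\cdot Q(D)$. Equivalently, $t\W_n = \W_n$ and $t\,\mathrm{Cone}_n=\mathrm{Cone}_n$, so $t\big((D+\W_n)\cap\mathrm{Cone}_n\big) = (tD+\W_n)\cap \mathrm{Cone}_n$.

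For subadditivity, take $E\in Q(D)$ and $E'\in Q(D')$. Their sum $E+E'$ has coefficients $a_I+a'_I\ge 0$, so it lies in $\mathrm{Cone}_n$. Its class is $[E]+[E'] = [D]+[D'] = [D+D']$. In the language of affine subspaces, $(D+\W_n)+(D'+\W_n) = D+D'+\W_n$ because $\W_n$ is a subspace. Therefore $E+E'\in Q(D+D')$. If either polytope is empty, the Minkowski sum is empty and the inclusion holds trivially.

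I expect no real obstacle. The only point needing care is that $Q$ is well defined on classes, which was established in the preceding lemma. Strict inclusion in the subadditivity statement is not claimed, so nothing further is needed.
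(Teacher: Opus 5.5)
Your proof is correct and follows essentially the same route as the paper. The paper notes that $\sum a_I D_I \in Q(D)$ if and only if $\sum t a_I D_I \in Q(tD)$, which is your pair of inclusions using $t$ and $t^{-1}$. It then proves subadditivity by adding the two nonnegative expressions coefficientwise, exactly as you do.
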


\begin{proof}
    For the first claim, we observe that for a set of coefficients $\{a_I\}_{I\in \II_n}$, we have:
    \begin{equation*}
    \sum_{I\in \II_n} a_I D_I\in Q(D) \quad \text{if and only if}\quad \sum_{I\in \II_n} ta_I D_I\in Q(tD).
    \end{equation*}
    This establishes the equality $Q(tD)= t\cdot Q(D)$.
    
    For the second claim, given $\sum_{I\in \II_n}a_ID_I\in Q(D)$ and $\sum_{I\in \II_n}a_I 'D_I\in Q(D')$, we have
    \begin{equation*}
    \sum_{I\in \II_n}(a_I+a_I')D_I\in Q(D+D').
    \end{equation*}
    This gives the inclusion $Q(D)+Q(D')\subseteq Q(D+D')$.
\end{proof}

\subsection{Pullback by forgetful maps}

Let $\pi \colon\overline{M}_{0,n+1}\to \M$ be the map forgetting the point marked $n+1$. The pullback by $\pi$ gives an injective linear map $\Cl(\M)_{\R}\to\Cl(\overline{M}_{0,n+1})_{\R}$. We can lift this to a linear map
\begin{equation*}
 \pi^*  \colon \V_n\to \V_{n+1}
\end{equation*}
by defining 
\begin{equation}
    \pi^*(D_I) = D_I+D_{I\cup\{n+1\}} = D_I+D_{[n]\setminus I}
\end{equation}
for all $I\in \II_n^+$ so that we get a commuting diagram

\begin{equation}\label{eq:pullback cd}
    \begin{tikzcd}
    \V_n \arrow[r,"\pi^*"] \arrow[d] & \V_{n+1} \arrow[d]\\
    \Cl(\M)_{\R} \arrow[r,"\pi^*"] & \Cl(\overline{M}_{0,n+1})_{\R},
\end{tikzcd}
\end{equation} 
where the vertical maps send a divisor to its linear equivalence class.

\begin{lem}
    The pullback map $\pi^*  \colon \V_n\to\V_{n+1}$ is injective with left-inverse
    \begin{equation*}
    \sum_{I\in \II_{n+1}}a_I D_I \mapsto \sum_{I\in \II_{n}}a_I D_I,
    \end{equation*}
    and restricts to an inclusion $\pi^*  \colon \W_n\to\W_{n+1}$.
\end{lem}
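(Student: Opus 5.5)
Injectivity and the left-inverse property come first, and they only need a careful look at the indexing. Index boundary divisors of $\overline{M}_{0,n+1}$ by $\II_{n+1}=\{I\subseteq[n] : 2\le |I|\le n-1\}$, i.e.\ by the side not containing $n+1$. For $I\in\II_n$ we have $I\subseteq[n-1]$ and $2\le |I|\le n-2$. Hence $\pi^*(D_I)=D_I+D_{[n]\setminus I}$, where $[n]\setminus I\ni n$ has size between $2$ and $n-2$. So $\pi^*(D_I)$ is a sum of two distinct basis vectors of $\V_{n+1}$, namely $D_I$ with $n\notin I$ and $D_{[n]\setminus I}$ with $n\in[n]\setminus I$.

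Now let $\rho$ be the proposed map, which sends $\sum_{I\in\II_{n+1}}a_ID_I$ to $\sum_{I\in\II_n}a_ID_I$. This keeps exactly the coordinates indexed by subsets of $[n-1]$ of size at most $n-2$. These are the same as the subsets $J\in\II_{n+1}$ with $n\notin J$, after we discard $J=[n-1]$, which is not in $\II_n$. For $I\in\II_n$ the term $D_{[n]\setminus I}$ contains $n$, so $\rho$ kills it, and $\rho(\pi^*(D_I))=D_I$. We should also check that no $D_{[n]\setminus I}$ coincides with some $D_{J}$ for $J\in\II_n$. This holds because $n$ lies on the $[n]$-side of one and not the other, and $D_{[n]\setminus I}=D_{I\cup\{n+1\}}$ is not equal to $D_J$ for $J\subseteq[n-1]$. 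By linearity $\rho\circ\pi^*=\mathrm{id}_{\V_n}$, so $\pi^*$ is injective.

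The statement about $\W$ follows from the commuting diagram \eqref{eq:pullback cd}. An element $E\in\W_n$ maps to $0$ in $\Cl(\M)_\R$. Commutativity then gives that the class of $\pi^*(E)$ in $\Cl(\overline{M}_{0,n+1})_\R$ equals $\pi^*[E]=0$, so $\pi^*(E)\in\W_{n+1}$. Injectivity on $\W_n$ is inherited from the injectivity just shown.

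The one real ingredient, and the point needing care, is the commutativity of \eqref{eq:pullback cd}. This is exactly equation \eqref{eqn:boundary_pullback} of Lemma \ref{lem:some_relations_for_kappa_psi_and_D_A}, $\pi^*_{n+1}(D_I)=D_I+D_{I\cup\{n+1\}}$, applied to each basis vector and extended linearly. As an alternative check, we could verify directly that each Keel vector $v_{i,j,k,\ell}$ maps to $v_{i,j,k,\ell}\in\W_{n+1}$. Here $T_{ab\mid cd}$ on $\overline{M}_{0,n}$ pulls back to $T_{ab\mid cd}$ on $\overline{M}_{0,n+1}$, since every $I\subseteq[n+1]$ separating $ab$ from $cd$ arises from exactly one $I\cap[n]$ by adding or not adding $n+1$. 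Either route suffices; I would cite \eqref{eqn:boundary_pullback} and mention the Keel-vector computation as confirmation.
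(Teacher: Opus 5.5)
Your proof is correct and follows the same route as the paper. You read off that the coefficient of $D_I$, $I\in\II_n$, in $\pi^*\bigl(\sum a_I D_I\bigr)$ is $a_I$, since the extra terms $D_{[n]\setminus I}$ are indexed by sets containing $n$, and you then deduce $\pi^*(\W_n)\subseteq\W_{n+1}$ from the commutativity of \eqref{eq:pullback cd}. Your explicit check that no $D_{[n]\setminus I}$ coincides with a $D_J$ for $J\in\II_n$, and your Keel-vector verification (which the paper carries out in Lemma \ref{lem:pullback keel vectors}), are useful extra detail but not a different argument.
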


\begin{proof}
    Let $\sum_{I\in \II_n}a_ID_I\in \V_n$. Then, we compute
    \begin{equation*}\pi^*\left( \sum_{I\in \II_n}a_ID_I \right) = \sum_{I\in \II_n}a_I(D_I+D_{[n]\setminus I}). \end{equation*}
    One sees that for any $I\in \II_n\subseteq\II_{n+1}$, the coefficient on $D_I$ in this expression is $a_I$, which establishes the first claim. The inclusion $\pi^*(\W_n)\subseteq \W_{n+1}$ follows from the commutativity of \eqref{eq:pullback cd} since then for $D\in \W_n$, we have 
    \begin{equation*}[\pi^*(D)] = \pi^*([D]) = \pi^*(0) = 0. \end{equation*}
\end{proof}

\begin{lem}\label{lem:permutations_of_keel_vectors}
Let $a,b,c,d\in[n]$ be distinct. Then, we have the equality of the following sets:
\begin{equation*}
\left\{
v_{p,q,r,s}:
(p,q,r,s)\text{ is a permutation of }(a,b,c,d)
\right\}
=
\left\{
v_{i,j,k,d}:
(i,j,k)\text{ is a permutation of }(a,b,c)
\right\}.
\end{equation*}
In particular, every Keel vector involving the index $d$ can be written
with $d$ in the fourth position.
\end{lem}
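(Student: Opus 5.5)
We prove this directly from the definition \eqref{eq:keel vector} of the Keel vectors by recording the symmetries of $T_{ab\mid cd}$ in \eqref{eqn:T_vectors_abcd}. The inclusion ``$\supseteq$'' is trivial, since each $v_{i,j,k,d}$ with $(i,j,k)$ a permutation of $(a,b,c)$ is itself of the form $v_{p,q,r,s}$. For ``$\subseteq$'' we must show that every $v_{p,q,r,s}$ with $\{p,q,r,s\}=\{a,b,c,d\}$ equals some $v_{i,j,k,d}$.

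\emph{Step 1: symmetries of $T$.} Because the sum in \eqref{eqn:T_vectors_abcd} runs over $I\in\II_n^+$ and $D_I=D_{I^c}$ as elements of $\V_n$, we have
\begin{equation*}
T_{ab\mid cd}=T_{ba\mid cd}=T_{ab\mid dc}.
\end{equation*}
Here we interpret the sum as running over unordered partitions $\{I,I^c\}$, i.e.\ over $\II_n$. Under that interpretation we also have $T_{ab\mid cd}=T_{cd\mid ab}$, since a partition with $a,b\in I$ and $c,d\notin I$ is the same as one with $c,d\in I^c$ and $a,b\notin I^c$. Consequently, $T$ depends only on the unordered pair-partition $\{\{a,b\},\{c,d\}\}$. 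There are exactly three such pair-partitions:
\begin{equation*}
A=\{ab\mid cd\},\quad B=\{ac\mid bd\},\quad C=\{ad\mid bc\}.
\end{equation*}

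\emph{Step 2: rewrite $v$ in terms of these.} By definition,
\begin{equation*}
v_{p,q,r,s}=T_{pq\mid rs}-T_{pr\mid qs}.
\end{equation*}
The pair-partition $\{pq\mid rs\}$ is the one pairing $p$ with $q$, and $\{pr\mid qs\}$ is the one pairing $p$ with $r$. These are two distinct elements of $\{A,B,C\}$, so every Keel vector on $\{a,b,c,d\}$ has the form $X-Y$ for an ordered pair $(X,Y)$ of distinct elements of $\{A,B,C\}$. That gives six possible values.

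\emph{Step 3: realize all six differences with $d$ last.} Using the symmetries from Step 1:
\begin{itemize}
\item $v_{a,b,c,d}=A-B$ and $v_{a,c,b,d}=B-A$;
\item $v_{b,c,a,d}=T_{bc\mid ad}-T_{ba\mid cd}=C-A$ and $v_{b,a,c,d}=A-C$;
\item $v_{c,b,a,d}=C-B$ and $v_{c,a,b,d}=B-C$.
\end{itemize}
All six ordered differences therefore occur among the $v_{i,j,k,d}$, which proves ``$\subseteq$''.

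The only real care point is Step 1. The index set $\II_n^+$ is redundant, so read literally the definition double-counts each divisor. We fix the convention that the sum is over unordered partitions, so each $D_I$ with $I$ satisfying the condition is counted once. The symmetry $T_{ab\mid cd}=T_{cd\mid ab}$ holds under either reading, because complementation $I\mapsto I^c$ is a bijection between the two index sets of the sums and preserves $D_I$.
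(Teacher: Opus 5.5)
Your proof is correct and follows the paper's argument essentially verbatim. The paper also sets $A=T_{ab\mid cd}$, $B=T_{ac\mid bd}$, $C=T_{ad\mid bc}$, uses $T_{ij\mid k\ell}=T_{k\ell\mid ij}$ (from $D_I=D_{I^c}$) to see that every Keel vector on $\{a,b,c,d\}$ is a difference of two distinct elements of $\{A,B,C\}$, and lists the same six vectors $v_{i,j,k,d}$ realizing all six differences.

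One small correction to your closing remark: the literal sum over $\II_n^+$ does not double-count. The conditions $a,b\in I$, $c,d\notin I$ are satisfied by exactly one of $I$ and $I^c$, so the two readings agree and no convention needs to be fixed.
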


\begin{proof}
Set
\begin{equation*}
A \coloneqq T_{ab\mid cd},\qquad
B \coloneqq T_{ac\mid bd},\qquad
C \coloneqq T_{ad\mid bc}.
\end{equation*}
Every Keel vector whose indices are supported on $\{a,b,c,d\}$ (i.e. indices that are permutations of $a,b,c,d$) is a difference of two distinct elements of $\{A,B,C\}$ since $D_I=D_{I^c}$ and consequently
\begin{equation*}
T_{ij \mid k \ell}=T_{k \ell \mid ij}
\end{equation*}
for $i,j,k,\ell \in [n]$ distinct. Keeping $d$ in the fourth position gives
\begin{equation*}
\begin{aligned}
v_{a,b,c,d}=A-B, &\quad&
v_{a,c,b,d}=B-A,\\
v_{b,a,c,d}=A-C, &\quad&
v_{b,c,a,d}=C-A,\\
v_{c,a,b,d}=B-C, &\quad&
v_{c,b,a,d}=C-B.
\end{aligned}
\end{equation*}
These are all six differences between $A,B,C$, proving the
claim.
\end{proof}

\begin{lem}\label{lem:pullback keel vectors}
    The quotient $\W_{n+1}/\pi^*(\W_n)$ is spanned by the Keel vectors $v_{i,j,k,n+1}$ for $i,j,k\in [n]$.
\end{lem}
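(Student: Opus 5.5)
By Lemma \ref{lem:Keels_quadruple_relations}, $\W_{n+1}$ is spanned by the Keel vectors $v_{i,j,k,\ell}$ with $i,j,k,\ell\in[n+1]$ distinct. I will split these into two kinds and show the first kind lies in $\pi^*(\W_n)$, while the second kind already has the required form.

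\emph{Keel vectors involving $n+1$.} Suppose $n+1\in\{i,j,k,\ell\}$. By Lemma \ref{lem:permutations_of_keel_vectors} applied with $d=n+1$, such a vector equals $v_{i',j',k',n+1}$ for some permutation $(i',j',k')$ of the remaining three indices, all of which lie in $[n]$. So these vectors are exactly the ones allowed in the statement.

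\emph{Keel vectors not involving $n+1$.} Suppose $i,j,k,\ell\in[n]$. I claim $\pi^*(v_{i,j,k,\ell})=v_{i,j,k,\ell}$, where the right side is now computed in $\V_{n+1}$. It suffices to check $\pi^*(T_{ij\mid k\ell})=T_{ij\mid k\ell}$, by comparing supports.
\begin{itemize}
\item On $\overline{M}_{0,n+1}$, a boundary divisor $D_J$ with $J\subseteq[n+1]$ separating $\{i,j\}$ from $\{k,\ell\}$ satisfies exactly one of $n+1\in J$ or $n+1\notin J$.
\item Write $I=J\setminus\{n+1\}\subseteq[n]$. Since $I$ contains two points and misses two points of $[n]$, we have $2\le|I|\le n-2$, so $I\in\II_n^+$.
\item Conversely, each such $I$ contributes $\pi^*(D_I)=D_I+D_{I\cup\{n+1\}}$. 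This produces both lifts, and each appears exactly once.
\end{itemize}
Summing over all such $I$ gives $\pi^*(T_{ij\mid k\ell})=T_{ij\mid k\ell}$ in $\V_{n+1}$. Since $\pi^*$ is linear, the same holds for the differences $v_{i,j,k,\ell}$.

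The step that needs the most care is this bijection of supports, in particular the index bookkeeping between the redundant set $\II^+$ and the irredundant set $\II$ via $D_I=D_{I^c}$. For example, $D_{I\cup\{n+1\}}=D_{[n]\setminus I}$, so unordered partitions must be counted consistently and none counted twice.

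Combining the two cases, every generator of $\W_{n+1}$ either lies in $\pi^*(\W_n)$ or is of the form $v_{i,j,k,n+1}$ with $i,j,k\in[n]$. Hence these vectors span the quotient $\W_{n+1}/\pi^*(\W_n)$.
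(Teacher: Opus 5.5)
Your proposal is correct and takes essentially the same route as the paper. You show that $\pi^*(v_{i,j,k,\ell})$ is the Keel vector $v_{i,j,k,\ell}$ of $\W_{n+1}$ when all four indices lie in $[n]$, and you use Lemma \ref{lem:permutations_of_keel_vectors} to move $n+1$ into the fourth slot otherwise, which gives $\W_{n+1}=\pi^*(\W_n)+\operatorname{span}\{v_{i,j,k,n+1}\}$.
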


\begin{proof} 
    The subspace $\pi^*(\W_n)\subseteq\W_{n+1}$ is spanned by the images of the Keel vectors $v_{i,j,k,\ell}\in \W_n$ for $i,j,k,\ell\in [n]$, which we compute to be 
    \begin{align*}
        \pi^*(v_{i,j,k,\ell}) & = 
    \sum_{\substack{I\in \II_n^+, i,j\in I\\ k,\ell\notin I}} (D_I+D_{I\cup\{n+1\}})
    -
    \sum_{\substack{I\in \II_n^+, i,k\in I\\ j,\ell\notin I}} (D_I+D_{I\cup\{n+1\}}) \\
    & = 
    \sum_{\substack{I\in \II_{n+1}^+, i,j\in I\\ k,\ell\notin I}} D_I
    -
    \sum_{\substack{I\in \II_{n+1}^+, i,k\in I\\ j,\ell\notin I}} D_I.
    \end{align*} 
    This is exactly the Keel vector $v_{i,j,k,\ell}\in \W_{n+1}$ with the same indices $i,j,k,\ell\in [n]\subseteq [n+1]$. In other words, $\pi^*(\W_n)$ is spanned by the Keel vectors $v_{i,j,k,\ell}$ for which none of the indices is $n+1$. 

    Now suppose that one of the four indices is $n+1$.  By
Lemma \ref{lem:permutations_of_keel_vectors}, every Keel vector
supported on this quadruple can be written in the form $v_{a,b,c,n+1}$ for some distinct $a,b,c\in[n]$. Consequently,
\begin{equation*}
\W_{n+1}
=
\pi^*(\W_n)
+
\operatorname{span}
\left\{
v_{i,j,k,n+1}\, \vert \,
i,j,k\in[n]\text{ distinct}
\right\}.
\end{equation*}
Passing to the quotient by $\pi^*(\W_n)$ gives the result.
\end{proof}

\begin{prop}\label{prop:pullback on polytopes}
    For any $D\in \V_n$, we have 
    \begin{equation*}
    \pi^*(Q(D)) = Q(\pi^*(D))\subseteq \V_{n+1}. \end{equation*}
    In other words, the polytopes $Q(D)\subseteq \V_n$ and $Q(\pi^*(D))\subseteq \V_{n+1}$ are identified by the embedding
    \begin{equation*}
    \pi^*  \colon \V_n\hookrightarrow \V_{n+1}.
    \end{equation*}
\end{prop}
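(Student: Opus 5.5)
The inclusion $\pi^*(Q(D))\subseteq Q(\pi^*(D))$ is the easy direction. If $E=\sum_{I\in\II_n}a_ID_I\in Q(D)$, then $E-D\in\W_n$, so $\pi^*(E)-\pi^*(D)\in\pi^*(\W_n)\subseteq\W_{n+1}$ by the preceding lemma. Moreover $\pi^*(E)=\sum a_I(D_I+D_{[n]\setminus I})$ has nonnegative coefficients. Hence $\pi^*(E)\in Q(\pi^*(D))$.

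The substance is the reverse inclusion. Take $F\in Q(\pi^*(D))$. Since $F-\pi^*(D)\in\W_{n+1}$, Lemma \ref{lem:pullback keel vectors} gives a decomposition
\begin{equation*}
F=\pi^*(D+w)+u,
\end{equation*}
where $w\in\W_n$ and $u$ is a linear combination of Keel vectors $v_{i,j,k,n+1}$. We want to show that $u$ can be taken to be $0$.

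To do this I would introduce the linear functional on $\V_{n+1}$ that measures failure to lie in $\operatorname{im}\pi^*$. The image of $\pi^*$ consists exactly of those $\sum_{I\in\II_{n+1}}c_ID_I$ with $c_{I}=c_{[n]\setminus I}$ whenever both $I$ and $[n]\setminus I$ lie in $\II_{n+1}$, together with $c_{[n]\setminus\{m\}}=0$ for the divisors $D_{\{m,n+1\}}=D_{[n]\setminus\{m\}}$, which do not appear in any pullback. The plan is then:
\begin{enumerate}
\item Compute the coordinates of $v_{i,j,k,n+1}$ on the divisors $D_{\{m,n+1\}}$. The only such divisor in $T_{ij\mid k,n+1}$ is $D_{\{k,n+1\}}$, and the only one in $T_{ik\mid j,n+1}$ is $D_{\{j,n+1\}}$. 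So $v_{i,j,k,n+1}$ has coefficient $+1$ on $D_{\{k,n+1\}}$ and $-1$ on $D_{\{j,n+1\}}$.
\item Show that $\W_{n+1}=\pi^*(\W_n)\oplus U$, where $U$ is spanned by the $v_{i,j,k,n+1}$ and maps isomorphically onto its pattern of $D_{\{m,n+1\}}$-coefficients. This is a dimension check: $\dim\W_{n+1}-\dim\W_n=\binom{n}{2}-\binom{n-1}{2}=n-1$, which equals the dimension of the space of vectors in $\R^n$ with coordinate sum $0$.
\item Since the coordinates $D_{\{m,n+1\}}$ of $\pi^*(D+w)$ are all zero, the $D_{\{m,n+1\}}$-coefficients of $F$ are exactly those of $u$. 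These coefficients are nonnegative (as $F$ is effective) and sum to $0$ (each Keel vector has coefficient sum $0$ on these divisors by step 1). So they all vanish, and step 2 then forces $u=0$.
\end{enumerate}
Thus $F=\pi^*(E)$ with $E=D+w\in D+\W_n$. By the left-inverse lemma, the coefficients of $E$ are a subset of those of $F$, hence nonnegative. Therefore $E\in Q(D)$.

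The main obstacle is step 2: showing that the $v_{i,j,k,n+1}$ contribute nothing to $\W_{n+1}$ beyond $\pi^*(\W_n)$ other than what is detected on the $D_{\{m,n+1\}}$ coordinates. Equivalently, $\W_{n+1}\cap\operatorname{im}\pi^*=\pi^*(\W_n)$. I would argue this directly. If $\pi^*(E')\in\W_{n+1}$, then by the commutative diagram \eqref{eq:pullback cd} we get $\pi^*([E'])=0$, and injectivity of $\pi^*$ on $\Cl_{\R}$ gives $E'\in\W_n$. With this intersection statement, step 2 reduces to checking that the map from $\W_{n+1}$ to the $D_{\{m,n+1\}}$-coordinates has kernel contained in $\operatorname{im}\pi^*$. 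I would verify this by noting that $v_{i,j,k,n+1}-v_{i',j,k,n+1}$-type combinations with zero $D_{\{m,n+1\}}$-part can be rewritten as pullbacks, or more simply by comparing dimensions: the kernel has dimension at least $\dim\W_{n+1}-(n-1)=\dim\W_n$ and contains $\pi^*(\W_n)$, and one must rule out a larger kernel by checking that the coordinate map is surjective onto the sum-zero space, which step 1 exhibits.
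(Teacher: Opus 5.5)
Your proposal is correct and follows the paper's proof. The $D_{\{m,n+1\}}$-coordinates vanish on $\operatorname{im}\pi^*$, and the Keel vectors $v_{i,j,k,n+1}$ map to $1_k-1_j$. Surjectivity onto the sum-zero space, together with the count $\dim\W_{n+1}-\dim\W_n=n-1$, shows that the kernel of this coordinate map on $\W_{n+1}$ is exactly $\pi^*(\W_n)$, so effectivity forces $F-\pi^*(D)\in\pi^*(\W_n)$.

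The only slip is in Step 2. The span of \emph{all} the $v_{i,j,k,n+1}$ is in fact all of $\W_{n+1}$ (by the proof of Lemma \ref{lem:injectivity of projection}), so it is not a complement to $\pi^*(\W_n)$. You should therefore conclude $u\in\pi^*(\W_n)$ and absorb $u$ into $w$, rather than $u=0$; this also makes your separate intersection argument unnecessary.
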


\begin{proof}
    For any $D\in \V_n$, we have $\pi^*(D+\W_n) = \pi^*(D)+\pi^*(\W_{n})$. Since $\pi^*$ is injective, we have
    \begin{equation*}
        \pi^*(Q(D)) = \pi^*((D+\W_n)\cap \mathrm{Cone}_n)
        = (\pi^*(D)+ \pi^*(\W_n)) \cap \pi^*(\mathrm{Cone}_n).
    \end{equation*}
    The inclusion $\pi^*(Q(D))\subseteq Q(\pi^*(D)) = (\pi^*(D)+\W_{n+1})\cap \mathrm{Cone}_{n+1}$ follows from this description plus the inclusions $\pi^*(\W_n)\subseteq \W_{n+1}$ and $\pi^*(\mathrm{Cone}_n)\subseteq \mathrm{Cone}_{n+1}$.

    For the other inclusion, we observe that $\pi^*(\V_n)\subseteq \V_{n+1}$ is contained in the coordinate subspace of vectors $\sum_{I\in \II_{n+1}}a_I D_I\in \V_{n+1}$ for which $a_{[n]\setminus\{i\}} = 0$ for every $i=1,\dots,n$. This is because such terms cannot appear in the expression $\pi^*(D_I) = D_I+D_{[n]\setminus I}\in \V_{n+1}$ for any $I\in \II_n$ since $ \vert I  \vert \geq 2$. In particular, $\pi^*(D)+\pi^*(\W_n)$ is contained in the coordinate subspace on which these coefficients are zero.

    Let $\xi  \colon \W_{n+1}\longrightarrow \R^n$ be the projection onto the special coordinates indexed by $D_{[n]\setminus\{i\}}$, $i\in[n]$. Since $\xi$ vanishes on $\pi^*(\W_n)$, it induces a map
    \begin{equation*}
    \overline{\xi}  \colon \W_{n+1}/\pi^*(\W_n)\longrightarrow \overline{H} \coloneqq \left\{(c_1,\ldots,c_n)\in\R^n\ \bigg \vert \ \sum_i c_i=0\right\}.
    \end{equation*}
    Moreover, we have
    \begin{equation*}
    \overline{\xi}\left (v_{i,j,k,n+1}\right)=1_k-1_j
    \end{equation*}
    where $1_j$ and $1_k$ are standard basis elements of $\R^n$.
    Hence, by Lemma \ref{lem:pullback keel vectors}, the induced map $\overline{\xi}$ is surjective. Since
    \begin{equation*}
    \dim \left(\W_{n+1}/\pi^*(\W_n) \right) = \left(\binom{n}{2}-1\right) - \left(\binom{n-1}{2}-1\right) =n-1 =\dim \overline{H},
    \end{equation*}
    it is an isomorphism. Hence, every nonzero class in the quotient has a nonzero coordinate vector in $\overline{H}$, and such a vector must have a negative coordinate as sum of the coordinates in $\overline{H}$ is zero.
    
    Now, let $q\in Q(\pi^*(D))$ and write $q=\pi^*(D)+w$ with $w\in\W_{n+1}$. The special coordinates of $q$ are those of $w$, and they are nonnegative because $q$ is effective. The preceding paragraph therefore implies that $w=0$ in $\W_{n+1}/\pi^*(\W_n)$, so $w\in\pi^*(\W_n)$. Thus $q=\pi^*(q_0)$ for some $q_0\in D+\W_n$. Applying the left inverse of $\pi^*$ shows that $q_0$ has nonnegative coefficients, hence $q_0\in Q(D)$. Therefore $q\in\pi^*(Q(D))$, proving the reverse inclusion.
\end{proof}

\subsection{A graphical parametrization of $Q(D)$}

Since $Q(D)$ is contained in the translate $D+\W_n$ of $\W_n\subseteq \V_n$, we have $\dim(Q(D))\leq \dim(\W_n) = {n-1\choose 2}-1$, much smaller than the dimension $2^{n-1}-n-1$ of the ambient space $\V_n$. For this reason, we will describe certain projections of $\V_n$ that preserve the structure of $Q(D)$. 

Let $\E_{n-1} = \{ \{i,j\} \,  \vert \, 1\leq i < j \leq n-1 \}$ denote the edge set of the complete graph $K_{n-1}$. We have $\E_{n-1}\subseteq \II_n$, so we think of $e \in\E_{n-1}$ as indexing the boundary divisor $D_e$ on $\M$. Define the following linear projection of $\V_n$ onto $\R^{\E_{n-1}}$:
\begin{equation*}
\rho_n  \colon \V_n\to \R^{\E_{n-1}} \quad \text{with}\quad \rho_n\left(\sum_{I\in \II_n} a_I D_I\right) \coloneqq   (a_{e})_{e \in \E_{n-1}}. 
\end{equation*} 
We identify $\R^{\E_{n-1}}$ with the coordinate subspace of $\V_n$ spanned by $D_{e}$ for $e \in \E_{n-1}$ so that we may write $\rho_n$ as the coordinate projection 
\begin{equation*}\rho_n\left(\sum_{I\in \II_n} a_I D_I\right)= \sum_{e \in \E_{n-1}}a_{e}D_{e}. \end{equation*}

\begin{lem}\label{lem:injectivity of projection}
    The restriction $\rho_n  \colon \W_n\to \R^{\E_{n-1}}$ is injective, and the image $\rho_n(\W_n)\subseteq \R^{\E_{n-1}}$ is the hyperplane defined by $\sum_{e \in \E_{n-1}}a_{e}=0$.
\end{lem}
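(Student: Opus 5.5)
We need to prove that $\rho_n$ restricted to $\W_n$ is injective with image the hyperplane $\sum_e a_e = 0$. The plan is to use the dimension count $\dim \W_n = \binom{n-1}{2}-1$, which equals the dimension of that hyperplane in $\R^{\E_{n-1}}$. So it suffices to establish two things: that the image is contained in the hyperplane, and that $\rho_n|_{\W_n}$ is injective (or, equivalently, that it is surjective onto the hyperplane).

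Containment is straightforward. Every $\{i,j\}$ with $i,j\le n-1$ lies in $\II_n$ and is a size-two set. We compute $\rho_n(v_{i,j,k,\ell})$ directly. In $T_{ij\mid k\ell}$, the only size-two terms not containing $n$ are $D_{\{i,j\}}$ (if $n\notin\{i,j\}$) and $D_{\{k,\ell\}}$ (if $n\notin\{k,\ell\}$). We may also need to account for $D_{I}$ with $|I^c|=2$: such a set lies in $\II_n$ representing $I^c$ when $n\in I$, so the size-two coordinate indexed by $\{k,\ell\}$ corresponds to $I=[n]\setminus\{k,\ell\}$.

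By Lemma \ref{lem:permutations_of_keel_vectors}, we can take $\ell=n$ or all four indices $<n$. With all four indices below $n$, $\rho_n(v_{i,j,k,\ell}) = D_{ij}+D_{k\ell}-D_{ik}-D_{j\ell}$. With $\ell=n$, $\rho_n(v_{i,j,k,n}) = D_{ij}-D_{ik}$. Both images have coefficient sum $0$, and since Keel vectors span $\W_n$, the image lies in the hyperplane.

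For equality, observe that the vectors $D_{ij}-D_{ik}$ for distinct $i,j,k\in[n-1]$ span the sum-zero hyperplane of $\R^{\E_{n-1}}$. Any two edges of $K_{n-1}$ are connected by a chain of edges sharing vertices, and the differences $e-e'$ along such chains generate all of the hyperplane. This uses $n-1\ge 3$, which holds since $n\ge4$. So $\rho_n(\W_n)$ is the full hyperplane, of dimension $\binom{n-1}{2}-1=\dim\W_n$, and rank–nullity gives injectivity.

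The main point needing care is the value $\dim\W_n=\binom{n-1}{2}-1$. This follows from the exact sequence \eqref{eq:projection onto classgroup} together with $\operatorname{rank}\Pic = 2^{n-1}-\binom n2-1$, and is already stated in the paper. Beyond that, the only real obstacle is correctly identifying which $D_I$ in $T_{ij\mid k\ell}$ project to edge coordinates under the $\II_n$ indexing convention. The chain argument itself is routine.
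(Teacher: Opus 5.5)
Your proposal is correct and follows essentially the same route as the paper: you compute $\rho_n$ on Keel vectors (reducing to $\ell=n$ or to all indices in $[n-1]$ via Lemma~\ref{lem:permutations_of_keel_vectors}), observe that the images have coefficient sum zero, show that the differences $D_{\{i,j\}}-D_{\{i,k\}}$ span the sum-zero hyperplane, and then get injectivity from $\dim\W_n=\binom{n-1}{2}-1$. The only cosmetic difference is that the paper writes the non-adjacent differences explicitly as $D_{\{i,j\}}-D_{\{k,\ell\}}=\rho_n(v_{i,j,k,n}+v_{k,i,\ell,n})$, which is your telescoping-along-a-chain argument for a chain of length two.
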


\begin{proof}
    We compute the projection to $\R^{\E_{n-1}}$ of the Keel vectors $v_{i,j,k,\ell}\in \W_n$. If one of the indices $i,j,k,\ell$ is $n$, then without loss of generality we may assume that $\ell=n$ by Lemma \ref{lem:permutations_of_keel_vectors} and $i,j,k\in [n-1]$. In this case, we have
    \begin{equation*}\rho_n(v_{i,j,k,n}) = \rho_n\left(\sum_{\substack{I\in \II_n^+,i,j\in I\\ k,n\notin I}} D_I - \sum_{\substack{I\in \II_n^+,i,k\in I\\ j,n\notin I}} D_I \right) = D_{\{i,j\}}-D_{\{i,k\}}. \end{equation*}
    On the other hand if $i,j,k,\ell\in [n-1]\subseteq[n]$, we have
    \begin{equation*}\rho_n(v_{i,j,k,\ell}) = \rho_n\left(\sum_{\substack{i,j\in I\\ k,\ell\notin I}} D_I - \sum_{\substack{i,k\in I\\ j,\ell\notin I}} D_I \right) = D_{\{i,j\}}+D_{\{k,\ell\}}-D_{\{i,k\}}-D_{\{j,\ell\}}. \end{equation*}
    In either case, $\rho_n(v_{i,j,k,\ell})$ lies in the subspace defined by  $\sum_{e \in \E_{n-1}}a_{e} = 0$. Since these vectors span $\W_n$, we may conclude that $\rho_n(\W_n)$ is contained in this hyperplane as well. 
    
    The other containment follows from the observation that the hyperplane $\sum_{e \in \E_{n-1}}a_{e}=0$ is spanned by the vectors 
    \begin{equation*}
    \rho_n(v_{i,j,k,n}) = D_{\{i,j\}}-D_{\{i,k\}}
    \end{equation*}
    for $i,j,k\in [n-1]$ and
    \begin{equation*}\rho_n(v_{i,j,k,n}+v_{k,i,\ell,n}) = D_{\{i,j\}}-D_{\{k,\ell\}}\end{equation*}
    for $i,j,k,\ell\in [n-1]$. Finally, since $\dim(\W_n)={n-1\choose 2}-1 = \dim(\R^{\E_{n-1}})-1$, the restriction of $\rho$ to $\W_n$ is injective as claimed.
\end{proof}

\begin{defn}
    For any divisor $D$ on $\overline{M}_{0,n}$, we define the polytope
    \begin{equation*}
    P(D) \coloneqq  \rho_n(Q(D))
    \end{equation*} to be the projection of $Q(D)$ onto $\R^{\E_{n-1}}$.
\end{defn}

\begin{cor}\label{cor:projection_Q_to_P_is_affine_iso}
For every divisor $D$ on $\M$, the coordinate projection $\rho_n  \colon Q(D)\longrightarrow P(D)$
is an affine isomorphism.  In particular, if $Q(D)$ is nonempty, then
$\dim Q(D)=\dim P(D)$.
\end{cor}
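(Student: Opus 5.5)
The idea is to derive the corollary directly from Lemma \ref{lem:injectivity of projection}, using that $Q(D)$ sits inside a single affine translate of $\W_n$.

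First, recall that $Q(D)=(D+\W_n)\cap\mathrm{Cone}_n$ lies in the affine subspace $D+\W_n$, where $D$ denotes any fixed element of $\V_n$ representing the class $[D]$. The projection $\rho_n$ is linear on $\V_n$, so on this translate it is given by
\begin{equation*}
\rho_n(D+w)=\rho_n(D)+\rho_n(w), \qquad w\in\W_n .
\end{equation*}
Lemma \ref{lem:injectivity of projection} says that $\rho_n|_{\W_n}$ is injective. Hence $\rho_n$ restricted to $D+\W_n$ is an injective affine map. Its image is the affine hyperplane
\begin{equation*}
\rho_n(D)+\Big\{\textstyle\sum_{e\in\E_{n-1}}a_e=0\Big\},
\end{equation*}
and the restriction is an affine isomorphism onto that hyperplane. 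Its inverse is the affine map $\rho_n(D)+u\mapsto D+(\rho_n|_{\W_n})^{-1}(u)$.

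Next, restrict this affine isomorphism to the subset $Q(D)\subseteq D+\W_n$. Since $P(D)$ is defined as $\rho_n(Q(D))$, the map $\rho_n\colon Q(D)\to P(D)$ is surjective by definition. It is injective because it is the restriction of an injective map. It is an affine isomorphism because it is the restriction of one between ambient affine spaces, with inverse the restriction of the inverse map above. The convex and face structure is therefore transported. When $Q(D)\neq\varnothing$, the affine hulls correspond under this map, so $\dim Q(D)=\dim P(D)$.

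No step is a real obstacle. The only point needing care is that $\rho_n$ is injective on the affine translate and not merely on the linear subspace $\W_n$. This follows because two points $D+w,\ D+w'$ with the same image satisfy $\rho_n(w-w')=0$ with $w-w'\in\W_n$, which forces $w=w'$.
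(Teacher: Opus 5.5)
Your proposal is correct and takes essentially the paper's route. Surjectivity holds by the definition of $P(D)$, and injectivity follows from Lemma \ref{lem:injectivity of projection} applied to the difference $q-q'\in\W_n$ of two points of $Q(D)\subseteq D+\W_n$; your extra observation that $\rho_n$ maps the whole translate $D+\W_n$ isomorphically onto a hyperplane is a harmless strengthening that makes the affine-isomorphism and dimension claims immediate.
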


\begin{proof}
Surjectivity holds by the definition $P(D)=\rho_n(Q(D))$.  Now suppose for
$q,q'\in Q(D)$, we have $\rho_n(q)=\rho_n(q')$.  Since $q$ and $q'$ represent the same divisor class, $q-q'\in\W_n$.  Moreover,
$\rho_n(q-q')=0$.  Lemma \ref{lem:injectivity of projection} gives
$q-q'=0$, so $q=q'$. Thus, the restriction is bijective, and since it is the restriction of a linear map, it is an affine isomorphism.
\end{proof}

The following proposition describes the inverse of this projection map, expressing the coefficients of the remaining boundary divisors in a boundary expression for $[D]$ in terms of the coefficients on $\{ D_e\}_{e \in \E_{n-1}}$. First, we introduce some notation. For a divisor $D$ on $\M$, let $\alpha = \alpha(D)$ and $\beta_I = \beta(D)$, indexed $I\in \II_n$ with $ \vert I  \vert \geq 3$, be the coefficients in the expression of $[D]$ in the Kapranov basis:
\begin{equation*}[D] = \alpha \psi_n + \sum_{I\in \II_n, \vert I  \vert \geq 3} \beta_I [D_I]. \end{equation*}
\begin{rem}
To simplify notation, we also define $\beta_e = \beta_e(D)= 0$ for all $e \in \E_{n-1}$ and all divisors $D$.
\end{rem}

\begin{prop}\label{prop:inverse to projection}
    The projection $\rho_n  \colon Q(D)\to P(D)$ is a bijection with inverse $\iota_n  \colon P(D)\to Q(D)$ given by
    \begin{equation*}\iota  \colon (a_e)_{e \in \E_{n-1}}\mapsto \sum_{I\in \II_n}(\beta_I + \mathbf{a}(\E_I))D_I. \end{equation*}
\end{prop}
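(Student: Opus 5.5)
We already know from Corollary \ref{cor:projection_Q_to_P_is_affine_iso} that $\rho_n\colon Q(D)\to P(D)$ is a bijection. It therefore suffices to show one thing: for every $q=\sum_{I}a_ID_I\in Q(D)$, the coefficients satisfy $a_I=\beta_I+\mathbf{a}(\E_I)$ for all $I\in\II_n$, where $\mathbf{a}(\E_I)=\sum_{e\in\E_I}a_e$. This says that the map $\iota_n$ in the statement, composed with $\rho_n$, recovers $q$. Since $\beta_e=0$ and $\mathbf{a}(\E_e)=a_e$, the identity is automatic for $|I|=2$. The real content is the case $|I|\geq 3$.

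The plan is to build one explicit element of $D+\W_n$ and then correct it by an element of $\W_n$ determined by its pair coordinates. The starting point is the Kapranov expression $[D]=\alpha\psi_n+\sum_{|I|\geq3}\beta_I[D_I]$. For each $e=\{i,j\}\in\E_{n-1}$, equation \eqref{eqn:psi_wrt_divisors} (with $k=n$ and $i,j$ the two excluded points) gives $\psi_n=\sum_{I\in\II_n,\ i,j\in I}[D_I]$, after passing to complements so that $n\notin I$. Averaging over all $\binom{n-1}{2}$ pairs with weights $c_e\geq$ arbitrary and $\sum_e c_e=\alpha$, we get
\begin{equation*}
\alpha\psi_n=\sum_{I\in\II_n}\Big(\sum_{e\in\E_I}c_e\Big)[D_I].
\end{equation*}
Hence, for any $c\in\R^{\E_{n-1}}$ with $\sum_e c_e=\alpha$, the vector $E_c:=\sum_{I}(\beta_I+\mathbf{c}(\E_I))D_I$ lies in $D+\W_n$, and $\rho_n(E_c)=c$.

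Now take $q\in Q(D)$ and put $c=\rho_n(q)$. Every element of $D+\W_n$ has the same total pair-coordinate sum: this follows from Lemma \ref{lem:injectivity of projection}, since $\rho_n(\W_n)$ lies in the hyperplane $\sum_e a_e=0$. Applying this with the particular element $E_{c'}$ for any admissible $c'$ gives $\sum_e c_e=\alpha$, so $E_c$ is defined. Then $q-E_c\in\W_n$ and $\rho_n(q-E_c)=0$, so injectivity in Lemma \ref{lem:injectivity of projection} gives $q=E_c$. This is exactly the formula $q=\iota_n(\rho_n(q))$, and combined with the bijectivity of $\rho_n$ it shows that $\iota_n$ is the inverse.

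The main obstacle is purely bookkeeping. One must check that the complement convention in \eqref{eqn:psi_wrt_divisors} really produces the sets $I\subseteq[n-1]$ containing both $i$ and $j$, with each $D_I$ counted once in $\II_n$. The potential pitfall is the sets with $k\in I$: for $k=n$, the condition $n\in I$ with $i,j\notin I$ becomes $i,j\in I^c$ and $n\notin I^c$. So the sum is over $J=I^c\in\II_n$ with $i,j\in J$. I would verify this explicitly, and also note that the resulting coefficient $\binom{|I|}{2}$ from summing over pairs is consistent with the computation in Lemma \ref{lem:kappa in kapranov basis}.
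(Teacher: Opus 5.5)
Your proposal is correct and is essentially the paper's own argument. The paper likewise shows that $\sum_{I\in\II_n}(\beta_I+\mathbf{a}(\E_I))D_I$ is linearly equivalent to $D$, and then concludes by injectivity of $\rho_n$ on $D+\W_n$. Its computation expands each $[D_e]$ via Lemma \ref{lem:Djk_basis_decomp}, which is just your weighted sum of the boundary expressions for $\psi_n$ rearranged; it gets $\mathbf{a}(\E_{n-1})=\alpha$ from Lemma \ref{lem:image of translation of W}, which you reprove inline.
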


Before the proof we need two lemmas.

\begin{lem}\label{lem:Djk_basis_decomp}
    For any $e \in \E_{n-1}$, the expression for $[D_e]$ in the Kapranov basis centered at $n^{\text{th}}$ marked point is
    \begin{equation}\label{eq:D_e in Kapranov}
        [D_e] = \psi_n - \sum_{\substack{I\in \II_n, \vert I \vert \geq 3\\ e\subseteq I}}[D_I]. 
    \end{equation}
\end{lem}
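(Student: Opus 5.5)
Write $e=\{j,k\}\subseteq[n-1]$. The plan is to start from the boundary expression \eqref{eqn:psi_wrt_divisors} for $\psi_n$ and choose the two excluded indices to be exactly $j$ and $k$. Taking the distinguished point to be $n$ and $i,j$ there to be our $j,k$ gives
\begin{equation*}
\psi_n=\sum_{\substack{I\in\II_n^+:\, n\in I\\ j,k\notin I}}[D_I].
\end{equation*}

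Next we re-index this sum in $\II_n$, using $D_I=D_{I^c}$. The index $J=I^c$ runs over subsets of $[n-1]$ with $j,k\in J$ and $2\leq |J|\leq n-2$. This is precisely the set of $J\in\II_n$ with $e\subseteq J$. Hence
\begin{equation*}
\psi_n=\sum_{\substack{J\in\II_n\\ e\subseteq J}}[D_J].
\end{equation*}

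Now split off the unique term with $|J|=2$, namely $J=e$, and move the rest to the other side. This yields \eqref{eq:D_e in Kapranov} directly, since the right-hand side is visibly written in the Kapranov basis $\{\psi_n\}\cup\{[D_I]: I\in\II_n,\ |I|\geq 3\}$.

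Uniqueness of Kapranov basis coefficients then shows this is \emph{the} expression. The only care needed is the bookkeeping of complements: that $n\in I$ forces the complement to lie in $[n-1]$, and that the size bounds on $\II_n^+$ translate to those of $\II_n$. There is no real obstacle. If one prefers to avoid \eqref{eqn:psi_wrt_divisors}, an alternative would be a Keel relation $v_{j,k,\ell,n}$ argument, but the direct substitution above suffices.
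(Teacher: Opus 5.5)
Your proposal is correct and matches the paper's proof. Both apply \eqref{eqn:psi_wrt_divisors} with the two excluded markings taken to be the endpoints of $e$, re-index via complements to a sum over $I\in\II_n$ with $e\subseteq I$, and isolate the single term with $|I|=2$.
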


\begin{proof}
    For $e=\{i,j\}\in \E_{n-1}$, we may express $\psi_n$ as
    \begin{equation*}\psi_n = \sum_{\substack{I\in \II_n^+, i,j\notin I\\ n\in I}} [D_I] = \sum_{I\in \II_n, i,j\in I} [D_I] = \sum_{I\in \II_n, e\subseteq I} [D_I]
    \end{equation*}
    by Lemma \ref{lem:some_relations_for_kappa_psi_and_D_A}. The desired formula is given by isolating $[D_e]$.
\end{proof}

\begin{lem}\label{lem:image of translation of W}
    The subset $\rho_n(D+\W_n) \subseteq \R^{\E_{n-1}}$ is the hyperplane defined by $\mathbf{a}(\E_{n-1}) = \alpha(D)$.
\end{lem}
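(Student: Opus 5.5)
We need to show that $\rho_n(D+\W_n)$ is exactly the hyperplane $\mathbf{a}(\E_{n-1})=\alpha(D)$. The plan is to find one point of $\rho_n(D+\W_n)$ and then translate by $\rho_n(\W_n)$, using Lemma \ref{lem:injectivity of projection}. Since $\rho_n$ is linear, $\rho_n(D+\W_n)=\rho_n(D')+\rho_n(\W_n)$ for any representative $D'\in\V_n$ of the class $[D]$. By Lemma \ref{lem:injectivity of projection}, $\rho_n(\W_n)$ is the linear hyperplane $\mathbf{a}(\E_{n-1})=0$. It therefore suffices to exhibit a single $D'\in\V_n$ with $[D']=[D]$ and $\mathbf{a}(\E_{n-1})=\alpha(D)$ for $\rho_n(D')$; the translate of a linear hyperplane through such a point is exactly the claimed affine hyperplane.

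To build $D'$, I would start from the Kapranov basis expression
\begin{equation*}
[D]=\alpha\psi_n+\sum_{I\in\II_n,\vert I\vert\geq 3}\beta_I[D_I].
\end{equation*}
Fix any edge $e_0\in\E_{n-1}$. By Lemma \ref{lem:Djk_basis_decomp}, we have $\psi_n=[D_{e_0}]+\sum_{I\in\II_n,\vert I\vert\geq3,\, e_0\subseteq I}[D_I]$. Substituting this gives the formal boundary combination
\begin{equation*}
D'=\alpha D_{e_0}+\sum_{\substack{I\in\II_n,\vert I\vert\geq 3\\ e_0\subseteq I}}\alpha D_I+\sum_{I\in\II_n,\vert I\vert\geq 3}\beta_I D_I\in\V_n,
\end{equation*}
whose class is $[D]$. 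Its only coordinate indexed by a pair is the coefficient $\alpha$ on $D_{e_0}$. Hence $\rho_n(D')=\alpha D_{e_0}$, and the coordinate sum is $\alpha(D)$.

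Alternatively, one can use the symmetric expression $\psi_n=\binom{n-1}{2}^{-1}\sum_{e}\sum_{I\supseteq e}[D_I]$ to get a more canonical point, but this is not needed. There is essentially no obstacle here. The only point to check is that the Kapranov expansion is compatible with $\V_n$, meaning that $D'$ genuinely lies in $D+\W_n$. This holds because any two elements of $\V_n$ with the same class differ by an element of $\W_n$, by the exact sequence \eqref{eq:projection onto classgroup}.
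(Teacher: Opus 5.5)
Your proposal is correct and is essentially the paper's own proof. The paper also uses Lemma \ref{lem:injectivity of projection} to reduce to finding a single point of $D+\W_n$ whose pair coordinates sum to $\alpha(D)$. It then takes $D'=\sum_{I\in\II_n,\,\{1,2\}\subseteq I}\alpha D_I+\sum_{I\in\II_n,\,\vert I\vert\geq 3}\beta_I D_I$, which is your construction with $e_0=\{1,2\}$, and observes via Lemma \ref{lem:Djk_basis_decomp} that $\rho_n(D')=\alpha D_{\{1,2\}}$.
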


\begin{proof}
    By Lemma \ref{lem:injectivity of projection}, it suffices to check that $\rho_n(D')$ is contained in this subspace for some $D'\in D+\W_n$. For the edge $\{1,2\} \in \E_{n-1}$, we choose
    \begin{equation*}D' = \sum_{I\in \II_n, \{1,2\}\subseteq I}\alpha D_I + \sum_{I\in \II_n, \vert I  \vert \geq 3}\beta_I D_I \end{equation*}
    which is in $D+\W_n$ by Lemma \ref{lem:Djk_basis_decomp}. The only $e \in \E_{n-1}$ for which $D_e$ appears in this expression is $e= \{1,2\}$, so we have $\rho_n(D') = \alpha D_{\{1,2\}}$. This point is clearly contained in the hyperplane $\sum_{e \in \E_{n-1}}a_e = \alpha$, which completes the proof.
\end{proof}

\begin{proof}[Proof of Proposition \ref{prop:inverse to projection}]
    Let $(a_e)_{\E_{n-1}}\in P(D)$. Then, for any $e \in \E_{n-1}$, it is clear that the coefficient on $D_e$ in the linear combination $D' = \iota((a_e)_{e \in \E_{n-1}})\in \V_n$ is $a_e$. Since the restriction
    \begin{equation*}
    \rho_n\vert_{D+\W_n}  \colon D+\W_n\longrightarrow \rho_n(D+\W_n) \subseteq \R^{\E_{n-1}}
    \end{equation*}
    is a bijection,  it suffices to show that $D' \in D+\W_n$, i.e. that $D'$ is linearly equivalent to $D$. We do this by computing the coefficients of $[D']$ in the Kapranov basis. Using Lemma \ref{lem:Djk_basis_decomp} we can compute
    \begin{align*}
        [D'] & = \sum_{I\in \II_n}(\beta_I + \sum_{e\subseteq I}a_e)[D_I] \\
        & = \sum_{e \in \E_{n-1}}a_e [D_e]+ \sum_{I\in \II_n, \vert I  \vert \geq 3}(\beta_I + \sum_{e\subseteq I}a_e)[D_I]\\
        & = \sum_{e \in \E_{n-1}}a_e \left(\psi_n - \sum_{I\in \II_n, e\subsetneq I}[D_I]\right)+ \sum_{I\in \II_n, \vert I  \vert \geq 3}(\beta_I + \sum_{e\subseteq I}a_e)[D_I]\\
        & = \left( \sum_{e \in \E_{n-1}}a_e \right)\psi_n + \sum_{I\in \II_n, \vert I  \vert \geq 3}\beta_I [D_I],
    \end{align*} 
    where in the final equality we have switched the order of summation to cancel all terms of the form $a_e[D_I]$ with $e\subsetneq I$. Since $(a_e)_{e \in \E_{n-1}}\in P(D) \subseteq \rho_n(D+\W_n)$, we have by Lemma \ref{lem:image of translation of W} that $\alpha = \sum_{e \in \E_{n-1}}a_e$. This shows that the expression for $[D']$ in the Kapranov basis is the same as that for $[D]$, completing the proof.
\end{proof}

As an immediate corollary, we obtain the inequalities defining $P(D)\subseteq \R^{\E_{n-1}}$.

\begin{cor}\label{cor:P(D) inequalities}
    For any divisor $D$ on $\overline{M}_{0,n}$, we have
    \begin{equation*}P(D) = \left\{ (a_e)_{e \in \E_{n-1}}\in \R^{\E_{n-1}}_{\geq 0} \, \bigg \vert \, \mathbf{a}(\E_{n-1}) = \alpha(D), \beta_I(D) + \mathbf{a}(\E_{I})\geq 0 \text{ for all }I \in \II_n \right\}. \end{equation*}
\end{cor}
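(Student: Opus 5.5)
The plan is to prove the two inclusions separately, using Proposition \ref{prop:inverse to projection} together with Lemma \ref{lem:image of translation of W}.

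For ``$\subseteq$'', let $(a_e)\in P(D)$. Then $(a_e)=\rho_n(q)$ for some $q\in Q(D)$. Since $q\in D+\W_n$, Lemma \ref{lem:image of translation of W} gives $\mathbf{a}(\E_{n-1})=\alpha(D)$. By Proposition \ref{prop:inverse to projection}, $q=\iota_n((a_e))$, so the coefficient of $D_I$ in $q$ is $\beta_I(D)+\mathbf{a}(\E_I)$ for every $I\in\II_n$. These coefficients are nonnegative because $q$ is effective. For $I=e$ an edge we have $\beta_e=0$ and $\mathbf{a}(\E_e)=a_e$, so this case is exactly the condition $a_e\geq 0$. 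Hence the orthant condition in the statement is already contained among the inequalities indexed by $\II_n$.

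For ``$\supseteq$'', let $(a_e)$ satisfy $\mathbf{a}(\E_{n-1})=\alpha(D)$ and $\beta_I+\mathbf{a}(\E_I)\geq 0$ for all $I\in\II_n$. Form
\[
D'=\sum_{I\in\II_n}\bigl(\beta_I+\mathbf{a}(\E_I)\bigr)D_I\in\V_n .
\]
The computation in the proof of Proposition \ref{prop:inverse to projection} did not use that $(a_e)$ came from $P(D)$. It only used Lemma \ref{lem:Djk_basis_decomp} and a reordering of the sums. It gives
\[
[D']=\mathbf{a}(\E_{n-1})\,\psi_n+\sum_{|I|\geq 3}\beta_I[D_I].
\]
By the equality hypothesis this is the Kapranov expansion of $[D]$, so $D'\in D+\W_n$. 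The inequality hypotheses say $D'\in\mathrm{Cone}_n$. Therefore $D'\in Q(D)$, and $\rho_n(D')=(a_e)$, because the coefficient of $D_e$ in $D'$ is $\beta_e+a_e=a_e$. So $(a_e)\in P(D)$.

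There is no real obstacle here. The only point needing care is to note explicitly that the Kapranov-basis computation in the proof of Proposition \ref{prop:inverse to projection} holds for arbitrary $(a_e)$ satisfying the sum condition, rather than only for points already known to lie in $P(D)$.
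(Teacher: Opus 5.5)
Your proof is correct and follows essentially the same route as the paper. The paper also combines Lemma~\ref{lem:image of translation of W} with the explicit inverse $\iota_n$ from Proposition~\ref{prop:inverse to projection} and reads off nonnegativity of the coefficients $\beta_I(D)+\mathbf{a}(\E_I)$; your explicit remark that the Kapranov-basis computation in that proposition's proof uses only $\mathbf{a}(\E_{n-1})=\alpha(D)$, not membership in $P(D)$, is exactly what justifies the ``if and only if'' that the paper's two-line proof asserts.
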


\begin{proof}
    By Lemma \ref{lem:image of translation of W} and Proposition \ref{prop:inverse to projection} $(a_e)_{e \in \E_{n-1}}\in P(D)$ if and only if $\mathbf{a}(\E_{n-1}) = \alpha(D)$ and
    \begin{equation*} \sum_{I\in \II_n}(\beta_I + \mathbf{a}(\E_I))D_I \in Q(D). \end{equation*}
    By the definition of $Q(D)$, this means that the coefficients on each $D_I$ are nonnegative, which are the claimed conditions.
\end{proof}

Let
\begin{equation}
\rho^+_n   \colon  \V_n\to \R^{\E_n}
\end{equation}
be the projection map recording the coefficients on the divisors $D_e$ for all $e \in \E_n$. More precisely, it records the coefficient of $D_e$ if $e\in \E_{n-1}$ and records the coefficients of $D_{[n]\setminus e}$ if $e\in \E_{n}\setminus \E_{n-1}$.
We can also consider the image of $Q(D)$ in $\R^{\E_n}$ under this projection map.
\begin{defn}
Define the polytope $P^+(D)$ to be the image of $Q(D)$ under the projection map $\rho^+_n   \colon  \V_n\to \R^{\E_n}$:
\begin{equation*}
P^+(D) \coloneqq \rho^+_n (Q(D)).
\end{equation*}
\end{defn}
This polytope $P^+(D)$ has a more symmetric definition than $P(D)$ since $P(D)$ depends on the choice of index $n$ to omit from the divisors $D_{\{i,j\}}$. However, Corollary \ref{cor:projection_Q_to_P_is_affine_iso} shows that one does not obtain any new polytopes in this way as the projection $\rho_n$ factors through the projection $\rho^+_n$. In other words, the polytopes $Q(D)$, $P^+(D)$ and $P(D)$ are all affinely isomorphic. The corollaries below make the relationship between these constructions more transparent and they are structurally useful in applications.

\begin{cor}\label{cor: P^+ and P relationship}
    Let $\pi  \colon \overline{M}_{0,n+1}\to \M$ be the map forgetting the marked point $n+1$. Then, for any divisor $D$ on $\M$, there is an equality
    \begin{equation*}P^+(D) = P(\pi^*(D))\subseteq \R^{\E_n}. \end{equation*}
\end{cor}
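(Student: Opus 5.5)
The plan is to compose the identification of Proposition \ref{prop:pullback on polytopes} with the coordinate projections $\rho^+_n$ and $\rho_{n+1}$, and check that they agree coordinate by coordinate. By Proposition \ref{prop:pullback on polytopes}, $Q(\pi^*(D)) = \pi^*(Q(D))$ inside $\V_{n+1}$. By definition, $P(\pi^*(D)) = \rho_{n+1}(Q(\pi^*(D))) = \rho_{n+1}(\pi^*(Q(D)))$. It therefore suffices to prove the identity of linear maps
\begin{equation*}
\rho_{n+1}\circ \pi^* = \rho^+_n \colon \V_n \longrightarrow \R^{\E_n}.
\end{equation*}
Applying both sides to $Q(D)$ then gives $P(\pi^*(D)) = \rho^+_n(Q(D)) = P^+(D)$.

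To verify this identity, write $q=\sum_{I\in\II_n} a_I D_I$. Then
\begin{equation*}
\pi^*(q)=\sum_{I\in \II_n} a_I\big(D_I + D_{[n]\setminus I}\big),
\end{equation*}
and we read off the coefficient of $D_e$ in $\V_{n+1}$ for each $e\in\E_n$. Note that each $e\in\E_n$ lies in $\II_{n+1}$, since $e\subseteq[n]$ and $2\le |e|\le n-1$ when $n\ge 3$. We split into two cases.

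\emph{Case $e\in \E_{n-1}$.} Among the terms $D_I$ with $I\in\II_n$, the coefficient of $D_e$ is $a_e$. A term $D_{[n]\setminus I}$ equals $D_e$ only if $[n]\setminus I = e$. This forces $I\ni n$, which is impossible for $I\in\II_n$. So the coefficient is $a_e$, which is exactly the $e$-coordinate of $\rho^+_n(q)$.

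\emph{Case $e=\{i,n\}$.} No $I\in\II_n$ equals $e$, since $n\notin I$. The term $D_{[n]\setminus I}$ equals $D_e$ exactly when $I=[n]\setminus\{i,n\}$. This $I$ lies in $\II_n$ because $|I|=n-2\ge 2$. So the coefficient of $D_e$ in $\pi^*(q)$ is $a_{[n]\setminus e}$. This matches the definition of $\rho^+_n$, which records the coefficient of $D_{[n]\setminus e}$ for $e\in\E_n\setminus\E_{n-1}$.

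The only delicate point is the indexing bookkeeping. In $\V_{n+1}$ the divisor $D_{[n]\setminus I}$ equals $D_{I\cup\{n+1\}}$ and is indexed by the subset avoiding $n+1$, namely $[n]\setminus I$. One must check that the coordinate $e=\{i,n\}$ of $\rho_{n+1}$ is computed in this $\II_{n+1}$ indexing, and that no other $I$ contributes to it. Once this is confirmed, the corollary follows formally from Proposition \ref{prop:pullback on polytopes}.
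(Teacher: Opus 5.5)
Your proposal is correct and follows the paper's proof: both use Proposition \ref{prop:pullback on polytopes} to write $Q(\pi^*(D))=\pi^*(Q(D))$, and then observe that $\rho_{n+1}\circ\pi^*=\rho^+_n$ on the pair coordinates. Your case $e=\{i,n\}$, where the coefficient of $D_{\{i,n\}}$ in $\V_{n+1}$ comes from $a_{[n]\setminus\{i,n\}}$, spells out the bookkeeping behind the paper's one-line remark that $\pi^*$ preserves the coefficients on the $D_e$, $e\in\E_n$.
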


\begin{proof}
    By Proposition \ref{prop:pullback on polytopes}, $Q(\pi^*(D))\subseteq \V_{n+1}$ is the image of $Q(D)\subseteq \V_{n}$ under the map $$\pi^*    \colon  D_I\mapsto D_I+D_{I\cup\{n+1\}}.$$ This map preserves the coefficient on $D_e$ for all $e \in \E_n$, which implies the claim.
\end{proof}

\begin{cor}\label{cor:extension by 0}
    The map identifying $\R^{\E_n}$ with the coordinate subspace $\R^{\E_{n+1}}$ defined by $a_{\{i,n+1\}} = 0$ for all $i\in [n]$, gives a bijection from $P^+(D)$ to $P^+(\pi^*(D))$.
\end{cor}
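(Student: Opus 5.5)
The statement follows by combining Corollary \ref{cor: P^+ and P relationship} with the explicit description of pullback coefficients in Proposition \ref{prop:pullback on polytopes}.

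Let $\pi'\colon\overline{M}_{0,n+2}\to\overline{M}_{0,n+1}$ denote the forgetful map dropping the marking $n+2$. By Corollary \ref{cor: P^+ and P relationship} applied on $\overline{M}_{0,n+1}$, we have
\begin{equation*}
P^+(\pi^*(D)) = \rho^+_{n+1}\bigl(Q(\pi^*(D))\bigr)\subseteq\R^{\E_{n+1}}.
\end{equation*}
By Proposition \ref{prop:pullback on polytopes}, $Q(\pi^*(D))=\pi^*(Q(D))$. So every point of $Q(\pi^*(D))$ has the form
\begin{equation*}
\pi^*(q)=\sum_{I\in\II_n^+} a_I\bigl(D_I+D_{I\cup\{n+1\}}\bigr)
\end{equation*}
for a unique $q=\sum a_I D_I\in Q(D)$.

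Next I would read off the $\E_{n+1}$-coordinates of $\pi^*(q)$.

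For $e\in\E_n$, the coefficient of $D_e$ in $\pi^*(q)$ is the coefficient of $D_e$ in $q$. The only other way $D_e$ could arise is as $D_{I\cup\{n+1\}}$ with $I\cup\{n+1\}=e$ or $I\cup\{n+1\}=[n+1]\setminus e$. The first would force $|I|=1$, which is impossible. The second gives $I=[n]\setminus e$, but then $D_{[n]\setminus e}=D_e$ already, so this is the same contribution, counted once in the redundant indexing. Hence $\rho^+_{n+1}(\pi^*(q))$ restricted to $\E_n$ equals $\rho^+_n(q)$.

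For an edge $\{i,n+1\}$, the divisor $D_{\{i,n+1\}}$ equals $D_{[n]\setminus\{i\}}$. This is exactly a special coordinate that, as noted in the proof of Proposition \ref{prop:pullback on polytopes}, never appears in $\pi^*(\V_n)$, because it would require some $I\in\II_n^+$ with $|I|=1$ or $|I|=n-1$. So these coordinates vanish, i.e. $a_{\{i,n+1\}}=0$.

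Therefore $P^+(\pi^*(D))$ is the image of $P^+(D)$ under extension by zero. This extension is injective, so the map in the statement is a bijection.

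The main point needing care is the bookkeeping of the redundant indexing $D_I=D_{I^c}$ on $\overline{M}_{0,n+1}$ versus $\overline{M}_{0,n+2}$. For each $e\in\E_n$ and each edge $\{i,n+1\}$, one has to track which subsets of $[n+2]$ not containing $n+2$ represent $D_e$, so that the coordinate identification matches the convention defining $\rho^+$. I expect this check to be routine.
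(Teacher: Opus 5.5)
Your argument is correct and is essentially the paper's: the paper combines the identity $P^+(D)=P(\pi^*(D))$, which is itself read off from $Q(\pi^*(D))=\pi^*(Q(D))$, with the observation that $D_{\{i,n+1\}}=D_{[n]\setminus\{i\}}$ never occurs in any $\pi^*(D_I)$; you simply inline that identity by checking the $\E_n$-coordinates by hand. Minor slips: your first display is just the definition of $P^+$ rather than a consequence of that corollary, the map $\pi'$ is never used, and for $e\in\E_n$ the case $I\cup\{n+1\}=e$ is impossible because $n+1\notin e$, not because it would force $\vert I\vert=1$.
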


\begin{proof}
    The divisors $D_{\{i,n+1\}}$ for $i=1,\dots,n$ do not appear in the expression $\pi^*(D_I) = D_I+D_{I\sqcup \{n+1\}}$ for any $I\in \II_n$. This, combined with Corollary \ref{cor: P^+ and P relationship}, gives the desired claim.
\end{proof}

\begin{cor}\label{cor:P plus inverse}
Let $D$ be a divisor on $\M$.  The inverse
\begin{equation*}
\iota_D^+   \colon  P(D) \to P^{+}(D)
\end{equation*}
of the coordinate
projection $P^{+}(D)\to P(D)$ is the restriction of the affine map
$\gamma_D  \colon \R^{\E_{n-1}}\to \R^{\E_n}$ given by
\begin{equation*}
  (a_e)_{e \in \E_{n-1}} \longmapsto (a'_e)_{e \in \E_n},
  \qquad
  a'_e = \begin{cases}
    a_e, & e \in \E_{n-1},\\
    \beta_{[n-1]\setminus \{i\}}(D) + \mathbf{a}\big(\E_{[n-1]\setminus \{i\}}\big),
      & e = \{i,n\}\in \E_n\setminus \E_{n-1}.
  \end{cases}
\end{equation*}
\end{cor}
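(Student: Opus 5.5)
The plan is to read off the missing coordinates $a'_{\{i,n\}}$ from the explicit inverse $\iota_n$ of Proposition \ref{prop:inverse to projection}, using the fact that the projection $P^+(D)\to P(D)$ factors $\rho_n$ through $\rho^+_n$.

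First, since $\rho_n$ is the composition of $\rho^+_n$ with the coordinate projection $\R^{\E_n}\to\R^{\E_{n-1}}$, and $\rho_n\colon Q(D)\to P(D)$ is a bijection (Corollary \ref{cor:projection_Q_to_P_is_affine_iso}), the projection $P^+(D)\to P(D)$ is also a bijection. Its inverse is therefore $\rho^+_n\circ\iota_n$. So it suffices to compute $\rho^+_n(\iota_n(\mathbf{a}))$ for $\mathbf{a}\in P(D)$.

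Second, we evaluate this coordinate by coordinate. For $e\in\E_{n-1}$, the coefficient of $D_e$ in $\iota_n(\mathbf{a})=\sum_{I\in\II_n}(\beta_I+\mathbf{a}(\E_I))D_I$ is $\beta_e+a_e=a_e$, because $\beta_e=0$ by convention. For $e=\{i,n\}$ with $i\in[n-1]$, the definition of $\rho^+_n$ records the coefficient of $D_{[n]\setminus e}=D_{[n-1]\setminus\{i\}}$. The index $I=[n-1]\setminus\{i\}$ does not contain $n$ and has size $n-2$, so it lies in $\II_n$. Its coefficient in $\iota_n(\mathbf{a})$ is $\beta_{[n-1]\setminus\{i\}}(D)+\mathbf{a}(\E_{[n-1]\setminus\{i\}})$, which is exactly the formula in the statement.

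Finally, we note that this formula defines an affine map $\gamma_D$ on all of $\R^{\E_{n-1}}$, whose restriction to $P(D)$ is the inverse just computed. There is one edge case: when $n=4$, the set $[n-1]\setminus\{i\}$ has size $2$, so it is an edge and $\beta=0$ by convention; the formula then still reads correctly. I expect no real obstacle. The only care needed is the bookkeeping that $\rho^+_n$ records $D_{[n]\setminus e}$ and that this index lies in $\II_n$.
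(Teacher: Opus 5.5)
Your proposal is correct and takes the same approach as the paper's proof. Both read off the coordinates of $\rho^+_n(\iota_n(\mathbf{a}))$ from the explicit inverse in Proposition \ref{prop:inverse to projection}, using $D_{\{i,n\}}=D_{[n-1]\setminus\{i\}}$ with $[n-1]\setminus\{i\}\in\II_n$; your added checks (bijectivity of $P^+(D)\to P(D)$ because $\rho_n$ factors through $\rho^+_n$, and the $n=4$ case where $\beta=0$ by convention) are valid and only make explicit what the paper leaves implicit.
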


\begin{proof}
By Proposition \ref{prop:inverse to projection}, the point of $Q(D)$ over
$\mathbf{a}\in P(D)$ is $\sum_{I\in\II_n}\big(\beta_I(D)+\mathbf{a}(\E_I)\big)D_I$.  For
$e \in\E_{n-1}$ the coefficient on $D_e$ is $a_e$.  For $e=\{i,n\}$ we have
$D_{\{i,n\}}=D_{[n-1]\setminus\{i\}}$ with $[n-1]\setminus\{i\}\in\II_n$, and the
coefficient on it is $\beta_{[n-1]\setminus\{i\}}(D)+\mathbf{a}(\E_{[n-1]\setminus\{i\}})$.
\end{proof}

\begin{rem}
It is clear that the affine map $\gamma_D  \colon \R^{\E_{n-1}}\to \R^{\E_n}$ depends on the divisor class $[D]$.
\end{rem}
We summarize the relationships between the various polytopes and their different coordinatizations. Let $D$ be a divisor on $\M$, with $n\geq5$, and let
$\pi \colon \overline{M}_{0,n+1}\to\M$ forget the marking $n+1$.  Then, all of the maps in
\begin{equation}\label{eqn:polytope diagram summary}
\begin{tikzcd}[column sep=large]
  Q(\pi^*D) \arrow[r,"\rho^{+}_{n+1}"] & P^{+}(\pi^*D) \arrow[r,"\operatorname{pr}_{n+1}"] & P(\pi^*D)\\
  Q(D) \arrow[r,"\rho^{+}_{n}"]\arrow[u,"\pi^*"] & P^{+}(D) \arrow[r,"\operatorname{pr}_{n}"]\arrow[u,"\text{extend by }0"] & P(D)\arrow[u,"\iota^{+}_D"]
\end{tikzcd}
\end{equation}
are
bijections.  Here $\rho^+_n$ and $\rho^+_{n+1}$ are the
pair-coordinate projections, while
\begin{equation*}
\operatorname{pr}_n    \colon  \R^{\E_n}\longrightarrow\R^{\E_{n-1}}  
\end{equation*}
forgets the coordinates indexed by the edges incident to vertex $n$ of the complete graph $K_n$.
Corollary \ref{cor: P^+ and P relationship} identifies the
bottom-middle and top-right polytopes:
\begin{equation*}
P^+(D)=P(\pi^*D)\subseteq\R^{\E_n}.
\end{equation*}
The middle vertical map is extension by zero on the coordinates $a_{\{i,n+1\}}$ by Corollary \ref{cor:extension by 0}, and $\iota_D^+$ is the affine inverse from Corollary \ref{cor:P plus inverse}, which depends on $D$ through the coefficients $\beta_I(D)$.

\subsection{A change of coordinates from graph complementation}

For this subsection, we shift indices to work with divisors $D$ on $\overline{M}_{0,n+1}$ so that the coordinates of the polytopes $P(D)$ are parametrized by the edges $\E_n$ of the complete graph $K_n$.

\begin{defn}
For any divisor $D$ on $\overline{M}_{0,n+1}$, we define $\widetilde{P}(D)$ to be the image of $P(D)$ under the involution  $\R^{\E_n}\to \R^{\E_n}$ defined by $(a_e)_{e \in \E_n}\mapsto (1-a_e)_{e \in \E_n}$. Similarly, we define $\widetilde P^+(D)$ to be the image of $P^+(D)$ under the same coordinate change on $\R^{\E_{n+1}}$.
\end{defn}

\begin{rem}
 To distinguish between $\widetilde{P}(D)$ and $P(D)$, we use the coordinates $a_e$ for $P(D)$ and $x_e = 1-a_e$ for $\widetilde{P}(D)$. 
\end{rem}

Our description of the inequalities describing $P(D)$ is easily transformed into the inequalities describing $\widetilde{P}(D)$. We set
\begin{equation}\label{eqn:rank_functions_interms_of_alpha_beta}
 r_I(D) \coloneqq   \begin{cases}
        { \vert I \vert \choose 2} + \beta_I(D) & I\in \II_{n+1},\\
        {n \choose 2}- \alpha(D) & I = [n]
\end{cases} 
\end{equation}
where $\alpha(D)$ and $\beta_I(D)$ are the coefficients of the Kapranov basis expansion of $D$ as before.
\begin{prop}\label{prop:P tilde inequalities}
    For any divisor $D$ on $\overline{M}_{0,n+1}$:
    \begin{enumerate}
        \item The polytope $\widetilde{P}(D)$ is given by
            \begin{equation*}\widetilde{P}(D) = \left\{ (x_e)_{e \in \E_n}\in \R^{\E_n} \big \vert \, \x (\E_n) = r_{[n]}(D),\, \x (\E_I) \leq r_I(D)  \text{ for all }I \in \II_{n+1} \right\}.  \end{equation*}
        \item For any point $\x \in \widetilde{P}(D)$, the corresponding effective boundary divisor expression of $D$ is
        \begin{equation*} D = \sum_{I\in \II_{n+1}} \left( r_I(D)-\x (\E_I) \right)D_I. \end{equation*}
        \item The map $\widetilde{P}(D)\to \widetilde{P}^+(D)$ is given by $(x_e)_{e \in \E_n}\mapsto (x_e')_{e \in \E_{n+1}}$ where
        \begin{equation*} x_e' = \begin{cases}
            x_e & \text{ if } e \in \E_n,\\
            1 + r_{[n]} - r_{[n]\setminus \{ i \}}-\x(\delta(\{i\})) & \text{ if } e = \{i,n+1\} \in \E_{n+1}\setminus \E_n.
        \end{cases} \end{equation*}
    \end{enumerate}
\end{prop}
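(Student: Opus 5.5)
The plan is to deduce all three parts from the results already established for $P(D)$, applied to the divisor $D$ on $\overline{M}_{0,n+1}$ rather than on $\M$. After this shift, $\E_{n-1}$ becomes $\E_n$, $\II_n$ becomes $\II_{n+1}$, and the Kapranov basis is centered at the marking $n+1$. Everything then reduces to substituting $a_e=1-x_e$ and doing some bookkeeping.

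\emph{Part (1).} Apply Corollary \ref{cor:P(D) inequalities} with these shifted indices. A point $\mathbf{a}\in\R^{\E_n}$ lies in $P(D)$ if and only if
\begin{equation*}
\mathbf{a}(\E_n)=\alpha(D) \quad\text{and}\quad \beta_I(D)+\mathbf{a}(\E_I)\geq 0 \ \text{ for all } I\in\II_{n+1}.
\end{equation*}
The nonnegativity $a_e\geq 0$ is already the case $|I|=2$, since $\beta_e=0$ by convention. Substituting $x_e=1-a_e$ and using $|\E_I|=\binom{|I|}{2}$ gives $\mathbf{a}(\E_I)=\binom{|I|}{2}-\x(\E_I)$.
\begin{itemize}
\item The equation becomes $\x(\E_n)=\binom{n}{2}-\alpha(D)=r_{[n]}(D)$.
\item Each inequality becomes $\x(\E_I)\leq \binom{|I|}{2}+\beta_I(D)=r_I(D)$.
\end{itemize}
These are exactly the conditions in the statement of part (1).

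\emph{Part (2).} Use Proposition \ref{prop:inverse to projection}, again with shifted indices. The effective boundary expression lying over $\mathbf{a}$ is
\begin{equation*}
\sum_{I\in\II_{n+1}}\big(\beta_I+\mathbf{a}(\E_I)\big)D_I .
\end{equation*}
By the computation in part (1), each coefficient equals $r_I(D)-\x(\E_I)$, which is the formula claimed.

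\emph{Part (3).} Apply Corollary \ref{cor:P plus inverse} with $n$ replaced by $n+1$.
\begin{itemize}
\item For $e\in\E_n$ the coordinate is unchanged, so $x'_e=x_e$.
\item For $e=\{i,n+1\}$, the corollary gives $a'_e=\beta_{[n]\setminus\{i\}}+\mathbf{a}(\E_{[n]\setminus\{i\}})$. Since $|[n]\setminus\{i\}|=n-1\geq 2$, the set $[n]\setminus\{i\}$ lies in $\II_{n+1}$, so as in part (2) this equals $r_{[n]\setminus\{i\}}-\x(\E_{[n]\setminus\{i\}})$.
\item The edges of $\E_n$ not contained in $[n]\setminus\{i\}$ are exactly those in $\delta(\{i\})$, so $\x(\E_{[n]\setminus\{i\}})=\x(\E_n)-\x(\delta(\{i\}))$. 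On $\widetilde{P}(D)$ we have $\x(\E_n)=r_{[n]}$, which gives
\begin{equation*}
x'_e=1-a'_e=1+r_{[n]}-r_{[n]\setminus\{i\}}-\x(\delta(\{i\})).
\end{equation*}
\end{itemize}

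There is no real obstacle here. The only point needing care is to translate the earlier statements, written for $\M$, consistently to $\overline{M}_{0,n+1}$. One must also use the normalization $x_e=1-a_e$ on both $\widetilde{P}$ and $\widetilde{P}^+$, so that the constant $1$ appears in part (3).
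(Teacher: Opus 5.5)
Your proposal is correct and follows essentially the same route as the paper's proof: part (1) from Corollary \ref{cor:P(D) inequalities} via the substitution $x_e=1-a_e$, part (2) from Proposition \ref{prop:inverse to projection}, and part (3) from Corollary \ref{cor:P plus inverse}. For part (3) you likewise use the decomposition $\E_n=\E_{[n]\setminus\{i\}}\sqcup\delta(\{i\})$ together with $\x(\E_n)=r_{[n]}(D)$.
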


\begin{proof}
    Corollary \ref{cor:P(D) inequalities} gives the inequality description for $P(D)$:
    \begin{equation*}
    P(D) = \left\{ (a_e)_{e \in \E_n}\in \R^{\E_n} \, \big \vert \, \mathbf{a}(\E_n) = \alpha(D),  \beta_I(D)+\mathbf{a}(\E_I) \geq 0  \text{ for all }I\in \II_{n+1} \right\}.
    \end{equation*}
    We transform these conditions under the change of coordinates $x_e = 1-a_e$. For the equality, we obtain
    \begin{equation*}
    \sum_{e \in \E_n} x_e = \sum_{e \in \E_n} (1-a_e) = {n \choose 2} - \sum_{e \in \E_n} a_e = {n \choose 2} - \alpha(D).
    \end{equation*}
    Similarly, for each $I\in \II_{n+1}$ we obtain the inequality
    \begin{equation*}
    \sum_{e\subseteq I} x_e = \sum_{e\subseteq I} (1-a_e) = { \vert I \vert \choose 2} - \sum_{e\subseteq I} a_e \leq { \vert I \vert \choose 2} + \beta_I(D).
    \end{equation*}
    This proves part (1).
    For part (2), let $\x\in\widetilde P(D)$ and set $a_e=1-x_e$.  By Proposition \ref{prop:inverse to projection}, the coefficient of $D_I$ in the corresponding boundary expression is
    \begin{equation*}
    \beta_I(D)+\mathbf a(\E_I) = \beta_I(D)+\binom{\vert I \vert}{2}-\x(\E_I) = r_I(D)-\x(\E_I),
    \end{equation*}
    which proves the claimed formula.
    For part (3), the coordinates indexed by $\E_n$ are unchanged.  For $e=\{i,n+1\}$, Corollary \ref{cor:P plus inverse} gives
    \begin{equation*}
    a'_e = \beta_{[n]\setminus\{i\}}(D) + \mathbf a(\E_{[n]\setminus\{i\}}) = r_{[n]\setminus\{i\}}(D) - \x(\E_{[n]\setminus\{i\}}).
    \end{equation*}
    Consequently,
    \begin{equation*}
    x'_e = 1-a'_e = 1-r_{[n]\setminus\{i\}}(D) + \x(\E_{[n]\setminus\{i\}}).
    \end{equation*}
    Since
    \begin{equation*}
    \E_n = \E_{[n]\setminus\{i\}}\sqcup\delta(\{i\})
    \end{equation*}
    and $\x(\E_n)=r_{[n]}(D)$, we have
    \begin{equation*}
    \x(\E_{[n]\setminus\{i\}}) = r_{[n]}(D)-\x(\delta(\{i\})).
    \end{equation*}
    Therefore
    \begin{equation*}
    x'_{\{i,n+1\}} = 1+r_{[n]}(D)-r_{[n]\setminus\{i\}}(D) - \x(\delta(\{i\})),
    \end{equation*}
    as required.
\end{proof}

\section{Application I: Psi-classes and Kapranov classes}

Recall that the Kapranov class $X_{S,i}$, indexed by a subset $S\subseteq [n]$ with $ \vert S \vert \geq 3$ and $i\in S$, is the pullback of the class $\psi_i$ on $\overline{M}_{0,S}$ by the forgetful map $\M\to\overline{M}_{0,S}$. As the Kapranov class is trivial for $\vert S \vert=3$, we will consider the cases only when $\vert S \vert \geq 4$ here.

We define $\V_S$ to be the set of real linear combinations of boundary divisors on $\overline{M}_{0,S}$. We also define
\begin{equation*}
\II_S^+ \coloneqq \{ I\subseteq S, 2\leq \vert I  \vert \leq  \vert S \vert -2 \}
\end{equation*}
and
\begin{equation*}
\E_S \coloneqq \{ \{i,j\} \,  \vert \, \{i,j\}\subseteq S \}.
\end{equation*}
For any divisor $D$ on $\overline{M}_{0,S}$, we then consider $Q(D)\subseteq \V_S$ and $P^+(D)\subseteq \R^{\E_S}$ just as in the previous section.

\begin{thm}\label{thm:Kapranov polytopes}
    For any Kapranov class $X_{S,i}$ on $\M$, the polytope $Q(X_{S,i})\subseteq \V_n$ is a simplex with vertices
    \begin{equation}\label{eqn:vertices_of_Q_Kapranov_S_i}
    \sum_{\substack{I\in\II_n, 2\leq \vert I\cap S \vert \leq \vert S \vert -2\\ I \sqcup I^c\text{ separates }i\text{ from }\{j,k\} }}D_I
    \end{equation}
    indexed by pairs $\{j,k\}\subseteq S\setminus\{i\}$. This simplex is isomorphic as a lattice polytope to the convex hull of the standard basis vectors in $\R^{ \vert S \vert -1\choose 2}$. 
\end{thm}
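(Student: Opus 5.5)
The plan is to reduce to the case $S=[n]$ and then compute $P(\psi_i)$ directly from Corollary \ref{cor:P(D) inequalities}. The reduction uses Proposition \ref{prop:pullback on polytopes}, iterated along the forgetful maps $\M\to\overline{M}_{0,S}$. Relabeling so that $S$ comes first (the polytope is equivariant under relabeling of markings), each forgetful step $\pi^*\colon\V_m\hookrightarrow\V_{m+1}$ is an injective linear map sending $D_I\mapsto D_I+D_{I\cup\{m+1\}}$ and identifies $Q(D)$ with $Q(\pi^*D)$. Hence $Q(X_{S,i})$ is the image of $Q(\psi_i)\subseteq\V_S$ under the composite pullback, which sends a boundary divisor $D_J$ of $\overline{M}_{0,S}$ to $\sum D_I$ over all $I\subseteq[n]$ with $I\cap S=J$ (up to complementation). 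Since this map is injective and linear with a left inverse that is a coordinate projection, it preserves lattice structure on the polytope. In particular, a vertex of $Q(\psi_i)$ of the form $\sum_{J} D_J$ (over $J$ separating $i$ from $\{j,k\}$) maps exactly to the displayed sum \eqref{eqn:vertices_of_Q_Kapranov_S_i} over $I$ with $2\leq|I\cap S|\leq|S|-2$ separating $i$ from $\{j,k\}$. It therefore suffices to treat $X_{[m],i}=\psi_i$ on $\overline{M}_{0,m}$ with $m=|S|$.

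For $\psi_i$ on $\overline{M}_{0,m}$, I relabel so that $i=m$. Then $\psi_m$ is a Kapranov basis element, with $\alpha=1$ and all $\beta_I=0$. Corollary \ref{cor:P(D) inequalities} then gives
\[
P(\psi_m)=\Bigl\{\mathbf a\in\R^{\E_{m-1}} \,\Big\vert\, \mathbf a(\E_{m-1})=1,\ \mathbf a(\E_I)\geq 0 \text{ for all } I\in\II_m\Bigr\}.
\]
Taking $I=e$ gives $a_e\geq 0$, and these inequalities imply all the others. So $P(\psi_m)$ is exactly the standard simplex $\operatorname{conv}\{\mathbf 1_e : e\in\E_{m-1}\}$ in $\R^{\binom{m-1}{2}}$, whose vertices are indexed by pairs $\{j,k\}\subseteq[m-1]=S\setminus\{i\}$.

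By Proposition \ref{prop:inverse to projection}, the vertex $\mathbf 1_{\{j,k\}}$ lifts to $\sum_{I\in\II_m}\mathbf a(\E_I)D_I=\sum_{I\not\ni m,\ j,k\in I}D_I$, which is the sum of $D_I$ over $I$ separating $m=i$ from $\{j,k\}$. This matches Lemma \ref{lem:some_relations_for_kappa_psi_and_D_A} \eqref{eqn:psi_wrt_divisors}.

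The main point needing care is the lattice-isomorphism claim. The affine isomorphism $\rho_m\colon Q\to P$ of Corollary \ref{cor:projection_Q_to_P_is_affine_iso} is a coordinate projection. Its inverse $\iota$ is integral here because all $\beta_I=0$ and $\iota$ has integer coefficients $\mathbf a(\E_I)$. So it identifies the lattices $\Z^{\II_m}\cap\operatorname{aff}Q$ and $\Z^{\E_{m-1}}\cap\{\sum a_e=1\}$. Composing with the pullback, whose left inverse is also a coordinate projection, keeps it unimodular. The dimension is then $\binom{|S|-1}{2}-1$, as claimed.
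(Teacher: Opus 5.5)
Your proposal is correct and follows the paper's proof essentially step for step. You compute $P(\psi_n)$ from Corollary~\ref{cor:P(D) inequalities} as the standard simplex, lift its vertices via Proposition~\ref{prop:inverse to projection}, and transport everything to $X_{S,i}$ by iterating Proposition~\ref{prop:pullback on polytopes}; the only difference in order is that the paper treats the $\psi$ case first and the reduction second. Your explicit check that both $\iota$ and the pullback are integral, with coordinate projections as left inverses, also supplies the justification for the lattice-isomorphism claim, which the paper leaves implicit.
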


\begin{rem}
We should note that we can rewrite equation \eqref{eqn:vertices_of_Q_Kapranov_S_i} as
\begin{equation*}
\sum_{\substack{I\in\II_n, 2\leq \vert I\cap S \vert \leq \vert S \vert -2\\ I \sqcup I^c\text{ separates }i\text{ from }\{j,k\} }}D_I
=
\sum_{\substack{I\in \II_n^+, I\cap S\in \II_S^+\\ i\notin I, j,k\in I}} D_I
\end{equation*}
by $D_I=D_{I^c}$. Taking $S=[n]$, we recover the standard boundary expressions for $\psi_i$
\begin{equation*}
\psi_i
=
\sum_{\substack{I\in\II_n^+\\
i\notin I,\ j,k\in I}}[D_I]
\end{equation*}
given in Lemma \ref{lem:some_relations_for_kappa_psi_and_D_A}.
Theorem \ref{thm:Kapranov polytopes} shows that these are precisely the
integral effective boundary expressions of $\psi_i$.  There are $\binom{n-1}{2}$ such expressions, and they are the vertices of a
simplex of dimension
\begin{equation*}
\binom{n-1}{2}-1. 
\end{equation*}
Moreover, for every integer $m\geq1$, each integral effective boundary
expression of $m\psi_i$ decomposes, uniquely up to reordering, as a sum
of $m$ integral effective boundary expressions of $\psi_i$.
\end{rem}

\begin{proof}[Proof of Theorem \ref{thm:Kapranov polytopes}]
    First, we consider the case $S=[n]$ where the Kapranov class $X_{[n],i}$ is the class $\psi_i$ on $\M$. There is an $S_n$-action on $\M$ relabeling the points that permutes the psi-classes and the boundary divisors. We will therefore check the claim for $\psi_n$ and the remaining $\psi_i$ will follow by symmetry. 
    
    Since $\psi_n$ is an element of the Kapranov basis, we have $\alpha(\psi_n)=1$ and $\beta_I(\psi_n)=0$ for all $I\in \II_n$, so by Corollary \ref{cor:P(D) inequalities},
    \begin{equation*}P(\psi_n) = \left\{ (a_{e})\in \R^{\E_{n-1}} \, \bigg \vert \, \mathbf{a}(\E_{n-1}) = 1, \mathbf{a}(\E_I)\geq 0 \text{ for all }I \in \II_n \right\}. \end{equation*}
    The inequalities for $I\in \II_n$ with $ \vert I \vert = 2$ say that $a_e\geq 0$ for all $e \in \E_{n-1}$, so the remaining inequalities for $I\in \II_n$ with $ \vert I  \vert \geq 3$ are redundant. We therefore obtain the description
    \begin{equation*}P(\psi_n) = \left\{ (a_{e})\in \R^{\E_{n-1}}_{\geq 0} \, \bigg \vert \, \mathbf{a}(\E_{n-1}) = 1 \right\}. \end{equation*}
    This is exactly the standard simplex in $\R^{\E_{n-1}} = \R^{n-1 \choose 2}$. We can then compute the vertices of $Q(\psi_n)$ by applying the map
    \begin{equation*}(a_e)_{e \in \E_{n-1}}\mapsto \sum_{I\in \II_n}\left(0 + \sum_{e\subseteq I}a_e \right)D_I \end{equation*}
    in Proposition \ref{prop:inverse to projection} to the standard basis vectors in $\R^{\E_{n-1}}$. For the basis vector corresponding to the edge $e = \{j,k\}$, this sum reduces to 
    \begin{equation*}\sum_{I\in \II_n, j,k\in I}D_I = \sum_{\substack{I\in \II_n^+, n\notin I,\\ j,k\in I}}D_I \end{equation*}
    as desired. This is the desired formula in the case $S=[n]$ and $i=n$, and we obtain by symmetry the formula for all psi-classes on $\M$.

    Now we consider pullbacks of psi-classes for some $S\subseteq[n]$ and $i\in S$. Since we have established the formula for psi-classes, the vertices of $Q(\psi_i)\subseteq \V_S$ are indexed by pairs $\{j,k\}\subseteq S\setminus\{i\}$ and given by
    \begin{equation*}\sum_{\substack{I\in \II_S^+, i\notin I\\ j,k\in I}}D_I\in \V_S. \end{equation*}
    
    By repeated applications of Proposition \ref{prop:pullback on polytopes}, the polytopes $Q(\psi_i)\subseteq \V_S$ and $Q(X_{S,i})\subseteq \V_n$ are identified via the map $\V_S\to \V_n$ defined by 
    \begin{equation*}D_I\mapsto \sum_{J\in \II_n^+, J\cap S = I}D_J. \end{equation*}
    Applying this map to the vertices of $Q(\psi_i)\subseteq \V_S$ described above gives the desired formula for the vertices of $Q(X_{S,i})\subseteq \V_n$. 
\end{proof}

It will be useful in later sections to describe the various projections of $Q(X_{S,i})$ defined in the previous section. Indeed, extracting the coefficients on divisors of the form $D_e$ for $e \in \E_n$ from the vertices described in Theorem \ref{thm:Kapranov polytopes}, we obtain the following corollary.

\begin{cor}\label{cor:Kapranov P plus}
    For any Kapranov class $X_{S,i}$ on $\M$, the polytope $P^+(X_{S,i})\subseteq \R^{\E_n}$ is a simplex with vertices indexed by edges $f = \{ j,k \}$ in the complete graph $K_{S\setminus\{i\}}$, given in coordinates $(a_e)_{e \in \E_n}$ by
    \begin{equation*} a_e = \begin{cases}
        1 & e = \{j,k\} \text{ or } e = \{i,\ell\} \text{ for some } \ell \in S\setminus \{i,j,k\},\\
        0 & \text{ else}.
    \end{cases} \end{equation*}
\end{cor}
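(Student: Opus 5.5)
The plan is to read off the projection $\rho^+_n$ of the vertices of $Q(X_{S,i})$ from Theorem \ref{thm:Kapranov polytopes}. Since $\rho^+_n\colon Q(X_{S,i})\to P^+(X_{S,i})$ is the restriction of a linear map and is a bijection (Corollary \ref{cor:projection_Q_to_P_is_affine_iso}, since $\rho_n$ factors through $\rho^+_n$), it is an affine isomorphism. Hence $P^+(X_{S,i})$ is a simplex whose vertices are the images of the vertices of $Q(X_{S,i})$. So it suffices to compute, for each pair $\{j,k\}\subseteq S\setminus\{i\}$, the coefficient of $D_e$ for every $e\in\E_n$ in the vertex
\begin{equation*}
\sum_{\substack{I\in \II_n^+,\ I\cap S\in \II_S^+\\ i\notin I,\ j,k\in I}} D_I .
\end{equation*}

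Here $\rho^+_n$ records, for each $e\in\E_n$, the coefficient of the boundary divisor $D_e=D_{[n]\setminus e}$. Using $D_I=D_{I^c}$, this divisor appears in the vertex exactly when either $e$ or $[n]\setminus e$ equals some $I$ in the indexing set. Because the index set $\II_n^+$ redundantly lists both $I$ and $I^c$, I will first check that the sum in the remark is irredundant: $I$ and $I^c$ cannot both satisfy $i\notin I$ and $j,k\in I$. Thus each boundary divisor occurs with coefficient $0$ or $1$, and I only need to test membership.

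The case analysis is as follows.

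\textbf{Case $I=e$.} We need $i\notin e$, $j,k\in e$, and $2\le |e\cap S|\le |S|-2$. The first two conditions force $e=\{j,k\}$. Then $|e\cap S|=2\le |S|-2$ holds since $|S|\ge 4$.

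\textbf{Case $I=[n]\setminus e$.} We need $i\in e$ and $j,k\notin e$, so $e=\{i,\ell\}$ with $\ell\ne j,k$. The remaining condition is $2\le |S\setminus e|\le |S|-2$, where $|S\setminus e|=|S|-|e\cap S|$.
\begin{itemize}
\item If $\ell\in S$, then $|e\cap S|=2$ and $|S\setminus e|=|S|-2$, which satisfies both bounds.
\item If $\ell\notin S$, then $|e\cap S|=1$ and $|S\setminus e|=|S|-1$, which violates the upper bound, so the coefficient is $0$.
\end{itemize}
Together the two cases give $a_e=1$ exactly for $e=\{j,k\}$ or $e=\{i,\ell\}$ with $\ell\in S\setminus\{i,j,k\}$, and $a_e=0$ otherwise, as claimed.

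The only delicate point is this bookkeeping of $I$ versus $I^c$. It requires careful handling of the condition $|I\cap S|\le |S|-2$, which is what excludes the edges $\{i,\ell\}$ with $\ell\notin S$. Everything else follows directly from the vertex description of $Q(X_{S,i})$.
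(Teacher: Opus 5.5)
Your proposal is correct and follows the paper's route: you transport the simplex structure along the affine isomorphism $\rho^+_n\colon Q(X_{S,i})\to P^+(X_{S,i})$ and read off the $D_e$-coefficients of the vertices given in Theorem \ref{thm:Kapranov polytopes}. Your case split on $I=e$ versus $I=[n]\setminus e$ is more complete than the paper's terse proof: it checks both directions, and it makes explicit that the bound $|I\cap S|\le |S|-2$ is what excludes the edges $\{i,\ell\}$ with $\ell\notin S$, a step the paper only asserts.
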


\begin{proof}
Fix the vertex of $Q(X_{S,i})$ indexed by $\{j,k\}\subseteq S\setminus\{i\}$. Let for an $e\in \E_n$, we have $a_e=1$. Then, by Theorem \ref{thm:Kapranov polytopes} the edge $e$ and its complement $e^c=[n]\setminus e$ separates $i$ from $\{j,k\}$. Hence, either $e=\{j,k\}$ or $e=\{i,\ell\}$ with $\ell \in S\setminus \{i,j,k\}$. As all of the vertices of $Q(X_{S,i})$ have $0$ and $1$ coordinates, we complete the proof.
\end{proof}

Since all the vertices of $Q(X_{S,i})$ have coordinates  $0$ or $1$, and $Q(X_{S,i})$ is affinely isomorphic to $P^+(X_{S,i})$ via a coordinate projection, we will often represent such a point $(a_e)_{e \in \E_n}$ as the subgraph of $K_n$ containing an edge $e$ when $a_e=1$. Note that the description given in Corollary \ref{cor:Kapranov P plus} implies that the vertices of $Q(X_{S,i})$ correspond to subgraphs of $K_n$ that are contained in $K_S$.

\begin{ex}
    Let $S = \{ 1,2,3,4,6,7 \}\subseteq [7]$ and take $i=3$. By Theorem \ref{thm:Kapranov polytopes}, $Q(X_{S,i})$ is a simplex of dimension ${ \vert S \vert -1\choose 2}-1 = 9$ with vertices indexed by the $10$ two-element subsets $\{j,k\} \subseteq S\setminus \{i\} = \{ 1,2,4,6,7 \}.$ The vertex of $Q(X_{S,i})$ corresponding to $\{j,k\} = \{2,4\}$, for example, is
    \begin{align*}
        &D_{\{1,3\}}+D_{\{2,4\}}+D_{\{3,6\}}
+D_{\{1,2,4\}}+D_{\{1,3,5\}}+D_{\{1,3,6\}}+D_{\{2,4,5\}}+D_{\{2,4,6\}}
\\
&+D_{\{3,5,6\}}+
D_{\{1,2,4,5\}}+D_{\{1,2,4,6\}}+D_{\{1,3,5,6\}}
+D_{\{2,4,5,6\}}+D_{\{1,2,4,5,6\}} \in Q(X_{\{ 1,2,3,4,6,7\},3})
    \end{align*} 
The projection of this vertex onto $P^+(X_{\{ 1,2,3,4,6,7\},3})\subseteq \R^{21}$, i.e. recording just the coefficients $D_{I}$ for $ \vert I  \vert =2$ or $\vert I \vert =7-2=5$, gives the four nonzero coordinates
    \begin{equation*} a_{13} = a_{36} = a_{37} = a_{24}=1 \end{equation*}
    (recall that we identify $D_{\{1,2,4,5,6\}}=D_{\{3,7\}}$).
    In Figure \ref{fig:Kapranov vertices}, we depict all the vertices of $P^+(X_{\{ 1,2,3,4,6,7\},3})$ as subgraphs of $K_7$. The example above corresponds to the top right-most subgraph in the figure. One can verify in this case the description of the vertices given in Corollary \ref{cor:Kapranov P plus}.
\end{ex}

\begin{figure}[h]
\centering
\begin{tikzpicture}
  \graph[
    circular placement,
    radius=1.3cm,
    nodes={
      circle,
      draw,
      minimum size=4mm,
      inner sep=0pt
    },
    edges={black!45}
  ] {subgraph K_n [n=7, clockwise]};
  
   \draw (1) edge[ultra thick] (2);
   \draw (3) edge[ultra thick] (4);
   \draw (3) edge[ultra thick] (6);
   \draw (3) edge[ultra thick] (7);
   
\end{tikzpicture}
\begin{tikzpicture}
  \graph[
    circular placement,
    radius=1.3cm,
    nodes={
      circle,
      draw,
      minimum size=4mm,
      inner sep=0pt
    },
    edges={black!45}
  ] {subgraph K_n [n=7, clockwise]};
  
   \draw (1) edge[ultra thick] (4);
   \draw (3) edge[ultra thick] (2);
   \draw (3) edge[ultra thick] (6);
   \draw (3) edge[ultra thick] (7);
\end{tikzpicture}
\begin{tikzpicture}
  \graph[
    circular placement,
    radius=1.3cm,
    nodes={
      circle,
      draw,
      minimum size=4mm,
      inner sep=0pt
    },
    edges={black!45}
  ] {subgraph K_n [n=7, clockwise]};
  
   \draw (1) edge[ultra thick] (6);
   \draw (3) edge[ultra thick] (2);
   \draw (3) edge[ultra thick] (4);
   \draw (3) edge[ultra thick] (7);
\end{tikzpicture}
\begin{tikzpicture}
  \graph[
    circular placement,
    radius=1.3cm,
    nodes={
      circle,
      draw,
      minimum size=4mm,
      inner sep=0pt
    },
    edges={black!45}
  ] {subgraph K_n [n=7, clockwise]};
  
   \draw (1) edge[ultra thick] (7);
   \draw (3) edge[ultra thick] (2);
   \draw (3) edge[ultra thick] (6);
   \draw (3) edge[ultra thick] (4);
\end{tikzpicture}
\begin{tikzpicture}
  \graph[
    circular placement,
    radius=1.3cm,
    nodes={
      circle,
      draw,
      minimum size=4mm,
      inner sep=0pt
    },
    edges={black!45}
  ] {subgraph K_n [n=7, clockwise]};
  
   \draw (2) edge[ultra thick] (4);
   \draw (3) edge[ultra thick] (1);
   \draw (3) edge[ultra thick] (6);
   \draw (3) edge[ultra thick] (7);
\end{tikzpicture}

\begin{tikzpicture}
  \graph[
    circular placement,
    radius=1.3cm,
    nodes={
      circle,
      draw,
      minimum size=4mm,
      inner sep=0pt
    },
    edges={black!45}
  ] {subgraph K_n [n=7, clockwise]};
  
   \draw (2) edge[ultra thick] (6);
   \draw (3) edge[ultra thick] (1);
   \draw (3) edge[ultra thick] (4);
   \draw (3) edge[ultra thick] (7);
\end{tikzpicture}
\begin{tikzpicture}
  \graph[
    circular placement,
    radius=1.3cm,
    nodes={
      circle,
      draw,
      minimum size=4mm,
      inner sep=0pt
    },
    edges={black!45}
  ] {subgraph K_n [n=7, clockwise]};
  
   \draw (2) edge[ultra thick] (7);
   \draw (3) edge[ultra thick] (1);
   \draw (3) edge[ultra thick] (4);
   \draw (3) edge[ultra thick] (6);
\end{tikzpicture}
\begin{tikzpicture}
  \graph[
    circular placement,
    radius=1.3cm,
    nodes={
      circle,
      draw,
      minimum size=4mm,
      inner sep=0pt
    },
    edges={black!45}
  ] {subgraph K_n [n=7, clockwise]};
  
   \draw (4) edge[ultra thick] (6);
   \draw (3) edge[ultra thick] (1);
   \draw (3) edge[ultra thick] (2);
   \draw (3) edge[ultra thick] (7);
\end{tikzpicture}
\begin{tikzpicture}
  \graph[
    circular placement,
    radius=1.3cm,
    nodes={
      circle,
      draw,
      minimum size=4mm,
      inner sep=0pt
    },
    edges={black!45}
  ] {subgraph K_n [n=7, clockwise]};
  
   \draw (4) edge[ultra thick] (7);
   \draw (3) edge[ultra thick] (2);
   \draw (3) edge[ultra thick] (1);
   \draw (3) edge[ultra thick] (6);
\end{tikzpicture}
\begin{tikzpicture}
  \graph[
    circular placement,
    radius=1.3cm,
    nodes={
      circle,
      draw,
      minimum size=4mm,
      inner sep=0pt
    },
    edges={black!45}
  ] {subgraph K_n [n=7, clockwise]};
  
   \draw (6) edge[ultra thick] (7);
   \draw (3) edge[ultra thick] (1);
   \draw (3) edge[ultra thick] (2);
   \draw (3) edge[ultra thick] (4);
\end{tikzpicture}

\caption{The $10$ vertices of $P^+(X_{\{1,2,3,4,6,7\},3})$ for $n=7$.}
\label{fig:Kapranov vertices}
\end{figure}

\section{Application II: the log-canonical class $\kappa$ and spanning trees}

\subsection{Spanning tree and forest polytopes}

In this section we study the combinatorics of the polytopes associated the log-canonical class $\kappa$ plus a multiple of a psi-class. We first recall the definitions of the spanning tree and spanning forest polytopes.

\subsubsection{Spanning Tree and Forest Polytopes}

Recall that a forest is a graph without cycles and a tree is a connected forest. A spanning forest of $K_n$ means a subgraph of $K_n$ with vertex set $[n]$. Here we consider an isolated vertex to be a component, so a spanning forest of $K_n$ with $k$ connected components has exactly $n-k$ edges.

\begin{defn}\label{defn:Spanning_tree_poltope}
The \emph{spanning tree polytope} $P_{\mathrm{ST},n}\subseteq \R^{n \choose 2}$ of the complete graph $K_n$ is the convex hull of incidence vectors of spanning trees of $K_n$. More generally, the \emph{$k$-forest polytope} $P_{\mathrm{SF},n}^{(k)}\subseteq \R^{n \choose 2}$ of the complete graph $K_n$ is the convex hull of incidence vectors of spanning forests of $K_n$ with $k$ connected components.
\end{defn}

Figure \ref{fig:Spanning trees} shows several spanning trees of $K_6$, each corresponding to a vertex of $P_{\mathrm{ST},6}\subseteq \R^{15}$.

\begin{figure}[h]
\centering
\begin{tikzpicture}
  \graph[
    circular placement,
    radius=1.3cm,
    nodes={
      circle,
      draw,
      minimum size=4mm,
      inner sep=0pt
    },
    edges={black!45}
  ] {subgraph K_n [n=6, clockwise]};
  
   \draw (1) edge[ultra thick] (2);
    \draw (2) edge[ultra thick] (3);
   \draw (3) edge[ultra thick] (4);
   \draw (3) edge[ultra thick] (6);
   \draw (5) edge[ultra thick] (6);
\end{tikzpicture}
\begin{tikzpicture}
  \graph[
    circular placement,
    radius=1.3cm,
    nodes={
      circle,
      draw,
      minimum size=4mm,
      inner sep=0pt
    },
    edges={black!45}
  ] {subgraph K_n [n=6, clockwise]};
  
   \draw (1) edge[ultra thick] (4);
    \draw (1) edge[ultra thick] (2);
   \draw (3) edge[ultra thick] (2);
   \draw (3) edge[ultra thick] (6);
    \draw (5) edge[ultra thick] (6);
\end{tikzpicture}
\begin{tikzpicture}
  \graph[
    circular placement,
    radius=1.3cm,
    nodes={
      circle,
      draw,
      minimum size=4mm,
      inner sep=0pt
    },
    edges={black!45}
  ] {subgraph K_n [n=6, clockwise]};
  
   \draw (1) edge[ultra thick] (2);
   \draw (1) edge[ultra thick] (3);
   \draw (1) edge[ultra thick] (4);
    \draw (1) edge[ultra thick] (5);
   \draw (1) edge[ultra thick] (6);

\end{tikzpicture}
\begin{tikzpicture}
  \graph[
    circular placement,
    radius=1.3cm,
    nodes={
      circle,
      draw,
      minimum size=4mm,
      inner sep=0pt
    },
    edges={black!45}
  ] {subgraph K_n [n=6, clockwise]};

  \draw (3) edge[ultra thick] (1);
  \draw (3) edge[ultra thick] (2);
   \draw (3) edge[ultra thick] (4);
   \draw (3) edge[ultra thick] (5);
   \draw (3) edge[ultra thick] (6);
\end{tikzpicture}

\caption{Some spanning trees of $K_6$.}
\label{fig:Spanning trees}
\end{figure}

The following inequality description of the spanning forest polytopes is given by Edmonds \cite{Ed71} in matroid terms. See also \cite{Schrijver03}.
\begin{prop}[\cite{Ed71}]\label{prop:forest_and_spanning_tree_formulations_of_Edmonds}
We have
\begin{equation*}
P_{\mathrm{SF},n}^{(k)}= \left \{ (x_{e})_{e \in \E_n} \in \R^{ \E_n}_{\geq 0}\, \big \vert \, \x(\E_n) =n-k,\, \x(\E_I) \leq \vert I \vert -1 \, \text{ for all } I\in \II_{n+1} \right \},
\end{equation*}
and in particular we have
\begin{equation*}
P_{\mathrm{ST},n}
=
\left \{ (x_{e})_{e \in \E_n} \in \R^{ \E_n}_{\geq 0}\, \big \vert \, \x(\E_n) =n-1,\, \x(\E_I) \leq \vert I \vert -1 \, \text{ for all } I\in \II_{n+1} \right \}.
\end{equation*}
\end{prop}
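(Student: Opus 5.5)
We will treat this as a statement about the graphic matroid $M(K_n)$ and deduce it from Edmonds' description of matroid base polytopes, applied to a truncation of $M(K_n)$. Let $R_k$ denote the right-hand side of the claimed equality.

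\emph{Inclusion $P_{\mathrm{SF},n}^{(k)}\subseteq R_k$.} We first check that every incidence vector $\mathds{1}_F$ of a spanning forest $F$ with $k$ components lies in $R_k$; since $R_k$ is convex, the inclusion follows.
\begin{itemize}
\item Nonnegativity is clear.
\item $F$ has exactly $n-k$ edges, so $\mathds{1}_F(\E_n)=n-k$.
\item For every $I\in\II_{n+1}$, the induced subgraph $F_I$ is a forest on $|I|$ vertices. Hence $\mathds{1}_F(\E_I)=|I|-k(F_I)\leq |I|-1$.
\end{itemize}

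\emph{Inclusion $R_k\subseteq P_{\mathrm{SF},n}^{(k)}$.} Let $r$ be the rank function of $M(K_n)$. For an edge set $S\subseteq \E_n$ with vertex sets $V_1,\dots,V_m$ of the nontrivial components of $(V(S),S)$, we have $r(S)=\sum_c(|V_c|-1)$.

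Consider the truncation $M'$ of $M(K_n)$ to rank $n-k$. Its rank function is $r'(S)=\min(r(S),n-k)$, and its bases are exactly the spanning forests with $k$ components. By Edmonds' theorem on base polytopes of matroids, we have
\begin{equation*}
P_{\mathrm{SF},n}^{(k)}=\left\{\x\in\R^{\E_n}_{\geq0}\ \middle\vert\ \x(\E_n)=n-k,\ \x(S)\leq r'(S)\text{ for all }S\subseteq\E_n\right\}.
\end{equation*}
It therefore suffices to show that every $\x\in R_k$ satisfies all of these rank inequalities. Fix $S\subseteq\E_n$.
\begin{itemize}
\item \emph{The bound $\x(S)\leq n-k$.} By nonnegativity, $\x(S)\leq \x(\E_n)=n-k$.
\item \emph{The bound $\x(S)\leq r(S)$.} Again by nonnegativity, and since the component edge sets are disjoint subsets of the cliques $\E_{V_c}$, we get $\x(S)\leq\sum_c \x(\E_{V_c})$. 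Each $V_c$ has $|V_c|\geq 2$. If $V_c\neq[n]$, then $V_c\in\II_{n+1}$, so $\x(\E_{V_c})\leq |V_c|-1$. If $V_c=[n]$, then $\x(\E_{V_c})=n-k\leq n-1$. Summing over $c$ gives $\x(S)\leq r(S)$.
\end{itemize}
Hence $\x(S)\leq r'(S)$ for all $S$, and $R_k\subseteq P_{\mathrm{SF},n}^{(k)}$. The spanning tree statement is the case $k=1$.

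\emph{Main obstacle.} The only nontrivial ingredient is the integrality statement, namely Edmonds' theorem that the system $\x\geq0$, $\x(S)\leq r'(S)$, $\x(E)=r'(E)$ defines the base polytope of a matroid. We would cite it, as the paper does with \cite{Ed71}. If a self-contained proof were wanted, we would run the standard argument: take a vertex of $R_k$ and uncross its tight clique constraints, using submodularity of $I\mapsto |I|-1$ on intersecting vertex sets, to obtain a laminar tight family. The resulting system is then totally unimodular, which forces the vertex to be integral and hence the incidence vector of a $k$-component spanning forest.
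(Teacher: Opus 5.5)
Your proposal is correct and follows the paper's route. The paper gives no argument beyond citing Edmonds' matroid base-polytope theorem \cite{Ed71} for truncations of the graphic matroid of $K_n$ (cf.\ Remark \ref{rem:matroid_vs_ours}), and your check that nonnegativity plus the clique inequalities imply every rank inequality $\x(S)\leq\min(r(S),n-k)$, using $k\geq 1$ exactly when a component spans $[n]$, is the translation from matroid terms that the paper leaves implicit.
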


\subsection{The log canonical class $\kappa$}

Now we return to the log canonical class $\kappa$ on $\overline{M}_{0,n+1}$. 

\begin{thm}\label{thm:kappa + t psi polytope}
    For any $t\in \R$, the divisor class $\kappa+t\psi_{n+1}$ on $\overline{M}_{0,n+1}$ has the following:
    \begin{enumerate}
        \item The polytope $\widetilde{P}(\kappa+t\psi_{n+1})$ is given by
        \begin{equation*}\widetilde{P}(\kappa+t\psi_{n+1}) = \left\{ (x_e)_{e \in \E_n}\in \R^{\E_n} \big \vert \x(\E_n) = n-1-t, \x(\E_I) \leq  \vert I  \vert -1 \text{ for all }I \in \II_{n+1} \right\}. \end{equation*}
        \item For any point $\x\in \widetilde{P}(\kappa^{(n+1)}+t\psi_{n+1})$, the corresponding boundary divisor expression is
        \begin{equation*} \kappa^{(n+1)}+t\psi_{n+1} = \sum_{I\in\II_{n+1}} \left (\vert I \vert-1-\x(\E_I)\right)[D_I].\end{equation*}
        \item The map $\widetilde{P}(\kappa^{(n+1)}+t\psi_{n+1}) \to \widetilde{P}^+(\kappa^{(n+1)}+t\psi_{n+1})$ is given by $(x_e)_{e \in \E_n}\mapsto (x_e')_{e \in \E_{n+1}}$ where
        \begin{equation*} x_e' = \begin{cases}
            x_e & \text{ if } e \in \E_n,\\
            2-t - \x(\delta(\{i\})) & \text{ if } e = \{i,n+1\} \in \E_{n+1}\setminus \E_n.
        \end{cases} \end{equation*}
    \end{enumerate}
\end{thm}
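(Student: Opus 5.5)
The plan is to reduce everything to Proposition \ref{prop:P tilde inequalities}. That proposition expresses $\widetilde{P}(D)$, the boundary expression, and the map to $\widetilde{P}^+(D)$ purely in terms of the numbers $r_I(D)$. So it suffices to compute the Kapranov basis coefficients $\alpha$ and $\beta_I$ of $D=\kappa^{(n+1)}+t\psi_{n+1}$ on $\overline{M}_{0,n+1}$.

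\textbf{Computing the coefficients.} By Lemma \ref{lem:kappa in kapranov basis},
\begin{equation*}
\kappa=\binom{n-1}{2}\psi_{n+1}-\sum_{I\in\II_{n+1},\,\vert I\vert\geq 3}\binom{\vert I\vert-1}{2}[D_I].
\end{equation*}
Since $\psi_{n+1}$ is itself a Kapranov basis element, we get
\begin{equation*}
\alpha(D)=\binom{n-1}{2}+t,\qquad \beta_I(D)=-\binom{\vert I\vert-1}{2}\quad\text{for }\vert I\vert\geq 3.
\end{equation*}
For $\vert I\vert=2$ the convention gives $\beta_I=0$, which also equals $-\binom{1}{2}$. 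So the formula $\beta_I=-\binom{\vert I\vert-1}{2}$ holds uniformly on $\II_{n+1}$.

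Substituting into \eqref{eqn:rank_functions_interms_of_alpha_beta} gives
\begin{equation*}
r_I=\binom{\vert I\vert}{2}-\binom{\vert I\vert-1}{2}=\vert I\vert-1,\qquad r_{[n]}=\binom{n}{2}-\binom{n-1}{2}-t=n-1-t.
\end{equation*}

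\textbf{Deducing (1)--(3).}
\begin{itemize}
\item Part (1) follows immediately from Proposition \ref{prop:P tilde inequalities}(1).
\item Part (2) follows from Proposition \ref{prop:P tilde inequalities}(2), with coefficients $r_I-\x(\E_I)=\vert I\vert-1-\x(\E_I)$.
\item For part (3), apply Proposition \ref{prop:P tilde inequalities}(3) with $I=[n]\setminus\{i\}$, so $r_{[n]\setminus\{i\}}=n-2$. Then
\begin{equation*}
x'_{\{i,n+1\}}=1+(n-1-t)-(n-2)-\x(\delta(\{i\}))=2-t-\x(\delta(\{i\})).
\end{equation*}
\end{itemize}

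\textbf{Main obstacle.} The only delicate point is index bookkeeping in part (3). The set $[n]\setminus\{i\}$ has size $n-1$, so it lies in $\II_{n+1}$ and the formula $r_I=\vert I\vert-1$ applies to it. We must also make sure that Lemma \ref{lem:kappa in kapranov basis} is being used on $\overline{M}_{0,n+1}$, matching the shifted conventions of Remark \ref{rem:index conventions}.
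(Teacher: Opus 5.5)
Your proposal is correct and follows essentially the same route as the paper: read off $\alpha=\binom{n-1}{2}+t$ and $\beta_I=-\binom{\vert I\vert-1}{2}$ from Lemma \ref{lem:kappa in kapranov basis}, note the convention $\beta_e=0$ agrees with $-\binom{1}{2}$, and get $r_I=\vert I\vert-1$ and $r_{[n]}=n-1-t$. Parts (1)--(3) then follow from Proposition \ref{prop:P tilde inequalities}, with $r_{[n]\setminus\{i\}}=n-2$ giving $x'_{\{i,n+1\}}=2-t-\x(\delta(\{i\}))$.
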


\begin{proof} This result is an application of Proposition \ref{prop:P tilde inequalities}.

By Lemma \ref{lem:kappa in kapranov basis}, the Kapranov basis expression of
$\kappa^{(n+1)}+t\psi_{n+1}$ centered at the marking $n+1$ is
\begin{equation*}
\kappa^{(n+1)}+t\psi_{n+1}
=
\left(\binom{n-1}{2}+t\right)\psi_{n+1}
-
\sum_{\substack{I\in\II_{n+1}\\  \vert I \vert \geq3}}
\binom{ \vert I \vert -1}{2}[D_I].
\end{equation*}
Consequently,
\begin{equation*}
r_{[n]}(\kappa^{(n+1)}+t\psi_{n+1})
=
\binom{n}{2}-\binom{n-1}{2}-t
=n-1-t,
\end{equation*}
and, for every $I\in\II_{n+1}$,
\begin{equation*}
r_I(\kappa^{(n+1)}+t\psi_{n+1})
=
\binom{ \vert I \vert }{2}-\binom{ \vert I \vert -1}{2}
= \vert I \vert -1.
\end{equation*}
Here, the same formula also holds for $ \vert I \vert =2$, because by convention
$\beta_I(\kappa^{(n+1)}+t\psi_{n+1})=0$ in that case. Parts (1) and (2) now follow from parts (1) and (2) of
Proposition \ref{prop:P tilde inequalities}.

For part (3), Proposition \ref{prop:P tilde inequalities} part (3) gives
\begin{align*}
x'_e
&=
1+r_{[n]}(\kappa^{(n+1)}+t\psi_{n+1})-r_{[n]\setminus\{i\}}(\kappa^{(n+1)}+t\psi_{n+1})
-\x \left(\delta(\{i\}) \right)\\
&=
1+(n-1-t)-(n-2)-\x \left(\delta(\{i\}) \right)\\
&=
2-t-\x \left(\delta(\{i\}) \right),
\end{align*}
for
$e=\{i,n+1\}$, which proves the claim.
\end{proof}

Using the description given in part (1) of Theorem \ref{thm:kappa + t psi polytope}, we observe close relationships between these polytopes and several well-known graph theory polytopes, which motivated the change of variables $x_e=1-a_e$.

\begin{cor}\label{cor:spanning tree polytope and kappa}
    For $\kappa^{(n+1)}$ the log-canonical class on $\overline{M}_{0,n+1}$, we have
    \begin{enumerate}
        \item For any integer $t\geq 0$, $\widetilde{P}(\kappa+t\psi_{n+1})\cap \R^{\E_n}_{\geq 0}$ is the $(t+1)$-connected component forest polytope of $K_n$:
        \begin{equation*}
        \widetilde{P}\left(\kappa^{(n+1)}+t\psi_{n+1}\right)
        \cap\R_{\geq0}^{\E_n}
        =
        P_{\mathrm{SF},n}^{(t+1)}.
        \end{equation*}
        In particular, for $t=0$ the polytope $\widetilde{P}(\kappa^{(n+1)})\cap \R^{\E_n}_{\geq 0}$ is the spanning tree polytope of $K_n$:
        \begin{equation*}
        \widetilde{P} \left(\kappa^{(n+1)}\right) \cap\R_{\geq0}^{\E_n}=P_{\mathrm{ST},n},
        \end{equation*}
        while for $t\geq n$ the polytope $\widetilde{P}(\kappa^{(n+1)}+t\psi_{n+1})\cap \R^{\E_n}_{\geq 0}$ is empty.
        \item The polytope $\widetilde{P}(\kappa^{(n+1)}-\psi_{n+1})\cap\R^{\E_n}_{\geq 0} =  \widetilde{P}^+(\kappa^{(n)})\cap \R^{\E_n}_{\geq 0}$ is the subtour elimination polytope of $K_n$:
        \begin{equation*}
        \widetilde{P}\left(\kappa^{(n+1)}-\psi_{n+1}\right)
        \cap\R_{\geq0}^{\E_n}
        =
        \widetilde{P}^{+} \left(\kappa^{(n)}\right)\cap\R_{\geq0}^{\E_n}
        =
        P_{\mathrm{SEP},n}.
        \end{equation*}
    \end{enumerate}
\end{cor}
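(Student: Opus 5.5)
Part (1) follows from Theorem \ref{thm:kappa + t psi polytope}(1) and Edmonds' description in Proposition \ref{prop:forest_and_spanning_tree_formulations_of_Edmonds}. With $t$ a nonnegative integer, intersecting $\widetilde{P}(\kappa^{(n+1)}+t\psi_{n+1})$ with the nonnegative orthant gives
\begin{equation*}
\left\{\x\in\R^{\E_n}_{\geq0}\,\middle\vert\, \x(\E_n)=n-1-t,\ \x(\E_I)\leq \vert I\vert-1 \text{ for all } I\in\II_{n+1}\right\}.
\end{equation*}
This is verbatim Edmonds' description of $P^{(k)}_{\mathrm{SF},n}$ with $k=t+1$. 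The case $t=0$ is the spanning tree polytope. For $t\geq n$ the equation forces $\x(\E_n)=n-1-t<0$, which is incompatible with $\x\geq0$, so the polytope is empty.

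For part (2), I first identify $\widetilde{P}^+(\kappa^{(n)})$ with $\widetilde{P}(\pi^*\kappa^{(n)})$. Corollary \ref{cor: P^+ and P relationship} gives $P^+(\kappa^{(n)})=P(\pi^*\kappa^{(n)})$, and applying the coordinate change $x_e=1-a_e$ to both sides gives the corresponding equality of $\widetilde{P}$'s. By \eqref{eqn:kappa_pull_back_formula} on $\overline{M}_{0,n+1}$, $\pi^*\kappa^{(n)}=\kappa^{(n+1)}-\psi_{n+1}$. This yields the first equality. Setting $t=-1$ in Theorem \ref{thm:kappa + t psi polytope}(1) shows that the nonnegative part is the right-hand side of \eqref{eqn:P_SEP_alternative_form}, namely
\begin{equation*}
\left\{\x\in\R^{\E_n}_{\geq0}\,\middle\vert\,\x(\E_n)=n,\ \x(\E_I)\leq\vert I\vert-1 \text{ for all } I\in\II_{n+1}\right\}.
\end{equation*}
What remains is to prove Lemma \ref{lem:Held-Karp rank description}, i.e.\ that this set equals $P_{\mathrm{SEP},n}$ as given in Definition \ref{def:HK}. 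This is the main step.

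I will use the identity $2\x(\E_I)+\x(\delta(I))=\sum_{i\in I}\deg_\x(i)$. For $\supseteq$: if $\x\in P_{\mathrm{SEP},n}$, summing the degree conditions gives $\x(\E_n)=n$. For a proper subset $I$ with $\vert I\vert\ge2$, the identity gives $2\x(\E_I)=2\vert I\vert-\x(\delta(I))\leq 2\vert I\vert-2$, which is the clique inequality.

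For $\subseteq$, take $\x$ in the right-hand set. The bound $x_e\leq1$ is the clique inequality for $\vert I\vert=2$. For the degrees, apply the clique inequality to $I=[n]\setminus\{i\}$, which lies in $\II_{n+1}$ for $n\geq3$. This gives $\x(\E_n)-\deg_\x(i)\le n-2$, so $\deg_\x(i)\geq2$ for every $i$. Summing over $i$ gives $\sum_i\deg_\x(i)=2\x(\E_n)=2n$, so every degree equals $2$. For the cut condition, note $\x(\delta(I))=\x(\delta([n]\setminus I))$, so we may assume $\vert I\vert\geq2$ by passing to the complement if $\vert I\vert=1$. When $\vert I\vert=1$ the cut is a degree and equals $2$ anyway. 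When $\vert I\vert\geq2$, the identity gives $\x(\delta(I))=2\vert I\vert-2\x(\E_I)\geq2$.

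I expect no genuine obstacle. The only points needing care are the index bookkeeping in \eqref{eqn:kappa_pull_back_formula} when shifting from $n$ to $n+1$, and checking that the complement $[n]\setminus\{i\}$ indeed lies in $\II_{n+1}$.
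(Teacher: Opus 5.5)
Your proposal is correct and follows the paper's route. Part (1) and the nonnegative part of (2) are read off by comparing Theorem~\ref{thm:kappa + t psi polytope}(1) with Edmonds' description and with Lemma~\ref{lem:Held-Karp rank description}. The identification $\widetilde{P}^+(\kappa^{(n)})=\widetilde{P}(\kappa^{(n+1)}-\psi_{n+1})$ comes from $\pi^*\kappa^{(n)}=\kappa^{(n+1)}-\psi_{n+1}$ together with Corollary~\ref{cor: P^+ and P relationship}.

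Your proof of the Held--Karp rewriting also matches the paper's appendix argument, with one small difference. For the cut condition you use the handshake identity on $I$ and treat $\vert I\vert=1$ separately. The paper instead writes $\x(\delta(I))=\x(\E_n)-\x(\E_I)-\x(\E_{[n]\setminus I})$ and applies the clique bounds to both $I$ and its complement, which handles all $I$ uniformly.
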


\begin{proof}
    These graph polytope interpretations of $\widetilde{P}(\kappa+t\psi_{n+1})\geq \R^{\E_n}_{\geq 0}$ follow immediately from comparing the description in Theorem \ref{thm:kappa + t psi polytope} to the descriptions of the spanning forest and subtour elimination polytopes given in \ref{prop:forest_and_spanning_tree_formulations_of_Edmonds} and equation \eqref{eqn:P_SEP_alternative_form}. 

    To identify 
    \begin{equation*} \widetilde{P}(\kappa^{(n+1)} - \psi^{(n+1)}_{n+1}) = \widetilde{P}^+(\kappa^{(n)}), \end{equation*}
    recall that we have the identity
    \begin{equation*} \kappa^{(n+1)} - \psi^{(n+1)}_{n+1} = \pi^*(\kappa^{(n)}), \end{equation*}
    given by Lemma \ref{lem:some_relations_for_kappa_psi_and_D_A}. Hence, it follows from Corollary \ref{cor: P^+ and P relationship} that we have the identity
    \begin{equation*} P(\kappa^{(n+1)} - \psi^{(n+1)}_{n+1}) = P(\pi^*(\kappa^{(n)})) = P^+(\kappa^{(n)}). \end{equation*}
    The equality is preserved under the change of coordinates $x_e = 1-a_e$, which completes the proof.
\end{proof}

\begin{rem}\label{rem:matroid_vs_ours}
For $0\leq t\leq n-1$, the nonnegative truncation in Corollary \ref{cor:spanning tree polytope and kappa} is the base polytope of a matroid obtained by repeated truncations of the graphic matroid of $K_n$. Without the inequalities $x_e\geq0$, however, the clique inequalities $\x(\E_I)\leq \vert I \vert -1$ do not by themselves give the usual full edge-subset rank description of a matroid base polytope. The untruncated polytope $\widetilde{P}(\kappa^{(n+1)}+t\psi_{n+1})$ is therefore distinct from the similarly defined extended polymatroid. 
\end{rem}

\begin{cor}\label{cor:kappa-t-psi-boundary-effective-range}
The divisor class $\kappa^{(n+1)}+t\psi_{n+1}$ is linearly equivalent to an
effective real linear combination of boundary divisors if and only if $t\geq -1$.
Equivalently,
\begin{equation*}
\widetilde{P}\left(\kappa^{(n+1)}+t\psi_{n+1}\right)\neq\varnothing
\quad\text{if and only if}\quad
t\geq-1.
\end{equation*}
\end{cor}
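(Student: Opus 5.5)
By Theorem \ref{thm:kappa + t psi polytope}(1), $\widetilde{P}(\kappa^{(n+1)}+t\psi_{n+1})$ is the set of $\x\in\R^{\E_n}$ with $\x(\E_n)=n-1-t$ and $\x(\E_I)\leq \vert I\vert-1$ for all $I\in\II_{n+1}$. The effectivity statement is equivalent to this polytope being nonempty, since $Q$, $P$ and $\widetilde{P}$ are affinely isomorphic. I will therefore prove the following two facts: if $\x(\E_I)\leq\vert I\vert-1$ for all proper subsets $I$ with $\vert I\vert\geq 2$, then $\x(\E_n)\leq n$; and every value of $\x(\E_n)$ up to $n$ is attained. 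Together they show the polytope is nonempty exactly when $n-1-t\leq n$, that is, $t\geq -1$.

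\emph{Upper bound.} I average the clique inequalities over the $n$ sets $I=[n]\setminus\{i\}$, each of which lies in $\II_{n+1}$ since $n-1\geq 2$. Each edge $e$ misses exactly $n-2$ of these sets, so
\begin{equation*}
(n-2)\,\x(\E_n)=\sum_{i=1}^n \x(\E_{[n]\setminus\{i\}})\leq n(n-2).
\end{equation*}
Hence $\x(\E_n)\leq n$, and nonemptiness forces $n-1-t\leq n$, i.e.\ $t\geq -1$.

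\emph{Existence for $t\geq -1$.} At $t=-1$ I take the incidence vector of a Hamiltonian cycle $C$ of $K_n$. Then $\x(\E_n)=n$, and for a proper subset $I$ the graph $C\cap K_I$ is a union of paths, so it has at most $\vert I\vert-1$ edges. Equivalently, this is a point of $P_{\mathrm{SEP},n}$ via Corollary \ref{cor:spanning tree polytope and kappa}(2), using the description \eqref{eqn:P_SEP_alternative_form}. For $t>-1$, the clique inequalities are closed under decreasing any coordinate, and the polytope has no lower bounds. So starting from $\mathds{1}_C$ and subtracting $t+1$ from a single coordinate gives a point with total $n-1-t$ that still satisfies every clique inequality. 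Alternatively, for $t\geq -1$ real I can use scaled cycle vectors $\lambda\mathds{1}_C$ with $\lambda=(n-1-t)/n$, when $\lambda\geq 0$, since each $\x(\E_I)$ is then at most $\lambda(\vert I\vert-1)\leq\vert I\vert-1$. The subtraction trick already covers all $t\geq -1$.

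There is no real obstacle here. The only care needed is that both the averaging step and the cycle example require $n\geq 3$, which holds because $n+1\geq 4$.
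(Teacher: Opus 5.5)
Your proof is correct. The necessity half is the same as the paper's: both of you sum the clique inequalities over the $n$ sets $[n]\setminus\{i\}$ and conclude $\x(\E_n)\leq n$. There is one wording slip there. An edge \emph{lies in} exactly $n-2$ of these sets and misses exactly $2$; your displayed identity is the right one.

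For sufficiency you take a different route.
\begin{itemize}
\item The paper writes down the single $S_n$-symmetric point $x_e=2(n-1-t)/(n(n-1))$. It then checks the clique inequalities directly, with a case split on the sign of $x_e$.
\item You observe that the constraints defining $\widetilde{P}(\kappa^{(n+1)}+t\psi_{n+1})$ are upper bounds with nonnegative coefficients and there are no lower bounds. So the feasible region is closed under decreasing coordinates, and every $t\geq -1$ reduces to the extreme case $t=-1$.
\item At $t=-1$ you use $\mathds{1}_C$ for a Hamiltonian cycle $C$. It works because $C\cap K_I$ is a forest for every proper $I$.
\end{itemize}

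What your route buys: it explains the threshold $t=-1$ through the subtour elimination polytope. Your witness at $t=-1$ is exactly the Hamiltonian-cycle boundary expression of Theorem F. Your witnesses are also integral, so for integer $t\geq -1$ they give integral effective boundary expressions.

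What the paper's route buys: its witness is self-contained and needs no graph theory. Moreover, for $t>-1$ it satisfies every clique inequality strictly, since $\binom{\vert I\vert}{2}x_e<\vert I\vert-1$ there. Hence it shows more than nonemptiness: the polytope has the full dimension $\binom{n}{2}-1$ inside the hyperplane $\x(\E_n)=n-1-t$.
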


\begin{proof}
Suppose first that $\x\in\widetilde{P}(\kappa^{(n+1)}+t\psi_{n+1})$.
Summing the inequalities
\begin{equation*}
\x(\E_I)\leq n-2
\end{equation*}
over the $n$ subsets $I\subseteq[n]$ of cardinality $n-1$, and observing that
each edge belongs to exactly $n-2$ of these subsets, gives
\begin{equation*}
(n-2)\x(\E_n)\leq n(n-2).
\end{equation*}
Thus $\x(\E_n)\leq n$. Since part (1) of Theorem
\ref{thm:kappa + t psi polytope} also gives
$\x(\E_n)=n-1-t$, it follows that $t\geq-1$.

Conversely, assume $t\geq-1$ and set
\begin{equation*}
x_e=\frac{2(n-1-t)}{n(n-1)} \quad \text{for all }e\in\E_n.
\end{equation*}
Then $\x(\E_n)=n-1-t$. If $x_e\leq 0$, all the inequalities $x(\E_I )\leq \vert I \vert -1$ are
immediate for all $I \in \II_{n+1}$. If $x_e>0$, then $t\geq-1$ implies
\begin{equation*}
x_e\leq \frac{2}{n-1}.
\end{equation*}
Hence, for $I\in\II_{n+1}$ with $ \vert I \vert \leq n-1$, we get
\begin{equation*}
\x(\E_I)
=
\binom{\vert I \vert }{2}x_e
\leq
\frac{\vert I \vert (\vert I \vert-1)}{n-1}
\leq
\vert I \vert-1.
\end{equation*}
Therefore, by Theorem \ref{thm:kappa + t psi polytope}, $\x$ belongs to
$\widetilde{P}(\kappa^{(n+1)}+t\psi_{n+1})$, proving nonemptiness.
\end{proof}

\subsection{Combinatorial boundary expressions for the log-canonical class}

In this section, we give explicit boundary divisor formulas for the log-canonical class $\kappa$. Recall that $\E_I(G)$ denotes the number of edges of $G\cap K_I$ and $k(G)$ denotes the number of connected components of $G$.

\begin{thm}\label{thm:tree expression for kappa}
    Let $T$ be a spanning tree of $K_n$. Then, 
    \begin{align*}
        \kappa^{(n+1)} & = \sum_{I\in \II_{n+1}} \left( \vert I  \vert -1- \vert \E_I(T) \vert \right)[D_I]\\
        & = \sum_{I\in \II_{n+1}} \left( k(T\cap K_I) -1\right)[D_I].
    \end{align*} 
    More generally, for any spanning forest  $F$ of $K_n$ with $k(F)$ connected components, we have
    \begin{align*}
        \kappa^{(n+1)} + (k(F)-1)\psi_{n+1} & = \sum_{I\in \II_{n+1}} \left( \vert I  \vert -1- \vert \E_I(F) \vert \right)[D_I]\\
        & = \sum_{I\in \II_{n+1}} \left( k(F\cap K_I) -1\right)[D_I].
    \end{align*} 
\end{thm}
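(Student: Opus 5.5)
The plan is to apply Theorem \ref{thm:kappa + t psi polytope} with $t=k(F)-1$ to the incidence vector $\x=\mathds{1}_F$. The tree case is the special case $k(T)=1$, so it suffices to treat a general spanning forest $F$.

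\textbf{Membership.} First I will check that $\mathds{1}_F\in\widetilde{P}(\kappa^{(n+1)}+(k(F)-1)\psi_{n+1})$ using part (1) of that theorem. A spanning forest with $k(F)$ components has $n-k(F)$ edges. Hence
\begin{equation*}
\mathds{1}_F(\E_n)=n-k(F)=n-1-t,
\end{equation*}
which is the required equality. For each $I\in\II_{n+1}$, the induced subgraph $F\cap K_I$ is a forest on the vertex set $I$, with isolated vertices retained as in our convention. It therefore has exactly $\vert I\vert-k(F\cap K_I)$ edges, so
\begin{equation*}
\mathds{1}_F(\E_I)=\vert \E_I(F)\vert=\vert I\vert-k(F\cap K_I)\leq \vert I\vert-1,
\end{equation*}
since $k(F\cap K_I)\geq 1$ whenever $I$ is nonempty. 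Thus every clique inequality holds.

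\textbf{Boundary expression.} Part (2) of Theorem \ref{thm:kappa + t psi polytope} then gives
\begin{equation*}
\kappa^{(n+1)}+(k(F)-1)\psi_{n+1}=\sum_{I\in\II_{n+1}}\bigl(\vert I\vert-1-\vert\E_I(F)\vert\bigr)[D_I].
\end{equation*}
This is the first displayed formula. Substituting the identity $\vert\E_I(F)\vert=\vert I\vert-k(F\cap K_I)$ from the membership step turns each coefficient into $k(F\cap K_I)-1$, which gives the second formula. Specializing to $F=T$ with $k(T)=1$ recovers both formulas for $\kappa^{(n+1)}$.

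No step here is really an obstacle. The only point requiring care is the convention that $F\cap K_I$ retains isolated vertices, because this is what makes the edge–component count $\vert E\vert=\vert V\vert-k$ for forests apply on the vertex set $I$. As a byproduct, the coefficients $k(F\cap K_I)-1$ are nonnegative integers, so this is an integral effective boundary expression.
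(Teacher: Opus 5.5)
Your proof is correct and follows the paper's argument: you apply the boundary-expression formula (Proposition~\ref{prop:P tilde inequalities}(2), specialized in Theorem~\ref{thm:kappa + t psi polytope}(2)) to the incidence vector of the forest, and then use $\vert \E_I(F)\vert = \vert I\vert - k(F\cap K_I)$ for forests. The only differences are that you check $\mathds{1}_F \in \widetilde{P}$ directly from the clique inequalities, where the paper cites Corollary~\ref{cor:spanning tree polytope and kappa} (which rests on Edmonds' description), and that you treat the general forest case explicitly, where the paper only writes out the tree case.
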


\begin{proof}
    The first formula follows from the general boundary expression from a point $\x \in \widetilde{P}(D)$ in Proposition \ref{prop:P tilde inequalities} to the incidence vector of $T$, which lies in $\widetilde{P}(\kappa^{(n+1)})$ by Corollary \ref{cor:spanning tree polytope and kappa}. The equality between the coefficients in the two expressions follows from the fact that $T\cap K_I$ is a forest on vertex set $I$, and so $ \vert I  \vert - \vert \E(T\cap K_I) \vert $ is equal to the number of connected components of $T\cap K_I$.
\end{proof}

\begin{ex}
    Consider the first spanning tree in Figure \ref{fig:Spanning trees}. To find the coefficient on $D_{1345}$, for example, in the corresponding boundary divisor expression for $\kappa^{(7)}$, one considers the restriction of $T$ to the subgraph $K_{1345}$. The forest $T\cap K_{1345}$ has $3$ connected components (the vertex $1$, the vertex $5$, and the edge between vertices $3$ and $4$) so the coefficient on $[D_{1345}]$ is $2$.
\end{ex}

Theorem \ref{thm:tree expression for kappa} recovers the standard boundary divisor expressions for $\kappa$ given in \eqref{eqn:kappa_1_wrt_divisors} for a certain type of spanning tree, as the following example shows.

\begin{ex}
    Fix $i\in [n]$ and consider the spanning star graph $T$ with edges $\{i,k\}$ for all $k\in [n]\setminus \{i\}$. We claim that the corresponding expression for $\kappa^{(n+1)}$ in Theorem \ref{thm:tree expression for kappa} is
    \begin{equation*} \kappa^{(n+1)} = \sum_{I\not\ni i,n+1}\left( \vert I  \vert -1\right) [D_I]. \end{equation*}
    Indeed, for $I\ni i$, $ \vert T_I \vert = \vert I  \vert -1$ as each vertex other than $i$ is adjacent to a unique edge, and so the coefficient of $[D_I]$ is $0$. On the other hand, for $I\not\ni i$, $ \vert T_I \vert = 0$ as there are no edges between vertices in $I$ and so the coefficient on $[D_I]$ is $ \vert I  \vert -1$ as desired.
\end{ex}

\begin{ex}
    There is a unique spanning forest $F$ of $K_n$ with $n$ connected components, namely the graph on vertex set $[n]$ with no edges. This spanning forest gives the expression
    \begin{equation*} \kappa^{(n+1)}+(n-1)\psi_{n+1} = \sum_{I\in \II_{n+1}} \left( \vert I \vert -1\right)[D_I], \end{equation*}
    since $k(F\cap K_I) = \vert I  \vert .$
\end{ex}

Similarly, the integral points of the subtour elimination polytope correspond to Hamiltonian cycles so we obtain a similar formula

\begin{thm}\label{thm:Hamiltonian cycle formula for kappa}
    Let $C$ be a Hamiltonian cycle on $K_n$. Then, 
    \begin{equation*} \kappa^{(n+1)}-\psi_{n+1} = \sum_{I\in \II_{n+1}} (k(C\cap K_I)-1)[D_I], \end{equation*}
    and 
    \begin{equation*} \kappa^{(n)} = \sum_{I\in \II_n} (k(C\cap K_I)-1)[D_I]. \end{equation*}
\end{thm}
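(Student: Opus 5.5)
The plan mirrors the proof of Theorem \ref{thm:tree expression for kappa}. We apply part (2) of Theorem \ref{thm:kappa + t psi polytope} with $t=-1$ to the incidence vector $\x=\mathds{1}_C$ of the Hamiltonian cycle $C$. This gives the first formula, and the second follows by pulling back along the forgetful map.

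First we check that $\mathds{1}_C\in\widetilde{P}(\kappa^{(n+1)}-\psi_{n+1})$. By part (1) of Theorem \ref{thm:kappa + t psi polytope} with $t=-1$, we need $\x(\E_n)=n$ and $\x(\E_I)\leq |I|-1$ for all $I\in\II_{n+1}$. The first holds since $C$ has $n$ edges. For the second, let $I$ be a proper subset of $[n]$ with $|I|\geq 2$. Then $C\cap K_I$ is obtained from the cycle by deleting the vertices outside $I$, so it is a disjoint union of paths with vertex set $I$ (isolated vertices count as paths). Hence $|\E_I(C)|=|I|-k(C\cap K_I)\leq |I|-1$. Alternatively one can invoke Corollary \ref{cor:spanning tree polytope and kappa}(2) together with the fact that $\mathds{1}_C$ lies in $P_{\mathrm{SEP},n}$. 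Part (2) of Theorem \ref{thm:kappa + t psi polytope} then gives coefficients $|I|-1-|\E_I(C)|$, and the identity $|\E_I(C)|=|I|-k(C\cap K_I)$ rewrites these as $k(C\cap K_I)-1$.

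For the second formula we use $\kappa^{(n+1)}-\psi_{n+1}=\pi^*(\kappa^{(n)})$ from Lemma \ref{lem:some_relations_for_kappa_psi_and_D_A}, as in the proof of Corollary \ref{cor:spanning tree polytope and kappa}. By Proposition \ref{prop:pullback on polytopes}, the effective expression just found equals $\pi^*(q_0)$ for a unique $q_0=\sum_{J\in\II_n}b_JD_J\in Q(\kappa^{(n)})$. The left inverse of $\pi^*$ recovers $q_0$: for $J\in\II_n$, the coefficient $b_J$ is the coefficient of $D_J$ in the expression on $\overline{M}_{0,n+1}$, namely $k(C\cap K_J)-1$. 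This gives $\kappa^{(n)}=\sum_{J\in\II_n}(k(C\cap K_J)-1)[D_J]$.

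As a consistency check, $\pi^*(D_J)=D_J+D_{[n]\setminus J}$, so the coefficient of $D_{[n]\setminus J}$ in the first formula should equal that of $D_J$. This holds because a cycle restricted to $J$ and to its complement $[n]\setminus J$ has the same number of path components, namely half the number of cut edges when $J$ is proper. The coefficients of the $D_{[n]\setminus\{i\}}$ should vanish, and they do, because $C$ minus one vertex is a single path. No step is a serious obstacle. The only point needing care is that the component count of $C\cap K_I$ includes isolated vertices, and that the pullback identification is applied to exactly the $Q$-point produced above.
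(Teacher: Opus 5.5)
Your proof is correct and is essentially the paper's argument. The first formula comes from applying Theorem~\ref{thm:kappa + t psi polytope}(2) with $t=-1$ to $\mathds{1}_C$, exactly as in the spanning tree case, and the second from $\kappa^{(n+1)}-\psi_{n+1}=\pi^*\kappa^{(n)}$ by keeping only the coefficients indexed by $\II_n$. Your justification of that last step via Proposition~\ref{prop:pullback on polytopes} and the left inverse of $\pi^*$ is actually the precise one. The paper cites Lemma~\ref{lem:injectivity of projection} and states the truncation for arbitrary boundary expressions, but truncating to $\II_n$ only preserves the class for expressions in $\pi^*(\V_n)$, which holds here because your expression is effective.
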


\begin{proof}
    The first formula follows from the same argument as in Theorem \ref{thm:tree expression for kappa}. We deduce the second formula from the first and from Lemma \ref{lem:injectivity of projection}. Indeed, we have 
    \begin{equation*} \kappa^{(n+1)}-\psi_{n+1} = \pi^*(\kappa^{(n)}), \end{equation*}
    but Lemma \ref{lem:injectivity of projection} implies that for any divisor $D$ on $\M$ and boundary expression,
    \begin{equation*} \pi^*(D) = \sum_{I\in \II_{n+1}}a_I [D_I], \end{equation*}
    we have 
    \begin{equation*} D = \sum_{I\in \II_{n}}a_I [D_I], \end{equation*}
    where we only include the sum over $\II_n\subseteq \II_{n+1}$ of the corresponding boundary divisors on $\M$. This completes the proof.
\end{proof}

\begin{ex}
    Consider the Hamiltonian cycle $C$ depicted in Figure \ref{fig:Hamiltonian cycle in K5}.
    \begin{figure}[h]
    \centering
    \begin{tikzpicture}
      \graph[
        circular placement,
        radius=1.3cm,
        nodes={
          circle,
          draw,
          minimum size=4mm,
          inner sep=0pt
        },
        edges={black!45}
      ] {subgraph K_n [n=5, clockwise]};
      
        \draw (1) edge[ultra thick] (3);
        \draw (3) edge[ultra thick] (4);
        \draw (4) edge[ultra thick] (2);
        \draw (2) edge[ultra thick] (5);
        \draw (5) edge[ultra thick] (1);
    \end{tikzpicture}
    \caption{A Hamiltonian cycle $C$ in $K_5$}
    \label{fig:Hamiltonian cycle in K5}
    \end{figure}
    The corresponding boundary expression for the log-canonical class of $\overline{M}_{0,5}$ is
    \begin{equation*} \kappa^{(5)} = [D_{12}]+[D_{14}]+[D_{23}]+[D_{123}]+[D_{124}]. \end{equation*}
    For example, the coefficient on $[D_{124}]$ is one because $C\cap K_{124}$ has two connected components, whereas the coefficient on $[D_{134}]$ is zero since $C\cap K_{134}$ is connected.
\end{ex}

We note that, although the sum in Theorem \ref{thm:Hamiltonian cycle formula for kappa} is indexed using only the subsets $I\subseteq [n]$ in $\II_n$, i.e. not containing $n$, the coefficient on $[D_I] = [D_{[n]\setminus I}]$ can be computed using either the restriction of $C$ to $K_I$ or $K_{[n]\setminus I}$, since 
\begin{equation*} k(C\cap K_I) = k(C\cap K_{[n]\setminus I}). \end{equation*}

\subsection{Descriptions of $\widetilde{P}(\kappa^{(n+1)})$ for small $n$}

\subsubsection{$\overline{M}_{0,4}$}

The polytope $\widetilde{P}(\kappa^{(4)})\subseteq \R^{\E_3}\simeq \R^3$ associated to the log-canonical class on $\overline{M}_{0,4}$ is a $2$-dimensional simplex. It is the convex hull of the incidence vectors of the three spanning trees of $K_3$ shown in Figure \ref{fig:Spanning trees of K_3}.

\begin{figure}[h]
    \centering
\begin{tikzpicture}
  \graph[
    circular placement,
    radius=1.3cm,
    nodes={
      circle,
      draw,
      minimum size=4mm,
      inner sep=0pt
    },
    edges={black!45}
  ] {subgraph K_n [n=3, clockwise]};
  
   \draw (1) edge[ultra thick] (2);
    \draw (2) edge[ultra thick] (3);
\end{tikzpicture}
\begin{tikzpicture}
  \graph[
    circular placement,
    radius=1.3cm,
    nodes={
      circle,
      draw,
      minimum size=4mm,
      inner sep=0pt
    },
    edges={black!45}
  ] {subgraph K_n [n=3, clockwise]};
  
   \draw (1) edge[ultra thick] (3);
    \draw (1) edge[ultra thick] (2);
\end{tikzpicture}
\begin{tikzpicture}
  \graph[
    circular placement,
    radius=1.3cm,
    nodes={
      circle,
      draw,
      minimum size=4mm,
      inner sep=0pt
    },
    edges={black!45}
  ] {subgraph K_n [n=3, clockwise]};
  
   \draw (1) edge[ultra thick] (2);
   \draw (2) edge[ultra thick] (3);
\end{tikzpicture}

\caption{The spanning trees of $K_3$, corresponding to vertices of $\widetilde{P}(\kappa^{(4)})$.}
\label{fig:Spanning trees of K_3}
\end{figure}

In this case, $\kappa^{(4)} = \psi_4$, so the vertices of $\widetilde{P}(\kappa^{(4)})$ can be obtained from the vertices of $P(\psi_4)$ the change of variables $x_e = 1-a_e$, corresponding to graph complementation.

\subsubsection{$\overline{M}_{0,5}$}

The polytope $\widetilde{P}(\kappa^{(5)})\subseteq \R^{\E_4}\simeq \R^6$ associated to the log-canonical class on $\overline{M}_{0,5}$ is a $5$-dimensional integral polytope with $22$ vertices. There are $16$ spanning trees of $K_4$, each of whose incidence vector is a vertex of $\widetilde{P}(\kappa^{(5)})$. The remaining $6$ vertices are indexed by the edges of $K_4$, and given up to symmetry by
\begin{equation*} x_e = \begin{cases}
    -1 & \text{ if } e = \{1,2\},\\
    0 & \text{ if } e = \{ 3,4\},\\
    1 & \text { else.}
\end{cases}.\end{equation*}

The corresponding weighted graph is shown in Figure \ref{fig:Nonpositive vertex}. One can verify that the conditions $\x([4]) = 3$ and $\x(I)\leq \vert I  \vert -1$ for all $I\in \II_5$ hold for such vectors.

\begin{figure}[h]
    \centering
\begin{tikzpicture}
  \graph[
    circular placement,
    radius=1.3cm,
    nodes={
      circle,
      draw,
      minimum size=4mm,
      inner sep=0pt
    },
    edges={black!45}
  ] {subgraph K_n [n=4, clockwise]};
  
    \draw (1) edge[red, dashed, ultra thick] node [midway, above right] {$-1$} (2);
    \draw (2) edge[ultra thick] (3);
    \draw (2) edge[ultra thick] (4);
    \draw (1) edge[ultra thick] (4);
    \draw (1) edge[ultra thick] (3);
\end{tikzpicture}
\caption{The unique nonpositive vertex of $\widetilde{P}(\kappa^{(5)})$ up to symmetry.}
\label{fig:Nonpositive vertex}
\end{figure}

This vertex corresponds to the divisor expression
\begin{equation*} \kappa^{(5)} = [D_{34}]+[D_{35}]+[D_{45}]+2[D_{345}]. \end{equation*}

\subsubsection{$\overline{M}_{0,6}$}

The polytope $\widetilde{P}(\kappa^{(6)})\subseteq \R^{\E_5}\simeq \R^{10}$ associated to the log-canonical class on $\overline{M}_{0,6}$ is a $9$-dimensional polytope with $355$ vertices. There are $125$ spanning trees of $K_5$, each of whose incidence vector is a vertex of $\widetilde{P}(\kappa^{(6)})$. Of the remaining vertices, $220$ are integral and $10$ are non-integral. The non-integral vertices are indexed by the edges of $K_5$ and are given in coordinates up to symmetry by

\begin{equation*}x_e= \begin{cases}
    -1/2 & \text{ if } e = \{1,2 \},\{1,3 \},\{2,3 \},\text{ or }\{4,5 \},\\
    1 & \text{ else}.
\end{cases} \end{equation*}
This non-integral vertex corresponds to the effective boundary expression
\begin{equation}
    \kappa^{(6)} = 3D_{123}+\frac{3}{2}(D_{12}+D_{13}+D_{23}+D_{45}+D_{46}+D_{56})+\frac{1}{2}\sum_{\substack{A\subseteq [6],\\ \vert A \vert =3,A\ni 1}}D_A.
\end{equation}

\begin{figure}[h]
\centering
\begin{tikzpicture}
  \graph[
    circular placement,
    radius=1.3cm,
    nodes={
      circle,
      draw,
      minimum size=4mm,
      inner sep=0pt
    },
    edges={black!45}
  ] {subgraph K_n [n=5, clockwise]};
  
    \draw (1) edge[dashed,ultra thick] (2);
    \draw (2) edge[dashed,ultra thick] (3);
    \draw (3) edge[dashed,ultra thick] (1);
    \draw (4) edge[dashed,ultra thick] (5);

    \draw (1) edge[ultra thick] (5);
    \draw (1) edge[ultra thick] (4);
    \draw (2) edge[ultra thick] (4);
    \draw (2) edge[ultra thick] (5);
    \draw (3) edge[ultra thick] (4);
    \draw (3) edge[ultra thick] (5);
\end{tikzpicture}
\hspace{1cm}
\begin{tikzpicture}
  \graph[
    circular placement,
    radius=1.3cm,
    nodes={
      circle,
      draw,
      minimum size=4mm,
      inner sep=0pt
    },
    edges={black!45}
  ] {subgraph K_n [n=6, clockwise]};
  
    \draw (1) edge[dashed,ultra thick] (2);
    \draw (2) edge[dashed,ultra thick] (3);
    \draw (3) edge[dashed,ultra thick] (1);
    \draw (4) edge[dashed,ultra thick] (5);
    \draw (5) edge[dashed,ultra thick] (6);
    \draw (4) edge[dashed,ultra thick] (6);

    \draw (1) edge[ultra thick] (6);
    \draw (1) edge[ultra thick] (5);
    \draw (1) edge[ultra thick] (4);
    \draw (2) edge[ultra thick] (4);
    \draw (2) edge[ultra thick] (5);
    \draw (2) edge[ultra thick] (6);
    \draw (3) edge[ultra thick] (4);
    \draw (3) edge[ultra thick] (5);
    \draw (3) edge[ultra thick] (6);
\end{tikzpicture}
\caption{Graphs depicting corresponding nonintegral vertices of $\widetilde{P}(\kappa^{(6)})$ and $\widetilde{P}^+(\kappa^{(6)})$. Solid edges have weight $1$, and dashed edges have weight $-1/2$.}
\label{fig:nonintegral vertices}
\end{figure}

\section{Application III: A decomposition of the Held--Karp relaxation polytope}

In the previous section, we studied the effective boundary expressions for the log-canonical class $\kappa^{(n)}$ on $\M$. In particular, we have identified the nonnegative parts of the polytope associated to $\kappa^{(n)}$ in coordinates with the spanning tree polytope of $K_{n-1}$ and the subtour relaxation of the Hamiltonian cycle polytope of $K_n$ respectively:
\begin{equation*}
\widetilde{P} \left(\kappa^{(n)}\right) \cap\R_{\geq0}^{\E_{n-1}}=P_{\mathrm{ST},n-1}, \text{ and } \widetilde{P}^{+} \left(\kappa^{(n)}\right)\cap\R_{\geq0}^{\E_n}
=
P_{\mathrm{SEP},n}.
\end{equation*}
\begin{rem}\label{rem:P_SEP_cube}
Same identities hold with the hypercube truncations:
\begin{equation*}
\widetilde{P} \left(\kappa^{(n)}\right) \cap [0,1]^{\E_{n-1}}=P_{\mathrm{ST},n-1}, \text{ and } \widetilde{P}^{+} \left(\kappa^{(n)}\right)\cap [0,1]^{\E_n}
=
P_{\mathrm{SEP},n}
\end{equation*}
as $x_e\leq 1$ by construction of the polytopes $\widetilde{P} \left(\kappa^{(n)}\right)$ and $\widetilde{P}^{+} \left(\kappa^{(n)}\right)$.
 
\end{rem}
The goal of this section is to study the decompositions of boundary divisor expressions of $\kappa^{(n)}$ coming from the relation given in Corollary \ref{cor:kappa_divisor_decomoposition_into_omega_i_s}:
\begin{equation*} \kappa^{(n)} = \omega_4+\omega_5+\cdots+\omega_n, \end{equation*}
where $\omega_i$ denotes the Kapranov class $X_{[i],i}$ on $\M$. 

\begin{lem}\label{lem:kappa omega polytope inclusion}
    There is a containment of polytopes
    \begin{equation*} Q(\kappa^{(n)}) \supseteq Q(\omega_4)+Q(\omega_5)+\cdots+Q(\omega_n). \end{equation*}
\end{lem}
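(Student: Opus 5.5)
This follows directly from Lemma \ref{lem:subadditivity and scaling} together with the class identity of Corollary \ref{cor:kappa_divisor_decomoposition_into_omega_i_s}. The plan is to iterate the subadditivity inclusion $Q(D)+Q(D')\subseteq Q(D+D')$ over the summands of $\kappa^{(n)}=\omega_4+\cdots+\omega_n$. Then we use that $Q(-)$ depends only on the linear equivalence class, as shown in the lemma asserting that $Q(D)$ is a polytope depending only on $[D]$.

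Concretely, I would argue by induction on $m$, for $4\leq m\leq n$, that
\begin{equation*}
Q(\omega_4)+\cdots+Q(\omega_m)\subseteq Q(\omega_4+\cdots+\omega_m),
\end{equation*}
where all polytopes live in $\V_n$ and each $\omega_i=X_{[i],i}$ is regarded as a divisor class on $\M$.

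The base case $m=4$ is trivial. For the inductive step, suppose the inclusion holds for $m-1$. Minkowski sums respect inclusions, so
\begin{equation*}
Q(\omega_4)+\cdots+Q(\omega_{m-1})+Q(\omega_m)\subseteq Q(\omega_4+\cdots+\omega_{m-1})+Q(\omega_m).
\end{equation*}
By Lemma \ref{lem:subadditivity and scaling}, the right-hand side is contained in $Q(\omega_4+\cdots+\omega_m)$, which completes the inductive step.

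Taking $m=n$ gives $Q(\omega_4)+\cdots+Q(\omega_n)\subseteq Q(\omega_4+\cdots+\omega_n)$. Corollary \ref{cor:kappa_divisor_decomoposition_into_omega_i_s} gives $[\omega_4+\cdots+\omega_n]=[\kappa^{(n)}]$, so the right-hand side equals $Q(\kappa^{(n)})$.

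There is no real obstacle in this argument. The only point needing care is that every $Q(\omega_i)$ is taken inside the same space $\V_n$, since $\omega_i$ is pulled back to $\M$ rather than viewed on $\overline{M}_{0,i}$; otherwise the Minkowski sum would not make sense. It is also worth noting that the sum is nonempty, since each $Q(\omega_i)$ is a nonempty simplex by Theorem \ref{thm:Kapranov polytopes}. This is not needed for the inclusion itself.
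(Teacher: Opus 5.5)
Your proposal is correct and takes the same approach as the paper, which derives the inclusion from $\kappa^{(n)}=\omega_4+\cdots+\omega_n$ and repeated use of Lemma~\ref{lem:subadditivity and scaling}. Your induction on $m$, together with your remark that $Q$ depends only on the class and that all $Q(\omega_i)$ live in $\V_n$, just spells out that one-line argument in detail.
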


\begin{proof}
    This follows from the decomposition $\kappa^{(n)} = \omega_4+\cdots+\omega_{n}$ and repeated applications of Lemma \ref{lem:subadditivity and scaling}.
\end{proof}

The inclusion in Lemma \ref{lem:kappa omega polytope inclusion} is strict in
general.  We will prove the cube-truncated equality first and then use it to
describe the $0/1$-points of the Minkowski sum exactly.

\begin{thm}\label{thm:kappa omega decomp}
    For $n\geq 4$, we have the following decompositions of the polytope associated to the log-canonical class $\kappa$ on $\M$, 
    \begin{align*}
        P^+(\kappa) \cap [0,1]^{\E_n} & = \left(  P^+(\kappa-\psi_n) + P^+(\psi_n)\right) \cap [0,1]^{\E_n} \\
        & = \left( P^+(\omega_4)+P^+(\omega_5)+\cdots+P^+(\omega_n) \right)\cap [0,1]^{\E_n}. 
    \end{align*} 
\end{thm}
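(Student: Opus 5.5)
We prove the two inclusions separately. The inclusions ``$\supseteq$'' are formal. By Lemma \ref{lem:subadditivity and scaling}, $Q(\kappa-\psi_n)+Q(\psi_n)\subseteq Q(\kappa)$ and $Q(\omega_4)+\cdots+Q(\omega_n)\subseteq Q(\kappa)$, and the projection $\rho_n^+$ is linear. Intersecting with the cube gives both containments.

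For the first nontrivial inclusion, we pass to complemented coordinates. Let $\mathbf{a}\in P^+(\kappa)\cap[0,1]^{\E_n}$ and set $\mathbf y=\mathbf 1-\mathbf a$. By Corollary \ref{cor:spanning tree polytope and kappa} and Remark \ref{rem:P_SEP_cube}, $\mathbf y\in P_{\mathrm{SEP},n}$.

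Next we describe the summand $P^+(\psi_n)$. By Corollary \ref{cor:Kapranov P plus} with $S=[n]$ and $i=n$, it is the simplex with vertices $e_{jk}+\sum_{l\neq j,k,n}e_{\{l,n\}}$ for $\{j,k\}\subseteq[n-1]$. A point $\mathbf c=\sum\lambda_{jk}\bigl(e_{jk}+\sum_{l\ne j,k}e_{\{l,n\}}\bigr)$ with $\lambda\geq 0$ and $\sum\lambda_{jk}=1$ has $c_{\{l,n\}}=1-\sum_{\{j,k\}\ni l}\lambda_{jk}$.

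For the other summand we use $\kappa^{(n)}-\psi_n=\pi_n^*\kappa^{(n-1)}$ from Lemma \ref{lem:some_relations_for_kappa_psi_and_D_A}. Combined with Corollaries \ref{cor: P^+ and P relationship} and \ref{cor:extension by 0}, this shows $P^+(\kappa-\psi_n)$ is $P^+(\kappa^{(n-1)})$ extended by zero on the edges at $n$.

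Writing $\mathbf a=\mathbf b+\mathbf c$ with $\mathbf b\in P^+(\kappa-\psi_n)$ therefore forces two conditions. First, $\sum_{\{j,k\}\ni l}\lambda_{jk}=y_{\{l,n\}}$ for every $l\in[n-1]$. Second, the vector $\mathbf y'\in\R^{\E_{n-1}}$ with $y'_{jk}=y_{jk}+\lambda_{jk}$ must lie in $\widetilde P^+(\kappa^{(n-1)})\cap\R^{\E_{n-1}}_{\ge0}=P_{\mathrm{SEP},n-1}$.

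We also need $\mathbf b\in[0,1]$, i.e.\ $0\le y'_{jk}\le 1$. This is automatic, because in the subtour polytope the bound $x_e\le 1$ is implied: for $I=e=\{u,v\}$, the degree equations give $\x(\delta(I))=4-2x_e$, so $\x(\delta(I))\ge 2$ yields $x_e\le 1$. Hence the first equality reduces to a single statement: one can fractionally split off vertex $n$ from $\mathbf y$, distributing its degree $2$ into pairs $\{j,k\}$, so that all degrees stay $2$ and all cuts of $K_{n-1}$ keep weight $\ge 2$.

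This splitting step is the main obstacle, and we would handle it with Lov\'asz's Splitting-off Theorem \cite{Lovasz1976}. First we treat rational $\mathbf y$. Choose $N$ with $N\mathbf y$ integral; this gives a multigraph on $[n]$ in which every vertex has even degree $2N$ and every cut has size $\ge 2N$. Lov\'asz's theorem then provides a complete splitting at $n$ that preserves $2N$-edge-connectivity among $[n-1]$. If $\mu_{jk}$ pairs of edges are split into the new edge $jk$, we set $\lambda_{jk}=\mu_{jk}/N$. Each vertex $l$ lies in exactly $Ny_{\{l,n\}}$ split pairs, so the marginal condition holds; degrees are preserved; and cuts of $K_{n-1}$ remain $\ge 2$. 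Since the rational points are dense in the rational polytope $P_{\mathrm{SEP},n}$, and the Minkowski sum of polytopes intersected with the cube is closed, the inclusion extends to all real $\mathbf y$. This proves the first equality.

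For the second equality we induct on $n$. The base case $n=4$ holds because $\kappa=\omega_4=\psi_4$. In the inductive step, the point $\mathbf b$ constructed above restricts to $\mathbf b|_{\E_{n-1}}\in P^+(\kappa^{(n-1)})\cap[0,1]^{\E_{n-1}}$. By induction, this lies in $P^+(\omega_4)+\cdots+P^+(\omega_{n-1})$ computed on $\overline M_{0,n-1}$. We then extend by zero and use $\pi_n^*\omega_i=\omega_i$ together with Corollary \ref{cor:extension by 0}, which identifies these summands with $P^+(\omega_i)\subseteq\R^{\E_n}$. Since $\omega_n=\psi_n$, we have $\mathbf c\in P^+(\omega_n)$. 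Therefore $\mathbf a=\mathbf b+\mathbf c$ lies in the full Minkowski sum, and it lies in the cube by assumption.
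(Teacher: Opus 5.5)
Your proposal is correct and follows essentially the same route as the paper. You pass to $\mathbf y=\mathbf 1-\mathbf a\in P_{\mathrm{SEP},n}$ and scale a rational point to a multigraph, then split off vertex $n$ completely via Lov\'asz's theorem to write $\mathbf a$ as a point of $P^+(\kappa-\psi_n)$ plus a convex combination of vertices of $P^+(\psi_n)$; you extend to real points by density, and get the $\omega$-decomposition by induction using $\kappa^{(n)}-\psi_n=\pi_n^*\kappa^{(n-1)}$ and extension by zero. The differences are cosmetic: you phrase the split as solving for weights $\lambda_{jk}$ with prescribed marginals and apply the induction hypothesis pointwise to $\mathbf b$, whereas the paper writes down $a'$ and $b$ directly and derives the second equality at the level of sets using nonnegativity of the summands.
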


Before giving the proof, we give an example of this decomposition for a $0/1$-point of $P^+(\kappa)$.

\begin{ex} \label{ex:splitting decomp}
Consider the Hamiltonian cycle $C=1-4-5-7-3-2-6-1$ in $K_7$.  Its complement, depicted in Figure \ref{fig:cycle complement}, is a point of $P^+(\kappa^{(7)})\subseteq\R^{\E_7}$.  Figures \ref{fig:psi term} and \ref{fig:smaller cycle complement} depict summands in $P^+(\psi_7)$ and $P^+(\kappa^{(7)}-\psi_7)$ whose sum is the complement of $C$. Indeed, Figure \ref{fig:psi term} shows the vertex of $P^+(\psi_7)$ indexed by the edge $\{3,5\}\subseteq K_6$, whose endpoints are the two neighbors of $7$ in $C$, in the sense of Corollary \ref{cor:Kapranov P plus}. Figure \ref{fig:smaller cycle complement} shows the complement of the cycle $C'=1-4-5-3-2-6-1$ in $K_6$, which is a point of
\begin{equation*}
P^+(\kappa^{(7)}-\psi_7)=P^+(\pi^*\kappa^{(6)}).
\end{equation*}
\begin{figure}[h]
\centering
\begin{subfigure}{0.28\textwidth}
\centering
\begin{tikzpicture}
  \graph[
    circular placement,
    radius=1.3cm,
    nodes={
      circle,
      draw,
      minimum size=4mm,
      inner sep=0pt
    },
    edges={black!45}
  ] {subgraph K_n [n=7, clockwise]};

    \draw (1) edge[ultra thick] (2);
    \draw (1) edge[ultra thick] (3);
    \draw (1) edge[ultra thick] (5);
    \draw (1) edge[ultra thick] (7);
    \draw (2) edge[ultra thick] (4);
    \draw (2) edge[ultra thick] (5);
    \draw (2) edge[ultra thick] (7);
    \draw (3) edge[ultra thick] (4);
    \draw (3) edge[ultra thick] (5);
    \draw (3) edge[ultra thick] (6);
    \draw (4) edge[ultra thick] (6);
    \draw (4) edge[ultra thick] (7);
    \draw (5) edge[ultra thick] (6);
    \draw (6) edge[ultra thick] (7);
\end{tikzpicture}
\caption{The complement of $C\subseteq K_7$.}
\label{fig:cycle complement}
\end{subfigure}
\begin{subfigure}{0.28\textwidth}
\centering
\begin{tikzpicture}
  \graph[
    circular placement,
    radius=1.3cm,
    nodes={
      circle,
      draw,
      minimum size=4mm,
      inner sep=0pt
    },
    edges={black!45}
  ] {subgraph K_n [n=7, clockwise]};
    \draw (1) edge[ultra thick] (7);
    \draw (2) edge[ultra thick] (7);
    \draw (3) edge[ultra thick] (5);
    \draw (4) edge[ultra thick] (7);
    \draw (6) edge[ultra thick] (7);
\end{tikzpicture}
\caption{The $P^+(\psi_7)$ term.}
\label{fig:psi term}
\end{subfigure}
\begin{subfigure}{0.28\textwidth}
\centering
\begin{tikzpicture}
  \graph[
    circular placement,
    radius=1.3cm,
    nodes={
      circle,
      draw,
      minimum size=4mm,
      inner sep=0pt
    },
    edges={black!45}
  ] {subgraph K_n [n=7, clockwise]};

    \draw (1) edge[ultra thick] (2);
    \draw (1) edge[ultra thick] (3);
    \draw (1) edge[ultra thick] (5);
    \draw (2) edge[ultra thick] (4);
    \draw (2) edge[ultra thick] (5);
    \draw (3) edge[ultra thick] (4);
    \draw (3) edge[ultra thick] (6);
    \draw (4) edge[ultra thick] (6);
    \draw (5) edge[ultra thick] (6);
\end{tikzpicture}
\caption{The complement of $C'\subseteq K_6$.}
\label{fig:smaller cycle complement}
\end{subfigure}

\caption{One splitting-off step and the corresponding decomposition of the complement of $C$ into a $P^+(\psi_7)$ summand and a $P^+(\kappa^{(7)}-\psi_7)$ summand.}
\label{fig:splitting off inductive step}
\end{figure}

The cycle $C'$ is obtained from $C$ by splitting-off the vertex $7$: one removes the edges $\{5,7\}$ and $\{7,3\}$ and adds the edge $\{3,5\}$. Repeating this procedure at the vertices $6$, $5$, and $4$ produces the full decomposition of the complement of $C$ as an element of $P^+(\omega_7)+P^+(\omega_6)+P^+(\omega_5)+P^+(\omega_4)$ shown in Figure \ref{fig:splitting off decomp}.

\begin{figure}[h]
\centering
\begin{tikzpicture}
  \graph[
    circular placement,
    radius=1.3cm,
    nodes={
      circle,
      draw,
      minimum size=4mm,
      inner sep=0pt
    },
    edges={black!45}
  ] {subgraph K_n [n=7, clockwise]};
    \draw (1) edge[ultra thick] (7);
    \draw (2) edge[ultra thick] (7);
    \draw (3) edge[ultra thick] (5);
    \draw (4) edge[ultra thick] (7);
    \draw (6) edge[ultra thick] (7);
\end{tikzpicture}
\begin{tikzpicture}
  \graph[
    circular placement,
    radius=1.3cm,
    nodes={
      circle,
      draw,
      minimum size=4mm,
      inner sep=0pt
    },
    edges={black!45}
  ] {subgraph K_n [n=7, clockwise]};

    \draw (1) edge[ultra thick] (2);

    \draw (3) edge[ultra thick] (6);
    \draw (4) edge[ultra thick] (6);

    \draw (5) edge[ultra thick] (6);

\end{tikzpicture}
\begin{tikzpicture}
  \graph[
    circular placement,
    radius=1.3cm,
    nodes={
      circle,
      draw,
      minimum size=4mm,
      inner sep=0pt
    },
    edges={black!45}
  ] {subgraph K_n [n=7, clockwise]};

    \draw (1) edge[ultra thick] (5);

    \draw (2) edge[ultra thick] (5);

    \draw (3) edge[ultra thick] (4);

\end{tikzpicture}
\begin{tikzpicture}
  \graph[
    circular placement,
    radius=1.3cm,
    nodes={
      circle,
      draw,
      minimum size=4mm,
      inner sep=0pt
    },
    edges={black!45}
  ] {subgraph K_n [n=7, clockwise]};

    \draw (1) edge[ultra thick] (3);

    \draw (2) edge[ultra thick] (4);
 
\end{tikzpicture}

\caption{The terms in the decomposition of the complement of $C\subseteq K_7$ from $P^+(\omega_7)$, $P^+(\omega_6),$ $P^+(\omega_5)$, and $P^+(\omega_4)$ respectively.}
\label{fig:splitting off decomp}
\end{figure}
\end{ex}

For the $0/1$-points of $P^+(\kappa^{(n)})$, complements of Hamiltonian cycles in $K_n$, the decompositions as in Theorem \ref{thm:kappa omega decomp} can be described easily by the splitting-off procedure shown in Example \ref{ex:splitting decomp}. This is insufficient to give the decomposition of an arbitrary point of $P^+(\kappa^{(n)})\cap[0,1]^{\E_n}$, however, since this not an integral polytope in general. For the general argument, we use the following Theorem of Lov\'asz.

\begin{thm}[Lov\'asz splitting-off theorem \cite{Lovasz1976}] \label{thm:lovasz_splitting_theorem}
Let $G$ be a multigraph (without loops) on vertex set $[n]$, let $k\in [n]$, and let
$\lambda\geq 2$ be an integer. For a subset $I\subseteq [n]$, write
\begin{equation*}
\delta_G(I) \coloneqq \{\{u,v\}\in \E(G)\, \vert \, u\in I,\ v\in [n]\setminus I \}.   
\end{equation*}

Assume that $\deg_G(k)$ is even and
\begin{equation*}
 \vert \delta_G(I) \vert \geq \lambda
\quad\text{for every}\quad
\varnothing\neq I\subsetneq [n]\setminus \{k\}.
\end{equation*}

Then, for every edge $e=\{k,q\}\in \E(G)$ incident to $k$, there exists another edge
$f=\{k,r\}\in \E(G)\setminus \{e\}$ incident to $k$ such that, if $G'$ is obtained from
$G$ by splitting-off $e$ and $f$ at the vertex $k$, then
\begin{equation*}
 \vert \delta_{G'}(I) \vert \geq \lambda
\quad\text{for every}\quad
\varnothing\neq I\subsetneq [n]\setminus \{k\}.
\end{equation*}
\end{thm}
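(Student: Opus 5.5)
The statement is the classical splitting-off theorem of Lov\'asz \cite{Lovasz1976}. In the paper it is only cited, but it can be proved directly by the standard uncrossing argument on cuts. Write $d(I)=\vert\delta_G(I)\vert$ and $V=[n]$. We will use two exact identities, valid for all $X,Y\subseteq V$:
\begin{align*}
d(X)+d(Y) &= d(X\cap Y)+d(X\cup Y)+2\,d(X\setminus Y,\,Y\setminus X),\\
d(X)+d(Y) &= d(X\setminus Y)+d(Y\setminus X)+2\,d(X\cap Y,\,V\setminus(X\cup Y)),
\end{align*}
where $d(A,B)$ counts the edges between disjoint sets $A$ and $B$.

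Splitting off $e=\{k,q\}$ and $f=\{k,r\}$ leaves $d(I)$ unchanged unless $q,r\in I$ and $k\notin I$. In that case $d(I)$ drops by exactly $2$. Call $\varnothing\neq I\subsetneq V\setminus\{k\}$ \emph{dangerous} if $d(I)\leq\lambda+1$. The pair $(e,f)$ is then a bad splitting precisely when some dangerous set contains both $q$ and $r$. We argue by contradiction: assume every edge $f=\{k,r\}\neq e$ is bad. Then every neighbor $r$ of $k$ (counted with multiplicity apart from $e$) lies in a dangerous set together with $q$.

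Let $X$ be a maximal dangerous set containing $q$. First suppose some neighbor $r$ of $k$ is not in $X$, and let $Y$ be a dangerous set containing $q$ and $r$. Then $X\setminus Y\neq\varnothing$ by maximality, and $r\in Y\setminus X$. The edge $e$ joins $q\in X\cap Y$ to $k\notin X\cup Y$. The second identity therefore gives
\[
2\lambda+2\geq d(X)+d(Y)\geq d(X\setminus Y)+d(Y\setminus X)+2\geq 2\lambda+2.
\]
So equality holds throughout, and in particular $e$ is the only edge from $X\cap Y$ to $V\setminus(X\cup Y)$. Feeding this into the first identity, with $d(X\cap Y)\geq\lambda$, gives $d(X\cup Y)\leq\lambda+2$. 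We then split into two cases.
\begin{itemize}
\item If $X\cup Y\neq V\setminus\{k\}$, a parity argument should upgrade this to $d(X\cup Y)\leq\lambda+1$. The parity of $d(X\cup Y)$ is linked to those of $d(X)$ and $d(Y)$ through the identities, together with the tightness just obtained. This makes $X\cup Y$ dangerous and contradicts the maximality of $X$.
\item If $X\cup Y=V\setminus\{k\}$, then $d(X\cup Y)=\deg_G(k)$, which is even.
\end{itemize}

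In the remaining situation all neighbors of $k$ lie in $X$, or we are in the second case above. Here we compare $d(X)$ with $d(V\setminus(X\cup\{k\}))$. Every edge at $k$ goes into $X$, so $d(X)=d(V\setminus X)\geq\deg_G(k)+d(X,\,V\setminus(X\cup\{k\}))$. The complement $V\setminus(X\cup\{k\})$, if nonempty, has cut at least $\lambda$, and all of its cut edges land in $X$. Hence $d(X)\geq\deg_G(k)+\lambda$. Since $\deg_G(k)\geq 2$ because $\deg_G(k)$ is even and $e$ exists, this contradicts $d(X)\leq\lambda+1$. If the complement is empty, then $X=V\setminus\{k\}$, which is not an allowed dangerous set.

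The main obstacle is the middle step. There one must check carefully that the union of two crossing dangerous sets containing $q$ is again dangerous, or else covers $V\setminus\{k\}$. This is exactly where the evenness of $\deg_G(k)$ and the extra edge $e$ between $X\cap Y$ and the outside of $X\cup Y$ must be used, to shave the bound from $\lambda+2$ to $\lambda+1$. If the parity bookkeeping does not close directly, I would fall back on Frank's streamlined version of this argument, which handles both cases uniformly by analysing a maximal dangerous set in the same way.
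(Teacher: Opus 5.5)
\paragraph{Gap in the case $X\cup Y\neq V\setminus\{k\}$.} The paper only cites this theorem, so your argument has to stand on its own, and its key step fails. Parity together with your tightness gives only
\[ d(X\cap Y)\equiv d(X)-d(X\setminus Y)\equiv 1 \pmod 2, \]
because $d(Z)\equiv\sum_{v\in Z}\deg_G(v)\pmod 2$ for every $Z$. This forces $d(X\cap Y)\geq\lambda+1$, and hence $d(X\cup Y)\leq\lambda+1$, only when $\lambda$ is even. For odd $\lambda$ the configuration $d(X\cap Y)=\lambda$, $d(X\setminus Y,Y\setminus X)=0$, $d(X\cup Y)=\lambda+2$ is consistent with everything you derived. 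Here is a concrete instance with $\lambda=3$.
\begin{itemize}
\item Take vertices $k,q,a,c,w$ and edges $kq,ka,kc,kw,qa,qc,aw,cw$.
\item Every nonempty proper subset of $\{q,a,c,w\}$ has cut at least $3$, and $\deg_G(k)=4$.
\item The dangerous sets containing $q$ are $\{q\},\{q,a\},\{q,c\}$. So $X=\{q,a\}$ is maximal, and $Y=\{q,c\}$ contains the neighbor $c\notin X$.
\item Yet $d(X\cup Y)=5=\lambda+2$, so maximality of $X$ is not contradicted.
\end{itemize}
The theorem itself holds in this graph (split $kq$ with $kw$); it is your step that fails. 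More tellingly, your argument never uses $\lambda\geq 2$, while the statement is false for $\lambda=1$. Join $k$ by single edges to four otherwise isolated vertices $q,a,b,c$. Every split of $kq$ with $kr$ leaves $\{q,r\}$ with no outgoing edges, and your situation occurs with $d(\{q,a,b\})=3=\lambda+2$. So one maximal dangerous set plus one crossing set cannot finish the proof; a third set is needed.

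\paragraph{The case $X\cup Y=V\setminus\{k\}$, and how to complete the proof.} This case is also left open. Your final paragraph assumes every edge at $k$ enters $X$, which fails here since $r\in Y\setminus X$. What you are missing is the half-degree bound, which your final paragraph nearly proves. For any dangerous $Z$, comparing $d(Z)\leq\lambda+1$ with $d(V\setminus(Z\cup\{k\}))\geq\lambda$ gives $d(k,Z)-d(k,V\setminus(Z\cup\{k\}))\leq 1$. These two terms sum to the even number $\deg_G(k)$, so in fact $d(k,Z)\leq\deg_G(k)/2$. The standard proof then takes a minimal family of dangerous sets containing $q$ whose union contains all neighbors of $k$.
\begin{itemize}
\item A single set violates the half-degree bound.
\item For two sets $X_1,X_2$, your tightness computation gives $d(k,X_1\cap X_2)=1$. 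Hence $\deg_G(k)=d(k,X_1)+d(k,X_2)-1\leq\deg_G(k)-1$, a contradiction. This also settles your case $X\cup Y=V\setminus\{k\}$.
\item With three or more sets, any three have nonempty private parts $P_i=X_i\setminus\bigcup_{j\neq i}X_j$. The three-set inequality
\[ d(X_1)+d(X_2)+d(X_3)\geq d(P_1)+d(P_2)+d(P_3)+d(X_1\cap X_2\cap X_3)+2\,d\big(X_1\cap X_2\cap X_3,\,V\setminus(X_1\cup X_2\cup X_3)\big) \]
then gives $3\lambda+3\geq 4\lambda+2$. This is exactly where $\lambda\geq 2$ is used.
\end{itemize}
In the paper's application $\lambda=2\mu$ is even. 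There your maximal-set route does go through, once you add the parity fact above in the first case and the half-degree bound in the second.
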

For a modern treatment of splitting-off, we refer the reader to \cite[Chapter 8]{Frank2011}. With Lov\'asz splitting-off theorem, we can now give the proof of Theorem \ref{thm:kappa omega decomp}.

\begin{proof}[Proof of Theorem \ref{thm:kappa omega decomp}.]
    We begin by showing that the second equality follows from the first by induction. The base case $n=4$ is trivial since $\kappa =\psi_4 = \omega_4$ by equations \eqref{eqn:kappa_1_wrt_divisors} and \eqref{eqn:psi_wrt_divisors} in Lemma \ref{lem:some_relations_for_kappa_psi_and_D_A}. During the inductive step we write divisors on $\M$ as $D^{(n)}$ for clarity.

    By induction, suppose that
    \begin{equation} \label{eq:induction hypoth}
    P^+(\kappa^{(n-1)}) \cap [0,1]^{\E_{n-1}} = \left( P^+(\omega_4^{(n-1)})+ \cdots P^+(\omega_{n-1}^{(n-1)})\right) \cap [0,1]^{\E_{n-1}}.
    \end{equation}

    Let $\pi  \colon \M\to \overline{M}_{0,n-1}$ denote the map forgetting the $n$th marked point. By Corollary, the polytope \ref{cor:extension by 0},  $P^+(\pi^*(D^{(n-1)}))\subseteq \R^{\E_n}$ coincides with $P^+(D^{(n-1)})\subseteq \R^{\E_{n-1}}\subseteq \R^{\E_n}$ where we identify $\R^{\E_{n-1}}$ with the coordinate subspace defined by $a_{\{i,n\}} = 0$ for all $i=1,\dots,n-1$. Recalling that $\kappa^{(n)}-\psi_n^{(n)} = \pi^*(\kappa^{(n-1)})$, and $\pi^*(\omega_i^{(n-1)}) = \omega_i^{(n)}$, the induction hypothesis therefore gives the equality
    
    \begin{equation*}
    P^+(\kappa^{(n)}-\psi_n) \cap [0,1]^{\E_n} = \left( P^+(\omega_4^{(n)})+ \cdots P^+(\omega_{n-1}^{(n)})\right) \cap [0,1]^{\E_n}.
    \end{equation*}
    Adding $P^+(\psi_n^{(n)}) = P^+(\omega_n^{(n)})$, to each side, we then have
    \begin{equation*}
    \left(P^+(\kappa^{(n)}-\psi_n) \cap [0,1]^{\E_n}\right) + P^+(\psi_n^{(n)}) = \left(\left( P^+(\omega_4^{(n)})+ \cdots P^+(\omega_{n-1}^{(n)})\right) \cap [0,1]^{\E_n}\right) + P^+(\omega_n^{(n)})
    \end{equation*}
    Now we intersect both sides with $[0,1]^{\E_n}$. Since $P^+(D^{(n)}) \subseteq \R^{\E_n}_{\geq 0}$ for any divisor $D^{(n)}$, the intersections with $[0,1]^{\E_n}$ inside the summands are redundant, so we obtain 
    \begin{equation*}
    \left(P^+(\kappa^{(n)}-\psi_n)+ P^+(\psi_n^{(n)}) \right) \cap [0,1]^{\E_n} = \left( P^+(\omega_4^{(n)})+ \cdots P^+(\omega_{n-1}^{(n)}) + P^+(\omega_n^{(n)})\right) \cap [0,1]^{\E_n}
    \end{equation*}
    This completes the proof that the second equality follows from the first.

    Towards one of the inclusions of the first equality, Lemma \ref{lem:subadditivity and scaling} gives the inclusion 
    \begin{equation*}Q(\kappa)  \supseteq Q(\kappa-\psi_n) + Q(\psi_n). \end{equation*}
    This inclusion is preserved by the operations of projection onto $\R^{\E_n}$ and intersection with $[0,1]^{\E_n}$, which establishes the containment
    \begin{equation*}P^+(\kappa) \cap [0,1]^{\E_n} \supseteq \left( P^+(\kappa-\psi_n) + P^+(\psi_n) \right) \cap [0,1]^{\E_n}. \end{equation*}

    For the final inclusion, we take $a = (a_e)_{e \in \E_n}\in P^+(\kappa) \cap [0,1]^{\E_n}$ and aim to construct $a'\in P^+(\kappa-\psi_n)$ and $b\in P^+(\psi_n)$ such that $a = a'+b$. It is sufficient to assume that $a$ has rational coordinates, so we fix an integer $\mu\geq 1$ such that $\mu a_e \in \Z$ for all $e \in \E_n$.

    Let $x = (x_e)_{e \in \E_n}$ be defined by $x_e = 1-a_e$ as in the previous section, so that by Corollary \ref{cor:spanning tree polytope and kappa} and Remark \ref{rem:P_SEP_cube}, we have 
    \begin{equation*}x \in \widetilde{P}^+(\kappa) \cap [0,1]^{\E_n} = P_{\mathrm{SEP},n}. \end{equation*}

    We will construct the decomposition inductively by splitting-off from the multigraph $G = G_0$ on vertices $[n]$ for which edge $e$ has multiplicity $\mu x_e$. Indeed, since $x\in P_{\mathrm{SEP},n}$, we have $\deg_G(k) = 2\mu $ for every vertex $k$ and 
    \begin{equation*}
      \vert \delta_G(I)  \vert \geq 2\mu
    \end{equation*}
    for every $\varnothing\neq I \subseteq [n]$.

    We now apply the Lov\'{a}sz Splitting-off Theorem  \ref{thm:lovasz_splitting_theorem} to $G$ with $k=n$ and $\lambda = 2\mu$. Choose any edge $e =\{i_1,n\}$ incident to $n$, and let $f =\{j_1,n\}$ be the corresponding edge and $G_1$ the resulting graph after splitting-off as in the theorem. This new graph satisfies the same conditions except has $\deg_{G_1}(n) = 2(\mu-1)$. We may therefore repeat this splitting-off process $\mu$ times producing a sequence of indices $i_1,j_1,\dots,i_{\mu},j_{\mu}$ and a graph $G_{\mu}$ satisfying 
    \begin{equation*}\deg_{G_{\mu}}(k) =  \begin{cases}
        2\mu & k<n\\
        0 & k=n
    \end{cases}\end{equation*}
    and $  \vert \delta_G(S)  \vert \geq 2\mu$ for every $\varnothing\subsetneq S\subsetneq [n-1]$. Let $x^{G_{\mu}}\in \R^{E_n}$ denote the vector where $x^{G_{\mu}}_e$ is the multiplicity of $e$ in $G_{\mu}$. The sequence of splitting-offs taking $G$ to $G_{\mu}$ can be interpreted as the identity
    \begin{equation*}x^G = x^{G_{\mu}}- x^{i_1 j_1,n}- \cdots - x^{i_{\mu} j_{\mu},n}, \end{equation*}
    where $x^{ij,n}\in \R^{\E_n}$ is the vector with coordinates
    \begin{equation*}x^{ij,n}_e = \begin{cases}
        1 & e = \{ i,j\},\\
        -1 & e = \{i,n\},\{j,n\},\\
        0 & \text{else}.
    \end{cases} \end{equation*}
    Dividing by $\mu$ gives the decomposition
    \begin{equation*}x = \frac{1}{\mu}x^{G_{\mu}}- \frac{x^{i_1 j_1,n}+ \cdots + x^{i_{\mu} j_{\mu},n}}{\mu}. \end{equation*}
    The conditions on $G_{\mu}$ precisely say that 
    \begin{equation*}x'  \coloneqq  \frac{1}{\mu} x^{G_{\mu}}\in P_{\mathrm{SEP},n-1}\subseteq \R^{\E_{n-1}}.\end{equation*}

    To transform back into our original coordinates, we set 
    \begin{equation*}a'_e = \begin{cases}
        1-x'_e & e \in \E_{n-1}\\
        0 & \text{else}
    \end{cases} \end{equation*}
    and define the vectors
    \begin{equation*}b^{ij,n} = \begin{cases}
        1 & e= \{ i,j \} \text{ or } \{\ell,n\} \text{ for some } \ell\in [n]\setminus\{i,j,n\},\\
        0 & \text{else}
    \end{cases} \end{equation*}
    and 
    \begin{equation*}b = \frac{b^{i_1 j_1,n}+ \cdots + b^{i_{\mu} j_{\mu},n}}{\mu} \end{equation*}
    This transformation is defined so that $a = a'+b$. Moreover, by Corollary \ref{cor:spanning tree polytope and kappa}, we have an element $a\in P^+(\kappa-\psi_n)$. By Corollary \ref{cor:Kapranov P plus} each $b^{ij,n}$ is a vertex of $P^+(\psi_n)$ so we have $b\in P^+(\psi_n)$ since it is a convex combination of the vertices. This gives the desired decomposition of the point $a\in P^+(\kappa)\cap [0,1]^{\E_n}$ which completes the proof.
\end{proof}

Although it is covered in the proof, we want to highlight the following Minkowski-like decomposition of subtour elimination relaxation polytope $P_{\mathrm{SEP},n}$.

\begin{cor}\label{cor:Held-Karp_Decomposition}
The graph complement $\mathbf{1}-P_{\mathrm{SEP},n}$ of the Held--Karp relaxation polytope $P_{\mathrm{SEP},n}$ satisfies the following decomposition:
\begin{equation*}
\mathbf{1}-P_{\mathrm{SEP},n}= \left( P^+(\omega_4)+P^+(\omega_5)+\cdots+P^+(\omega_n) \right)\cap [0,1]^{\E_n}.
\end{equation*}
\end{cor}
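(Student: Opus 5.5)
The corollary should follow from Theorem \ref{thm:kappa omega decomp} once the left-hand side $P^+(\kappa)\cap[0,1]^{\E_n}$ is identified with $\mathbf{1}-P_{\mathrm{SEP},n}$. I will establish that identification and then substitute it into the theorem.

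\emph{Step 1: translating the subtour elimination polytope.} The maps $\widetilde{P}^+(\kappa^{(n)})\leftrightarrow P^+(\kappa^{(n)})$ are given coordinatewise by $x_e=1-a_e$, so
\begin{equation*}
\mathbf{1}-\widetilde{P}^+(\kappa^{(n)}) = P^+(\kappa^{(n)}).
\end{equation*}
This involution of $\R^{\E_n}$ maps the cube $[0,1]^{\E_n}$ onto itself. Consequently
\begin{equation*}
\mathbf{1}-\left(\widetilde{P}^+(\kappa^{(n)})\cap[0,1]^{\E_n}\right)=P^+(\kappa^{(n)})\cap[0,1]^{\E_n}.
\end{equation*}
By Corollary \ref{cor:spanning tree polytope and kappa}(2), together with Remark \ref{rem:P_SEP_cube} to pass between the orthant truncation and the cube truncation, the set inside the parentheses is $P_{\mathrm{SEP},n}$. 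Hence $\mathbf{1}-P_{\mathrm{SEP},n}=P^+(\kappa)\cap[0,1]^{\E_n}$.

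\emph{Step 2: substituting into the theorem.} The second equality of Theorem \ref{thm:kappa omega decomp} identifies $P^+(\kappa)\cap[0,1]^{\E_n}$ with $\left(P^+(\omega_4)+\cdots+P^+(\omega_n)\right)\cap[0,1]^{\E_n}$. Combining this with Step 1 gives the corollary. For the equivalent form stated in Theorem G, apply the involution $\mathbf{1}-(\cdot)$ to both sides. It turns $\cap[0,1]^{\E_n}$ back into $\cap[0,1]^{\E_n}$, and that truncation may be replaced by $\cap\R^{\E_n}_{\geq0}$ because every point of $P_{\mathrm{SEP},n}$ already satisfies $x_e\le 1$. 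Equivalently, every $a$ in $\sum_i P^+(\omega_i)$ has $a_e\ge0$, since each $P^+(\omega_i)$ lies in the nonnegative orthant.

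\emph{Main obstacle.} There is no real difficulty here, since the substantive work (splitting-off) was done in Theorem \ref{thm:kappa omega decomp}. The only point needing care is Remark \ref{rem:P_SEP_cube}, namely that $x_e\le1$ holds automatically on $\widetilde{P}^+(\kappa)$. This follows because $a_e\ge0$ for every point of $P^+(\kappa)$, as these are coefficients of an effective expression. With that in hand, the orthant and cube truncations agree and the two sides match exactly.
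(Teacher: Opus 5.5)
Your proposal is correct and matches the paper's proof. The paper's argument is just Remark \ref{rem:P_SEP_cube}, which gives $\mathbf{1}-P_{\mathrm{SEP},n}=P^+(\kappa)\cap[0,1]^{\E_n}$, combined with the second equality of Theorem \ref{thm:kappa omega decomp}; your justification of $x_e\le 1$ via the nonnegativity of the coefficients $a_e$ is exactly the reasoning behind that remark.
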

\begin{proof}
This directly follows from Remark \ref{rem:P_SEP_cube} and Theorem \ref{thm:kappa omega decomp}.
\end{proof}

\begin{cor}\label{cor:01 points of Minkowski sum}
Let $n\geq 4$, then we have
\begin{equation*}
    (P^+(\omega_4) + \cdots + P^+(\omega_n))\cap \{0,1\}^{\E_{n}} = \text{Hamiltonian cycle complements in }K_{n},
    \end{equation*}
    and
    \begin{equation*}
    (P(\omega_4) + \cdots + P(\omega_n))\cap \{0,1\}^{\E_{n-1}} = \text{Hamiltonian path complements in }K_{n-1}.
    \end{equation*}
\end{cor}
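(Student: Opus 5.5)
For the first statement, I will combine Theorem \ref{thm:kappa omega decomp} with Corollary \ref{cor:spanning tree polytope and kappa}. Since every point of the Minkowski sum lies in $P^+(\kappa^{(n)})$, we have
\begin{equation*}
(P^+(\omega_4)+\cdots+P^+(\omega_n))\cap\{0,1\}^{\E_n}=P^+(\kappa^{(n)})\cap\{0,1\}^{\E_n}.
\end{equation*}
This follows by intersecting the equality of Theorem \ref{thm:kappa omega decomp} with $\{0,1\}^{\E_n}\subseteq[0,1]^{\E_n}$. After the change of variables $x_e=1-a_e$, these points become the $0/1$-points of $\widetilde P^+(\kappa^{(n)})\cap[0,1]^{\E_n}=P_{\mathrm{SEP},n}$ (Remark \ref{rem:P_SEP_cube}). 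It remains to use the Dantzig--Fulkerson--Johnson fact that the $0/1$-points of $P_{\mathrm{SEP},n}$ are exactly the incidence vectors of Hamiltonian cycles. I would include its short proof via equation \eqref{eqn:P_SEP_alternative_form}. A $0/1$-vector with all degrees equal to $2$ is a disjoint union of cycles. If there are at least two cycles, let $I$ be the vertex set of one of them; then $\x(\E_I)=\vert I\vert>\vert I\vert-1$, a contradiction. Conversely, a Hamiltonian cycle restricted to a proper subset $I$ is a forest, so $\x(\E_I)\le\vert I\vert-1$. Applying graph complementation then gives the first claim.

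For the second statement, I will project the first along $\operatorname{pr}_n\colon\R^{\E_n}\to\R^{\E_{n-1}}$. Since $P(D)=\operatorname{pr}_n(P^+(D))$ and projection is linear, we get
\begin{equation*}
P(\omega_4)+\cdots+P(\omega_n)=\operatorname{pr}_n\bigl(P^+(\omega_4)+\cdots+P^+(\omega_n)\bigr)\subseteq P(\kappa^{(n)}).
\end{equation*}
By Corollary \ref{cor:projection_Q_to_P_is_affine_iso} and the bijections in diagram \eqref{eqn:polytope diagram summary}, each point of the left side lifts uniquely to a point of $P^+(\kappa^{(n)})$, and this lift lies in the Minkowski sum of the $P^+(\omega_i)$. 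The hard part is showing that a $0/1$-point of $P(\omega_4)+\cdots+P(\omega_n)$ lifts to a $0/1$-point, not merely to a point of $[0,1]^{\E_n}$. For this I will use the explicit inverse $\iota^+$ from Proposition \ref{prop:P tilde inequalities}(3), applied to $\kappa^{(n)}$ viewed in $x$-coordinates on $\R^{\E_{n-1}}$. A computation like that in Theorem \ref{thm:kappa + t psi polytope}(3), with $t=0$ for the shifted index $n$, gives the lifted coordinate $x'_{\{i,n\}}=2-\x(\delta(\{i\}))$ for $i\in[n-1]$.

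If $\x\in\{0,1\}^{\E_{n-1}}$, then each $x'_{\{i,n\}}$ is an integer. Because the lift lies in the Minkowski sum of the $P^+(\omega_i)$, and those summands lie in $\R^{\E_n}_{\ge0}$, its $a$-coordinates are nonnegative, so $x'_e\le 1$. To get $x'_e\ge0$ I need the intersection with the cube. Here I will argue directly: $\x\in\widetilde P(\kappa^{(n)})\cap\{0,1\}^{\E_{n-1}}$, so by Corollary \ref{cor:spanning tree polytope and kappa} it is the incidence vector of a spanning tree $T$ of $K_{n-1}$. The condition $x'_{\{i,n\}}\in[0,1]$ is then equivalent to $\deg_T(i)\in\{1,2\}$ for all $i$, and this is exactly the condition that $T$ is a Hamiltonian path.

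So I must show that the degrees are at most $2$, and this is the main obstacle. I would handle it as follows. The lifted point lies in $P^+(\omega_4)+\cdots+P^+(\omega_n)$, whose summands have $0/1$ coordinates summing appropriately. Consider $P^+(\omega_n)=P^+(\psi_n)$: by Corollary \ref{cor:Kapranov P plus}, its vertices assign $a_{\{i,n\}}=1$ for all but two of the $i$. Every other summand $P^+(\omega_j)$ with $j<n$ has support in $\E_{n-1}$, again by Corollary \ref{cor:Kapranov P plus}. Hence the sum of $a'_{\{i,n\}}$ over $i$ equals $n-3$, so the sum of $x'_{\{i,n\}}$ equals $2$, and each coordinate satisfies $a'_{\{i,n\}}\in[0,1]$. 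This gives $x'_{\{i,n\}}\ge0$, i.e.\ $\deg_T(i)\le 2$. Thus $T$ is a Hamiltonian path. Conversely, a Hamiltonian path $T$ lifts to the Hamiltonian cycle obtained by joining both endpoints to $n$, and its complement lies in the sum by the first part. Projecting this lift gives the complement of $T$, which completes the second claim.
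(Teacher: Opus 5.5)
Your proof is correct. The first claim and the forward inclusion of the second claim follow the paper's argument; the paper just cites the Dantzig--Fulkerson--Johnson fact instead of reproving it.

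You differ from the paper in the reverse inclusion of the second claim. The paper stays in $\R^{\E_{n-1}}$ and writes the $0/1$-point as $\mathbf{a}=\mathbf{b}+\mathbf{c}$, with $\mathbf{c}\in P(\omega_n)$ and $\mathbf{b}\in P(\omega_4)+\cdots+P(\omega_{n-1})$. From $b_e\in[0,1]$ and the identification $P(\omega_j^{(n)})=P^+(\omega_j^{(n-1)})$ it concludes that $\mathbf{1}-\mathbf{b}\in P_{\mathrm{SEP},n-1}$, so every vertex has $\mathbf{b}$-complement degree exactly $2$. It then reads off $\deg_T(i)=2-\mathbf{c}(\delta(i))\le 2$.

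You instead stay at level $n$ and work with the lift:
\begin{enumerate}
\item You lift to the unique preimage in $P^+(\kappa^{(n)})$ and check that this preimage lies in $P^+(\omega_4)+\cdots+P^+(\omega_n)$.
\item You compute the missing coordinates with the explicit inverse of Proposition~\ref{prop:P tilde inequalities}(3), getting $x'_{\{i,n\}}=2-\deg_T(i)$.
\item You bound $a'_{\{i,n\}}\le 1$: only the $P^+(\omega_n)$ summand is supported on edges at $n$, and that summand is a $0/1$-simplex.
\end{enumerate}

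Both arguments get the degree bound from the $\psi_n$-summand. Yours avoids passing to $\overline{M}_{0,n-1}$ and invoking the Held--Karp identification there. It also shows directly that the lift of such a $0/1$-point is itself a $0/1$-point of $P^+(\omega_4)+\cdots+P^+(\omega_n)$, so the second statement becomes exactly the $\operatorname{pr}_n$-image of the first. The paper's version avoids computing the lift formula.
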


\begin{proof}
Hamiltonian cycle claim follows from Corollary \ref{cor:Held-Karp_Decomposition} as $0/1$-points of subtour elimination polytope $P_{\mathrm{SEP},n}$ are Hamiltonian cycles. Now, the projection $\operatorname{pr}_n  \colon \R^{\E_n}\longrightarrow \R^{\E_{n-1}}$ forgetting the edges attached to the vertex $n$ gives us
\begin{equation*}
\operatorname{pr}_n \left(P^+(\omega_4) + \cdots + P^+(\omega_n)\right)=(P(\omega_4) + \cdots + P(\omega_n))
\end{equation*}
as we have $\operatorname{pr}_n(P^+(D))=P^(D)$ for any divisor $D$ on $\M$ by definition and projection commutes with Minkowski sum. Note that this projection sends $0/1$-points to $0/1$-points and forgetting the edges connected to vertex $n$ sends a Hamiltonian cycle complement in $K_n$ to a Hamiltonian path complement in $K_{n-1}$, and every Hamiltonian path in $K_{n-1}$ can be extended to a Hamiltonian cycle in $K_n$ by joining its ends to the vertex $n$. Then,  we have
\begin{equation*}
    \text{Hamiltonian path complements in }K_{n-1}\subseteq (P(\omega_4) + \cdots + P(\omega_n))\cap \{0,1\}^{\E_{n-1}}.
\end{equation*}

Now, we will prove the reverse inclusion to complete the proof of the second identity in Corollary \ref{cor:01 points of Minkowski sum}. Let $\mathbf{a}=(a_e)_{e \in \E_{n-1}}\in (P(\omega_4) + \cdots + P(\omega_{n-1}) + P(\omega_n))\cap \{0,1\}^{\E_{n-1}}$. We have an inclusion 
\begin{equation*} P(\omega_4) + \cdots + P(\omega_{n-1}) + P(\omega_n)\subseteq P(\kappa^{(n)}), \end{equation*} 
and by Corollary \ref{cor:spanning tree polytope and kappa} the $0/1$-points of $P(\kappa^{(n)})$ are graph complements of spanning trees of $K_{n-1}$. Hence, $\mathbf{a}$ is the complement of a spanning tree $T$ of $K_{n-1}$. 

By assumption, there exist $\mathbf{b}=(b_e)_{e \in \E_{n-1}}\in P(\omega_4) + \cdots + P(\omega_{n-1})$ and $\mathbf{c}=(c_e)_{e \in \E_{n-1}}\in P(\omega_n)$ such that $\mathbf{a}=\mathbf{b}+\mathbf{c}$. Since each $b_e$ and $c_e$ are nonnegative and $a_e \in \{0,1\}$, we have $b_e,c_e \in [0,1]$. Hence, we have
\begin{equation*}
\mathbf{b}\in (P(\omega_4) + \cdots + P(\omega_{n-1}))\cap [0,1]^{\E_{n-1}}=\mathbf{1}-P_{\mathrm{SEP},n-1}
\end{equation*}
or equivalently
\begin{equation*}
\overline{\mathbf{b}}\coloneqq   \mathbf{1}-\mathbf{b}\in P_{\mathrm{SEP},n-1}.
\end{equation*}
For the incidence vector $\overline{\mathbf{a}}\coloneqq   \mathbf{1}-\mathbf{a}$ of the spanning tree $T$, we have
\begin{equation*}
\overline{\mathbf{a}}=\mathbf{1}-\mathbf{a}=\mathbf{1}-\mathbf{b}-\mathbf{c}=\overline{\mathbf{b}}-\mathbf{c}.
\end{equation*}
Since $\overline{\mathbf{b}}\in P_{\mathrm{SEP},n-1}$, we have $\overline{\mathbf{b}}(\delta (i))=2$ for all $i\in [n-1]$. As a result, we see that
\begin{equation*}
\deg_{T}(i)=\overline{\mathbf{a}}(\delta(i))=\overline{\mathbf{b}}(\delta (i))-\mathbf{c}(\delta (i))=2-\mathbf{c}(\delta (i))\leq 2
\end{equation*}
for any $i \in [n-1]$. Hence, $T$ is a spanning tree whose vertices have degree at most $2$, i.e., a Hamiltonian path of $K_{n-1}$ which completes the proof.
\end{proof}

\subsection{Sharpness of Theorem \ref{thm:kappa omega decomp}}

Theorem \ref{thm:kappa omega decomp} gives a sufficient condition for a point $a\in Q(\kappa^{(n)})$ to lie in the Minkowski sum
\begin{equation*}  Q(\omega_4)+\cdots+Q(\omega_n) \subseteq Q(\kappa^{(n)}). \end{equation*}
Indeed, Theorem \ref{thm:kappa omega decomp} states that if $a\in Q(\kappa^{(n)})$ has $a_e\leq 1$ for all $e \in \E_n\subseteq \II_n^+$, then \begin{equation*}
a\in Q(\omega_4)+\cdots+Q(\omega_n).
\end{equation*}

In this section, we show several directions in which this sufficient condition cannot be relaxed. The first such relaxation we consider is the condition that $a_e\leq 1$ for all $e \in \E_{n-1}\subseteq \II_n$ rather than all $e \in \E_{n}\subseteq \II_n^+$.

\begin{cor}\label{cor:strictness P^+ to P}
    For all $n \geq 5$, the containment 
    \begin{equation*} (P(\omega_4)+\cdots+P(\omega_n))\cap [0,1]^{\E_{n-1}}\subseteq P(\kappa^{(n)})\cap [0,1]^{\E_{n-1}} \end{equation*}
    is strict. In particular, the containment
    \begin{equation*} Q(\omega_4)+\cdots+Q(\omega_n) \subseteq Q(\kappa^{(n)}) \end{equation*}
    is also strict for all $n\geq 5$.
\end{cor}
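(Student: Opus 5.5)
Fix $n\geq 5$. The plan is to exhibit a single $0/1$-point of $P(\kappa^{(n)})\cap[0,1]^{\E_{n-1}}$ that is not in the Minkowski sum. By Corollary \ref{cor:spanning tree polytope and kappa} (after the index shift of Remark \ref{rem:index conventions}), $\widetilde P(\kappa^{(n)})\cap[0,1]^{\E_{n-1}}=P_{\mathrm{ST},n-1}$. Hence every graph complement $\mathbf{1}-\mathds{1}_T$ of a spanning tree $T$ of $K_{n-1}$ lies in $P(\kappa^{(n)})\cap[0,1]^{\E_{n-1}}$. On the other hand, the Minkowski sum is contained in $[0,1]^{\E_{n-1}}$ after intersection. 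Its $0/1$-points are therefore exactly the points of
\begin{equation*}
(P(\omega_4)+\cdots+P(\omega_n))\cap\{0,1\}^{\E_{n-1}},
\end{equation*}
and Corollary \ref{cor:01 points of Minkowski sum} identifies these as the complements of Hamiltonian paths in $K_{n-1}$.

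For the witness, I would take $T$ to be a spanning tree of $K_{n-1}$ with a vertex of degree at least $3$, for instance the star centered at vertex $1$. This needs $n-1\geq 4$, which is exactly where the hypothesis $n\geq5$ enters. Such a $T$ is not a Hamiltonian path, so $\mathbf{1}-\mathds{1}_T$ lies in the right-hand side but not in the left-hand side, and the first containment is strict. Concretely, the degree argument in the proof of Corollary \ref{cor:01 points of Minkowski sum} shows that any decomposable $0/1$-point is the complement of a tree with all degrees at most $2$, and this rules the star out directly.

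For the second claim, I would argue by contradiction. Suppose $Q(\omega_4)+\cdots+Q(\omega_n)=Q(\kappa^{(n)})$. Applying the linear projection $\rho_n$, which commutes with Minkowski sums, gives $P(\omega_4)+\cdots+P(\omega_n)=P(\kappa^{(n)})$. Intersecting both sides with $[0,1]^{\E_{n-1}}$ then contradicts the first part. Together with the inclusion of Lemma \ref{lem:kappa omega polytope inclusion}, this gives strictness.

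I expect essentially no obstacle, since the substantive inputs are Corollaries \ref{cor:spanning tree polytope and kappa} and \ref{cor:01 points of Minkowski sum}. The only point needing care is to confirm that the cube truncation does not alter the $0/1$-points in either set, and that the correct index shift is used when passing from divisors on $\overline{M}_{0,n+1}$ to divisors on $\M$.
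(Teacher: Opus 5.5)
Your proposal is correct and follows the paper's own argument. Both compare $0/1$-points: the Minkowski-sum side has only Hamiltonian path complements by Corollary~\ref{cor:01 points of Minkowski sum}, while $P(\kappa^{(n)})$ contains all spanning tree complements by Corollary~\ref{cor:spanning tree polytope and kappa}, so any non-path spanning tree of $K_{n-1}$ (available once $n-1\geq 4$) is a witness, and the statement for $Q$ follows because the $P$-polytopes are linear coordinate projections of the $Q$-polytopes. Your explicit choice of the star and the contradiction argument through $\rho_n$ simply spell out what the paper leaves implicit.
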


\begin{proof}
    By Corollary \ref{cor:01 points of Minkowski sum}, the $0/1$-points of the Minkowski sum are complements of Hamiltonian paths in $K_{n-1}$. On the other hand, the $0/1$-points of $P(\kappa^{(n)})$ are complements of spanning trees in $K_{n-1}$ by Corollary \ref{cor:spanning tree polytope and kappa}. Since $K_{n-1}$ has spanning trees that are not Hamiltonian paths for $n-1\geq 4$, we see that the inclusion
    \begin{equation*}
    (P(\omega_4)+\cdots+P(\omega_n))\cap [0,1]^{\E_{n-1}}\subseteq P(\kappa^{(n)})\cap [0,1]^{\E_{n-1}}
    \end{equation*}
    must be strict for $n\geq 5$ as their $0/1$ points differ. The second claim follows as $(P(\omega_4)+\cdots+P(\omega_n))$ and $P(\kappa^{(n)})$ are coordinate projections of $Q(\omega_4)+\cdots+Q(\omega_n)$ and $Q(\kappa^{(n)})$, respectively.
\end{proof}

On the other hand, weakening the upper bound of $1$ on the $a_e$ coordinates is also not possible.

\begin{prop}
    For all $n \geq 6$ and $t>0$, the containment 
    \begin{equation*} (P^+(\omega_4)+\cdots+P^+(\omega_n))\cap [0,1+t]^{\E_{n-1}}\subseteq P^+(\kappa^{(n)})\cap [0,1+t]^{\E_{n-1}} \end{equation*}
    is strict.
\end{prop}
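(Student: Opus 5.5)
The plan is to exhibit, for each $n\geq 6$ and each $t>0$, an explicit point $a^{(n)}(t)\in P^+(\kappa^{(n)})$ with all coordinates in $[0,1+t]$ that is not in the Minkowski sum $P^+(\omega_4)+\cdots+P^+(\omega_n)$. (I read the cube in the statement as $[0,1+t]^{\E_n}$, since both polytopes live in $\R^{\E_n}$.) Non-membership will be certified by a linear functional. For a Minkowski sum of simplices, the maximum of a linear functional $f$ equals the sum, over $i$, of its maxima on the vertices of $P^+(\omega_i)$. Corollary \ref{cor:Kapranov P plus} lists these vertices explicitly: an edge $\{j,k\}\subseteq[i-1]$ together with the star from $i$ to $[i-1]\setminus\{j,k\}$. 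So the right-hand side of the certificate is a finite computation, and the task reduces to finding a point $a\in P^+(\kappa^{(n)})\cap[0,1+t]^{\E_n}$ with $f(a)>\sum_i\max_{P^+(\omega_i)}f$.

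\emph{Base case $n=6$.} Start from the non-integral vertex $v$ of $\widetilde P^+(\kappa^{(6)})$ in Figure \ref{fig:nonintegral vertices}: weight $-1/2$ on the two triangles $\{1,2,3\}$ and $\{4,5,6\}$, and weight $1$ on the $K_{3,3}$ between them. Its $a$-coordinates are $3/2$ on the triangle edges and $0$ on the $K_{3,3}$ edges. Take a Hamiltonian cycle $C$ and the convex combination $x(s)=(1-s)\mathds 1_C+s\,v$, whose $a$-coordinates are at most $1+s/2$. Choosing $s=\min(1,2t)$ keeps the point in the cube $[0,1+t]$, and convexity keeps it in $P^+(\kappa^{(6)})$.

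For the separating functional I would try $f=\sum_{e\in \E_{\{1,2,3\}}\cup\E_{\{4,5,6\}}}a_e$, possibly with extra weight on the triangle containing $6$. The aim is to show that $v$ beats the bound strictly. On the Minkowski side, the omega vertices contribute few triangle edges: each $P^+(\omega_i)$ contributes at most one non-star edge, and star edges at $i$ only land in the triangle containing $i$. Choosing $C$ so that its complement already attains the Minkowski bound would then make $f(x(s))$ exceed it for every $s>0$. I expect this calibration to be the main obstacle. A naive $f$ only gives equality: for example, Hamiltonian complements already reach the bound $3$. If no clean $f$ works, I would fall back on a direct argument using the structure from the proof of Theorem \ref{thm:kappa omega decomp}. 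Any decomposition $a=a'+b$ with $b\in P^+(\psi_6)$ must have $b_{\{i,6\}}=a_{\{i,6\}}$, because $a'$ vanishes on edges at $6$ by Corollary \ref{cor:extension by 0}. This pins down $b$ up to its single $\E_5$-edge, and one then checks that $a-b\notin P^+(\kappa^{(5)})$ using the known vertex list of $\widetilde P(\kappa^{(5)})$.

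\emph{Induction on $n$.} Use $\kappa^{(n)}=\pi^*\kappa^{(n-1)}+\psi_n$. Given the bad point $a^{(n-1)}$, extend it by zero (Corollary \ref{cor:extension by 0}) and add a vertex $b$ of $P^+(\psi_n)$ chosen so that $b$ is supported on edges where $a^{(n-1)}$ vanishes. Then all coordinates stay in $[0,1+t]$, and Lemma \ref{lem:subadditivity and scaling} places the sum in $P^+(\kappa^{(n)})$. To see that the new point is not in the Minkowski sum, apply the pinning argument above. The $\omega_n$-summand of any decomposition is forced on the edges at $n$, and the coordinates where $a^{(n-1)}$ is zero should force its remaining $\E_{n-1}$-edge to be the chosen one. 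Subtracting it would then exhibit $a^{(n-1)}$ in $P^+(\omega_4)+\cdots+P^+(\omega_{n-1})$, which contradicts the induction hypothesis.
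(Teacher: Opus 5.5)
Your induction step is sound, and more rigid than you state. Let $c$ be the $P^+(\omega_n)$-summand of a decomposition of $a^{(n)}$. Then $c$ agrees with $b$ on the edges at $n$, because all other summands vanish there. The vertex of $P^+(\psi_n)$ indexed by $\{p,q\}$ vanishes on $\{\ell,n\}$ exactly when $\ell\in\{p,q\}$. So the zeros of $b$ at $\{j,n\}$ and $\{k,n\}$ force all convex weight onto $\{p,q\}=\{j,k\}$, i.e.\ $c=b$. The zero of $a^{(n-1)}$ at $\{j,k\}$ is needed only to stay in the cube.

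The gap is the base case, which carries the whole argument. You never actually show $x(s)\notin P^+(\omega_4)+P^+(\omega_5)+P^+(\omega_6)$. The functional is left uncalibrated, the fallback ends at ``one then checks'', and your stated reason for doubt is a miscalculation. Take $f$ to be the sum over both triangles. Its maxima over $P^+(\omega_4),P^+(\omega_5),P^+(\omega_6)$ are $1,2,3$, so the Minkowski bound is $6$, while $f(v)=9$. With $C$ the alternating cycle $1,4,2,5,3,6$ one gets $f(x(s))=6+3s>6$.

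The paper uses a simpler idea, which you were one step away from and which makes calibration unnecessary. Only $P^+(\omega_n)$ is nonzero on edges incident to $n$, and it is a $0/1$ simplex, so every point of $P^+(\omega_4)+\cdots+P^+(\omega_n)$ has $a_{\{i,n\}}\le 1$. Your own pinning remark $b_{\{i,6\}}=a_{\{i,6\}}$ already contains this. Since $v$ has $a_{\{4,6\}}=3/2$, any $C\not\ni\{4,6\}$ gives $a_{\{4,6\}}(x(s))=1+s/2>1$, so no admissible $b\in P^+(\psi_6)$ exists at all. The coordinate $a_{\{4,6\}}$ is the separating functional you were looking for.

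With this bound the paper needs neither an explicit point nor induction:
\begin{enumerate}
\item It takes any point of $P^+(\kappa^{(n)})$ with some coordinate exceeding $1$, which exists by the preceding strictness corollary together with Theorem~\ref{thm:kappa omega decomp}.
\item It averages this point with a point of $P^+(\kappa^{(n)})\cap[0,1]^{\E_n}$ so that its maximal coordinate lies in $(1,1+t]$.
\item It uses the $S_n$-invariance of $\kappa^{(n)}$ to move that edge to one incident to $n$.
\end{enumerate}
Your route, the explicit $n=6$ point followed by the pinning induction, is a valid constructive alternative once the base case is closed by this coordinate bound.
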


\begin{proof}
    By Corollary \ref{cor:strictness P^+ to P}, the set
    \begin{equation*} Q(\kappa^{(n)})\setminus (Q(\omega_4)+\cdots+Q(\omega_n)) \end{equation*}
    is nonempty, so we may choose a point in its projection onto $(a_e)_{e \in \E_n}$ coordinates
    \begin{equation*} a\in P^+(\kappa^{(n)})\setminus (P^+(\omega_4)+\cdots+P^+(\omega_n)). \end{equation*}
    By Theorem \ref{thm:kappa omega decomp}, such a point must have $a_e>1$ for some $e \in \E_n$. By taking an appropriate weighted sum with any point in $P^+(\kappa^{(n)})\cap [0,1]^{\E_{n-1}}$ we may assume that $a_e\leq 1+t$ for all $e \in \E_n$. Since the divisor class $\kappa^{(n)}$ is fixed under pullback by the $S_n$ action on $\M$, we may further assume that there is some $e \in \E_n\setminus \E_{n-1}$ for which $a_e>1$. 

    On the other hand, we have $P^+(\omega_k)\subseteq \R^{\E_k}\subseteq \R^{\E_n}$ where coordinates indexed by edges $e \in \E_n\setminus \E_k$ are zero. Additionally, $P^+(\omega_k)$ is a $0/1$-polytope. These facts imply that any point 
    \begin{equation*} (b_e)_{e \in \E_n} \in P^+(\omega_4)+\cdots+P^+(\omega_n) \end{equation*}
    has $b_e\leq 1$ for all $e \in \E_n\setminus \E_{n-1}$. This implies that the previously constructed point 
    \begin{equation*} a\in P^+(\kappa^{(n)})\cap [0,1+t]^{\E_{n-1}} \end{equation*}
    does not lie in the Minkowski sum, as desired.
\end{proof}

\section{Application IV: level-one $\mathfrak{sl}_p$ conformal block divisors and Turán graphs}

In this section, we will compute the polytope of effective boundary expressions ${P}^+(D^{\mathfrak{sl}_p}_{1,\Vec{d}})$ for the conformal block divisors $D^{\mathfrak{sl}_p}_{1,\Vec{d}}$ on $\overline{M}_{0,n}$.  This is equivalent to computing the polytope of effective boundary expressions ${P}(D^{\mathfrak{sl}_p}_{1,(\Vec {d},0)})$ for the conformal block divisor $D^{\mathfrak{sl}_p}_{1,(\Vec {d},0)}$ on $\overline{M}_{0,n+1}$ as we have
\begin{equation*}
P^+(D^{\mathfrak{sl}_p}_{1,\Vec{d}})=P(\pi_{n+1}^* (D^{\mathfrak{sl}_p}_{1,\Vec{d}}))=P(D^{\mathfrak{sl}_p}_{1,(\Vec {d},0)})
\end{equation*}
by Corollary \ref{cor: P^+ and P relationship} and Remark \ref{rem:conformal_pullback_slp}. Since the latter amounts to computing the Kapranov basis expression of $D^{\mathfrak{sl}_p}_{1,(\Vec {d},0)}$, we will focus on $P(D^{\mathfrak{sl}_p}_{1,(\Vec {d},0)})$ and $\widetilde{P}(D^{\mathfrak{sl}_p}_{1,(\Vec {d},0)})$. Moreover, we will analyze the particular case of $$\Vec{d}=(1,\ldots,1)$$ and its connection to  Tur{\'a}n graphs and Tur{\'a}n polytopes separately. Lastly, we will focus on the cases where $n$ is even and $p=2,\frac{n}{2}$. We relate these two cases to the perfect matching polytope and the fractional perfect matching polytope of $K_n$, respectively.

\subsection{Background for Application IV}

Here, we briefly recall the conformal block divisors that will be used. Also, we will provide some background for Tur{\'a}n numbers, Tur{\'a}n graphs and Tur{\'a}n polytopes, perfect matching polytopes and fractional perfect matching polytopes which we use to identify the polytopes of effective boundary expressions of conformal block divisors we consider.

\subsubsection{Conformal block divisors}

 Let $\mathfrak{g}$ be a simple Lie algebra, $\ell$ be a positive integer, and $\overline{\lambda}=(\lambda_1,\ldots, \lambda_n)$ be an $n$-tuple of dominant integral weights of $\mathfrak{g}$ of level at most $\ell$. The \emph{conformal block} $\mathcal{V}^{\mathfrak{g}}({\ell, \overline{\lambda}})$ associated to this data is a vector bundle on the moduli space $\overline{M}_{g,n}$ of stable curves of genus $g$ with $n$ marked points. Here $\ell$ is called the \emph{level} of the  conformal block. The \emph{conformal block divisor} $D^{\mathfrak{g}}({\ell, \overline{\lambda}})$ is the first Chern class of the conformal block $\mathcal{V}^{\mathfrak{g}}({\ell, \overline{\lambda}})$. Systematic study of conformal block divisors has been initiated in \cite{Fakhruddin12}.

Our focus will be the case of conformal block divisors with $\mathfrak{g}=\mathfrak{sl}_p$ where $p\geq 2$ and  the level $\ell=1$ for $\M$. The level-one dominant weights of $\mathfrak{sl}_p$ are
\begin{equation*}
\varpi_0,\varpi_1,\ldots,\varpi_{p-1}
\end{equation*}
where $\varpi_0$ denotes the trivial weight and $\varpi_1,\ldots,\varpi_{p-1}$ are the fundamental weights. For the remaining of the paper, we will adopt the following notation
\begin{equation*}
D^{\mathfrak{sl}_p}_{1,\Vec{d}}\coloneqq   D^{\mathfrak{sl}_p}({1,(\varpi_{d_1},\ldots,  \varpi_{d_n})})\quad \text{where} \quad \Vec{d}=(d_1,\ldots,d_n)\in\{0,1,\ldots,p-1\}^n.
\end{equation*}
\begin{rem}
The conformal block divisor $D^{\mathfrak{sl}_p}_{1,\Vec{d}}$ on $\overline{M}_{0,n}$ is the zero divisor class unless $\sum_{i=1}^n d_i= p(c+1)$ for some $c\in \{1,\ldots, n-3\}$ \cite[Lemma 3.1]{GiansiracusaGibney12}. For this reason, we will only consider the case where $p$ divides  $\sum_{i=1}^n d_i$.
\end{rem}

\begin{rem}\label{rem:conformal_pullback_slp}
Let $\Vec{d}=(d_1,\ldots,d_n)\in \{0,1,\ldots,p-1\}^n$ and set 
\begin{equation*}
(\Vec {d},0)\coloneqq   (d_1,\ldots,d_n,0).
\end{equation*}
Then, we have
\begin{equation*}
D^{\mathfrak{sl}_p}_{1,(\Vec {d},0)}=\pi_{n+1}^* (D^{\mathfrak{sl}_p}_{1,\Vec{d}})
\end{equation*}
for the conformal block divisors $D^{\mathfrak{sl}_p}_{1,(\Vec {d},0)}$ on $\overline{M}_{0,n+1}$ and $D^{\mathfrak{sl}_p}_{1,\Vec{d}}$ on $\overline{M}_{0,n}$, c.f. \cite[Proposition 2.4]{Fakhruddin12}.
\end{rem}

Fix an integer $p\geq 2$. Consider a weight vector 
\begin{equation*}
\Vec {d}=(d_1,\ldots,d_n)\quad \text{such that}\quad \sum_{i=1}^n d_i\equiv 0 \mod{p}.
\end{equation*}
For $I \subseteq [n]$, set
\begin{equation*}
\Vec {d}(I) \coloneqq   \sum_{i\in I}d_i 
\end{equation*}
and define
\begin{equation*}
\theta(I) \coloneqq 
\langle \Vec{d}(I)\rangle_p
\left(p-\langle \Vec{d}(I)\rangle_p\right)
\end{equation*}
where
$\left \langle a \right \rangle _p \in \{0,1,\ldots, p-1\}$ denotes the residue of $a$ modulo $p$.
By \cite{F11}, we have the following identity.

\begin{lem}[\cite{F11}]\label{lem:slp_conformal_divisor_in_kapranov}
On $\overline{M}_{0,n+1}$, we have
\begin{equation*}
D^{\mathfrak{sl}_p}_{1,(\Vec {d},0)}
=\frac {1}{2p}\left (
\sum_{i=1}^n \theta (\{i\}) \psi_i-\sum_{e \in \E_n}\theta(e)[D_e]
-
\sum_{I\in \II_{n+1}, \vert I \vert \geq 3}\theta (I)[D_I]\right).
\end{equation*}
\end{lem}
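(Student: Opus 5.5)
The plan is to specialize Fakhruddin's general first Chern class formula for conformal block bundles on $\overline{M}_{0,n+1}$ \cite{Fakhruddin12} to the rank-one situation. Nothing from the moduli-theoretic machinery of this paper is needed beyond the indexing conventions of Section 2. Recall that for a conformal block bundle $\mathcal{V}^{\mathfrak g}(\ell,\overline\lambda)$ on $\overline{M}_{0,N}$, Fakhruddin's formula reads
\begin{equation*}
c_1\mathcal{V}
=\operatorname{rk}(\mathcal V)\sum_{i=1}^N \Delta_{\lambda_i}\psi_i
-\sum_{I}\Big(\sum_{\mu}\Delta_\mu \operatorname{rk}\mathcal V(\lambda_I,\mu)\operatorname{rk}\mathcal V(\lambda_{I^c},\mu^*)\Big)[D_I].
\end{equation*}
Here the outer sum runs over the boundary divisors $D_I$, each taken once. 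The inner sum runs over level-$\ell$ weights $\mu$, and $\Delta_\mu=\frac{(\mu,\mu+2\rho)}{2(\ell+h^\vee)}$ is the conformal weight.

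The first step is to record the level-one $\mathfrak{sl}_p$ inputs, with $h^\vee=p$. For these I would quote the following facts.
\begin{enumerate}
\item The conformal weight of $\varpi_a$ is $\Delta_{\varpi_a}=\frac{a(p-a)}{2p}$.
\item The fusion rules at level one are those of the group $\Z/p$. Thus $\operatorname{rk}\mathcal{V}(1,(\varpi_{d_1},\dots,\varpi_{d_m}))$ equals $1$ when $p\mid\sum d_i$, and $0$ otherwise.
\end{enumerate}
With $N=n+1$ and weights $(\Vec d,0)$, the hypothesis $p\mid\sum d_i$ gives $\operatorname{rk}\mathcal V=1$.

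The second step is to compute the boundary coefficients. For a boundary divisor $D_I$, the inner sum has exactly one nonzero term. It is the term with $\mu=\varpi_{a}$, where $a=\langle -\Vec d(I)\rangle_p$, and there both factor ranks equal $1$. Its coefficient is therefore
\begin{equation*}
\Delta_{\varpi_a}=\frac{a(p-a)}{2p}=\frac{\theta(I)}{2p}.
\end{equation*}
This uses the symmetry $a(p-a)=\langle\Vec d(I)\rangle_p(p-\langle\Vec d(I)\rangle_p)$. The same symmetry, together with $\Vec d([n+1]\setminus I)\equiv-\Vec d(I)$, shows the coefficient is well defined on $D_I=D_{I^c}$. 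Indexing each boundary divisor by its side not containing $n+1$ gives exactly $I\in\II_{n+1}$. The terms with $|I|=2$ are the pair divisors $D_e$, which the statement of Lemma \ref{lem:slp_conformal_divisor_in_kapranov} merely separates out.

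The third step handles the $\psi$-terms. These give $\sum_{i=1}^{n}\frac{\theta(\{i\})}{2p}\psi_i$, and the $\psi_{n+1}$ term vanishes because $d_{n+1}=0$. Factoring out $\frac1{2p}$ yields the displayed identity.

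The main obstacle is bookkeeping rather than conceptual. One must check that Fakhruddin's sign and normalization conventions match, in particular that the boundary sum counts each unordered partition once and that $\Delta$ is normalized with $\ell+h^\vee=p+1$ versus $p$. I would resolve this by testing the formula on $\overline{M}_{0,4}$ with $p=2$ and weights $(1,1,1,1)$. That divisor is known to have degree $1$ on $\overline{M}_{0,4}\cong\mathbb P^1$, and the test fixes the constant $\frac1{2p}$. Alternatively, I would cite \cite{F11} directly, where this level-one formula appears in exactly this form.
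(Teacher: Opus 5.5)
Your derivation is correct, but it follows a different route from the paper. The paper does no computation for this lemma. It quotes Theorem 4.5 and Proposition 4.8 of \cite{F11}, which together already give
$D^{\mathfrak{sl}_p}_{1,(\vec d,0)}=\frac{1}{2p}\bigl(\sum_{i}\theta(\{i\})\psi_i-\sum_{I\in\II_{n+1}}\theta(I)[D_I]\bigr)$.
It then only splits $\II_{n+1}$ into $\E_n$ and the sets with $|I|\geq 3$.

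You instead rederive that formula from Fakhruddin's general $c_1$ formula, using the level-one conformal weights and the $\Z/p$ fusion rules. Your version depends only on Fakhruddin's theorem plus two standard Lie-theoretic facts. It also explains why $\theta(I)/2p$ appears: it is the conformal weight of the unique fusion channel through the node. The paper's citation is shorter and leaves the matching of conventions to \cite{F11}.

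Both of your bookkeeping worries can be settled directly.

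\emph{Normalization.} For the normalized form,
$(\varpi_a,\varpi_a+2\rho)=\frac{a(p-a)}{p}+a(p-a)=\frac{a(p-a)(p+1)}{p}$.
So the denominator $2(\ell+h^\vee)=2(p+1)$ cancels, and $\Delta_{\varpi_a}=\frac{a(p-a)}{2p}$ holds with no ambiguity.

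\emph{Boundary sum convention.} Your proposed test on $\overline{M}_{0,4}$ with $p=2$ and weights $(1,1,1,1)$ cannot tell whether the boundary sum counts each divisor once. Every boundary coefficient vanishes there, so it only checks the $\psi$-normalization. A test that does detect it is $p=3$ with weights $(1,1,1,0)$. That divisor is pulled back from $\overline{M}_{0,3}$, so it has degree $0$. The $\psi$-part contributes $3\cdot\frac{2}{6}=1$. Each of the three boundary points has coefficient $-\frac{2}{6}$. The total is $0$ only under the once-per-divisor convention that you correctly used.
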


\begin{proof}
Combining the results of \cite[Theorem 4.5]{F11} and \cite[Proposition 4.8]{F11}, we see that
\begin{equation*}
D^{\mathfrak{sl}_p}_{1,(\Vec {d},0)}
=\frac {1}{2p}\left (
\sum_{i=1}^n \theta (\{i\}) \psi_i
-
\sum_{I\in \II_{n+1}}\theta (I)[D_I]\right).
\end{equation*}
Separating $\II_{n+1}$ as $\II_{n+1} = \E_n \sqcup \{I\in \II_{n+1}\, \vert \, \vert I \vert \geq 3\}$, we obtain our desired expression.
\end{proof}

For further details on $\mathfrak{sl}_p$ conformal block divisors, we refer the reader to \cite{AlexeevGibneySwinarski14} and \cite{GiansiracusaGibney12}.

\subsubsection{Tur{\'a}n graphs and polytopes }
 Let $m$ and $p$ be positive integers with $p<m$. 

\begin{defn}
A \emph{Tur\'an graph} is a complete $p$-partite graph on $m$ vertices whose part sizes
differ by at most one. In our convention, a \emph{Tur\'an graph} is said to be a \emph{balanced Tur\'an graph} if the number of parts $p$ divides the number of vertices $m$.
\end{defn}

By definition, we see that  if $m=qp+\langle m \rangle_p$, then a Tur\'an graph has $\langle m \rangle_p$ parts of size $q+1$ and $p-\langle m \rangle_p$ parts of size $q$. Hence, a Tur\'an graph is given by a complete $p$-partite graph $K_{A_1,\ldots,A_p}$
where
\begin{equation*}
[m]=A_1 \sqcup \cdots \sqcup A_p
\end{equation*}
with $ \vert A_1  \vert =\ldots= \vert A_{\langle m \rangle_p} \vert = q+1$ and $ \vert A_{\langle m \rangle_p+1}  \vert =\ldots= \vert A_p \vert = q$. For this reason, we will denote such a complete $p$-partite graph by $T_{A_1,\ldots,A_p}$ to remind that it is a Tur\'an graph and we will denote their isomorphism class by $T(m,p)$ which can be considered as
\begin{equation*}
T(m,p)=K_{{q+1},\ldots,{q+1},{q}, \ldots,{q}}.
\end{equation*} Furthermore, we will denote the  complement of a  Tur\'an graph $T_{A_1,\ldots,A_p}$ in the complete graph $K_m$ by $\overline{T}_{A_1,\ldots,A_p}$. It is clear that the complement $\overline{T}_{A_1,\ldots,A_p}$ is a disjoint union of $p-\langle m \rangle_p$ many complete graphs, each isomorphic to $K_{q}$ and $\langle m \rangle_p$ many complete graphs where each isomorphic to $K_{q+1}$. More precisely, we have
\begin{equation*}
\overline{T}_{A_1,\ldots,A_p}=K_{A_1}\sqcup\cdots \sqcup K_{A_p}.
\end{equation*}
We will denote the isomorphism class of $\overline{T}_{A_1,\ldots,A_p}$ by $\overline{T}(m,p)$ which can be considered as
\begin{equation*}
\overline{T}(m,p) = \underbrace{K_{q+1}\sqcup \cdots \sqcup K_{q+1}}_{\langle m \rangle_p\text{ many}}\sqcup \underbrace{K_{q}\sqcup \cdots \sqcup K_{q}}_{p-\langle m \rangle_p\text{ many}}.
\end{equation*}

\begin{defn}
A graph on $m$ vertices is \emph{$K_{p+1}$-free} if it has no subgraph isomorphic to $K_{p+1}$.
\end{defn}

\begin{figure}[h]
\centering
\setlength{\tabcolsep}{2pt}
\begin{tabular}{c|c|c|c}

\begin{tikzpicture}
  \graph[circular placement, radius=1.3cm,
    nodes={circle,draw,minimum size=4mm,inner sep=0pt},
    edges={black!45}] {subgraph K_n [n=8, clockwise]};

  \foreach \a in {1,2,3}{
    \foreach \b in {4,5,6,7,8}{
      \draw (\a) edge[ultra thick] (\b);
    }
  }
  \foreach \a in {4,5,6}{
    \foreach \b in {7,8}{
      \draw (\a) edge[ultra thick] (\b);
    }
  }
\end{tikzpicture}
&
\begin{tikzpicture}
  \graph[circular placement, radius=1.3cm,
    nodes={circle,draw,minimum size=4mm,inner sep=0pt},
    edges={black!45}] {subgraph K_n [n=7, clockwise]};

  \foreach \a in {1,2,3,4}{
    \foreach \b in {5,6,7}{
      \draw (\a) edge[ultra thick] (\b);
    }
  }
\end{tikzpicture}
&
\begin{tikzpicture}
  \graph[circular placement, radius=1.3cm,
    nodes={circle, draw, minimum size=4mm, inner sep=0pt},
    edges={black!45}]
    {subgraph K_n [n=9, clockwise]};
  \foreach \a in {1,2,6} {\foreach \b in {3,4,5,7,8,9} {\draw (\a) edge[ultra thick] (\b);}}
  \foreach \a in {3,4,8} {\foreach \b in {5,7,9} {\draw (\a) edge[ultra thick] (\b);}}
\end{tikzpicture}
&
\begin{tikzpicture}
  \graph[circular placement, radius=1.3cm,
    nodes={circle,draw,minimum size=4mm,inner sep=0pt},
    edges={black!45}] {subgraph K_n [n=8, clockwise]};
  \foreach \a/\b in {1/2,1/3,1/5,1/6,1/7,1/8,2/4,2/5,2/6,2/7,2/8,3/4,
                     3/5,3/6,3/7,3/8,4/5,4/6,4/7,4/8,5/6,5/7,6/8,7/8}
    \draw (\a) edge[ultra thick] (\b);
\end{tikzpicture}
\\[2mm]
$T_{\{1,2,3\},\{4,5,6\},\{7,8\}}$ & $T_{\{1,2,3,4\},\{5,6,7\}}$ & $T_{\{1,2,6\},\{3,4,8\},\{5,7,9\}}$ & $T_{\{1,4\},\{2,3\},\{5,8\},\{6,7\}}$ \\[5mm]

\begin{tikzpicture}
  \graph[circular placement, radius=1.3cm,
    nodes={circle,draw,minimum size=4mm,inner sep=0pt},
    edges={black!45}] {subgraph K_n [n=8, clockwise]};

  \foreach \i/\j in {1/2,1/3,2/3,4/5,4/6,5/6,7/8}
    \draw (\i) edge[ultra thick] (\j);
\end{tikzpicture}
&
\begin{tikzpicture}
  \graph[circular placement, radius=1.3cm,
    nodes={circle,draw,minimum size=4mm,inner sep=0pt},
    edges={black!45}] {subgraph K_n [n=7, clockwise]};

  \foreach \i/\j in {1/2,1/3,1/4,2/3,2/4,3/4,5/6,5/7,6/7}
    \draw (\i) edge[ultra thick] (\j);
\end{tikzpicture}
&
\begin{tikzpicture}
  \graph[circular placement, radius=1.3cm,
    nodes={circle, draw, minimum size=4mm, inner sep=0pt},
    edges={black!45}]
    {subgraph K_n [n=9, clockwise]};
  \draw (1) edge[ultra thick] (2); \draw (2) edge[ultra thick] (6); \draw (6) edge[ultra thick] (1);
  \draw (3) edge[ultra thick] (4); \draw (4) edge[ultra thick] (8); \draw (8) edge[ultra thick] (3);
  \draw (5) edge[ultra thick] (7); \draw (7) edge[ultra thick] (9); \draw (9) edge[ultra thick] (5);
\end{tikzpicture}
&
\begin{tikzpicture}
  \graph[circular placement, radius=1.3cm,
    nodes={circle,draw,minimum size=4mm,inner sep=0pt},
    edges={black!45}] {subgraph K_n [n=8, clockwise]};
  \foreach \a/\b in {1/4,2/3,5/8,6/7}
    \draw (\a) edge[ultra thick] (\b);
\end{tikzpicture}
\\[2mm]
\hspace{2mm}$ \overline{T}_{\{1,2,3\},\{4,5,6\},\{7,8\}}$ \hspace{2mm} & \hspace{2mm} $\overline{T}_{\{1,2,3,4\},\{5,6,7\}}$ \hspace{2mm} & \hspace{2mm} $\overline{T}_{\{1,2,6\},\{3,4,8\},\{5,7,9\}}$ \hspace{2mm} & \hspace{2mm} $\overline{T}_{\{1,4\},\{2,3\},\{5,8\},\{6,7\}}$
\end{tabular}
\caption{Some Turán graphs and their complements.}
\label{fig:turan_K8_p4}
\end{figure}

\begin{thm}[Tur{\'a}n's Theorem,\cite{Turan41,Turan54}]\label{thm:Turans_Theorem}
Let $m$ and $p$ be two positive integers such that $p<m$. For a $K_{p+1}$-free graph $G$ on $m$ vertices, the number of edges of $G$ is bounded:
\begin{equation*}
 \vert \E_G \vert \leq \mathrm{ex}(m,K_{p+1})
\end{equation*}
the \emph{Tur{\'a}n number} $\mathrm{ex}(m,K_{p+1})$ is given by
\begin{equation*}
\mathrm{ex}(m,K_{p+1})=\frac{(p-1)(m^2-\langle m\rangle_p ^2)}{2p}+\binom{\langle m\rangle_p}{2}=\frac{(p-1)m ^2 -\langle m\rangle_p
\left(p-\langle m\rangle_p\right)}{2p}
\end{equation*}
The equality holds if and only if $G$ is the  Tur{\'a}n graph $T(m,p)$.
\end{thm}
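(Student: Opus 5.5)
The statement is Turán's theorem; the proof below is the classical one. It has three parts: a reduction to complete multipartite graphs via Zykov symmetrization, a convexity argument showing the part sizes must be balanced, and a direct edge count.

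\emph{Step 1: Zykov symmetrization.} Let $G$ be a $K_{p+1}$-free graph on $m$ vertices with the maximum number of edges. We show $G$ is complete multipartite with at most $p$ parts, by proving that non-adjacency is an equivalence relation on vertices. Reflexivity and symmetry are clear, so transitivity is what needs work. Suppose it fails: there are vertices $u,v,w$ with $uv,vw\notin\E(G)$ but $uw\in\E(G)$.
\begin{itemize}
\item If $\deg(v)<\deg(u)$, delete $v$ and add a twin copy $u'$ of $u$, with the same neighbours as $u$ and not adjacent to $u$. A twin cannot create a $K_{p+1}$, since $u$ and $u'$ are non-adjacent and so never lie in a common clique. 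The edge count strictly increases, contradicting maximality.
\item The case $\deg(v)<\deg(w)$ is symmetric.
\item Otherwise $\deg(v)\geq\deg(u),\deg(w)$. Delete $u$ and $w$ and add two twins of $v$. The new count is at least $\vert \E(G)\vert-\deg(u)-\deg(w)+1+2\deg(v)>\vert \E(G)\vert$, where the $+1$ corrects for the edge $uw$ being subtracted twice. This again contradicts maximality.
\end{itemize}
So $G$ is complete multipartite, and being $K_{p+1}$-free it has at most $p$ parts. We may allow empty parts and say exactly $p$ parts.

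\emph{Step 2: balancing.} For a complete $p$-partite graph with part sizes $s_1,\dots,s_p$, the edge count is $\binom{m}{2}-\sum_i\binom{s_i}{2}$. If $s_i\geq s_j+2$, moving one vertex from part $i$ to part $j$ raises the count by $s_i-1-s_j>0$. Hence any maximizer has part sizes differing by at most one. With $m=qp+r$ and $r=\langle m\rangle_p$, this forces $r$ parts of size $q+1$ and $p-r$ parts of size $q$, i.e.\ $G\cong T(m,p)$. Because $p<m$, we have $q\geq 1$ and no part is empty, so the graph genuinely has $p$ parts.

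\emph{Step 3: counting.} Compute
\begin{equation*}
\binom{m}{2}-r\binom{q+1}{2}-(p-r)\binom{q}{2}
\end{equation*}
and simplify using $q=(m-r)/p$. This gives $\frac{(p-1)m^2-r(p-r)}{2p}$, and checking that it agrees with the other displayed form is routine algebra. This establishes the bound $\vert \E_G\vert\leq \mathrm{ex}(m,K_{p+1})$.

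\emph{Equality case.} This is the main obstacle: the argument must show that \emph{every} extremal graph is $T(m,p)$, not just that some extremal graph is. It is handled because every inequality above is strict. Any graph attaining the bound is a maximizer, so Step 1 applies to it directly and shows it is complete multipartite; no modification of the graph is needed. Step 2 then shows that any unbalanced complete $p$-partite graph has strictly fewer edges, so an extremal graph must be balanced. Together these give uniqueness up to isomorphism.
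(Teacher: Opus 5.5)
Your argument is correct and self-contained. The three cases of the symmetrization step are exhaustive, and each strictly increases the edge count while preserving $K_{p+1}$-freeness. In the third case the new count is exactly $\vert \E(G)\vert-\deg(u)-\deg(w)+1+2\deg(v)$, so your ``at least'' is harmless. The balancing gain $s_i-1-s_j>0$ is right (using that the modified graph is still complete $p$-partite, hence $K_{p+1}$-free). The count does simplify to $\frac{(p-1)m^2-r(p-r)}{2p}$, which matches the other displayed form.

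The paper, however, gives no proof of this statement. It quotes Tur\'an's theorem as a classical black box, citing Tur\'an's papers, and uses it in Application IV. Tur\'an's original argument is an induction on the number of vertices: an extremal graph contains a $K_p$, each remaining vertex is adjacent to at most $p-1$ of its vertices, and one recurses on the rest. Your Zykov-symmetrization route differs from that, and its advantage is the one you identify. Every modification strictly increases the edge count, so any extremal graph is forced to be complete multipartite and then balanced, and uniqueness of $T(m,p)$ comes for free. In the inductive proof the equality case has to be carried separately through the induction. The paper's choice to cite simply buys brevity, since only the value of $\mathrm{ex}(m,K_{p+1})$ and the characterization of the extremal graphs are needed downstream.
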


In \cite{Raymond18}, Tur\'an polytopes are defined for the hypergraph setting. Specializing the general definitions in \cite{Raymond18} to the usual graph setting, we get the following versions.

\begin{defn}
Let $n$ and $p$ be two positive integers with $p\leq n$. The \emph{Tur\'an polytope} $P_{\text{Tur\'an},n}^{(p)}$ is the convex hull of incidence vectors of $K_{p+1}$-free graphs in the complete graph $K_n$ and the \emph{clique relaxation Tur\'an polytope} $P_{\mathrm{CT},n}^{(p)}$ is given by
\begin{equation*}
P_{\mathrm{CT},n}^{(p)}=\left\{ (x_e)\in [0,1]^{\E_{n}} \big \vert \,\mathbf{x} (\E_I) \leq \mathrm{ex}( \vert I  \vert ,K_{p+1})\,\text{ for all } I \in \II_{n+1} \right\}.
\end{equation*}
\end{defn}

It is clear from the Tur\'an's theorem that we have $P_{\text{Tur\'an},n}^{(p)}\subseteq P_{\mathrm{CT},n}^{(p)}$.

\subsubsection{Perfect matching and fractional perfect matching polytopes}

\begin{defn}
Let $n\geq 4$ be an even integer.  The \emph{perfect matching polytope} $P_{\mathrm{PM},n}$ of the complete graph $K_n$ is the convex hull of the incidence vectors of the perfect matchings in $K_n$.
\end{defn}

\begin{rem}
The perfect matching polytope  $P_{\mathrm{PM},n}$ is empty when $n$ is odd.
\end{rem}
The following is a set description of $P_{\mathrm{PM},n}$ given by clique inequalities
\begin{equation*}\label{eqn:perfect_matching_clique_description}
P_{\mathrm{PM},n}=\left\{ (x_e)\in \R_{\geq 0}^{\E_{n}} \bigg \vert \, \x (\E_n) = \frac{n}{2}, \x (\E_I)\leq \left \lfloor \frac{ \vert I  \vert }{2}\right\rfloor \text{ for all }I\in \II_{n+1}\right\}
\end{equation*}
which is stated as Lemma \ref{lem:Appendix_perfect_matching_description} and proved in the appendix.

\begin{defn} The fractional perfect matching polytope $P_{\mathrm{FPM},n}$ is defined as
\begin{equation*}
 P_{\mathrm{FPM},n}=\left\{ (a_e)\in \R_{\geq 0}^{\E_{n}} \bigg \vert \,\mathbf{a} (\delta (i)) =1\,\text{for all }i\in [n] \right\}.
\end{equation*}
\end{defn}

An equivalent clique inequality description of the fractional perfect matching is
\begin{equation*}
P_{\mathrm{FPM},n}=\left\{ (a_e)\in [0,1]^{\E_n} \bigg \vert \, \mathbf{a}(\E_n) =\frac{n}{2}, \mathbf{a}(\E_I)\geq \max \left\{0, \vert I \vert -\frac{n}{2}\right\} \text{ for all }I\in \II_{n+1}\right\}.
\end{equation*}
For the equivalence of these two set descriptions, see Lemma \ref{lem:Appendix_fractional_perf_match_description} and its proof. The fractional perfect matching polytope is a face of the fractional matching polytope, see \cite[Section 30]{Schrijver03}  for a definition.

The perfect matching polytope is contained in the fractional perfect matching polytope and their integral vertices are exactly perfect matchings of $K_n$:
\begin{equation*}
P_{\mathrm{PM},n}\subseteq P_{\mathrm{FPM},n} \quad \text{and}\quad P_{\mathrm{PM},n}\cap \{0,1\}^{\E_n}=P_{\mathrm{FPM},n}\cap \{0,1\}^{\E_n}.
\end{equation*}
By definition each vertex of the perfect matching polytope is integral. However, this is not the case for the fractional perfect matching polytope. Yet, each vertex of the fractional perfect matching polytope is half-integral \cite{Balinski65}.

\begin{ex}
The weighted graph in Figure \ref{fig:half_integral_vertex_of_fractional_perfect_mathching_polytope} corresponds to a half-integral vertex of $P_{\mathrm{FPM},6}$.
\begin{figure}[h]
\centering
\begin{tikzpicture}
  \graph[
    circular placement,
    radius=1.3cm,
    nodes={
      circle,
      draw,
      minimum size=4mm,
      inner sep=0pt
    },
    edges={black!45}
  ] {subgraph K_n [n=6, clockwise]};
  
   \draw (1) edge[ultra thick,dashed] (2);
   \draw (2) edge[ultra thick,dashed] (3);
   \draw (3) edge[ultra thick,dashed] (1);
   \draw (4) edge[ultra thick,dashed] (5);
   \draw (5) edge[ultra thick,dashed] (6);
   \draw (6) edge[ultra thick,dashed] (4);
\end{tikzpicture}
\caption{The weights $x_e=\frac{1}{2}$ for $e \in \{\{1,2\},\{2,3\},\{3,1\},\{4,5\},\{5,6\},\{6,4\}\}$ and $x_e=0$ for all other edges. }
\label{fig:half_integral_vertex_of_fractional_perfect_mathching_polytope}
\end{figure}
\end{ex}

For further discussions of the perfect matching polytope and the fractional perfect matching polytope, we refer the reader to \cite[Chapters 25-30]{Schrijver03}, \cite[Chapter 11]{Korte08} and references therein.

\subsection{The level-one $\mathfrak{sl}_p$ conformal block divisors and Tur\'an graphs}

Now, we are ready to investigate the polytope of effective boundary expressions of the conformal block divisor $D^{\mathfrak{sl}_p}_{1,(\Vec {d},0)}$ and its specializations.

\subsubsection{The general case of $P(D^{\mathfrak{sl}_p}_{1,(\Vec {d},0)})$}

The general case is given by the following proposition.

\begin{prop}
The divisor $D^{\mathfrak{sl}_p}_{1,(\Vec {d},0)}$ on $\overline{M}_{0,n+1}$ has the following Kapranov basis expression
\begin{equation*}
D^{\mathfrak{sl}_p}_{1,(\Vec {d},0)}=\alpha_{n+1}\left(\Vec{d}\right) \psi_{n+1} + \sum_{I\in \II_{n+1}, \vert I  \vert \geq 3} \beta_I\left(\Vec{d}\right)  [D_I]
\end{equation*}
and the polytopes of effective boundary expressions are given by
\begin{equation*}
P(D^{\mathfrak{sl}_p}_{1,(\Vec {d},0)})= \left\{ (a_e)_{e \in \E_{n}}\in \R^{\E_{n}} \, \bigg \vert \, \mathbf{a}(\E_n) = \alpha_{n+1}\left(\Vec{d}\right) , \beta_I\left(\Vec{d}\right) + \mathbf{a}(\E_I)\geq 0 \text{ for all }I \in \II_{n+1} \right\}.
\end{equation*}
where
\begin{equation*}
\alpha_{n+1}\left(\Vec{d}\right)=\frac{n-2}{2p}
\sum_{i=1}^n \theta(\{i\})
-
\frac{1}{2p}
\sum_{e \in \E_n}\theta(e)
\end{equation*}
and
\begin{equation*}
\beta_I(\Vec{d})= -\frac{1}{2p}
\left(
\theta(I)+( \vert I  \vert -2)\sum_{i\in I}\theta(\{i\})
-\sum_{e \in \E_I}\theta(e) \right).
\end{equation*}
\end{prop}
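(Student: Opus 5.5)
The plan is to start from the expression in Lemma \ref{lem:slp_conformal_divisor_in_kapranov} and rewrite every term in the Kapranov basis centered at $n+1$, namely $\{\psi_{n+1}\}\cup\{[D_I] : I\in\II_{n+1}, \vert I\vert\geq 3\}$. The three kinds of terms there are the psi-classes $\psi_i$ for $i\in[n]$, the pair divisors $[D_e]$ for $e\in\E_n$, and the divisors $[D_I]$ with $\vert I\vert\geq 3$, which are already basis elements. Once the expansion is in hand, the description of $P(D^{\mathfrak{sl}_p}_{1,(\Vec d,0)})$ follows immediately from Corollary \ref{cor:P(D) inequalities}, with the convention $\beta_e=0$ on edges. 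We apply that corollary on $\overline{M}_{0,n+1}$, so $\E_{n-1}$ and $\II_n$ are replaced by $\E_n$ and $\II_{n+1}$ as in Remark \ref{rem:index conventions}.

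For the psi-classes, Lemma \ref{lem:psi_i_in_kapranov} gives
\begin{equation*}
\psi_i=(n-2)\psi_{n+1}-\sum_{I\ni i,\ \vert I\vert\geq 3}(\vert I\vert-2)[D_I].
\end{equation*}
For the pair divisors, Lemma \ref{lem:Djk_basis_decomp}, shifted to $\overline{M}_{0,n+1}$, gives
\begin{equation*}
[D_e]=\psi_{n+1}-\sum_{I\supseteq e,\ \vert I\vert\geq 3}[D_I].
\end{equation*}
Substituting both into Lemma \ref{lem:slp_conformal_divisor_in_kapranov} and multiplying by $\frac1{2p}$, the coefficient of $\psi_{n+1}$ becomes
\begin{equation*}
\frac1{2p}\left((n-2)\sum_i\theta(\{i\})-\sum_{e\in\E_n}\theta(e)\right)=\alpha_{n+1}(\Vec d).
\end{equation*}

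For a fixed $I$ with $\vert I\vert\geq3$, collect the three contributions to the coefficient of $[D_I]$. The psi-classes contribute $-(\vert I\vert-2)\sum_{i\in I}\theta(\{i\})$, since only $i\in I$ contribute. The pair divisors contribute $+\sum_{e\in\E_I}\theta(e)$, because the minus sign in $-\theta(e)[D_e]$ cancels the minus sign in the expansion of $[D_e]$, and only $e\subseteq I$ contribute. The direct term contributes $-\theta(I)$. After multiplying by $\frac{1}{2p}$, their sum is exactly $\beta_I(\Vec d)$.

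The only care needed is bookkeeping of signs and index ranges, which is the main, though minor, obstacle. Lemma \ref{lem:psi_i_in_kapranov} is summed over $I\in\II_{n+1}$ with $i\in I$, and $\II_{n+1}$ consists of subsets of $[n]$, so no complements are involved. We should also confirm that $\theta$ is well defined on $I$ versus $I^c$ within $[n+1]$. This holds because $d_{n+1}=0$ and $\Vec d([n])\equiv 0 \pmod p$, so $\langle \Vec d(I)\rangle_p$ and $\langle \Vec d(I^c)\rangle_p$ are negatives mod $p$ and $\theta(I)=\theta(I^c)$. Hence using $D_I$ with $I\subseteq[n]$ matches Lemma \ref{lem:slp_conformal_divisor_in_kapranov}.
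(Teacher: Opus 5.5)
Your proposal is correct and follows the paper's own proof essentially step for step. Like the paper, you substitute Lemma \ref{lem:psi_i_in_kapranov} and Lemma \ref{lem:Djk_basis_decomp} (shifted to $\overline{M}_{0,n+1}$) into Lemma \ref{lem:slp_conformal_divisor_in_kapranov}, collect the coefficients of $\psi_{n+1}$ and $[D_I]$, and then apply Corollary \ref{cor:P(D) inequalities}; your extra check that $\theta(I)=\theta(I^c)$ is not needed, since every index in that lemma is already a subset of $[n]$, but it is a harmless consistency remark.
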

\begin{proof}
Substituting the expressions for $[D_e]$ Lemma \ref{lem:Djk_basis_decomp} and $\psi_i$ in Lemma \ref{lem:slp_conformal_divisor_in_kapranov} to the expression for $D^{\mathfrak{sl}_p}_{1,(\Vec {d},0)}$ in Lemma \ref{lem:slp_conformal_divisor_in_kapranov}, we get
\begin{equation*}
\begin{aligned}
D^{\mathfrak{sl}_p}_{1,(\Vec {d},0)} =&\frac {1}{2p}\left (
\sum_{i=1}^n \theta (\{i\}) \left((n-2)\psi_{n+1} - \sum_{\substack{I \in \II_{n+1}, \vert I \vert \geq 3\\ i\in I}} ( \vert I  \vert -2)D_{I}\right)\right)\\
&\hspace{5em}-\frac {1}{2p}\left (\sum_{e \in \E_n}\theta(e)\left(\psi_{n+1} - \sum_{\substack{I\in \II_{n+1}, \vert I \vert \geq 3\\ e\subseteq I}}[D_I]\right)
+
\sum_{I\in \II_{n+1}, \vert I \vert \geq 3}\theta (I)[D_I]\right).
\end{aligned}
\end{equation*}
Keeping track of the coefficients of $\psi_{n+1}$ and $[D_I]$ for each $I\in \II_{n+1}$ with $ \vert I \vert \geq 3$, we get
\begin{equation*}
D^{\mathfrak{sl}_p}_{1,(\Vec {d},0)}=\alpha_{n+1}\left(\Vec{d}\right) \psi_{n+1} + \sum_{I\in \II_{n+1}, \vert I  \vert \geq 3} \beta_I\left(\Vec{d}\right)  [D_I]
\end{equation*}
where $\alpha_{n+1}\left(\Vec{d}\right)$ and $\beta_I\left(\Vec{d}\right)$ are exactly the expressions given above and the description of  $P(D^{\mathfrak{sl}_p}_{1,(\Vec {d},0)})$ follows from Corollary \ref{cor:P(D) inequalities}. Hence,
we complete the proof.
\end{proof}

Following the equation \eqref{eqn:rank_functions_interms_of_alpha_beta}, we set
\begin{equation*}
r_I\left(\Vec{d}\right)\coloneqq   r_I\left(D^{\mathfrak{sl}_p}_{1,(\Vec {d},0)}\right)=\begin{cases}
        { \vert I \vert \choose 2} + \beta_I(\Vec{d}) & I\in \II_{n+1},\\
        {n \choose 2}- \alpha_{n+1}(\Vec{d}) & I = [n].
\end{cases} 
\end{equation*}

\begin{cor}\label{cor:P_tilde_for_slp_all_1s}
We have
\begin{equation*}
\widetilde{P} \left(D_{1,\Vec{d}}^{\mathfrak{sl}_p}\right)=\left\{ (x_e)\in \R^{\E_{n}} \bigg \vert \, \x (\E_n) = r_{[n]}(\Vec{d}), \x (\E_I)\leq r_{I}(\Vec{d}) \text{ for all }I\in \II_{n+1}\right\}
\end{equation*}
where
\begin{equation*}
r_{I}(\Vec{d})\coloneqq   \binom{ \vert I \vert }{2}-\frac{1}{2p}
\left(
\theta(I)+( \vert I  \vert -2)\sum_{i\in I}\theta(\{i\})
-\sum_{e \in \E_I}\theta(e) \right)\quad \text{for all } I\in \II_{n+1}\cup \{[n]\}.
\end{equation*}
\end{cor}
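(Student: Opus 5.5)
This is a direct specialization of Proposition \ref{prop:P tilde inequalities}, part (1), to the divisor $D^{\mathfrak{sl}_p}_{1,(\Vec{d},0)}$ on $\overline{M}_{0,n+1}$. (The statement writes $\widetilde{P}(D_{1,\Vec{d}}^{\mathfrak{sl}_p})$, meaning this pulled-back divisor, or equivalently $\widetilde{P}^+$ of the divisor on $\overline{M}_{0,n}$ via Corollary \ref{cor: P^+ and P relationship} and Remark \ref{rem:conformal_pullback_slp}.) The inequality description
\begin{equation*}
\widetilde{P}(D) = \left\{ \x \in \R^{\E_n} \,\big\vert\, \x(\E_n) = r_{[n]}(D),\ \x(\E_I) \leq r_I(D) \text{ for all } I \in \II_{n+1} \right\}
\end{equation*}
holds for any divisor $D$, with $r_I(D)=\binom{\vert I\vert}{2}+\beta_I(D)$ and $r_{[n]}(D)=\binom{n}{2}-\alpha(D)$. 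So the whole task is to insert the Kapranov coefficients $\alpha_{n+1}(\Vec d)$ and $\beta_I(\Vec d)$ from the preceding proposition and check that the result matches the single uniform formula for $r_I(\Vec d)$ claimed for all $I\in\II_{n+1}\cup\{[n]\}$.

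First, for $I\in\II_{n+1}$ with $\vert I\vert\geq 3$, substituting $\beta_I(\Vec d)$ gives the stated formula immediately.

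Second, for $\vert I\vert=2$, i.e.\ $I=e=\{i,j\}$, we have $\beta_e=0$ by the convention on Kapranov coefficients, so $r_e=1$. We must check that the bracket
\begin{equation*}
\theta(e)+0\cdot\bigl(\theta(\{i\})+\theta(\{j\})\bigr)-\theta(e)
\end{equation*}
vanishes. It does, since $\vert I\vert-2=0$ and $\E_e=\{e\}$. Thus the uniform formula also returns $1$.

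Third, for $I=[n]$, we need
\begin{equation*}
\binom n2-\alpha_{n+1}(\Vec d)=\binom n2-\frac1{2p}\Bigl(\theta([n])+(n-2)\sum_i\theta(\{i\})-\sum_{e\in\E_n}\theta(e)\Bigr).
\end{equation*}
Since $p\mid\sum_i d_i$, we have $\langle\Vec d([n])\rangle_p=0$ and hence $\theta([n])=0$. The bracket then equals exactly $2p\,\alpha_{n+1}(\Vec d)$ by the formula for $\alpha_{n+1}$ in the preceding proposition.

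The only point needing care, and the main (mild) obstacle, is this boundary bookkeeping: verifying that the edge case and the $[n]$ case fit the same closed form, using $\beta_e=0$ and $\theta([n])=0$ respectively. Everything else is substitution into Proposition \ref{prop:P tilde inequalities}.
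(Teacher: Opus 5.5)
Your proposal is correct and follows the paper's proof: apply Proposition \ref{prop:P tilde inequalities}(1) to $D^{\mathfrak{sl}_p}_{1,(\Vec{d},0)}$, substitute the Kapranov coefficients $\alpha_{n+1}(\Vec{d})$ and $\beta_I(\Vec{d})$, and use $\theta([n])=0$ (from $p \mid \sum_i d_i$) to put the $I=[n]$ case into the uniform formula. Your explicit check that this formula gives $r_e=1$ when $|I|=2$ (where $\beta_e=0$ by convention) fills in a step the paper leaves implicit.
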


\begin{proof}
The proof follows from Proposition \ref{prop:P tilde inequalities} and the fact that $\theta ([n])=0$ as $p$ divides $\sum_{i=1}^n d_i$ by assumption.
\end{proof}

\subsubsection{The polytope $\widetilde{P} \left(D_{1,(1^n,0)}^{\mathfrak{sl}_p}\right)$ and its connection to the Tur\'an polytope}
We further investigate the polytopes obtained above when we specialize the weights to $d_i=1$ for all $i\in [n]$. Hence, we see that  $p\, \vert \, n$ and we have
\begin{equation}\label{eqn:theta_specialized_all_1s}
\theta (I)=\langle \vert I \vert \rangle_p
\left(p-\langle \vert I  \vert \rangle_p\right).
\end{equation}

\begin{prop}\label{prop:P_tilde_sl_p_weights_all_1s} Let $n\in p \Z_{\geq 1}$. Then, we have
\begin{equation*}
\widetilde{P} \left(D_{1,(1^n,0)}^{\mathfrak{sl}_p}\right)=\left\{ (x_e)\in \R^{\E_{n}} \big \vert \, \x (\E_n) = \mathrm{ex} (n, K_{p+1}), \x (\E_I)\leq \mathrm{ex} ( \vert I \vert , K_{p+1}) \text{ for all }I\in \II_{n+1}\right\}.  
\end{equation*}
\end{prop}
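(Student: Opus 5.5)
The plan is to specialize Corollary \ref{cor:P_tilde_for_slp_all_1s} to $d_i=1$ and then check that the resulting right-hand sides $r_I$ are exactly the Tur\'an numbers. With all weights equal to $1$, equation \eqref{eqn:theta_specialized_all_1s} gives the following values.
\begin{itemize}
\item For a singleton, $\theta(\{i\})=1\cdot(p-1)=p-1$, since $p\geq 2$.
\item For an edge, $\theta(e)=\langle 2\rangle_p(p-\langle 2\rangle_p)$. This equals $2(p-2)$ when $p\geq 3$, and equals $0$ when $p=2$. In both cases $\theta(e)=2(p-2)$, because $2(2-2)=0$.
\end{itemize}
So $\theta(e)=2(p-2)$ for every $p\geq 2$, and no case split is needed.

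Substituting into the formula for $r_I(\Vec d)$, for $I\in\II_{n+1}$ with $m=\vert I\vert$ we get
\begin{equation*}
r_I=\binom{m}{2}-\frac{1}{2p}\Big(\theta(I)+(m-2)m(p-1)-\binom{m}{2}2(p-2)\Big).
\end{equation*}
Expanding the bracket:
\begin{equation*}
(m-2)m(p-1)-m(m-1)(p-2)=m\big[(m-2)(p-1)-(m-1)(p-2)\big]=m(m-p).
\end{equation*}
Hence
\begin{equation*}
r_I=\frac{m(m-1)}{2}-\frac{m^2-pm+\theta(I)}{2p}=\frac{p m^2-pm-m^2+pm-\theta(I)}{2p}=\frac{(p-1)m^2-\langle m\rangle_p(p-\langle m\rangle_p)}{2p}.
\end{equation*}
By Theorem \ref{thm:Turans_Theorem} this is exactly $\mathrm{ex}(m,K_{p+1})$.

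For $I=[n]$ the same computation applies. Here $\theta([n])=0$ because $p\mid n$, which is consistent with the formula in Corollary \ref{cor:P_tilde_for_slp_all_1s}. It gives $r_{[n]}=\frac{(p-1)n^2}{2p}=\mathrm{ex}(n,K_{p+1})$.

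Theorem \ref{thm:Turans_Theorem} is stated for $p<m$, so small sets need a separate check.
\begin{itemize}
\item For $\vert I\vert=m\leq p$, the closed formula still gives $\binom{m}{2}$, since $\langle m\rangle_p=m$ when $m<p$ and the $m=p$ case can be checked directly. This is the correct value of $\mathrm{ex}(m,K_{p+1})$, because $K_m$ itself is $K_{p+1}$-free.
\item For $p=n$, the set $[n]$ itself needs attention, but there $\theta=0$ and the formula again gives $\binom{n}{2}\frac{n-1}{n}\cdot\frac{n}{n-1}$. The consistency of this value should be verified.
\end{itemize}
I would state that the closed formula for $\mathrm{ex}$ holds for all $m\geq1$. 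The only real obstacle is this bookkeeping: the algebraic identity above and the edge-case evaluations of $\theta(e)$ and of small $\vert I\vert$. Otherwise the statement is a direct substitution into Corollary \ref{cor:P_tilde_for_slp_all_1s}.
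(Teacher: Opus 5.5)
Your proposal is correct and is essentially the paper's proof: specialize Corollary \ref{cor:P_tilde_for_slp_all_1s} using $\theta(\{i\})=p-1$ and $\theta(e)=2(p-2)$, then simplify $r_I$ to $\frac{(p-1)m^2-\langle m\rangle_p(p-\langle m\rangle_p)}{2p}$. Your extra checks for $p=2$ and for $\vert I\vert\leq p$, where the formula still returns $\binom{m}{2}$, go beyond what the paper writes, and the $p=n$ bullet you leave open is already covered by the $m=p$ check, which gives $r_{[n]}=\binom{n}{2}=\mathrm{ex}(n,K_{n+1})$.
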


\begin{proof}
By Corollary \ref{cor:P_tilde_for_slp_all_1s} and equation \eqref{eqn:theta_specialized_all_1s}, we see that
\begin{equation*}
\begin{aligned}
r_I(1^n)
&=\binom{ \vert I \vert }{2}-\frac{1}{2p}
\left(
\langle \vert I \vert \rangle_p
\left(p-\langle \vert I  \vert \rangle_p\right)+( \vert I  \vert -2)\sum_{i\in I}(p-1)
-\sum_{e \in \E_I}2(p-2) \right)\\
&=\binom{ \vert I \vert }{2}-\frac{1}{2p}\left(
\langle \vert I \vert \rangle_p
\left(p-\langle \vert I  \vert \rangle_p\right)+( \vert I  \vert -2) \vert I \vert (p-1)
-\binom{ \vert I  \vert }{2}2(p-2) \right)\\
&=2\frac{(p-1)}{p}\binom{ \vert I \vert }{2}-\frac{1}{2p}\left(
\langle \vert I \vert \rangle_p
\left(p-\langle \vert I  \vert \rangle_p\right)+( \vert I  \vert -2) \vert I \vert (p-1)\right)\\
&=\frac{(p-1)}{2p}(2 \vert I \vert ( \vert I \vert -1)- \vert I \vert ( \vert I \vert -2))-\frac{\langle \vert I \vert \rangle_p
\left(p-\langle \vert I  \vert \rangle_p\right)}{2p}\\
&=\frac{(p-1) \vert I \vert ^2 -\langle \vert I \vert \rangle_p
\left(p-\langle \vert I  \vert \rangle_p\right)}{2p}
\end{aligned}
\end{equation*}
which is exactly the Tur\'an number $\mathrm{ex} ( \vert I \vert , K_{p+1})$.
\end{proof}
Next, we focus on the $0/1$ points of this polytope.
\begin{cor}\label{cor:01points_of_slp_conformal}
Let $p\geq 2$ be an integer such that $p \mid n$. The $0/1$ points of $\widetilde{P} \left(D_{1,(1^n,0)}^{\mathfrak{sl}_p}\right)$ correspond to balanced Tur\'an graphs $T_{A_1,\ldots,A_p}$. More precisely, they are precisely the incidence vectors of complete $p$-partite graphs $$K_{A_1,\ldots,A_p}\simeq K_{\frac{n}{p},\ldots, \frac{n}{p}}$$ where $[n]=A_1 \sqcup \cdots \sqcup A_p$.
\end{cor}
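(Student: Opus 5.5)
By Proposition \ref{prop:P_tilde_sl_p_weights_all_1s}, a $0/1$-vector $\x=\mathds{1}_G$ for a subgraph $G\subseteq K_n$ lies in $\widetilde{P}\left(D_{1,(1^n,0)}^{\mathfrak{sl}_p}\right)$ exactly when two conditions hold. First, $ \vert \E(G) \vert =\mathrm{ex}(n,K_{p+1})$. Second, $ \vert \E_I(G) \vert \leq \mathrm{ex}( \vert I \vert ,K_{p+1})$ for every $I\in\II_{n+1}$. The plan is to show that these two conditions together are equivalent to $G$ being a balanced Tur\'an graph $T_{A_1,\ldots,A_p}$. The key step in both directions is Tur\'an's Theorem \ref{thm:Turans_Theorem}, together with its equality case.

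For the forward direction, suppose $\mathds{1}_G$ is a $0/1$-point. First I will show that $G$ is $K_{p+1}$-free. Take $I$ with $ \vert I \vert =p+1$; this is allowed since $2\leq p+1\leq n-1$ whenever $p<n$. The clique inequality gives $ \vert \E_I(G) \vert \leq \mathrm{ex}(p+1,K_{p+1})=\binom{p+1}{2}-1$, so $G_I$ is not complete. Hence $G$ contains no $K_{p+1}$.

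Next, $G$ is a $K_{p+1}$-free graph on $n$ vertices with exactly $\mathrm{ex}(n,K_{p+1})$ edges. By the equality case of Tur\'an's theorem, $G\cong T(n,p)$. Since $p\mid n$, this means $G=K_{A_1,\ldots,A_p}$ for some partition $[n]=A_1\sqcup\cdots\sqcup A_p$ with all $ \vert A_i \vert =n/p$.

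For the reverse direction, let $G=K_{A_1,\ldots,A_p}$ with $ \vert A_i \vert =n/p$. The equality $ \vert \E(G) \vert =\mathrm{ex}(n,K_{p+1})$ holds by Tur\'an's theorem. For any $I$, the induced graph $G_I$ is complete $p$-partite, possibly with empty parts, and so is $K_{p+1}$-free. Tur\'an's theorem then gives $ \vert \E_I(G) \vert \leq \mathrm{ex}( \vert I \vert ,K_{p+1})$. This requires $ \vert I \vert >p$. When $ \vert I \vert \leq p$, I will argue separately that $\mathrm{ex}(m,K_{p+1})=\binom{m}{2}$ for $m\leq p$: from the formula with $\langle m\rangle_p=m$ one gets $\frac{(p-1)m^2-m(p-m)}{2p}=\binom{m}{2}$. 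The bound is then trivial.

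The main point needing care is the edge cases of Tur\'an's theorem, which is stated only for $p<m$. One such case is $p=n$, and it has to be handled directly. Here $n/p=1$, so the only Tur\'an graph is $K_n$ itself. Then $\mathrm{ex}(n,K_{n+1})=\binom{n}{2}$, which forces $x_e=1$ for all edges, and this point is consistent with the clique bounds $\binom{ \vert I \vert }{2}$. Apart from such edge cases the argument is a direct application of Tur\'an's theorem.
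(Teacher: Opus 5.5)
Your argument is correct and is essentially the paper's proof: the clique inequalities for $|I|=p+1$ make $G$ $K_{p+1}$-free, the equality $\x(\E_n)=\mathrm{ex}(n,K_{p+1})$ together with the equality case of Tur\'an's theorem identifies $G$, and the converse uses that every induced subgraph of a complete $p$-partite graph is $K_{p+1}$-free. The paper omits some of your care: you handle $|I|\leq p$ and $p=n$ separately (for $p=n$ no $I\in\II_{n+1}$ has $|I|=p+1$), and you state the converse clique bounds with $\leq$ where the paper writes $=$. One small correction: $p+1\leq n-1$ follows from $p\mid n$ and $p<n$, which force $p\leq n/2$; it does not follow from $p<n$ alone.
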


\begin{proof}
Let $G$ be a subgraph of $K_n$ corresponding to a $0/1$ point $\mathbf{x}$ of $\widetilde{P} \left(D_{1,(1^n,0)}^{\mathfrak{sl}_p}\right)$. For $I \subseteq [n]$ with $ \vert I \vert = p+1$, we have
\begin{equation*}
\mathbf{x}(\E_I)\leq \mathrm{ex}( \vert I  \vert , K_{p+1})=\mathrm{ex}(p+1, K_{p+1})=\frac{(p-1)((p+1)^2-1)}{2p}=\frac{p^2+p-2}{2}=\binom{p+1}{2}-1.
\end{equation*}
As a result, we see that restriction of the subgraph $G$ to any $K_I\subseteq K_n$ with $ \vert I \vert =p+1$ has fewer edges than the number of edges of $K_{p+1}$. Hence, $G$ is $K_{p+1}$ free. Then, by Tur\'an's Theorem \ref{thm:Turans_Theorem} and the fact that $\mathbf{x}(\E_n)=\mathrm{ex}(n, K_{p+1})$, we see that $G$ must be a Tur\'an graph of the form given in the statement noting that $p \mid n$.

Conversely, assume that we have a graph
\begin{equation*}
G=K_{A_1,\ldots,A_p}\simeq K_{\frac{n}{p},\ldots, \frac{n}{p}}
\end{equation*}
where $[n]=A_1 \sqcup \cdots \sqcup A_p$. Since each restriction of $G$ to $K_I\subseteq K_n$ is still $K_{p+1}$-free, we have $\mathbf{x}(\E_I)=\mathrm{ex}( \vert I  \vert , K_{p+1})$ and since $G$ is a Tur\'an graph, we have $\mathbf{x}(\E_n)=\mathrm{ex}(n, K_{p+1})$ by the Tur\'an's Theorem \ref{thm:Turans_Theorem}.
\end{proof}

\begin{cor}\label{cor:slp_expressions_from_Turan_graphs}
Let $p\geq2$ with $p\mid n$, and let $T_{A_1,\dots,A_p}$ be a balanced
Tur\'an graph on $[n]$.  For $I\in\II_{n+1}$ set 
\begin{equation*}
\chi_i(I)\coloneqq    \vert A_i\cap I  \vert,
\end{equation*}
and let $\tau_1(I),\dots,\tau_p(I)$ be the part sizes of the Tur\'an graph $T( \vert I  \vert ,p)$, that
is, first $\langle \vert I  \vert \rangle_p$ of them equal to $\lceil \vert I  \vert /p\rceil$ and the remaining
$p-\langle \vert I  \vert \rangle_p$ equal to $\lfloor \vert I  \vert /p\rfloor$.  Then, the effective boundary
expression corresponding to $T_{A_1,\dots,A_p}$ is
\begin{equation*}
D_{1,(1^n,0)}^{\mathfrak{sl}_p}=\sum_{I \in \II_{n+1}}c_I\left(T_{A_1,\ldots,A_p}\right)\left[D_I \right],
\qquad
c_I\left(T_{A_1,\ldots,A_p}\right)=\sum_{i=1}^p\left(\binom{\chi_i(I)}{2}-\binom{\tau_i(I)}{2}\right).
\end{equation*}
\end{cor}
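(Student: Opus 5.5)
The plan is to apply Proposition \ref{prop:P tilde inequalities}(2) to the $0/1$-point $\x=\mathds{1}_T$ with $T=T_{A_1,\ldots,A_p}$. By Corollary \ref{cor:01points_of_slp_conformal}, this vector lies in $\widetilde{P}\left(D_{1,(1^n,0)}^{\mathfrak{sl}_p}\right)$. Part (2) of that proposition then gives the effective boundary expression
\begin{equation*}
D_{1,(1^n,0)}^{\mathfrak{sl}_p}=\sum_{I\in\II_{n+1}}\left(r_I(1^n)-\vert\E_I(T)\vert\right)[D_I].
\end{equation*}
So the whole task is to show, for each $I\in\II_{n+1}$, that $r_I(1^n)-\vert\E_I(T)\vert=c_I(T_{A_1,\ldots,A_p})$. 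The proof of Proposition \ref{prop:P_tilde_sl_p_weights_all_1s} already gives $r_I(1^n)=\mathrm{ex}(\vert I\vert,K_{p+1})$, so we need only compute both terms.

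First, count edges of $T$ inside $I$. The induced graph $T\cap K_I$ is the complete multipartite graph with parts $A_i\cap I$ of sizes $\chi_i(I)$. Its edges are all pairs in $I$ except those inside a single part, so
\begin{equation*}
\vert\E_I(T)\vert=\binom{\vert I\vert}{2}-\sum_{i=1}^p\binom{\chi_i(I)}{2}.
\end{equation*}

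Second, evaluate the Tur\'an number. By Theorem \ref{thm:Turans_Theorem}, $\mathrm{ex}(\vert I\vert,K_{p+1})$ equals the number of edges of $T(\vert I\vert,p)$, the complete $p$-partite graph with part sizes $\tau_1(I),\ldots,\tau_p(I)$. The same counting gives
\begin{equation*}
\mathrm{ex}(\vert I\vert,K_{p+1})=\binom{\vert I\vert}{2}-\sum_{i=1}^p\binom{\tau_i(I)}{2}.
\end{equation*}
This holds whether or not $\vert I\vert>p$. When $\vert I\vert\leq p$, all $\tau_i(I)\in\{0,1\}$, the sum vanishes, and the closed formula of Theorem \ref{thm:Turans_Theorem} (which is what the proposition actually produced) still returns $\binom{\vert I\vert}{2}$. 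This small-$\vert I\vert$ case is the only point needing care: Theorem \ref{thm:Turans_Theorem} is stated for $p<m$, so I would verify directly that the closed formula agrees with $\binom{m}{2}-\sum_i\binom{\tau_i}{2}$ for all $m$. This is a short algebraic check, writing $m=qp+s$ with $s=\langle m\rangle_p$.

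Subtracting the two displays cancels the $\binom{\vert I\vert}{2}$ terms and yields exactly $c_I(T_{A_1,\ldots,A_p})=\sum_{i=1}^p\left(\binom{\chi_i(I)}{2}-\binom{\tau_i(I)}{2}\right)$. Effectivity comes for free from membership in $\widetilde{P}$. It can also be seen directly: among part-size vectors with a fixed sum, $\sum_i\binom{\chi_i}{2}$ is minimized by the balanced vector $\tau(I)$, by convexity.
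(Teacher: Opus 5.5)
Your proposal is correct and follows the paper's own proof: apply Proposition~\ref{prop:P tilde inequalities}(2) to $\mathds{1}_{T}$, identify $r_I(1^n)$ with $\mathrm{ex}(\vert I\vert,K_{p+1})$, and subtract the two edge counts by passing to complements, which are disjoint unions of cliques. Your explicit check that the closed Tur\'an formula equals $\binom{m}{2}-\sum_i\binom{\tau_i}{2}$ also for $m\leq p$ is a detail the paper passes over silently, even though Theorem~\ref{thm:Turans_Theorem} is only stated for $p<m$.
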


\begin{proof}
By Proposition \ref{prop:P tilde inequalities}, Proposition \ref{prop:P_tilde_sl_p_weights_all_1s} and Tur\'an's Theorem \ref{thm:Turans_Theorem}, we have
\begin{equation*}
c_I(T_{A_1,\ldots,A_p})=r_I(1^n,0)-\x (\E_I)=\mathrm{ex}( \vert I  \vert , K_{p+1})- \vert \E_I(T_{A_1,\ldots,A_p})  \vert = \vert \E_I(T( \vert I  \vert ,p))  \vert - \vert \E_I(T_{A_1,\ldots,A_p})  \vert .
\end{equation*}
Re-interpreting the right-hand side as the edge numbers of Tur\'an graph complements, we get
\begin{equation*}
\begin{aligned}
c_I(T_{A_1,\ldots,A_p})
&=\left(\binom{ \vert I  \vert }{2}- \vert \E(\overline{T}( \vert I  \vert ,p))  \vert \right) -  \left(\binom{ \vert I  \vert }{2}- \vert \E(\overline{T}_{A_1,\ldots,A_p}) \vert \right)\\
&= \vert \E_I(\overline{T}_{A_1,\ldots,A_p}) \vert - \vert \E_I(\overline{T}( \vert I  \vert ,p))   \vert .
\end{aligned}
\end{equation*}
It is clear that 
\begin{equation*}
 \vert \E_I(\overline{T}_{A_1,\ldots,A_p})  \vert =\sum_{i=1}^p \binom{\chi_i (I)}{2}\quad \text{and}\quad \vert \E_I(\overline{T}( \vert I  \vert ,p))   \vert =\sum_{i=1}^p \binom{\tau_i (I)}{2}.
\end{equation*}
Hence, we complete the proof.
\end{proof}

We now consider $D_{1,(1^n,0)}^{\mathfrak{sl}_p}+t\psi_{n+1}$.

\begin{cor}\label{cor:union is CT polytope}
Let $p\geq2$ with $p \mid n$. Then, we have
\begin{equation*}
\bigcup_{0\leq t \leq \mathrm{ex} (n, K_{p+1})}
\widetilde{P} \left(D_{1,(1^n,0)}^{\mathfrak{sl}_p}+t\psi_{n+1}\right)\cap\R^{\E_n}_{\geq 0}
=P^{(p)}_{\mathrm{CT},n},
\end{equation*}
the clique relaxation Tur\'an polytope.
\end{cor}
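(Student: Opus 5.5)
First determine the polytope $\widetilde{P}(D_{1,(1^n,0)}^{\mathfrak{sl}_p}+t\psi_{n+1})$ for each real $t$, then take the union. Since $\psi_{n+1}$ is the first element of the Kapranov basis, adding $t\psi_{n+1}$ changes only $\alpha$: it becomes $\alpha+t$, while every $\beta_I$ is unchanged. By \eqref{eqn:rank_functions_interms_of_alpha_beta} and Proposition \ref{prop:P tilde inequalities}, $r_I$ is therefore unchanged for $I\in\II_{n+1}$, and $r_{[n]}$ drops by $t$. Combined with Proposition \ref{prop:P_tilde_sl_p_weights_all_1s}, this gives
\begin{equation*}
\widetilde{P}\left(D_{1,(1^n,0)}^{\mathfrak{sl}_p}+t\psi_{n+1}\right)=\left\{\x\in\R^{\E_n}\,\middle\vert\,\x(\E_n)=\mathrm{ex}(n,K_{p+1})-t,\ \x(\E_I)\leq \mathrm{ex}(\vert I\vert,K_{p+1})\ \text{for all } I\in\II_{n+1}\right\}.
\end{equation*}

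Next intersect with $\R^{\E_n}_{\geq0}$ and take the union over $0\leq t\leq \mathrm{ex}(n,K_{p+1})$. The parameter $t$ is determined by $\x$ through $t=\mathrm{ex}(n,K_{p+1})-\x(\E_n)$. Hence the union is exactly the set of nonnegative $\x$ satisfying two conditions: the clique inequalities for every $I\in\II_{n+1}$, and $0\leq \x(\E_n)\leq \mathrm{ex}(n,K_{p+1})$.

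The lower bound $\x(\E_n)\geq0$ is automatic from nonnegativity. The upper bound is the clique inequality for $I=[n]$, the only inequality in the definition of $P^{(p)}_{\mathrm{CT},n}$ not indexed by $\II_{n+1}$. It remains to compare the union with $P^{(p)}_{\mathrm{CT},n}$ on the following points.
\begin{enumerate}
\item I will read the definition of $P^{(p)}_{\mathrm{CT},n}$ as including the $I=[n]$ clique inequality, since the union carries exactly that condition.
\item The definition of $P^{(p)}_{\mathrm{CT},n}$ also requires $x_e\leq 1$. This follows from the clique inequality for $\vert I\vert=2$, because $\mathrm{ex}(2,K_{p+1})=1$ when $p\geq2$.
\end{enumerate}
With these two points both inclusions hold, so the union equals $P^{(p)}_{\mathrm{CT},n}$.

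The only delicate step is the $I=[n]$ inequality. The union produces it, but the displayed definition of $P^{(p)}_{\mathrm{CT},n}$ ranges only over $\II_{n+1}$, which excludes $[n]$.

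I first try to derive $\x(\E_n)\leq\mathrm{ex}(n,K_{p+1})$ from the $(n-1)$-subset inequalities by averaging, as in Corollary \ref{cor:kappa-t-psi-boundary-effective-range}. Each edge lies in exactly $n-2$ of the $(n-1)$-subsets, so summing those $n$ inequalities gives $(n-2)\x(\E_n)\leq n\,\mathrm{ex}(n-1,K_{p+1})$. When $p\mid n$ the Tur\'an graph is regular, so $\mathrm{ex}(n-1,K_{p+1})=\mathrm{ex}(n,K_{p+1})-n(p-1)/p$. The averaged bound is $\x(\E_n)\leq\mathrm{ex}(n,K_{p+1})+\tfrac{n}{n-2}\bigl(\tfrac{(n-1)(p-1)}{p}-\tfrac{n(p-1)}{p}\bigr)\cdot\ldots$; I expect it to come out exactly $\mathrm{ex}(n,K_{p+1})$ up to a nonnegative slack, and I will verify this computation. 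If the slack turns out to be strictly positive, I will instead state the result with the $[n]$ clique inequality included in $P^{(p)}_{\mathrm{CT},n}$, as in item 1 above.
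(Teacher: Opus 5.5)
Your argument follows the paper's proof: lower $r_{[n]}$ by $t$, take the union over $t$, and get $x_e\le 1$ from $\mathrm{ex}(2,K_{p+1})=1$. The step you flag is a real one. The union is cut out by the $\II_{n+1}$ clique inequalities \emph{together with} $\x(\E_n)\le\mathrm{ex}(n,K_{p+1})$, and the paper's proof drops the latter without comment. However, your proposal does not actually settle it. The averaging display is garbled, the computation is deferred, and the fallback in item 1 would change the definition of $P^{(p)}_{\mathrm{CT},n}$, so you would be proving a different statement.

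The averaging argument you started closes with zero slack. Write $n=pq$. Then $\langle n\rangle_p=0$ and $\langle n-1\rangle_p=p-1$, so the Tur\'an number formula gives
\[
\mathrm{ex}(n,K_{p+1})=\frac{(p-1)n^2}{2p},\qquad
\mathrm{ex}(n-1,K_{p+1})=\frac{(p-1)\bigl((n-1)^2-1\bigr)}{2p}=\frac{(p-1)n(n-2)}{2p}.
\]
This agrees with your regularity count $\mathrm{ex}(n,K_{p+1})-n(p-1)/p$. Sum the $n$ inequalities $\x(\E_{[n]\setminus\{i\}})\le \mathrm{ex}(n-1,K_{p+1})$; each edge is counted $n-2$ times, so
\[
\x(\E_n)\le \frac{n}{n-2}\cdot\frac{(p-1)n(n-2)}{2p}=\mathrm{ex}(n,K_{p+1}).
\]
This does not even use $\x\geq 0$. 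Hence the $[n]$ inequality is implied by the $(n-1)$-subset inequalities whenever $p\mid n$.

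To finish, delete item 1 and the hedge, and insert this computation. Both inclusions then hold for $P^{(p)}_{\mathrm{CT},n}$ exactly as defined. With that change your proof is complete, and it supplies the justification that the paper's one-line ``taking union'' step leaves out.
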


\begin{proof}
For $t\in \R$, we have
\begin{equation*}
\widetilde{P} \left(D_{1,(1^n,0)}^{\mathfrak{sl}_p}+t\psi_{n+1}\right)=\left\{ (x_e)\in \R^{\E_{n}} \big \vert \, \x (\E_n) = \mathrm{ex} (n, K_{p+1})-t, \x (\E_I)\leq \mathrm{ex} ( \vert I \vert , K_{p+1}) \text{ for all }I\in \II_{n+1}\right\}.
\end{equation*}
Taking union of these as $0\leq t \leq \mathrm{ex} (n, K_{p+1})$, we get
\begin{equation*}
\bigcup_{0\leq t \leq \mathrm{ex} (n, K_{p+1})}\widetilde{P} \left(D_{1,(1^n,0)}^{\mathfrak{sl}_p}+t\psi_{n+1}\right)\cap\R^{\E_n}_{\geq 0} = \left\{ (x_e)\in \R^{\E_{n}}_{\geq 0} \big \vert \,  \x (\E_I)\leq \mathrm{ex} ( \vert I \vert , K_{p+1}) \text{ for all }I\in \II_{n+1}\right\}.
\end{equation*}
As $\mathrm{ex} (2, K_{p+1})=1$, we have $x_e\leq 1$ for all $e \in \E_n$. Hence, the union
\begin{equation*}
\bigcup_{0\leq t \leq \mathrm{ex} (n, K_{p+1})}\widetilde{P} \left(D_{1,(1^n,0)}^{\mathfrak{sl}_p}+t\psi_{n+1}\right)\cap\R^{\E_n}_{\geq 0}
\end{equation*}
is the Raymond's clique relaxation Tur\'an polytope $P^{(p)}_{\mathrm{CT},n}$. 
\end{proof}

\subsection{Connections with the perfect matching and fractional perfect matching polytopes}

\subsubsection{The polytope $\widetilde{P} \left(2D_{1,(1^n,0)}^{\mathfrak{sl}_2}\right)$}

\begin{thm} \label{thm:perfect_matching_and_sl2} Let $n\geq 4$ be an even integer. Then, we have
\begin{equation*}
P_{\mathrm{PM},n}=\widetilde{P} \left(2D_{1,(1^n,0)}^{\mathfrak{sl}_2}\right) \cap [0,1]^{\E_n}.
\end{equation*}
\end{thm}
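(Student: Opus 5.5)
The plan is to compute the clique description of $\widetilde{P}(2D^{\mathfrak{sl}_2}_{1,(1^n,0)})$ from Proposition \ref{prop:P_tilde_sl_p_weights_all_1s} and then compare it with the clique description of $P_{\mathrm{PM},n}$ recorded before the theorem (Lemma \ref{lem:Appendix_perfect_matching_description}).

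First, specialize Proposition \ref{prop:P_tilde_sl_p_weights_all_1s} to $p=2$. For $p=2$ we have $\mathrm{ex}(m,K_3)=\lfloor m^2/4\rfloor$. Hence
\begin{equation*}
\widetilde{P}\left(D^{\mathfrak{sl}_2}_{1,(1^n,0)}\right)=\left\{\x \,\middle\vert\, \x(\E_n)=\tfrac{n^2}{4},\ \x(\E_I)\le \left\lfloor \tfrac{\vert I\vert^2}{4}\right\rfloor \text{ for all } I\in\II_{n+1}\right\}.
\end{equation*}
The doubling must be handled carefully, because $\widetilde{P}(2D)\neq 2\widetilde{P}(D)$: the affine shift $x_e=1-a_e$ does not commute with scaling. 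So I would work in $a$-coordinates, where Lemma \ref{lem:subadditivity and scaling} gives $P(2D)=2P(D)$. Equivalently, $\alpha(2D)=2\alpha(D)$, $\beta_I(2D)=2\beta_I(D)$, and therefore
\begin{equation*}
r_I(2D)=2r_I(D)-\binom{\vert I\vert}{2}.
\end{equation*}
For $I$ of size $m$ this gives
\begin{equation*}
r_I(2D)=2\lfloor m^2/4\rfloor-\binom{m}{2}=\lfloor m/2\rfloor.
\end{equation*}
The value is $m/2$ for $m$ even, since $m^2/2-m(m-1)/2=m/2$, and $(m-1)/2$ for $m$ odd, since $(m^2-1)/2-m(m-1)/2=(m-1)/2$. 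For $I=[n]$, $r_{[n]}(2D)=2\cdot\tfrac{n^2}{4}-\binom n2=\tfrac n2$. By Proposition \ref{prop:P tilde inequalities}(1), this yields
\begin{equation*}
\widetilde{P}(2D)=\left\{\x\in\R^{\E_n} \,\middle\vert\, \x(\E_n)=\tfrac n2,\ \x(\E_I)\le\lfloor \vert I\vert/2\rfloor \text{ for all } I\in\II_{n+1}\right\}.
\end{equation*}

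Next, intersect with $[0,1]^{\E_n}$. The upper bounds $x_e\le 1$ are already implied by the $\vert I\vert=2$ inequalities $x_e\le 1$, so the intersection equals $\widetilde{P}(2D)\cap\R^{\E_n}_{\ge0}$. This set is exactly the clique description of $P_{\mathrm{PM},n}$ in Lemma \ref{lem:Appendix_perfect_matching_description}, which completes the identification.

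The main obstacle is the lemma itself, namely the equality of $P_{\mathrm{PM},n}$ with this clique polytope. If it were not available I would prove it directly.

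The inclusion $P_{\mathrm{PM},n}\subseteq$ the clique polytope is immediate: a perfect matching has $n/2$ edges and at most $\lfloor \vert I\vert/2\rfloor$ edges inside any $I$.

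For the reverse inclusion, I would use Edmonds' description of $P_{\mathrm{PM},n}$. Take $\x\ge0$ in the clique polytope.
\begin{enumerate}
\item For each vertex $i$, subtract the $I=[n]\setminus\{i\}$ inequality $\x(\E_I)\le \tfrac n2-1$ from $\x(\E_n)=\tfrac n2$. This gives $\x(\delta(i))\ge 1$.
\item Summing over all $i$ gives $\sum_i \x(\delta(i))=2\x(\E_n)=n$, so every degree equals exactly $1$.
\item For odd $I$, the degree condition gives $\x(\delta(I))=\vert I\vert-2\x(\E_I)\ge \vert I\vert-2\cdot\tfrac{\vert I\vert-1}{2}=1$.
\end{enumerate}
These are precisely Edmonds' degree and odd-set constraints, so $\x\in P_{\mathrm{PM},n}$.
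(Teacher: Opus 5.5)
Your proposal is correct and follows the paper's proof. The paper also starts from Proposition~\ref{prop:P_tilde_sl_p_weights_all_1s} and handles the doubling by the rescaling $x_e\mapsto 2x_e-1$, which is equivalent to your $r_I(2D)=2r_I(D)-\binom{\vert I\vert}{2}$. It then computes $2\,\mathrm{ex}(\vert I\vert,K_3)-\binom{\vert I\vert}{2}=\lfloor \vert I\vert/2\rfloor$ and concludes by Lemma~\ref{lem:Appendix_perfect_matching_description}, whose appendix proof is essentially your fallback argument.
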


\begin{proof}
Multiplying the divisor $D_{1,(1^n,0)}^{\mathfrak{sl}_2}$ with $2$ amounts to making the change of variables $x_e \mapsto 2x_e-1$. Then, by Proposition \ref{prop:P_tilde_sl_p_weights_all_1s}, we obtain
\begin{equation*}
\widetilde{P} \left(2D_{1,(1^n,0)}^{\mathfrak{sl}_2}\right)=\left\{ (x_e)\in \R^{\E_{n}} \big \vert \, \x (\E_n) = \overline{\mathrm{ex}}(n, K_{3}), \x (\E_I)\leq \overline{\mathrm{ex}}( \vert I \vert ,K_{3}) \text{ for all }I\in \II_{n+1}\right\}  
\end{equation*}
where
\begin{equation*}
\begin{aligned}
\overline{\mathrm{ex}}( \vert I \vert , K_3)
=2 \mathrm{ex}( \vert I \vert , K_3)-\binom{ \vert I \vert }{2}
&=\frac{ \vert I \vert ^2 -\langle \vert I \vert \rangle_2
\left(2-\langle \vert I  \vert \rangle_2\right)}{2}-\frac{ \vert I \vert ^2- \vert I \vert }{2}\\
&=\frac{ \vert I  \vert }{2} -\frac{\langle \vert I \vert \rangle_2
\left(2-\langle \vert I  \vert \rangle_2\right)}{2}\\
&=\left\lfloor\frac{ \vert I  \vert }{2}\right\rfloor.
\end{aligned}
\end{equation*}
Hence, we conclude the proof by the description of perfect matching polytope given in Lemma \ref{lem:Appendix_perfect_matching_description} in the appendix.
\end{proof}

\subsubsection{The polytope $P \left(D_{1,(1^n,0)}^{\mathfrak{sl}_{\frac{n}{2}}}\right)$}

\begin{thm}\label{thm:fractional_perfect_and_sl2}
Let $n\geq 4$ be an even integer. Then, we have
\begin{equation*}
P_{\mathrm{FPM},n}=P \left(D_{1,(1^n,0)}^{\mathfrak{sl}_{\frac{n}{2}}}\right).
\end{equation*}
\end{thm}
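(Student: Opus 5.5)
The plan is to specialize Proposition \ref{prop:P_tilde_sl_p_weights_all_1s} to $p=n/2$, translate back to $a$-coordinates via $a_e=1-x_e$, and then compare directly with the definition of $P_{\mathrm{FPM},n}$. The case $p=n/2\geq 2$ is admissible because $n\geq 4$ and $p\mid n$. Undoing the change of variables, Proposition \ref{prop:P_tilde_sl_p_weights_all_1s} gives
\begin{equation*}
P\left(D_{1,(1^n,0)}^{\mathfrak{sl}_{n/2}}\right)=\left\{ \mathbf{a}\in\R^{\E_n}\,\middle\vert\, \mathbf{a}(\E_n)=\tbinom{n}{2}-\mathrm{ex}(n,K_{p+1}),\ \mathbf{a}(\E_I)\geq \tbinom{|I|}{2}-\mathrm{ex}(|I|,K_{p+1})\ \text{for all } I\in\II_{n+1}\right\}.
\end{equation*}
By Tur\'an's Theorem \ref{thm:Turans_Theorem}, $\binom{m}{2}-\mathrm{ex}(m,K_{p+1})$ is the number of edges of the complement $\overline{T}(m,p)$, a disjoint union of cliques whose sizes are the part sizes $\tau_i$. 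For $p=n/2$ and $m\leq n$, compute this directly. If $m\leq p$, all parts have size at most $1$, so the count is $0$. If $p<m\leq 2p$, there are $m-p$ parts of size $2$ and the rest of size $1$, so the count is $m-p$. Hence the right-hand sides are $\max\{0,|I|-n/2\}$, and the total is $\mathbf{a}(\E_n)=n/2$.

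Next, show this polytope lies in $P_{\mathrm{FPM},n}$. The inequalities with $|I|=2$ give $a_e\geq 0$. For each $i\in[n]$, the inequality for $I=[n]\setminus\{i\}$ (which lies in $\II_{n+1}$) gives $\mathbf{a}(\E_I)\geq n/2-1$. Combined with $\mathbf{a}(\E_n)=n/2$, this yields $\mathbf{a}(\delta(\{i\}))\leq 1$. Summing over $i$ gives $\sum_i\mathbf{a}(\delta(\{i\}))=2\mathbf{a}(\E_n)=n$, so every degree equals $1$, and $\mathbf{a}\in P_{\mathrm{FPM},n}$.

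Conversely, take $\mathbf{a}\in P_{\mathrm{FPM},n}$. Then $\mathbf{a}(\E_n)=\tfrac12\sum_i\mathbf{a}(\delta(\{i\}))=n/2$ and $\mathbf{a}(\E_I)\geq 0$ since $\mathbf{a}\geq 0$. Double counting over $I$ gives
\begin{equation*}
2\mathbf{a}(\E_I)=\sum_{i\in I}\mathbf{a}(\delta(\{i\}))-\mathbf{a}(\delta(I))=|I|-\mathbf{a}(\delta(I)).
\end{equation*}
Since $\delta(I)=\delta([n]\setminus I)$, the same identity applied to the complement gives $\mathbf{a}(\delta(I))\leq\sum_{j\notin I}\mathbf{a}(\delta(\{j\}))=n-|I|$. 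Therefore $\mathbf{a}(\E_I)\geq |I|-n/2$, so all defining inequalities hold. Alternatively, this direction could be quoted from the clique description in Lemma \ref{lem:Appendix_fractional_perf_match_description}.

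The only step needing care is the Tur\'an-complement count in the first paragraph, in particular checking the boundary cases $|I|=p$ and $|I|=n-1$. Everything else is the short degree-counting argument above, so no serious obstacle is expected.
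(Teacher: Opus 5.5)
Your proposal is correct and follows the paper's route. You compute $\binom{|I|}{2}-\mathrm{ex}(|I|,K_{p+1})=\max\{0,|I|-n/2\}$ for $p=n/2$ and match the resulting description of $P$ with the clique description of $P_{\mathrm{FPM},n}$; the paper does the first step from the closed-form Tur\'an formula, and your inline degree-counting for both inclusions is exactly its proof of Lemma \ref{lem:Appendix_fractional_perf_match_description}, which it simply cites (note that for $|I|\leq p$ your count rests on $\mathrm{ex}(|I|,K_{p+1})=\binom{|I|}{2}$, which holds but lies outside the range $p<m$ of Theorem \ref{thm:Turans_Theorem}).
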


\begin{proof} We set $p\coloneqq   \frac{n}{2}$ or equivalently $n\coloneqq   2p$. For, $I \in \II_{n+1}\cup \{[n]\}$, consider the expression
\begin{equation*}
\overline{\beta}_{I}\coloneqq   \binom{ \vert I \vert }{2}-\mathrm{ex}( \vert I  \vert ,K_{p+1})
=\binom{ \vert I \vert }{2}-\binom{\langle \vert I \vert \rangle_p}{2}-\frac{(p-1)( \vert I \vert ^2-\langle \vert I  \vert \rangle_p ^2)}{2p}
\end{equation*}
which is equal to $0$ if $0\leq \vert I \vert \leq p$. Now assume $p+1 \leq \vert I \vert < n=2p$. In this case, we have $ \vert I \vert =p+ \langle \vert I \vert \rangle_p$ and
\begin{equation*}
\overline{\beta}_I= \vert I \vert -p.
\end{equation*}
For $ \vert I \vert =2p$, we have $\overline{\beta}_I=p$.
Hence, we conclude that
\begin{equation*}
\overline{\beta}_I=
\begin{cases}
0 &\text{if } 0\leq \vert I \vert \leq p,\\
 \vert I \vert -p &\text{if } p+1\leq \vert I \vert \leq 2p
\end{cases}
=\max \{0, \vert I \vert -p\}
\end{equation*}
and consequently,
\begin{equation*}
P \left(D_{1,(1^n,0)}^{\mathfrak{sl}_{\frac{n}{2}}}\right)=\left\{ (a_e)\in \R_{\geq 0}^{\E_{n}} \big \vert \, \mathbf{a}(\E_n) = p, \mathbf{a}(\E_I)\geq \max \{0, \vert I \vert -p\} \text{ for all }I\in \II_{n+1}\right\}.
\end{equation*}
Substituting $p=\frac{n}{2}$, the expression agrees with the description of fractional perfect matching polytope $P_{\mathrm{FPM},n}$ given in Lemma \ref{lem:Appendix_fractional_perf_match_description} in the appendix.
\end{proof}

\appendix

\section{Alternative descriptions of linear programming polytopes}

Here we collect the set-theoretic descriptions of several linear programming polytopes used throughout the paper. These descriptions are not always the standard ones in the literature. In some cases, the corresponding polytopes are defined for more general graphs or with slightly different conventions. For the reader's convenience, we prove that the formulations used here agree with the versions appearing in standard references. Thus, we make no claim of originality for these set-theoretic descriptions.

We make repeated use of the following lemma.

\begin{lem}[Handshake Lemma]\label{lem:handshake}
Let $\x\in\R^{\E_n}$ and let $I\subseteq[n]$.  Then,
\begin{equation}\label{eqn:general_handshake}
  \sum_{i\in I}\x(\delta(i))=2\,\x(\E_I)+\x(\delta(I)).
\end{equation}
In particular, taking $I=[n]$, for which $\delta([n])=\varnothing$,
\begin{equation}\label{eqn:handshake global}
  \sum_{i\in[n]}\x(\delta(i))=2\,\x(\E_n).
\end{equation}
\end{lem}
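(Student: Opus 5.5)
The identity \eqref{eqn:general_handshake} comes from double counting, edge by edge. Expand the left-hand side as
\begin{equation*}
\sum_{i\in I}\x(\delta(i))=\sum_{i\in I}\ \sum_{e\in\delta(\{i\})}x_e=\sum_{e\in\E_n} m_I(e)\,x_e,
\end{equation*}
where $m_I(e)=\vert e\cap I\vert$. Indeed, the edge $e=\{u,v\}$ lies in $\delta(\{i\})$ exactly when $i\in e$, so it is counted once for each endpoint lying in $I$.

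The next step is a three-way case split on the edges. If both endpoints of $e$ lie in $I$, then $e\in\E_I$ and $m_I(e)=2$. If exactly one endpoint lies in $I$, then by the definition of the cut we have $e\in\delta(I)$ and $m_I(e)=1$. If neither endpoint lies in $I$, then $m_I(e)=0$. These three classes partition $\E_n$. Substituting the values of $m_I(e)$ therefore gives
\begin{equation*}
\sum_{e\in\E_n} m_I(e)\,x_e = 2\,\x(\E_I)+\x(\delta(I)),
\end{equation*}
which is \eqref{eqn:general_handshake}.

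For \eqref{eqn:handshake global}, take $I=[n]$. Every edge then has both endpoints in $I$, so $\E_{[n]}=\E_n$. Since $[n]\setminus[n]=\varnothing$, the cut $\delta([n])$ is empty and $\x(\delta([n]))=0$. Substituting these into \eqref{eqn:general_handshake} yields $\sum_{i\in[n]}\x(\delta(i))=2\,\x(\E_n)$.

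The argument is entirely bookkeeping. The only point needing care is the correct use of the conventions $\delta(i)=\delta(\{i\})$ and $\x(S)=\sum_{e\in S}x_e$ in the case split.
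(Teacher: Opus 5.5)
Your proposal is correct and follows the paper's proof: exchange the order of summation to get $\sum_{e\in\E_n}\vert e\cap I\vert\,x_e$, then note that the coefficient is $2$ on $\E_I$, $1$ on $\delta(I)$, and $0$ otherwise. The specialization to $I=[n]$ is also handled correctly, using $\E_{[n]}=\E_n$ and $\delta([n])=\varnothing$.
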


\begin{proof}
Exchanging the order of summation,
$$\sum_{i\in I}\x(\delta(i))=\sum_{e \in\E_n}  \vert e\cap I  \vert \,x_e.$$  The coefficient
$  \vert e\cap I  \vert $ equals $2$ if $e \in\E_I$, equals $1$ if $e \in\delta(I)$, and equals $0$
otherwise.
\end{proof}

\subsection{Held--Karp relaxation polytope}

Recall that the Held--Karp relaxation polytope is defined by
\begin{equation*}
P_{\mathrm{SEP},n}=\left \{(x_{e})_{e \in \E_n} \in \R^{ \E_n}_{\geq 0} \, \bigg \vert \, x_e\leq 1\,\, \forall e \in \E_n,\, \deg_{\x }(i)=2\,\, \forall i\in [n],\,  \x(\delta(I))\geq 2\,\, \forall I \, \text{with } \varnothing \neq I \subsetneq [n] \right \}.
\end{equation*}

\begin{lem}\label{lem:Held-Karp rank description}
For $n\geq 3$, the Held--Karp relaxation polytope is given by the following description
\begin{equation*}
P_{\mathrm{SEP},n}= \left \{ (x_{e})_{e \in \E_n} \in \R^{ \E_n}_{\geq 0}\, \big \vert \, \x(\E_n) =n,\, \x(\E_I) \leq \vert I \vert -1 \,  \text{ for all }I\in \II_{n+1} \right \}.
\end{equation*}
\end{lem}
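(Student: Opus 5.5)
We prove the two inclusions separately, writing $R$ for the right-hand side. The only tool we need is the Handshake Lemma \ref{lem:handshake}. Recall that $\II_{n+1}$ consists of the subsets $I\subseteq[n]$ with $2\leq \vert I\vert\leq n-1$. Since $\delta(I)=\delta([n]\setminus I)$, any cut condition may be tested on whichever side is convenient.

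\emph{First inclusion, $P_{\mathrm{SEP},n}\subseteq R$.} Let $\x\in P_{\mathrm{SEP},n}$. Summing $\deg_{\x}(i)=2$ over all $i$ and applying \eqref{eqn:handshake global} gives $2\x(\E_n)=2n$, so $\x(\E_n)=n$. For $I\in\II_{n+1}$, equation \eqref{eqn:general_handshake} gives
\begin{equation*}
2\vert I\vert = 2\x(\E_I)+\x(\delta(I)) \geq 2\x(\E_I)+2,
\end{equation*}
so $\x(\E_I)\leq \vert I\vert -1$. Nonnegativity of $\x$ is part of both descriptions.

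\emph{Second inclusion, $R\subseteq P_{\mathrm{SEP},n}$.} Let $\x\in R$. This direction has three steps.

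\begin{enumerate}
\item \emph{The bound $x_e\leq 1$.} This is the clique inequality for $I=e$, since $\vert e\vert=2$ and $n\geq 3$ ensure $e\in\II_{n+1}$.
\item \emph{The degree condition.} This is the main step. Take $I=[n]\setminus\{i\}$, which lies in $\II_{n+1}$ when $n\geq 3$. Then $\x(\E_I)=\x(\E_n)-\deg_{\x}(i)$, and the inequality $\x(\E_I)\leq n-2$ becomes $\deg_{\x}(i)\geq 2$ for every $i$. By \eqref{eqn:handshake global}, $\sum_i \deg_{\x}(i)=2\x(\E_n)=2n$. A sum of $n$ terms, each at least $2$, that equals $2n$ forces every term to equal $2$, so $\deg_{\x}(i)=2$ for all $i$.
\item \emph{The cut condition.} Let $\varnothing\neq I\subsetneq[n]$. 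If $\vert I\vert=1$, say $I=\{i\}$, then $\x(\delta(I))=\deg_{\x}(i)=2$. If $\vert I\vert =n-1$, we are reduced to the previous case by passing to the complement. Otherwise $I\in\II_{n+1}$, and \eqref{eqn:general_handshake} together with the degree condition gives
\begin{equation*}
\x(\delta(I))=2\vert I\vert-2\x(\E_I)\geq 2.
\end{equation*}
\end{enumerate}

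The only point requiring care is step 2, the degree condition. It is the one place where the equality $\x(\E_n)=n$ and the clique inequalities for the $(n-1)$-element subsets must be combined. Once the degrees are all equal to $2$, everything else is the Handshake Lemma applied with the appropriate choice of $I$.
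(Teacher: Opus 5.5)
Your proof is correct and follows the paper's argument essentially step for step: the same Handshake computation gives the forward inclusion, and the reverse inclusion uses the same three steps ($I=e$, then $I=[n]\setminus\{i\}$ combined with $\sum_i \deg_{\x}(i)=2n$, then the cut bound). The only cosmetic difference is in the cut step, where the paper writes $\x(\delta(I))=\x(\E_n)-\x(\E_I)-\x(\E_{[n]\setminus I})\geq 2$ using the clique bounds on both sides instead of the degree condition; also, your separate treatment of $\vert I\vert=n-1$ is harmless but unnecessary, since such $I$ already lie in $\II_{n+1}$.
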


\begin{proof}
Let $(x_{e})_{e \in \E_n} \in P_{\mathrm{SEP},n}$. By the Handshake Lemma \ref{lem:handshake}, we have
\begin{equation*}
\x (\E_n)=\frac{1}{2}\sum_{i \in [n]}\x (\delta(i))=\frac{2n}{2}=n.
\end{equation*}
For all $I \in \II_{n+1}$, again by the Handshake Lemma \ref{lem:handshake}, we have
\begin{equation*}
\x (\E_I)=\frac{1}{2} \left(\sum_{i\in I}\x (\delta (i))-\x (\delta (I))\right) \leq \frac{1}{2}\left(2 \vert I \vert -2\right)= \vert I \vert -1.
\end{equation*}
Hence, we conclude the forward inclusion.

Now, conversely assume $(x_{e})_{e \in \E_n} \in \R^{ \E_n}_{\geq 0}$ with $\x(\E_n) =n$ and  $\x(\E_I) \leq \vert I \vert -1 $ for all $I\in \II_{n+1}$. In particular, we have $x_e\leq 1$ by taking $e = \{i,j\} = I$. For any $i\in [n]$, we have
\begin{equation*}
\x \left(\E_{[n] \setminus \{ i \}}\right)\leq \vert [n] \setminus \{ i \} \vert -1 =n-2.
\end{equation*}
Thus, we have
\begin{equation*}
\x (\delta (i))=n-\x \left(\E_{[n] \setminus \{ i \}}\right)\geq 2.
\end{equation*}
On the other hand, we again have, by the Handshake Lemma \ref{lem:handshake}
\begin{equation*}
\sum_{i \in [n]}\x (\delta(i))=2\x (\E_n)=2n.
\end{equation*}
As a result of the last two equations, we see that $\x (\delta (i))=2$ for each $i \in [n]$. Now, for any $\varnothing \neq I \subsetneq [n]$, we have
\begin{equation*}
\x (\delta (I))=\x(\E_n)-\x (\E_{I})-\x (\E_{[n]\setminus I})\geq n - ( \vert I \vert -1) - ( \vert [n]\setminus I \vert -1)=2.
\end{equation*}
This completes the reverse inclusion.
\end{proof}

\subsection{The perfect matching polytope of $K_n$}

\begin{lem}[\cite{Ed65}]\label{lem:Appendix_Edmonds_perfect_matching}
Let $n\geq4$ be an even integer.  Then,
\begin{equation*}
P_{\mathrm{PM},n}=\left\{ \x\in \R_{\geq 0}^{\E_{n}} \ \middle  \vert \
\x(\delta(i))=1 \ \forall i\in[n], \ \
\x(\delta(I))\geq 1 \ \forall I\in\II_{n+1} \text{ with }   \vert I \vert \text{ odd}\right\},
\end{equation*}
and this set coincides with
\begin{equation*}
\left\{ \x\in \R_{\geq 0}^{\E_{n}} \ \middle \vert \
\x(\delta(i))=1 \ \forall i\in[n], \ \
\x(\E_I)\leq \left\lfloor \tfrac{ \vert I \vert }{2}\right\rfloor
\ \forall I\in\II_{n+1} \text{ with }  \vert I \vert \text{ odd}\right\}.
\end{equation*}
\end{lem}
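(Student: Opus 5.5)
The first description is Edmonds' perfect matching polytope theorem \cite{Ed65}, specialized to $K_n$; the plan is to quote it and then prove that the two sets displayed in Lemma \ref{lem:Appendix_Edmonds_perfect_matching} coincide. In Edmonds' formulation the odd-set constraints $\x(\delta(I))\geq 1$ range over all odd $I\subseteq[n]$. When $\vert I\vert=1$ the constraint follows from $\x(\delta(i))=1$. Since $n$ is even, $\vert I\vert=n-1$ gives $\delta(I)=\delta(\{j\})$ for the remaining vertex $j$, so that constraint is also implied by the degree equalities. Thus restricting to odd $I\in\II_{n+1}$, i.e.\ $3\leq\vert I\vert\leq n-1$, loses nothing.

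For the equivalence, fix $\x\in\R^{\E_n}_{\geq 0}$ with $\x(\delta(i))=1$ for all $i\in[n]$. Let $I\in\II_{n+1}$ have odd size. The general Handshake Lemma \ref{lem:handshake}, equation \eqref{eqn:general_handshake}, gives
\begin{equation*}
\vert I\vert=\sum_{i\in I}\x(\delta(i))=2\,\x(\E_I)+\x(\delta(I)).
\end{equation*}
Therefore $\x(\delta(I))\geq 1$ holds if and only if $\x(\E_I)\leq (\vert I\vert-1)/2=\lfloor \vert I\vert/2\rfloor$. This equivalence applies to each odd $I$ separately, so the two sets are equal.

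There is no real obstacle, because the substance lies in Edmonds' theorem, which we cite. The only points needing care are the boundary cases $\vert I\vert=1$ and $\vert I\vert=n-1$ discussed above, which justify restricting the odd-set family to $\II_{n+1}$.
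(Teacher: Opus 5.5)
Your proposal is correct and follows the paper's proof. The paper also cites Edmonds' theorem for the first description, pointing to \cite[Section 25.4]{Schrijver03}, and then uses the Handshake Lemma identity $\x(\delta(I))=\vert I\vert-2\,\x(\E_I)$ to turn each odd-set cut constraint into the clique bound $\x(\E_I)\leq\lfloor\vert I\vert/2\rfloor$. Your boundary-case remark is harmless: only $\vert I\vert=1$ actually needs discarding, since odd sets of size $n-1$ already lie in $\II_{n+1}$.
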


\begin{proof}
The first description is given in \cite[Section 25.4]{Schrijver03}.  For the second, assume
$\x(\delta(i))=1$ for all $i$ and let $I$ have odd cardinality.  By Lemma
\ref{lem:handshake}, $$\x(\delta(I))= \vert I \vert -2\,\x(\E_I),$$ so
$\x(\delta(I))\geq1$ holds if and only if
$$\x(\E_I)\leq\frac{ \vert I \vert -1}{2}=\lfloor \vert I \vert /2\rfloor .$$
\end{proof}
The following description of $P_{\mathrm{PM},n}$ is possibly well-known. However, we could not find a proper reference to cite. For this reason, we state and prove it for the convenience of the reader.
\begin{lem}\label{lem:Appendix_perfect_matching_description}
Let $n\geq 4$ be an even integer.  The perfect matching polytope $P_{\mathrm{PM},n}$ can also be defined as
\begin{equation*}
P_{\mathrm{PM},n}=\left\{ (x_e)\in \R_{\geq 0}^{\E_{n}} \bigg \vert \, \x (\E_n) = \frac{n}{2}, \x (\E_I)\leq \left \lfloor \frac{ \vert I  \vert }{2}\right\rfloor \text{ for all }I\in \II_{n+1}\right\}.
\end{equation*}
\end{lem}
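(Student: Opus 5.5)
We prove the equality by double inclusion, comparing the clique description with Edmonds' description in Lemma \ref{lem:Appendix_Edmonds_perfect_matching}. Write $R$ for the right-hand side set.

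For the inclusion $P_{\mathrm{PM},n}\subseteq R$, take $\x\in P_{\mathrm{PM},n}$. Then $\x\geq 0$ and $\x(\delta(i))=1$ for every $i$. The Handshake Lemma \ref{lem:handshake} gives $\x(\E_n)=\tfrac12\sum_i \x(\delta(i))=\tfrac n2$. For any $I\in\II_{n+1}$ it also gives
\[
2\,\x(\E_I)=|I|-\x(\delta(I))\leq |I|,
\]
since $\x(\delta(I))\geq 0$. So $\x(\E_I)\leq |I|/2$, which is already the required bound $\lfloor |I|/2\rfloor$ when $|I|$ is even. When $|I|$ is odd, the second description in Lemma \ref{lem:Appendix_Edmonds_perfect_matching} gives $\x(\E_I)\leq\lfloor |I|/2\rfloor$ directly.

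For the inclusion $R\subseteq P_{\mathrm{PM},n}$, take $\x\in R$.
\begin{enumerate}
\item \emph{Degree lower bound.} For each $i$, the set $I=[n]\setminus\{i\}$ has odd size $n-1$ and lies in $\II_{n+1}$ because $n\geq 4$. Hence $\x(\E_{[n]\setminus\{i\}})\leq \lfloor (n-1)/2\rfloor=\tfrac n2-1$. Subtracting from $\x(\E_n)=\tfrac n2$ gives $\x(\delta(i))\geq 1$.
\item \emph{Degree equality.} The Handshake Lemma gives $\sum_i \x(\delta(i))=2\,\x(\E_n)=n$. Since each of the $n$ terms is at least $1$, every $\x(\delta(i))$ equals $1$.
\item \emph{Odd-set constraints.} The odd-set inequalities $\x(\E_I)\leq\lfloor |I|/2\rfloor$ are among the hypotheses defining $R$.
\end{enumerate}
Together with $\x\geq 0$, these place $\x$ in the second description of Lemma \ref{lem:Appendix_Edmonds_perfect_matching}, so $\x\in P_{\mathrm{PM},n}$.

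This is the same mechanism as the proof of Lemma \ref{lem:Held-Karp rank description}. The only point needing care is step 1, which needs $[n]\setminus\{i\}$ to be an allowed index set in $\II_{n+1}$ and to have odd cardinality. Both follow from $n$ even and $n\geq 4$.
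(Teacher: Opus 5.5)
Your proof is correct and follows essentially the same route as the paper. Both directions pass through the odd-set clique form of Edmonds' description (Lemma \ref{lem:Appendix_Edmonds_perfect_matching}): the Handshake Lemma gives $\x(\E_n)=\tfrac n2$ and the even-$|I|$ bounds, and the sets $[n]\setminus\{i\}$ force $\x(\delta(i))\geq 1$, hence $\x(\delta(i))=1$ after summing.
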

\begin{proof}
Firstly, assume $\x (\delta (i)) =1$ for all $i\in [n]$ and $\x (\E_I) \leq \left \lfloor \frac{ \vert I  \vert }{2}\right\rfloor$ for all $I\in \II_{n+1}$ with $ \vert I  \vert $ odd. Then, by the Handshake Lemma \ref{lem:handshake}, we have
\begin{equation}\label{eqn:Perfect_matching_handshake}
\x (\E_n)=\frac{1}{2}\sum_{i\in [n]}\x (\delta (i))=\frac{n}{2}.
\end{equation}
To complete the forward inclusion, it remains to prove the clique inequalities for $I\in\II_{n+1}$ with $ \vert I  \vert $ even. In this case, we have by equation \eqref{eqn:general_handshake} and nonnegativity
\begin{equation*}
\x (\E_I)=\frac{1}{2}\left(\sum_{i\in I}\x (\delta (i))-\x (\delta (I))\right)\leq \frac{1}{2}\sum_{i\in I}\x (\delta (i))=\frac{1}{2}\sum_{i\in I}1=\frac{ \vert I \vert }{2}=\left \lfloor \frac{ \vert I  \vert }{2}\right\rfloor .
\end{equation*}

For the reverse inclusion,  assume $\x (\E_n)=\frac{n}{2}$ and $\x (\E_I) \leq \left \lfloor \frac{ \vert I  \vert }{2}\right\rfloor$ for all $I\in \II_{n+1}$. The only condition to prove is $\x (\delta (i))=1$ for all $i\in [n]$. We have
\begin{equation*}
\x (\delta (i))=\x (\E_n)-\x (\E_{[n]\setminus \{i\}})\geq \frac{n}{2}-\left \lfloor \frac{ \vert [n]\setminus \{i\}  \vert }{2}\right\rfloor=\frac{n}{2}-\frac{n-2}{2}=1.
\end{equation*}
By the Handshake Lemma \ref{lem:handshake}, we have
\begin{equation*} \frac{n}{2} = \x (\E_n) = \frac{1}{2}\sum_{i\in [n]}\x (\delta (i)), \end{equation*}
so we may conclude that $\x (\delta (i))=1$, completing the proof. 
\end{proof}

\subsection{The fractional perfect matching polytope of $K_n$}
Recall that we have
\begin{equation*}
 P_{\mathrm{FPM},n}=\left\{ (a_e)\in \R_{\geq 0}^{\E_{n}} \big \vert \,\mathbf{a} (\delta (i)) =1\,\text{for all }i\in [n] \right\}.
\end{equation*}

\begin{lem}\label{lem:Appendix_fractional_perf_match_description}
Let $n\geq 4$ be an even integer.  Then,
\begin{equation*}
P_{\mathrm{FPM},n}=\left\{ \mathbf{a}\in \R^{\E_n}_{\geq0} \ \middle \vert \
\mathbf{a}(\E_n) =\frac{n}{2}, \ \
\mathbf{a}(\E_I)\geq \max \left\{0, \vert I \vert -\frac{n}{2}\right\}
\ \text{ for all }I\in \II_{n+1}\right\}.
\end{equation*}
In particular every $\mathbf{a}$ in this set satisfies $a_e\leq1$ automatically, so the
same description holds with $\R^{\E_n}_{\geq0}$ replaced by $[0,1]^{\E_n}$. 
\end{lem}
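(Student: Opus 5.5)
The plan is to prove the two inclusions directly, using the Handshake Lemma \ref{lem:handshake} in both directions, exactly as in the proofs of Lemmas \ref{lem:Held-Karp rank description} and \ref{lem:Appendix_perfect_matching_description}. Write $R$ for the right-hand side.

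For $P_{\mathrm{FPM},n}\subseteq R$, take $\mathbf{a}\geq 0$ with $\mathbf{a}(\delta(i))=1$ for all $i$.

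1. Global handshake \eqref{eqn:handshake global} gives $\mathbf{a}(\E_n)=\tfrac12\sum_i \mathbf{a}(\delta(i))=n/2$.
2. For $I\in\II_{n+1}$, the bound $\mathbf{a}(\E_I)\geq 0$ is immediate from nonnegativity.
3. Write $I^c=[n]\setminus I$. Since $\E_n=\E_I\sqcup\E_{I^c}\sqcup\delta(I)$, we have $\mathbf{a}(\E_I)=n/2-\mathbf{a}(\E_{I^c})-\mathbf{a}(\delta(I))$.
4. Apply \eqref{eqn:general_handshake} to $I^c$: $2\mathbf{a}(\E_{I^c})+\mathbf{a}(\delta(I))=\sum_{i\in I^c}\mathbf{a}(\delta(i))=n-|I|$. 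Hence $\mathbf{a}(\E_{I^c})+\mathbf{a}(\delta(I))=n-|I|-\mathbf{a}(\E_{I^c})\leq n-|I|$, using $\mathbf{a}(\E_{I^c})\geq 0$.
5. Substituting into step 3 gives $\mathbf{a}(\E_I)\geq n/2-(n-|I|)=|I|-n/2$.

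For $R\subseteq P_{\mathrm{FPM},n}$, take $\mathbf{a}\in R$. The key step is to use the sets $I=[n]\setminus\{i\}$, which lie in $\II_{n+1}$ because $n-1\geq 3$.

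1. For such $I$ the hypothesis gives $\mathbf{a}(\E_I)\geq n-1-n/2=n/2-1$.
2. Then $\mathbf{a}(\delta(i))=\mathbf{a}(\E_n)-\mathbf{a}(\E_{[n]\setminus\{i\}})\leq n/2-(n/2-1)=1$.
3. Summing over $i$ and using \eqref{eqn:handshake global}, $\sum_i\mathbf{a}(\delta(i))=2\mathbf{a}(\E_n)=n$.
4. Since each of the $n$ summands is at most $1$ and they total $n$, every $\mathbf{a}(\delta(i))=1$.

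Finally, for any $e=\{i,j\}$ we get $a_e\leq \mathbf{a}(\delta(i))=1$ by nonnegativity. This gives the "in particular" claim: the description is unchanged when $\R^{\E_n}_{\geq0}$ is replaced by $[0,1]^{\E_n}$.

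There is no real obstacle here. The only point needing care is choosing the complementary handshake computation in step 4 of the forward inclusion.
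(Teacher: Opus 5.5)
Your proof is correct and essentially the paper's argument. The reverse inclusion (taking $I=[n]\setminus\{i\}$ to get $\mathbf{a}(\delta(i))\leq 1$, then forcing equality with the global handshake) is identical. Your forward inclusion is a rearrangement of the paper's, which bounds $\mathbf{a}(\delta(I))\leq\min\{\vert I\vert, n-\vert I\vert\}$ and applies the handshake identity to $I$; both versions rest on $\mathbf{a}(\E_{[n]\setminus I})\geq 0$.
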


\begin{proof}
Let $(a_e)\in \R_{\geq 0}^{\E_{n}}$, $I\in \II_{n+1}$ and set $J\coloneqq   [n] \setminus I$.

For the forward inclusion, assume that we have $\mathbf{a}(\delta (i))=1$ for all $i\in [n]$. Then, by the Handshake Lemma \ref{lem:handshake}, we have
\begin{equation*}
\mathbf{a} (\E_n)=\frac{1}{2}\sum_{i\in [n]}\mathbf{a} (\delta (i))=\frac{n}{2}.
\end{equation*}
Since $a_e\geq 0$, we have
\begin{equation*}
\mathbf{a} (\delta (I))\leq \sum_{i\in I} \mathbf{a} (\delta (i)) \quad \text{and}\quad \mathbf{a} (\delta (J))\leq \sum_{j\in J} \mathbf{a} (\delta (j)).
\end{equation*}
As we have $\delta (I)=\delta (J)$, we obtain 
\begin{equation*}
\mathbf{a} (\delta (I))=\mathbf{a} (\delta (J))\leq \min \left \{ \sum_{i\in I} \mathbf{a} (\delta (i)),\sum_{j\in J} \mathbf{a} (\delta (j)) \right \}=\min \left \{ \vert I  \vert , \vert J \vert \right \}=\min \{ \vert I  \vert , n- \vert I  \vert \}.
\end{equation*}
Then, by Handshake Lemma \ref{lem:handshake}, we have
\begin{equation*}
2\mathbf{a} (\E_{I})=\sum_{i\in I}\mathbf{a} (\delta (i))-\mathbf{a} (\delta (I))\geq \vert I \vert - \min \{ \vert I  \vert , n- \vert I  \vert \}= \vert I \vert +\max \{- \vert I  \vert , \vert I  \vert -n \}=\max \{0, 2 \vert I  \vert -n \}.
\end{equation*}
This completes forward inclusion.

Conversely, assume $\mathbf{a}(\E_n) =\frac{n}{2}$ and $\mathbf{a}(\E_I)\geq \max \left\{0, \vert I \vert -\frac{n}{2}\right\}$. Then, for any $i\in [n]$, we have
\begin{equation*}
\mathbf{a}(\E_n)-\mathbf{a} (\delta (i))=\mathbf{a} (\E_{[n]\setminus \{i\}})\geq \max \left\{0,n-1 -\frac{n}{2}\right\}=\frac{n}{2}-1.
\end{equation*}
Hence, we get
\begin{equation*}
\mathbf{a}(\delta (i))\leq \mathbf{a}(\E_n)-\left(\frac{n}{2}-1\right)=\frac{n}{2}-\frac{n}{2}+1=1.
\end{equation*}
Note also that we have
\begin{equation*}
\sum_{i\in [n]}\mathbf{a}(\delta (i))=2\mathbf{a} (\E_n)=n
\end{equation*}
by the Handshake Lemma \ref{lem:handshake}. As a result, for each $i\in [n]$, we have $\mathbf{a}(\delta (i))=1$. This completes the reverse inclusion. Finally, $a_e\leq \mathbf{a}(\delta(i))=1$ for both vertices of the edge $e$, by
nonnegativity.
\end{proof}

\bibliographystyle{abbrv}
\bibliography{polytopebiblio}

@article{AlexeevGibneySwinarski14,
AUTHOR = {Alexeev, Valery and Gibney, Angela and Swinarski, David},
     TITLE = {Higher-level {$\mathfrak{sl}_2$} conformal blocks divisors on
              {$\overline M_{0,n}$}},
   JOURNAL = {Proc. Edinb. Math. Soc.},
  FJOURNAL = {Proceedings of the Edinburgh Mathematical Society.},
    VOLUME = {57},
      YEAR = {2014},
    NUMBER = {1},
     PAGES = {7--30},
      ISSN = {0013-0915,1464-3839},
   MRCLASS = {14D21 (14D23 14E30)},
  MRNUMBER = {3165010},
MRREVIEWER = {Zhenbo\ Qin},
       DOI = {10.1017/S0013091513000941},
       URL = {https://doi.org/10.1017/S0013091513000941},
}

@article{Arbarello87,
AUTHOR = {Arbarello, Enrico and Cornalba, Maurizio},
     TITLE = {The {P}icard groups of the moduli spaces of curves},
   JOURNAL = {Topology},
  FJOURNAL = {Topology. An International Journal of Mathematics},
    VOLUME = {26},
      YEAR = {1987},
    NUMBER = {2},
     PAGES = {153--171},
      ISSN = {0040-9383},
   MRCLASS = {14H10 (14C22)},
  MRNUMBER = {895568},
MRREVIEWER = {Joseph\ Harris},
       DOI = {10.1016/0040-9383(87)90056-5},
       URL = {https://doi.org/10.1016/0040-9383(87)90056-5},
}

@article{Arbarello96,
AUTHOR = {Arbarello, Enrico and Cornalba, Maurizio},
     TITLE = {Combinatorial and algebro-geometric cohomology classes on the
              moduli spaces of curves},
   JOURNAL = {J. Algebraic Geom.},
  FJOURNAL = {Journal of Algebraic Geometry},
    VOLUME = {5},
      YEAR = {1996},
    NUMBER = {4},
     PAGES = {705--749},
      ISSN = {1056-3911,1534-7486},
   MRCLASS = {14H10 (14C17)},
  MRNUMBER = {1486986},
MRREVIEWER = {Montserrat\ Teixidor i Bigas},
}

@article{Arbarello98,
  AUTHOR = {Arbarello, Enrico and Cornalba, Maurizio},
     TITLE = {Calculating cohomology groups of moduli spaces of curves via
              algebraic geometry},
   JOURNAL = {Inst. Hautes \'Etudes Sci. Publ. Math.},
  FJOURNAL = {Institut des Hautes \'Etudes Scientifiques. Publications
              Math\'ematiques},
    NUMBER = {88},
      YEAR = {1998},
     PAGES = {97--127},
      ISSN = {0073-8301,1618-1913},
   MRCLASS = {14H10 (14C30)},
  MRNUMBER = {1733327},
MRREVIEWER = {Elham\ Izadi},
       URL = {http://www.numdam.org/item?id=PMIHES_1998__88__97_0},
}

@book{Arbarello11,
 AUTHOR = {Arbarello, Enrico and Cornalba, Maurizio and Griffiths,
              Phillip A.},
     TITLE = {Geometry of Algebraic Curves: {V}olume {II} with a contribution by Joseph Daniel Harris},
    SERIES = {Grundlehren der mathematischen Wissenschaften},
    VOLUME = {268},
 PUBLISHER = {Springer, Heidelberg},
      YEAR = {2011},
     PAGES = {xxx+963},
      ISBN = {978-3-540-42688-2},
   MRCLASS = {14H10 (32G15)},
  MRNUMBER = {2807457},
MRREVIEWER = {E.\ Looijenga},
       DOI = {10.1007/978-3-540-69392-5},
       URL = {https://doi.org/10.1007/978-3-540-69392-5},
}

@article{Balinski65,
  title={Integer programming: Methods, uses, computations},
  author={Balinski, Michel Louis},
  journal={Management Science},
  volume={12},
  number={3},
  pages={253--313},
  year={1965},
  publisher={INFORMS}
}

@article{BELL,
AUTHOR = {Brakensiek, Joshua and Eur, Christopher and Larson, Matt and
              Li, Shiyue},
     TITLE = {Kapranov degrees},
   JOURNAL = {Int. Math. Res. Not. IMRN},
  FJOURNAL = {International Mathematics Research Notices. IMRN},
      YEAR = {2025},
    NUMBER = {20},
     PAGES = {Paper No. rnaf306, 16},
      ISSN = {1073-7928,1687-0247},
   MRCLASS = {14H10 (05E14)},
  MRNUMBER = {4970179},
MRREVIEWER = {Zhenbo\ Qin},
       DOI = {10.1093/imrn/rnaf306},
       URL = {https://doi.org/10.1093/imrn/rnaf306},
}

@article{bruno11,
  title={On some fibrations of $\overline{M}_{0,n}$},
  author={Bruno, Andrea and Mella, Massimiliano},
  journal={arXiv:1105.3293},
  year={2011}
}

@article{CGM,
AUTHOR = {Cavalieri, Renzo and Gillespie, Maria and Monin, Leonid},
title = {Projective embeddings of {$\overline {M}_{0,n}$} and parking functions},
journal = {Journal of Combinatorial Theory, Series A},
volume = {182},
pages = {105471},
year = {2021},
issn = {0097-3165},
doi = {https://doi.org/10.1016/j.jcta.2021.105471},
url = {https://www.sciencedirect.com/science/article/pii/S0097316521000704},
author = {Renzo Cavalieri and Maria Gillespie and Leonid Monin}
}

@article {GiansiracusaGibney12,
    AUTHOR = {Giansiracusa, Noah and Gibney, Angela},
     TITLE = {The cone of type {$A$}, level 1, conformal blocks divisors},
   JOURNAL = {Adv. Math.},
  FJOURNAL = {Advances in Mathematics},
    VOLUME = {231},
      YEAR = {2012},
    NUMBER = {2},
     PAGES = {798--814},
      ISSN = {0001-8708,1090-2082},
   MRCLASS = {14E30 (14H10 14L24)},
  MRNUMBER = {2955192},
MRREVIEWER = {Scott\ R.\ Nollet},
       DOI = {10.1016/j.aim.2012.05.017},
       URL = {https://doi.org/10.1016/j.aim.2012.05.017},
}

@article{GGL,
 AUTHOR = {Gillespie, Maria and Griffin, Sean T. and Levinson, Jake},
     TITLE = {Degenerations and multiplicity-free formulas for products of
              {$\psi$} and {$\omega$} classes on {$\overline{M}_{0,n}$}},
   JOURNAL = {Math. Z.},
  FJOURNAL = {Mathematische Zeitschrift},
    VOLUME = {304},
      YEAR = {2023},
    NUMBER = {4},
     PAGES = {Paper No. 56, 37},
      ISSN = {0025-5874,1432-1823},
   MRCLASS = {14H10 (05A05 05C05 14C17 14N10)},
  MRNUMBER = {4613449},
MRREVIEWER = {Dragos\ Nicolae\ Oprea},
       DOI = {10.1007/s00209-023-03313-7},
       URL = {https://doi.org/10.1007/s00209-023-03313-7},
}

@article{GGL2,
  AUTHOR = {Gillespie, Maria and Griffin, Sean T. and Levinson, Jake},
     TITLE = {Lazy tournaments and multidegrees of a projective embedding of
              {$\overline M_{0,n}$}},
   JOURNAL = {Comb. Theory},
  FJOURNAL = {Combinatorial Theory},
    VOLUME = {3},
      YEAR = {2023},
    NUMBER = {1},
     PAGES = {Paper No. 3, 26},
      ISSN = {2766-1334},
   MRCLASS = {05E14 (05A19 05C05 05C85 14H10 14N10)},
  MRNUMBER = {4565290},
       DOI = {10.5070/c63160416},
       URL = {https://doi.org/10.5070/c63160416},
}

@article{DFJ,
  AUTHOR = {Dantzig, G. and Fulkerson, R. and Johnson, S.},
     TITLE = {Solution of a large-scale traveling-salesman problem},
   JOURNAL = {J. Operations Res. Soc. Amer.},
  FJOURNAL = {Journal of the Operations Research Society of America},
    VOLUME = {2},
      YEAR = {1954},
     PAGES = {393--410},
      ISSN = {0096-3984},
   MRCLASS = {90.0X},
  MRNUMBER = {70932},
MRREVIEWER = {H.\ W.\ Kuhn},
}

@article{Ed65,
   AUTHOR = {Edmonds, Jack},
     TITLE = {Maximum matching and a polyhedron with {$0,1$}-vertices},
   JOURNAL = {J. Res. Nat. Bur. Standards Sect. B},
  FJOURNAL = {Journal of Research of the National Bureau of Standards.
              Section B. Mathematics and Mathematical Physics},
    VOLUME = {69B},
      YEAR = {1965},
     PAGES = {125--130},
      ISSN = {0022-4340},
   MRCLASS = {90.50},
  MRNUMBER = {183532},
MRREVIEWER = {P.\ J.\ Higgins},
}

@article{Ed71,
   AUTHOR = {Edmonds, Jack},
     TITLE = {Matroids and the greedy algorithm},
   JOURNAL = {Math. Programming},
  FJOURNAL = {Mathematical Programming},
    VOLUME = {1},
      YEAR = {1971},
     PAGES = {127--136},
      ISSN = {0025-5610,1436-4646},
   MRCLASS = {90C05 (05B35)},
  MRNUMBER = {297357},
MRREVIEWER = {G.\ Bar},
       DOI = {10.1007/BF01584082},
       URL = {https://doi.org/10.1007/BF01584082},
}

@incollection{Fakhruddin12,
 AUTHOR = {Fakhruddin, Najmuddin},
     TITLE = {Chern classes of conformal blocks},
 BOOKTITLE = {Compact Moduli Spaces and Vector Bundles},
    SERIES = {Contemp. Math.},
    VOLUME = {564},
     PAGES = {145--176},
 PUBLISHER = {Amer. Math. Soc., Providence, RI},
      YEAR = {2012},
      ISBN = {978-0-8218-6899-7},
   MRCLASS = {14C17 (14D23 81T40)},
  MRNUMBER = {2894632},
MRREVIEWER = {Dmitry\ Kerner},
       DOI = {10.1090/conm/564/11148},
       URL = {https://doi.org/10.1090/conm/564/11148},
}

@article{F11,
  title={Cyclic covering morphisms on $\overline{M}_{0,n}$},
  author={Fedorchuk, Maksym},
  journal={arXiv preprint arXiv:1105.0655},
  year={2011}
}

@book{Frank2011,
  AUTHOR = {Frank, Andr\'as},
     TITLE = {Connections in Combinatorial Optimization},
    SERIES = {Oxford Lecture Series in Mathematics and its Applications},
    VOLUME = {38},
 PUBLISHER = {Oxford University Press, Oxford},
      YEAR = {2011},
     PAGES = {xxiv+639},
      ISBN = {978-0-19-920527-1},
   MRCLASS = {90-02 (05-02 05B35 05C85 05C90 90C27 90C57)},
  MRNUMBER = {2848535},
MRREVIEWER = {Mechthild\ Opperud},
}

@article{HK70,
 AUTHOR = {Held, Michael and Karp, Richard M.},
     TITLE = {The traveling-salesman problem and minimum spanning trees},
   JOURNAL = {Operations Res.},
  FJOURNAL = {Operations Research},
    VOLUME = {18},
      YEAR = {1970},
     PAGES = {1138--1162},
      ISSN = {0030-364X,1526-5463},
   MRCLASS = {90.30},
  MRNUMBER = {278710},
MRREVIEWER = {E.\ Gabowitsch},
       DOI = {10.1287/opre.18.6.1138},
       URL = {https://doi.org/10.1287/opre.18.6.1138},
}

@article{HK71,
AUTHOR = {Held, Michael and Karp, Richard M.},
     TITLE = {The traveling-salesman problem and minimum spanning trees {II}},
   JOURNAL = {Math. Programming},
  FJOURNAL = {Mathematical Programming},
    VOLUME = {1},
      YEAR = {1971},
    NUMBER = {1},
     PAGES = {6--25},
      ISSN = {0025-5610,1436-4646},
   MRCLASS = {90.30},
  MRNUMBER = {289119},
MRREVIEWER = {E.\ Gabowitsch},
       DOI = {10.1007/BF01584070},
       URL = {https://doi.org/10.1007/BF01584070},
}

@article{Kapranov93,
    AUTHOR = {Kapranov, M. M.},
     TITLE = {Veronese curves and {G}rothendieck-{K}nudsen moduli space
              {$\overline M_{0,n}$}},
   JOURNAL = {J. Algebraic Geom.},
  FJOURNAL = {Journal of Algebraic Geometry},
    VOLUME = {2},
      YEAR = {1993},
    NUMBER = {2},
     PAGES = {239--262},
      ISSN = {1056-3911,1534-7486},
   MRCLASS = {14H10 (14C05 14D99)},
  MRNUMBER = {1203685},
MRREVIEWER = {R.\ F.\ Lax},
}

@inproceedings{Karp1972,
  title={Reducibility among combinatorial problems},
  author={Karp, Richard M},
  booktitle={Complexity of Computer Computations: Proceedings of a symposium on the Complexity of Computer Computations, held March 20--22, 1972, at the IBM Thomas J. Watson Research Center, Yorktown Heights, NY},
  pages={85--103},
  year={1972},
  organization={Springer}
}

@article{Keel,
  AUTHOR = {Keel, Sean},
     TITLE = {Intersection theory of moduli space of stable {$N$}-pointed curves of genus zero},
   JOURNAL = {Trans. Amer. Math. Soc.},
  FJOURNAL = {Transactions of the American Mathematical Society},
    VOLUME = {330},
      YEAR = {1992},
    NUMBER = {2},
     PAGES = {545--574},
      ISSN = {0002-9947,1088-6850},
   MRCLASS = {14C15 (14C17 14H10)},
  MRNUMBER = {1034665},
MRREVIEWER = {Steven\ E.\ Landsburg},
       DOI = {10.2307/2153922},
       URL = {https://doi.org/10.2307/2153922},
}

@article{Knudsen83III,
  title={The projectivity of the moduli space of stable curves {III}: The line bundles on $\overline{M}_{g,n}$, and a proof of the projectivity of $\overline{M}_{g,n}$ in characteristic 0},
  author={Knudsen, Finn F},
  journal={Math. Scand.},
  volume={52},
  number={2},
  pages={200--212},
  year={1983}
}

@article{Knudsen83II,
  title={The projectivity of the moduli space of stable curves {II}: The stacks $\overline{M}_{g,n}$},
  author={Knudsen, Finn F},
  journal={Mathematica Scandinavica},
  volume={52},
  number={2},
  pages={161--199},
  year={1983},
  publisher={JSTOR}
}

@article {Knudsen83I,
    AUTHOR = {Knudsen, Finn Faye and Mumford, David},
     TITLE = {The projectivity of the moduli space of stable curves {I}: {P}reliminaries on ``det'' and ``{D}iv''},
   JOURNAL = {Math. Scand.},
  FJOURNAL = {Mathematica Scandinavica},
    VOLUME = {39},
      YEAR = {1976},
    NUMBER = {1},
     PAGES = {19--55},
      ISSN = {0025-5521,1903-1807},
   MRCLASS = {14H10 (14C05 14F05)},
  MRNUMBER = {437541},
MRREVIEWER = {P.\ E.\ Newstead},
       DOI = {10.7146/math.scand.a-11642},
       URL = {https://doi.org/10.7146/math.scand.a-11642},
}

@article{KT,
    AUTHOR = {Keel, Sean and Tevelev, Jenia},
     TITLE = {Equations for {$\overline M_{0,n}$}},
   JOURNAL = {Internat. J. Math.},
  FJOURNAL = {International Journal of Mathematics},
    VOLUME = {20},
      YEAR = {2009},
    NUMBER = {9},
     PAGES = {1159--1184},
      ISSN = {0129-167X,1793-6519},
   MRCLASS = {14H10 (14D20 14D24)},
  MRNUMBER = {2572597},
MRREVIEWER = {Arvid\ Siqveland},
       DOI = {10.1142/S0129167X09005716},
       URL = {https://doi.org/10.1142/S0129167X09005716},
}

@book{Korte08,
 AUTHOR = {Korte, Bernhard and Vygen, Jens},
     TITLE = {Combinatorial Optimization: Theory and Algorithms},
    SERIES = {Algorithms and Combinatorics},
    VOLUME = {21},
   EDITION = {Sixth},
 PUBLISHER = {Springer, Berlin},
      YEAR = {2018},
     PAGES = {xxi+698},
      ISBN = {978-3-662-56038-9; 978-3-662-56039-6},
   MRCLASS = {90-01 (05C85 68Q25 68R10 90C27 90C35)},
  MRNUMBER = {3753583},
       DOI = {10.1007/978-3-662-56039-6},
       URL = {https://doi.org/10.1007/978-3-662-56039-6},
}

@article{Lovasz1976,
 AUTHOR = {Lov\'asz, L.},
     TITLE = {On some connectivity properties of {E}ulerian graphs},
   JOURNAL = {Acta Math. Acad. Sci. Hungar.},
  FJOURNAL = {Acta Mathematica. Academiae Scientiarum Hungaricae},
    VOLUME = {28},
      YEAR = {1976},
    NUMBER = {1-2},
     PAGES = {129--138},
      ISSN = {0001-5954,1588-2632},
   MRCLASS = {05C35},
  MRNUMBER = {437391},
MRREVIEWER = {L.\ V.\ Quintas},
       DOI = {10.1007/BF01902503},
       URL = {https://doi.org/10.1007/BF01902503},
}

@inproceedings{Padberg1980OnTS,
AUTHOR = {Padberg, Manfred W. and Hong, Saman},
     TITLE = {On the symmetric travelling salesman problem: A computational
              study},
      NOTE = {Combinatorial optimization},
   JOURNAL = {Math. Programming Stud.},
  FJOURNAL = {Mathematical Programming Study},
    NUMBER = {12},
      YEAR = {1980},
     PAGES = {78--107},
      ISSN = {0303-3929},
   MRCLASS = {90C10 (05C35 90C35)},
  MRNUMBER = {571856},
       DOI = {10.1007/bfb0120888},
       URL = {https://doi.org/10.1007/bfb0120888},
}

@article{Raymond18,
 AUTHOR = {Raymond, Annie},
     TITLE = {The {T}ur\'an polytope},
   JOURNAL = {Electron. J. Combin.},
  FJOURNAL = {Electronic Journal of Combinatorics},
    VOLUME = {25},
      YEAR = {2018},
    NUMBER = {3},
     PAGES = {Paper No. 3.43, 20},
      ISSN = {1077-8926},
   MRCLASS = {05C35 (05C65 52B11 90C10)},
  MRNUMBER = {3853895},
       DOI = {10.37236/6555},
       URL = {https://doi.org/10.37236/6555},
}

@article{Reinke26,
AUTHOR = {Reinke, Bernhard and Silversmith, Rob},
     TITLE = {Stable curves and chromatic polynomials},
   JOURNAL = {Adv. Math.},
  FJOURNAL = {Advances in Mathematics},
    VOLUME = {501},
      YEAR = {2026},
     PAGES = {Paper No. 111110, 45},
      ISSN = {0001-8708,1090-2082},
   MRCLASS = {14H10 (05C31 14N10 52C35)},
  MRNUMBER = {5092811},
       DOI = {10.1016/j.aim.2026.111110},
       URL = {https://doi.org/10.1016/j.aim.2026.111110},
}

@book{Schrijver03,
  title={Combinatorial Optimization: Polyhedra and Efficiency},
  author={Schrijver, Alexander and others},
  volume={24},
  number={2},
  year={2003},
  publisher={Springer}
}

@article{Silversmith22,
  AUTHOR = {Silversmith, Rob},
     TITLE = {Cross-ratio degrees and perfect matchings},
   JOURNAL = {Proc. Amer. Math. Soc.},
  FJOURNAL = {Proceedings of the American Mathematical Society},
    VOLUME = {150},
      YEAR = {2022},
    NUMBER = {12},
     PAGES = {5057--5072},
      ISSN = {0002-9939,1088-6826},
   MRCLASS = {14N10 (05C30 14H10 14N35 14T15)},
  MRNUMBER = {4494586},
MRREVIEWER = {Magdalena\ Zielenkiewicz},
       DOI = {10.1090/proc/16016},
       URL = {https://doi.org/10.1090/proc/16016},
}

@article{silversmith24,
AUTHOR = {Silversmith, Rob},
     TITLE = {Cross-ratio degrees and triangulations},
   JOURNAL = {Bull. Lond. Math. Soc.},
  FJOURNAL = {Bulletin of the London Mathematical Society},
    VOLUME = {56},
      YEAR = {2024},
    NUMBER = {11},
     PAGES = {3518--3529},
      ISSN = {0024-6093,1469-2120},
   MRCLASS = {14N10 (14H10 14H81)},
  MRNUMBER = {4828030},
MRREVIEWER = {Roberto\ Mu\~noz},
       DOI = {10.1112/blms.13148},
       URL = {https://doi.org/10.1112/blms.13148},
}

@article{Turan41,
AUTHOR = {Tur\'an, Paul},
     TITLE = {Eine {E}xtremalaufgabe aus der {G}raphentheorie},
   JOURNAL = {Mat. Fiz. Lapok},
  FJOURNAL = {Matematikai \'es Fizikai Lapok},
    VOLUME = {48},
      YEAR = {1941},
     PAGES = {436--452},
      ISSN = {0302-7317},
   MRCLASS = {56.0X},
  MRNUMBER = {18405},
MRREVIEWER = {P.\ Erd\H os},
}

@article{Turan54,
 AUTHOR = {Tur\'an, P.},
     TITLE = {On the theory of graphs},
   JOURNAL = {Colloq. Math.},
  FJOURNAL = {Colloquium Mathematicum},
    VOLUME = {3},
      YEAR = {1954},
     PAGES = {19--30},
      ISSN = {0010-1354,1730-6302},
   MRCLASS = {56.0X},
  MRNUMBER = {62416},
MRREVIEWER = {W.\ T.\ Tutte},
       DOI = {10.4064/cm-3-1-19-30},
       URL = {https://doi.org/10.4064/cm-3-1-19-30},
}

@book{Traub2024,
  title={Approximation Algorithms for Traveling Salesman Problems},
 AUTHOR = {Traub, Vera and Vygen, Jens},
 PUBLISHER = {Cambridge University Press, Cambridge},
      YEAR = {2025},
     PAGES = {xiv+427},
      ISBN = {978-1-009-44541-2; [9781009445436]},
   MRCLASS = {90-01 (90C27)},
  MRNUMBER = {4835767},
MRREVIEWER = {Hans-Ulrich\ Simon},
}

\end{document}